\begin{document}

\title{Derived Manifolds as Differential Graded Manifolds}

\author{David Carchedi}

\maketitle

\begin{abstract}
On one hand, together with Pelle Steffens, we recently characterized the $\i$-category of derived manifolds up to equivalence by a universal property. On the other hand, it is shown in \cite{dgder} that the category of differential graded manifolds admits a homotopy theory. In this paper, we prove that the associated $\i$-category of dg-manifolds is equivalent to that of derived manifolds. This allows the well developed theory of dg-geometry to live in symbiosis with $\i$-categorical approach to derived differential geometry. As a consequence, we prove that a dg-$\Ci$-algebra is homotopically finitely presented if and only if it is quasi-isomorphic to smooth functions on a dg-manifold.
\end{abstract}

\tableofcontents
\part*{Introduction}
	In this paper, we show that differential graded manifolds can be used to model derived manifolds. Differential graded manifolds (dg-manifolds) generalize the notion of smooth manifolds by allowing their algebras of smooth functions to be differential graded commutative algebras of a specific local form. This can be articulated using the language of supergeometry. They arose out of ideas developed in physics to deal with degenerate action functionals, specifically, in the BRST and BV quantization schemes. Kontsevich promoted dg-manifolds in the 90s as a formulation of derived geometry in the context of deformation theory, and they were adapted in the algebro-geometric context to define dg-schemes \cite{DGk1,mod1,mod2,Kai1,Kai2}. Unfortunately, dg-schemes turned out to be inadequate for the purposes of derived algebraic geometry, since for example, one should be able to homotopy-coherently glue together structure sheaves along quasi-isomorphisms, and they also should satisfy higher descent. This is one of many reasons for the higher categorical definition of the modern-day derived scheme. We show that in the $\Ci$-setting, these homotopical deficiencies are not present, and all that one must do in order to model derived manifolds with dg-manifolds is to formally invert the quasi-isomorphisms (in the $\i$-categorical sense).
	\section*{Organization and main results}
	The organization of this paper is slightly non-standard in that it is divided into two parts. Part \ref{part:background}, containing Sections \ref{sec:dggeom} --- \ref{sec:sch}, provides the reader with extensive background material. It may be skipped by experts. Part \ref{sec:sch} contains the main results of the paper.
	
	Section \ref{sec:history} is an overview of the current state of affairs of derived differential geometry, provided to orient the reader and place this paper within the existing literature.
	
	Part \ref{part:background} starts with Section \ref{sec:dggeom}, which provides a detailed review of graded supergeometry for the unacquainted reader. Experts may wish to briefly skim this section, as we will switch back and forth between equivalent formulations of the theory of dg-manifolds in the rest of the paper. Specifically, we discuss the following three equivalent categories: the category of dg-manifolds, the category of supermanifolds equipped with an action of $\End\left(\R^{0|1}\right),$ and the theory of bundles of $L_\i\left[1\right]$-algebras.
	
	Section \ref{sec:cialg} reviews the concept of $\Ci$-algebras (a.k.a. $\Ci$-rings) and their homotopical versions coming from simplicial $\Ci$-algebras and differential graded $\Ci$-algebras. It also recalls the definition of a derived manifold (in Section \ref{sec:dmfd}). Finally, the theory of Koszul complexes is explained in this setting, paying careful attention to the case of odd regular sequences.
	
	Section \ref{sec:sch} reviews the theory of derived $\Ci$-schemes.
	
	Part \ref{part:new} is where the main new results are. It starts with Section \ref{sec:ii} in which we connect the theory of dg-manifolds and the theory of derived $\Ci$-schemes. The main result is the following theorem:
	
	\begin{theorem*}(Theorem \ref{thm:sch}).
Let $\left(M,\O_\M,\D\right)$ be a dg-manifold. Then there is a canonical equivalence
$$\Speci\left(\Ci\left(\M\right),D\right) \simeq \left(\pi_0\left(\M\right),\O_\M|_{\pi_0\left(M\right)}\right),$$
where $\Speci\left(\Ci\left(\M\right),D\right)$ is the derived affine $\Ci$-scheme associated to dg-$\Ci$-algebra of smooth functions on $\M.$
\end{theorem*}

In section \ref{sec:catfib}, we review the category of fibrant objects structure on bundles of curved $L_\i\left[1\right]$-algebras developed in \cite{dgder}, and translate it into the language of dg-manifolds. We also prove the following crucial conjecture from that paper:

\begin{theorem*}(Theorem \ref{thm:weqi}.)
Let $f:\left(M,L,\lambda\right) \to \left(N,E,\mu\right)$ a morphism of bundles of curved $L_\i\left[1\right]$-algebras with corresponding dg-manifolds $\left(\M,D\right)$ and $\left(\cN,D'\right)$ respectively. Then $f$ is a weak equivalence if and only if the induced map $\Ci\left(\cN\right) \to \Ci\left(\M\right)$ is a quasi-isomorphism.
\end{theorem*}

In Section \ref{sec:fibprod} we explain how to compute a special case of derived fiber product of dg-manifolds, and use it to prove that every dg-manifold can be constructed through a finite iteration of derived fibered products, starting with ordinary manifolds (Theorem \ref{thm:decomp}.)

Section \ref{sec:eq} contains the main result of this paper:

\begin{theorem*}(Theorem \ref{thm:main}.)
There is a canonical equivalence of $\i$-categories
$$\dgMani \stackrel{\sim}{\longrightarrow} \DMfd$$
between the $\i$-category of dg-manifolds and the $\i$-category of derived manifolds.
\end{theorem*}

\section*{Conventions and notation}
\begin{itemize}
\item By an $\i$-category we mean $\left(\i,1\right)$-category or quasi-category. $\Spc$ will denote the $\i$-category of spaces.
\item We will use the following font choices to differentiate different objects:

A graded manifold or dg-manifold will be written with this font: $\M$

A supermanifold will be written with this font: $\sM.$

If $\M$ is a dg-manifold, we will denote the corresponding supermanifold with an $\End\left(\R^{0|1}\right)$-action as $\sM,$ and conversely.

\item We will use cohomological grading conventions, and Einstein summation convention throughout.
\end{itemize}

\subsection*{Acknowledgments}
We are particularly indebted to Dmitry Roytenberg, who took us under their differentially graded wings in graduate school, and has continued to provide support until present day.

\section{A (biased) history of derived differential geometry}\label{sec:history}

\subsection{Motivation for derived manifolds, and literature review.}
Recall that two closed submanifolds $M \hookrightarrow L$ and $N \hookrightarrow L$ of a smooth manifold $L,$ are said to intersect transversely if for all points $p \in M \cap N,$ if the two subspaces $T_pM$ and $T_pN$ of $T_p L$ collectively span $T_pL.$ In such a situation, the intersection is also a closed submanifold. More generally, given two smooth maps $f:M \to L$ and $g:N \to L,$ which are not necessarily embeddings, \emph{$f$ is transverse to $g$} if for all points $x \in M$ and $y \in N$ such that $f\left(x\right)=g\left(y\right),$ letting $p=f\left(x\right),$
$$f_*\left(T_xM\right)+g_*\left(T_yN\right)=T_pL.$$ In such a situation, the subspace 
\begin{equation}\label{eq:pbspace}
\left\{\left(x,y\right)\mspace{3mu}|\mspace{3mu} f\left(x\right)=g\left(y\right)\right\} \subseteq M \times N
\end{equation}
is a closed submanifold, and is the categorical fibered product $M \times_{L} N.$ A special case of this is when $N=\{p\},$ with $p \in L.$ Then $f$ is transverse to $g$ if there is a neighborhood $U$ of $f^{-1}\{p\}$ such that $f|_U$ is a submersion, and this recovers the well-known fact that the fibers of any submersion are a closed submanifold.

In the absence of transversality, the situation can get arbitrary bad, in the following precise sense. If $f:M \to \R$ is any smooth map, and $p \in \R,$ then by continuity $f^{-1}\left(\{p\}\right)$ is a closed subset of $M.$ Conversely, due to the existence of bump functions, \emph{any} closed subset of $M$ is the fiber over $\{0\}$ of some smooth function $f:M \to \R.$ In particular, one can realize the Cantor set as the zeros of a smooth function $f \in \Ci\left(M\right).$ This creates many problems, for example, one may want to construct a moduli space by considering the vanishing locus of a smooth section $s$ of a vector bundle $V \to M$ over a manifold, i.e. the subspace of $M$ on which $s$ is zero. If $s$ fails to meet the zero-section transversely, this intersection may not be a manifold, and the moduli space will become singular. Many famous moduli spaces arise this way, at least locally. For example, any point in a moduli space of $J$-holomorphic curves has a Kuranishi neighborhood, which essentially means that the neighborhood can be realized as the quotient of the vanishing locus of a section of a vector bundle over a manifold (with corners) by a finite group. These moduli spaces have important applications to Gromov-Witten theory.

In any theory of derived manifolds, given two smooth maps $f:M \to L$ and $g:N \to L,$ the (derived) fibered product $M \times_L N$ exists as a smooth object, and agrees with the ordinary fibered product when the maps are transverse. Such derived manifolds (and those locally of this form) are called \emph{quasi-smooth}. Of course, for any such theory to be useful, it must behave well in that derived intersections of arbitrary maps should enjoy the same nice properties as transversal intersections. In the pioneering work of Spivak, he lists several desiderata that a (simplicial) category of derived manifolds should satisfy. One of these is that for any manifold $M$ there exists a derived cobordism ring $$\Omega^\emph{der}\left(M\right)$$ which at the level of cocycles can be modeled by compact (quasi-smooth) derived manifold mapping to $M,$ in which the cup-product formula holds on the nose:
$$\left[\mathcal{N}_1\right] \smile \left[\mathcal{N}_2\right]=\left[\mathcal{N}_1 \times_M \mathcal{N}_2\right],$$ and
the induced map $$\Omega^\emph{der}\left(M\right) \to \Omega\left(M\right)$$ to the classical cobordism ring is an injection. He constructs an explicit (simplicial) category for which this map is in fact an isomorphism.

The reader may wonder about parenthetical insertion of the adjective simplicial. Spivak gives a very elegant proof (\cite[Proposition 1.10]{spivak}) that in order for a theory of derived manifolds to satisfy his desiderata, its objects cannot form a $1$-category, but must form a higher category. (There is a slight error in the proof, but it can be rectified by replacing $\mathbb{R}$ with $S^1.$) Thus, from purely geometric considerations, one must pursue higher category theory to find a theory of derived manifolds. There exists to date several approaches to derived geometry in the $C^\i$-setting, all of them higher categorical in nature e.g. \cite{spivak,joyce,derivjustden,dg2,Nuiten,univ,dgder,pelle}. Joyce's model, whose objects he calls $d$-manifolds, is in fact $2$-categorical. Although this theory loses some homotopical information (it is essentially a $2$-categorical shadow of the full theory \cite{Borisov2}), its applications have been highly impactful. In particular, Joyce proved that any moduli space of $\mathcal{J}$-holomorphic curves admits the structure of a $d$-orbifold with corners \cite[Theorem 16.2]{joyce2}.

For some applications, it is important to be able to take derived intersections \emph{of} derived intersections, and so forth. For example, suppose that $\pi_1:E_1 \to M$ and $\pi_2:E_2 \to M$ are fiber bundles and $$H:\Gamma_{M}\left(E_1\right) \to \Gamma_{M}\left(E_2\right)$$ is a non-linear differential operator of degree $k.$ Given $\psi \in \Gamma_M\left(E_2\right),$ one may be interested in those $\varphi \in \Gamma_M\left(E_1\right)$ for which $H\varphi = \psi.$ As a first approximation, one may want to study the \emph{formal solution space} as follows. Consider the $k^{th}$-order jet bundle 
$$\pi_1^k:\mathcal{J}^k_{M} E_1 \to M.$$ Then $H$ corresponds to a map of fiber bundles $\hat H:\mathcal{J}^k_{M}E_1 \to E_2,$ such that $H\left(\varphi\right)=\hat H\left(j^k\left(\varphi\right)\right).$ So one could first ask for the subspace $\mathit{FSol_\psi}$ of $\Gamma_M\left(\mathcal{J}^k_M E_1\right)$ on those sections $\lambda$ of $\mathcal{J}^k_{M}E_1$ such that $\hat H\left(\lambda\right)=\psi$ (and then from there try to figure out which ones arise as prolongations, i.e. which are holonomic sections). If transversality were not an issue, one could form the fibered product
$$\xymatrix{\mathcal{R} \ar[r]^{pr_2} \ar[d]_-{pr_1} & M \ar[d]^-{\psi}\\
\mathcal{J}^k E_1 \ar[r]_-{\hat H} & E_2.}$$
And then from there the fibered product
$$\xymatrix{\mathcal{S} \ar[r] \ar[d]_-{p} & \mathcal{R} \ar[d]^-{\langle \pi_1^k \circ pr_1,pr_2\rangle}\\
M \ar[r]_-{\Delta_M} & M \times M.}$$
Then one would have $\mathit{FSol_\psi}=\Gamma_{M} \left(\mathcal{S}\right).$

A common thread of all of the existing approaches to derived manifolds, with the notable exception of \cite{dgder}, is the prominent role played by $\Ci$-algebras (a.k.a. $\Ci$-rings). Roughly speaking, a $\Ci$-algebra is a commutative $\R$-algebra $\A$ with the additional structure of an induced $n$-ary operation on $\A$ for each smooth function $f:\R^n \to \R,$ natural in $f.$ The prototypical example of a $\Ci$-algebra is the ring of smooth functions on a manifold $M,$ and $\Ci\left(\R^n\right)$ is the free $\Ci$-algebra on $n$-generators. The category of $\Ci$-algebra posses a tensor product $\oinfty$ (coproduct) such that for any two manifolds $M$ and $N,$ one has
$$\Ci\left(M\right) \underset{\R} \otimes \Ci\left(N\right) \subsetneq\Ci\left(M\right) \oinfty \Ci\left(N\right) \cong \Ci\left(M \times N\right).$$ In fact, the functor given by taking smooth functions produces a fully-faithful functor from the category of manifolds to the opposite of the category of $\Ci$-algebras which \emph{preserves transverse pullbacks}. This fact, together with the fact that $\A/I$ is a $\Ci$-algebra for any ideal $I$ of $\A,$ is one of the main reasons these algebras form the cornerstone for many approaches to derived differential geometry. Indeed, \cite{Nuiten,pelle} develop the theory of derived algebraic geometry over homotopical $\Ci$-algebras, and Joyce's approach is a $2$-categorical enhancement of the ``classical'' theory of algebraic geometry over $\Ci$-algebras.

With so many theories of derived manifolds out there, one may wonder about the unicity of such a theory. Pelle Steffens and I rectify this in \cite{univ}, by classifying the $\i$-category of derived manifolds up to equivalence by a universal property. We then show that Spivak's model embeds fully faithfully onto the subcategory on the quasi-smooth objects. Here is a summary: whatever the $\i$-category $\mathsf{DMfd}$ is, it should receive a fully faithful functor from the category of smooth manifolds $$i:\Mfd \hookrightarrow \mathsf{DMfd}$$ and $i$ should preserve transverse pullbacks and the terminal object. Moreover, $\mathsf{DMfd}$ should be in a suitable sense closed under taking fibered products and retracts. One way to phrase this is to ask for $\mathsf{DMfd}$ to have finite limits and be idempotent complete (for $\i$-categories, having finite limits does not imply that idempotents split). Finally, one should make sense of the idea that derived manifolds are completely determined by how they are built out of manifolds using fibered products and retracts. This leads to the following definition:

\begin{definition}
The $\i$-category  $\mathsf{DMfd}$ of derived manifolds is the (essentially unique) idempotent complete $\i$-category with finite limits, together with a functor $i:\mathsf{Mfd} \to \mathsf{DMfd}$ which preserves transverse pullbacks and the terminal object, such that for all other idempotent complete $\i$-categories $\sC$ with finite limits, composition along $i$ induces an equivalence of $\i$-categories
$$\Fun^{\mathbf{lex}}\left(\mathsf{DMfd},\sC\right) \stackrel{\sim}{\longlongrightarrow} \Fun^\pitchfork\left(\Mfd,\sC\right)$$
between the $\i$-category of finite limit preserving functors from $\mathsf{DMfd}$ to $\sC$ and the $\i$-category of functors from $\Mfd$ to $\sC$ which preserves transverse pullbacks and the terminal object.
\end{definition}

\begin{remark}
It turns out one does not need to demand that $i$ be fully faithful, and that it is automatically satisfied.
\end{remark}

One can show that abstractly such an $\i$-category must exist, but of course, such a definition is not very tractable. However there is the following remarkable connection between derived manifolds and $\Ci$-algebras:

\begin{theorem} \cite[Theorem 5.3]{univ}
For all idempotent complete $\i$-categories $\sC$ with finite limits, composition with the canonical functor $\Ci \to \Mfd \to \mathsf{DMfd}$ induces an equivalence of $\i$-categories
$$\Fun^{\mathbf{lex}}\left(\mathsf{DMfd},\sC\right) \stackrel{\sim}{\longlongrightarrow} \Alg_{\Ci}\left(\sC\right),$$
between the $\i$-category of finite-limit preserving functor from $\DMfd$ to $\sC$ and the $\i$-category of $\Ci$-algebra objects in $\sC.$
\end{theorem}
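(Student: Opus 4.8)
The plan is to bootstrap off the universal property of $\DMfd$ recorded in the Definition above. That property presents restriction along $i\colon\Mfd\to\DMfd$ as an equivalence $\Fun^{\mathbf{lex}}(\DMfd,\sC)\simeq\Fun^\pitchfork(\Mfd,\sC)$. The functor appearing in the theorem is restriction along the composite $\Ci\to\Mfd\to\DMfd$, so it factors as this equivalence followed by restriction $j^*$ along the inclusion $j\colon\Ci\hookrightarrow\Mfd$ of the Euclidean spaces. Thus it suffices to prove that
$$ j^*\colon \Fun^\pitchfork(\Mfd,\sC)\longrightarrow \Fun^{\times}(\Ci,\sC)=\Alg_{\Ci}(\sC) $$
is an equivalence, naturally in $\sC$. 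I first record that $j^*$ is well defined: the terminal object $\R^0$ lies in $\Ci$, and every product $\R^n\times\R^m\cong\R^{n+m}$ is the transverse pullback of $\R^n\to\R^0\leftarrow\R^m$ (any map to a point is a submersion), so a functor preserving the terminal object and transverse pullbacks sends finite products of Euclidean spaces to products and hence restricts to a product-preserving functor.

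Because everything is natural in $\sC$, I would reformulate the claim in terms of corepresenting objects. The assignment $\sC\mapsto\Fun^{\times}(\Ci,\sC)$ is corepresented by the idempotent-complete $\i$-category with finite limits $\Ci^{\mathbf{lex}}$, obtained as the free such completion of $\Ci$ that turns its finite products into finite limits, while $\sC\mapsto\Fun^\pitchfork(\Mfd,\sC)\simeq\Fun^{\mathbf{lex}}(\DMfd,\sC)$ is corepresented by $\DMfd$ itself. Since $\Ci\to\Mfd\to\DMfd$ preserves finite products, it induces by the universal property of the free completion a finite-limit-preserving comparison functor $\Phi\colon\Ci^{\mathbf{lex}}\to\DMfd$, and (by the Yoneda lemma for idempotent-complete lex $\i$-categories) the natural transformation $j^*$ is an equivalence for all $\sC$ precisely when $\Phi$ is an equivalence. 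It therefore remains to establish that $\Phi$ is essentially surjective and fully faithful.

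Essential surjectivity is a generation statement: every manifold is a retract of a finite transverse pullback of Euclidean spaces. I would prove this from Whitney's embedding theorem together with the tubular neighborhood theorem. Embed $M\hookrightarrow\R^N$ as a closed submanifold and take a tubular neighborhood $U\subseteq\R^N$; the zero section and the bundle retraction exhibit $M$ as a retract of the open set $U$. Writing $U=\{g>0\}$ for a smooth $g\ge 0$ vanishing exactly on the closed complement (using bump functions), the map
$$ \psi\colon \R^{N}\times\R\longrightarrow\R,\qquad \psi(x,t)=t\,g(x)-1 $$
is a submersion along its zero locus, whence $U\cong\psi^{-1}(0)=(\R^{N}\times\R)\times_{\R}\R^0$ is a transverse pullback of Euclidean spaces. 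Since $\DMfd$ is generated by $\Mfd$ under finite limits and retracts, transitivity shows that the image of $\Ci$ generates $\DMfd$ under the same operations, which gives essential surjectivity of $\Phi$.

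The main obstacle is full faithfulness of $\Phi$, equivalently the assertion that the extension of a product-preserving $A\colon\Ci\to\sC$ built from the presentations above is independent of all choices and preserves every transverse pullback, not only those appearing in the presentations. I would settle this by passing to $\Ci$-rings: the functor $M\mapsto\Ci(M)$ embeds $\Mfd^{\mathrm{op}}$ fully faithfully into $\Alg_{\Ci}(\Spc)$, lands among the finitely presented algebras, and carries transverse pullbacks to pushouts. The genuinely delicate point is that transversality forces these pushouts to coincide with their derived (homotopy) counterparts: a transverse intersection presents the relevant ideal by a Koszul-regular sequence, so no higher homotopy is created, exactly as in the analysis of Koszul complexes and odd regular sequences of Section \ref{sec:cialg}. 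As the Euclidean spaces corepresent the free finitely generated $\Ci$-algebras and, together with this Tor-independence, generate all finitely presented derived $\Ci$-algebras under finite colimits and retracts, one identifies both $\Ci^{\mathbf{lex}}$ and $\DMfd$ with the opposite of finitely presented derived $\Ci$-algebras in a manner compatible with $\Phi$. This proves $\Phi$ an equivalence and hence the theorem.
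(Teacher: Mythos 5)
You should first be aware that the paper you were given never proves this statement: it is quoted verbatim from \cite[Theorem 5.3]{univ}, so there is no internal proof to compare against, and your argument can only be assessed on its own merits. Its skeleton is the right one, and it correctly isolates the ingredients the cited proof actually turns on: the reduction, via the defining universal property of $\DMfd$, to showing that restriction $\Fun^{\pitchfork}\left(\Mfd,\sC\right)\to \Alg_{\Ci}\left(\sC\right)$ is an equivalence; the recasting of both sides as corepresented functors, with $\Alg_{\Ci}\left(\blank\right)$ corepresented by the lex envelope of the algebraic theory $\Ci$, concretely $\left(\Alg_{\Ci}\left(\Spc\right)^{\mathbf{fp}}\right)^{op}$; and the differential-topological generation statement (Whitney embedding, tubular neighborhoods, and the presentation $U\cong \left(\R^{N}\times\R\right)\times_{\R}\R^{0}$ of an open set as a transverse pullback of Euclidean spaces), which is exactly how one sees that the image of $\Ci$ generates everything under finite limits and retracts.

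However, the step you yourself call ``the main obstacle'' is not closed by what you wrote; it is begged. Identifying $\DMfd$ with $\left(\Alg_{\Ci}\left(\Spc\right)^{\mathbf{fp}}\right)^{op}$ ``in a manner compatible with $\Phi$'' is, by the abstract formalism of algebraic theories together with the universal property of $\DMfd$, \emph{equivalent} to the theorem being proved: it is Corollary \ref{cor:dmfduniv}, which is derived \emph{from} this theorem, not prior to it. Tor-independence of transverse pullbacks (Theorem \ref{thm:transverse}, i.e.\ \cite[Theorem 4.51]{univ}) gives a fully faithful, transverse-pullback-preserving functor $\Mfd\to\left(\Alg_{\Ci}\left(\Spc\right)^{\mathbf{fp}}\right)^{op}$, but it does not by itself prove the coherence statement that carries all the weight: that an arbitrary $F\in\Fun^{\pitchfork}\left(\Mfd,\sC\right)$ is \emph{naturally} equivalent to the lex extension of $F|_{\Ci}$, equivalently that restriction along $\Ci\hookrightarrow\Mfd$ is fully faithful on $\Fun^{\pitchfork}\left(\Mfd,\sC\right)$. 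Object-by-object identifications of $F\left(M\right)$ via chosen retract and pullback presentations of each manifold do not assemble into a natural equivalence for free, and nothing in your sketch addresses how they would. A secondary, related flaw is the order of your argument: you establish ``essential surjectivity'' of $\Phi$ before full faithfulness, but the inference from ``the image of $\Ci$ generates $\DMfd$ under finite limits and retracts'' to ``$\Phi$ is essentially surjective'' requires the essential image of $\Phi$ to be closed under finite limits and retracts, and lifting a cospan or an idempotent from $\DMfd$ to $\Ci^{\mathbf{lex}}$ along $\Phi$ requires precisely the fullness you have not yet established. So your proposal assembles the correct known ingredients around the crucial step, but the crucial step itself --- the one that makes \cite[Theorem 5.3]{univ} a theorem rather than a reformulation of the definition --- is missing.
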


The abstract formalism of algebraic theories implies the following corollary, which provides a concrete model for derived manifolds:

\begin{corollary}\label{cor:dmfduniv}
There is a canonical equivalence of $\i$-categories
$$\DMfd\simeq \left(\Alg_{\Ci}\left(\Spc\right)^{\mathbf{fp}}\right)^{op}$$ between the $\i$-category of derived manifolds and the opposite of the $\i$-category of homotopically finitely presented $\Ci$-algebras in spaces, i.e. the full subcategory on the compact objects.
\end{corollary}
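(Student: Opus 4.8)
The plan is to show that both sides corepresent the same functor on the $\i$-category $\mathbf{Cat}^{\mathbf{lex}}_{\i}$ of idempotent complete $\i$-categories with finite limits (morphisms the finite-limit-preserving functors), and to conclude by uniqueness of corepresenting objects. Theorem 5.3 of \cite{univ} already does this for $\DMfd$: restriction along the canonical $\Ci \to \Mfd \to \DMfd$ gives, for every such $\sC$, a natural equivalence $\Fun^{\mathbf{lex}}(\DMfd,\sC) \simeq \Alg_{\Ci}(\sC)$, so $\DMfd$ corepresents $\sC \mapsto \Alg_{\Ci}(\sC)$. It therefore suffices to exhibit the same universal property for $\left(\Alg_{\Ci}(\Spc)^{\mathbf{fp}}\right)^{op}$.

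First I would record the relevant algebraic-theory input. Regarding $\Ci$ as a (Lawvere) algebraic theory --- the essentially small category with objects $\R^n$, all smooth maps as morphisms, and finite products $\R^m \times \R^n \cong \R^{m+n}$ --- one has $\Alg_{\Ci}(\Spc) = \Fun^{\times}(\Ci,\Spc) \simeq P_\Sigma(\Ci^{op})$, the free sifted cocompletion of the category $\Ci^{op}$ of finitely generated free $\Ci$-algebras. By the standard theory of such completions (Lurie, \emph{Higher Topos Theory}, Section 5.5.8), this $\i$-category is compactly generated with $\Alg_{\Ci}(\Spc) \simeq \operatorname{Ind}\left(\Alg_{\Ci}(\Spc)^{\mathbf{fp}}\right)$, and its compact objects $\Alg_{\Ci}(\Spc)^{\mathbf{fp}}$ form the smallest idempotent-complete full subcategory containing the free algebras and closed under finite colimits. (Note these genuinely include the derived, Koszul-type quotients of free algebras discussed earlier, not merely their retracts.) In particular $\mathcal{K} := \Alg_{\Ci}(\Spc)^{\mathbf{fp}}$ is idempotent complete and has finite colimits, so $\mathcal{K}^{op}$ is an object of $\mathbf{Cat}^{\mathbf{lex}}_{\i}$.

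Next I would verify the universal property by a variance computation. Fix $\sC \in \mathbf{Cat}^{\mathbf{lex}}_{\i}$. A functor $\mathcal{K}^{op} \to \sC$ preserves finite limits iff its opposite $\mathcal{K} \to \sC^{op}$ preserves finite colimits, so $\Fun^{\mathbf{lex}}(\mathcal{K}^{op},\sC) \simeq \Fun^{\mathrm{rex}}(\mathcal{K},\sC^{op})^{op}$. Because $\mathcal{K}$ is the free idempotent-complete finitely-cocomplete $\i$-category on $\Ci^{op}$ compatible with its finite coproducts, restriction along the Yoneda embedding $\Ci^{op} \hookrightarrow \mathcal{K}$ gives $\Fun^{\mathrm{rex}}(\mathcal{K},\sC^{op}) \simeq \Fun^{\sqcup}(\Ci^{op},\sC^{op})$, and a functor $\Ci^{op} \to \sC^{op}$ preserves finite coproducts iff its opposite $\Ci \to \sC$ preserves finite products. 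Chaining these equivalences, the opposites on the functor $\i$-categories cancel and one obtains, naturally in $\sC$,
$$\Fun^{\mathbf{lex}}\left(\left(\Alg_{\Ci}(\Spc)^{\mathbf{fp}}\right)^{op},\sC\right) \simeq \Fun^{\times}(\Ci,\sC) = \Alg_{\Ci}(\sC),$$
which is exactly the universal property of $\DMfd$.

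Finally, since $\DMfd$ and $\left(\Alg_{\Ci}(\Spc)^{\mathbf{fp}}\right)^{op}$ corepresent the same functor $\sC \mapsto \Alg_{\Ci}(\sC)$ on $\mathbf{Cat}^{\mathbf{lex}}_{\i}$, the Yoneda lemma produces a canonical equivalence between them: the unique one carrying the tautological $\Ci$-algebra object on the right-hand side (the diagram $\R^n \mapsto \Ci(\R^n)$ of free algebras, read in the opposite category) to the distinguished one $\Ci \to \Mfd \to \DMfd$. I expect the main obstacle to be the middle step: rigorously identifying the compact objects of $P_\Sigma(\Ci^{op})$ and establishing that $\mathcal{K}$ is the claimed free right-exact idempotent-complete category on $\Ci^{op}$, all while keeping the nested passages to opposite categories and the distinction between compact and compact-projective objects straight. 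Once that is in place, the equivalence is a formal consequence of representability.
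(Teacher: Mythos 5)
Your proposal is correct and takes essentially the same route as the paper: the paper deduces the corollary from \cite[Theorem 5.3]{univ} together with precisely the ``abstract formalism of algebraic theories'' that you unpack, namely that $\left(\Alg_{\Ci}\left(\Spc\right)^{\mathbf{fp}}\right)^{op}$ also corepresents $\sC \mapsto \Alg_{\Ci}\left(\sC\right)$ on idempotent complete $\i$-categories with finite limits, whence the equivalence follows by Yoneda. Your identification of the compact objects as the idempotent completion of the closure of the free algebras under finite colimits (and the ensuing variance bookkeeping) is exactly the standard input the paper leaves implicit.
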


This connects directly with the approach of Borisov-Noel \cite{derivjustden} based on simplicial $\Ci$-rings, as the model category thereof is a presentation for the $\i$-category $\Alg_{\Ci}\left(\Spc\right).$ There is also a spectrum functor $\Speci$ producing from a homotopical $\Ci$-algebra $\A$ a homotopically $\Ci$-ringed space $\Speci\left(\A\right),$ and from here, one can develop a good notion of derived $\Ci$-schemes. In scheme language, the above theorem states that $\DMfd$ is equivalent to the $\i$-category of affine $\Ci$-schemes of finite type. Thus, the theory of $\DMfd$ is also subsumed by the theories of derived algebraic geometry over homotopical $\Ci$-algebras developed in \cite{Nuiten,pelle}. In fact, this is also where the connection with Spivak's model lies, as his $\i$-category of derived manifolds can be identified almost on the nose with quasi-smooth derived $\Ci$-schemes (locally) of finite type.

\subsection{Dg-manifolds}
The odd theory out is the theory of derived manifolds developed in \cite{dgder}, based upon \emph{differential graded manifolds} (but using the alternative language of bundles of curved $L_\i\left[1\right]$-algebras), and is thus a theory developed using the formalism of differential-graded geometry--- which is based on supergeometry. The central objects of study are called dg-manifolds, which are also known as $Q$-supermanifolds, especially in the physics literature. Dg-manifolds are very tractable objects on which one can do differential geometry almost as usual, even in local coordinates. See Section \ref{sec:dggeom} for a detailed review of this theory. 

Dg-geometry stems from homological tools developed by  physicists several decades ago. For instance, when computing the Feynman diagrams of gauge theories, like those involved in quantum chromodynamics, physicists discovered that they had to add ``ghost fields'' to extract sensible answers. In a similar vein, physicists introduced ``antifields'' to partner with every field, as part of the Batalin-Vilkovisky (BV) formalism developed to deal with supergravity. Later, Kontsevich \cite{K} suggested how these ideas allow one to define a virtual fundamental class, a key technical tool in mirror symmetry. These ideas were developed further in the algebro-geometric to define \emph{differential graded schemes} in work of Ciocan-Fontanine and Kapranov \cite{DGk1,mod1,mod2} and subsequent work of Behrend \cite{Kai1,Kai2}, which were precursors to the modern-day derived scheme.

Dmitry Roytenberg and I have been working on developing a model for derived manifolds based on dg-manifolds for more than a decade. In fact, it was through these efforts that we defined and constructed a model category of dg-$\Ci$-algebras; the prototypical example of a dg-$\Ci$-algebra is the algebra of smooth functions on a dg-manifold. It was on this notion of homotopical $\Ci$-algebra that the thesis \cite{Nuiten} was based, where it was also shown the the associated $\i$-category $\dgci$ is equivalent to $\Alg_{\Ci}\left(\Spc\right)$ (and thus to the model using simplicial $\Ci$-rings).

The idea of an approach to derived manifolds based upon dg-manifolds is that dg-manifolds can be used to replace any smooth map with a submersion, up to quasi-isomorphism of dg-manifolds. For example, if $E\to M$ is a vector bundle and $s$ is a section, then insertion $\iota_s$ can be viewed as a derivation of the graded algebra $$\Gamma_{M}\left(\Lambda^\bullet\left(E^*\right)\right).$$ In the language of graded differential geometry, this graded algebra is the algebra of smooth functions on the underlying graded manifold of the vector bundle $E\left[-1\right] \to M$ obtained by shifting the fibers of $E$ into degree $1,$ and $\iota_s$ becomes a \emph{cohomological vector field} on this graded manifold, making $\left(E\left[-1\right],\iota_s\right)$ into a dg-manifold. It can be shown that if $s$ meets the zero-section $Z$ transversely, then the dg-algebra of smooth functions on $\left(E\left[-1\right],\iota_s\right)$ is quasi-isomorphic to $\Ci\left(s \cap Z\right).$ Using this, and the existence of tubular neighborhoods, one can easily produce from a smooth function $f:M \to N,$ a factorization $f=f' \circ \lambda,$ where
$$\xymatrix{& \mathcal{M} \ar[ld]_-{\pi} \ar[rd]^{f'} & \\ M & & N }$$
$\lambda$ is a section of $\pi,$ $\pi$ and $f'$ are submersions of graded manifolds, and $\pi$ is moreover a quasi-isomorphism. In particular, $f'$ will be transverse in the suitable graded sense to any smooth map to $N.$

For over a decade, Dmitry Roytenberg publicly conjectured the existence of the structure of a \emph{category of fibrant objects} on the category of non-positively graded dg-manifolds $\dgMan$ (a weakening of a model structure that only defines fibrations and weak equivalence, and does not require the existence of small (or even finite) limits and colimits) where:

\begin{itemize}
\item $f:\M \to \cN$ is a \textbf{fibration} if it is a submersion of graded manifolds.
\item $f:\M \to \cN$ is a \textbf{weak equivalence} if $f^*:\Ci\left(\cN\right) \to \Ci\left(\M\right)$ is a quasi-isomorphism.
\end{itemize}

Given any category $\sC$ with a set of morphisms $\mathcal{W}$ called the \emph{weak equivalences,} one can formally homotopically invert the morphisms in $\mathcal{W}$ to produce an $\i$-category $\sC\left[\mathcal{W}^{-1}\right]_\i$ such that given any $\i$-category $\sD$ and any functor 
$$F:\sC \to \sD,$$
for which $F\left(f\right)$ is an equivalence in $\sD$ for all $f \in \mathcal{W},$ there is a (homotopically) unique extension $$\tilde F:\sC\left[\mathcal{W}^{-1}\right]_\i \to \sD.$$ One can model this as a simplicial category simply by Dwyer-Kan localization, and for $\sC$ a model category and $\mathcal{W}$ its class of weak equivalences, this is the usual construction. For a general relative category, one has little information on how to compute limits and colimits in $\sC\left[\mathcal{W}^{-1}\right]_\i,$ if they even exist at all. However, for a category of fibrant objects, $\sC\left[\mathcal{W}^{-1}\right]_\i$ always has finite limits, and homotopy pullbacks can be computed simply be replacing one of the maps with a fibration. Since we are concerned with taking derived fibered products, and we want this to be just ordinary pullbacks in the $\i$-category of derived manifolds, this is exactly the structure one needs for concrete calculations.

Using ideas from AKSZ theory and homological perturbation theory, the authors of \cite{dgder} established a category of fibrant objects structure on dg-manifolds nearly the same as the one we proposed. The only subtlety is their definition of weak equivalence is instead that $f$ induces a bijection
$$\pi_0\left(\M\right) \to \pi_0\left(\cN\right)$$
on the \emph{underlying sets} of classical loci, and also for each classical point $x \in \pi_0\left(\M\right),$ the induced map on tangent complexes
$T_x \M \to T_{f(x)} \cN$ is a quasi-isomorphism. In this paper, we rectify this by proving an inverse-function theorem for dg-manifolds; namely we show these two notions of weak equivalence agree.

The main result of this paper is that
\begin{theorem}
There is a canonical equivalence of $\i$-categories
$$\dgMan\left[\mathbf{qi}^{-1}\right]_\i \stackrel{\sim}{\longlongrightarrow} \DMfd$$
between the $\i$-category associated to the category of fibrant objects structure on dg-manifolds, and the $\i$-category of derived manifolds.
\end{theorem}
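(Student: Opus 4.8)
The plan is to produce a pair of mutually inverse finite-limit-preserving functors between $\DMfd$ and $\dgMan\left[\mathbf{qi}^{-1}\right]_\i$, building one direction from the universal property that characterizes $\DMfd$ and the other from the smooth-functions functor, and then identifying the two composites with identities. First I would record that $\dgMan\left[\mathbf{qi}^{-1}\right]_\i$ has finite limits: this is the payoff of the category-of-fibrant-objects structure, with homotopy pullbacks computed by replacing one leg with a submersion. I would then check that the inclusion $j\colon \Mfd \hookrightarrow \dgMan \to \dgMan\left[\mathbf{qi}^{-1}\right]_\i$ of ordinary manifolds (placed in degree $0$ with zero differential) preserves the terminal object and transverse pullbacks: when $f \pitchfork g$, the honest pullback of manifolds already computes the homotopy pullback in the localization, since a transverse intersection is its own derived intersection --- the classical shadow of the Koszul-resolution computation sketched in the introduction. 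Granting that the localization is idempotent complete, the universal property of $\DMfd$ (via \cite[Theorem 5.3]{univ}) yields a unique finite-limit-preserving functor $F\colon \DMfd \to \dgMan\left[\mathbf{qi}^{-1}\right]_\i$ with $F \circ i \simeq j$.

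In the other direction I would use smooth functions. The assignment $\left(\M,D\right) \mapsto \Speci\left(\Ci\left(\M\right),D\right)$ defines a functor $\dgMan \to \DMfd$, where by Corollary \ref{cor:dmfduniv} I regard $\DMfd \simeq \left(\Alg_{\Ci}\left(\Spc\right)^{\mathbf{fp}}\right)^{op}$; Theorem \ref{thm:sch} identifies its value with the derived $\Ci$-scheme $\left(\pi_0\left(\M\right),\O_\M|_{\pi_0\left(M\right)}\right)$ presented by $\M$, and finite presentation of this value follows from Theorem \ref{thm:decomp} together with closure of finitely presented $\Ci$-algebras under finite colimits. By Theorem \ref{thm:weqi} the weak equivalences are exactly the maps inducing quasi-isomorphisms on smooth functions, hence equivalences in $\dgci \simeq \Alg_{\Ci}\left(\Spc\right)$, so the functor inverts weak equivalences and descends to $\Phi\colon \dgMan\left[\mathbf{qi}^{-1}\right]_\i \to \DMfd$ with $\Phi \circ j \simeq i$. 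The key technical input, from the fiber-product computations of Section \ref{sec:fibprod}, is that $\Phi$ preserves finite limits: smooth functions carry homotopy pullbacks along submersions to homotopy pushouts of dg-$\Ci$-algebras (the derived $\oinfty$), i.e. to homotopy pullbacks of the corresponding derived schemes.

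It then remains to see that $F$ and $\Phi$ are mutually inverse. Since $\Phi \circ F$ is finite-limit preserving and restricts along $i$ to $\Phi \circ j \simeq i$, the uniqueness clause of the universal property (applied with target $\DMfd$, which is idempotent complete with finite limits) forces $\Phi \circ F \simeq \mathrm{id}_{\DMfd}$. For the reverse composite, both $F \circ \Phi$ and the identity are finite-limit-preserving endofunctors of $\dgMan\left[\mathbf{qi}^{-1}\right]_\i$ agreeing on $\Mfd$ (indeed $F \circ \Phi \circ j \simeq F \circ i \simeq j$); here I would invoke Theorem \ref{thm:decomp}, which presents every dg-manifold as a finite iterated derived fibered product of ordinary manifolds, to propagate the equivalence on $\Mfd$ coherently to a natural equivalence $F \circ \Phi \simeq \mathrm{id}$ on all of $\dgMan\left[\mathbf{qi}^{-1}\right]_\i$.

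I expect the main obstacles to be twofold. First, verifying that $\Phi$ genuinely preserves finite limits --- equivalently, that smooth functions send derived intersections of dg-manifolds to derived tensor products of dg-$\Ci$-algebras --- is the real geometric content, and rests on the inverse function theorem of Theorem \ref{thm:weqi} and on computing homotopy pullbacks by submersion-replacement of fibrations. Second, establishing idempotent completeness of $\dgMan\left[\mathbf{qi}^{-1}\right]_\i$ (needed even to invoke the universal property and produce $F$) and carrying out the coherent propagation of $F \circ \Phi \simeq \mathrm{id}$ from manifolds to all dg-manifolds, where the decomposition theorem does the essential work, will require the most care.
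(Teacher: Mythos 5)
Your strategy has two genuine gaps, and each one is, on inspection, equivalent to one of the two main results the paper actually proves; the universal-property scaffolding does not let you avoid either. First, to produce $F$ at all you must feed $\dgMan\left[\mathbf{qi}^{-1}\right]_\i$ into the universal property, which only accepts \emph{idempotent complete} $\i$-categories with finite limits. You write ``granting that the localization is idempotent complete,'' but there is no soft reason for this: finite limits do not imply that idempotents split in an $\i$-category (the paper stresses exactly this point), and there is no general theorem giving idempotent completeness for the localization of a category of fibrant objects. Idempotent completeness of $\dgMani$ is in effect a \emph{corollary} of the theorem you are trying to prove; the paper's essential-surjectivity argument (Lemma \ref{lem:cofproj}, Lemma \ref{lem:factor}, Proposition \ref{prop:fincol}, and the splitting of homotopy idempotents inside quasi-free dg-$\Ci$-algebras in the proof of Theorem \ref{thm:main}) is precisely the work needed to handle retracts, and nothing in your proposal replaces it. Relatedly, your derivation of finite presentation of $\Ci\left(\M\right)$ from limit-preservation risks circularity: in the paper, limit-preservation (Proposition \ref{prop:pbspres}) \emph{uses} finite presentation (Lemma \ref{lem:hfp}, via completeness, Proposition \ref{prop:comp}), which is why the paper proves Lemma \ref{lem:hfp} independently by a tubular-neighborhood argument.

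Second, the closing step --- propagating $F \circ \Phi \simeq \mathrm{id}$ from $\Mfd$ to all of $\dgMani$ via Theorem \ref{thm:decomp} --- does not work as stated. The decomposition theorem produces, for each dg-manifold $\M$, an equivalence $F\Phi\left(\M\right) \simeq \M$ \emph{object by object}, but object-wise equivalences between $\i$-functors do not assemble into a natural equivalence, and there is no universal property of $\dgMani$ (that is exactly what is being proven) forcing two finite-limit-preserving functors that agree on $\Mfd$ to agree globally. The standard way to close this hole is to know that $\Phi$ is fully faithful: post-composition with a fully faithful functor is fully faithful on functor categories, so the equivalence $\Phi F \Phi \simeq \Phi$ would then force $F\Phi \simeq \mathrm{id}$. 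But fully faithfulness of the smooth-functions functor is exactly the paper's Theorem \ref{thm:ff} --- the computation identifying $\pi_0\Map_{\dgMani}\left(\M,\R\left[-k\right]\right)$ with $H^{-k}\left(\Ci\left(\M\right)\right)$ via cocycle categories and Koszul algebras --- and your proposal contains no substitute for that computation. So both composites in your plan reduce to the paper's two hard results (full fidelity and closure under retracts), which the proposal defers rather than proves.
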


\begin{remark}
It should be noted that we are using cohomological grading conventions, and restricting to dg-manifolds whose algebra of functions are concentrated in non-positive degrees. Dg-manifolds whose algebras are concentrated in \emph{non-negative} degrees behave much differently, and are closely related to Lie algebroids and deformation theory. See for example the thesis of Joost Nuiten \cite{Nuiten}.
\end{remark}


\part{Background}\label{part:background}
\section{A Review of Differential Graded Geometry}\label{sec:dggeom}
In this section, we will give a mostly self-contained introduction to graded and dg-geometry. We find this necessary firstly, as this language is likely unfamiliar to many homotopy theorists. Secondly, we will switch back and forth between many different equivalent categories of dg-manifolds in our arguments in the rest of the paper, so we provide a detailed explanation about how these different points of view are related for the reader's convenience. The content is mostly standard. There are many additional sources. Here is an incomplete list: \cite{Sch93,AKSZdima,Cattaneo,Maxim,Teichner,Fairon2017,ref1}

\subsection{$\Z_2$-graded geometry}
\begin{definition}
A \textbf{$\Z_2$-graded vector space} (a.k.a super vector space) $\V$ over $\R$ is simply a pair $\left(V_0,V_1\right)$ of real vector spaces. Elements of $V_0$ are said to be \textbf{even} and elements of $V_1$ are said to be \textbf{odd}. For $v \in V_i,$ we write $|v|=i \in \Z_2.$ A morphism $f:\V \to \W$ is a pair of linear maps, $f_0:V_0 \to W_0$ and and $f_1:V_1 \to W_1.$ I.e., the category $\SVect$ is the category $\Fun\left(\Z_2,\Vect\right).$ For $V$ an ordinary (i.e. non-graded) vector space, we will identify it with the $\Z_2$-vector space $\left(V,0\right).$ A $\Z_2$-graded vector space of this form will be called \textbf{purely even}. We write the $\Z_2$-graded vector space $\left(\R^n,\R^m\right)$ as $\R^{n|m},$ with $n$ the \textbf{even dimension} and $m$ the \textbf{odd dimension}, and $\left(n|m\right)$ is the \textbf{super dimension}.

The tensor product is defined by
$$\left(\V \otimes \W\right)_0=\left(V_0 \otimes W_0\right)\oplus \left(V_1 \otimes W_1\right),$$
$$\left(\V \otimes \W\right)_1=\left(V_0 \otimes W_1\right)\oplus \left(V_1 \otimes W_0\right).$$ We regard $\SVect$ as the symmetric monoidal category $\left(\SVect,\otimes,\tau\right)$ where $\tau$ is the Koszul braiding, defined on homogeneous elements by
\begin{eqnarray*}
\tau:\V \otimes \W &\stackrel{\sim}{\longrightarrow}& \W \otimes \V\\
 v \otimes w &\mapsto& \left(-1\right)^{|v|\cdot |w|} w \otimes v.
 \end{eqnarray*}
\end{definition}

\begin{remark}
Alternatively, many authors define a $\Z_2$-graded vector space as a vector space $V$ together with a direct sum decomposition $$V=V_0 \oplus V_1,$$ and define morphisms to be those which preserve degree. This category is canonically isomorphic to $\SVect.$ The only difference to keep in mind is, in our approach, one cannot sum an element of $V_0$ with one of $V_1,$ i.e. all elements of a $\Z_2$-graded vector space in the definition above are homogeneous. Nonetheless, these two approaches are categorically equivalent.
\end{remark}

\begin{definition}
There is a canonical idempotent automorphism $\Pi$ of the category $\SVect,$ called the \textbf{parity reversal} functor defined by $$\Pi=\R^{0|1} \otimes \left(\blank\right).$$ A $\Z_2$-graded vector space of the form $\Pi V,$ with $V$ purely even is called \textbf{purely odd}.
\end{definition}

Notice that for any $\Z_2$-graded vector space $\V=\left(V_0,V_1\right),$ we have $\R^{0|1} \otimes \V=\left(V_1,V_0\right).$ In particular $\R^{0|1} \otimes \R^{0|1}=\R.$ It follows that $\Pi\left(\Pi\left(\V\right)\right)=\V,$ for all $\V.$

The $\left(\SVect,\otimes,\tau\right)$ is symmetric monoidal closed with
$$\underline{\Hom}\left(\V,\W\right)=\left(\Hom\left(\V,\W\right),\Hom\left(\V,\Pi \W\right)\right).$$ Accordingly, the \textbf{dual} of $\V$ is defined as
$$\V^*:=\underline{\Hom}\left(\V,\R\right).$$

\begin{definition}
The category $\SCom$ of \textbf{super-commutative $\R$-algebras} is the category of commutative monoid objects in the symmetric monoidal category $\SVect.$ Explicitly, these are $\Z_2$-graded $\R$-algebras $\A=\left(A_0,A_1\right)$ such that $$a \cdot b = \left(-1\right)^{|a||b|}b \cdot a.$$
\end{definition}

\begin{definition}
The parity reversal functor $\Pi$ induces an automorphism $\Pi_\A$ for every super-commutative $\R$-algebra defined by $$\Pi_\A\left(x\right)=\left(-1\right)^{|x|}\cdot x.$$ This is called the \textbf{parity involution} automorphism.
\end{definition}

There is a forgetful functor $U:\SCom \to \SVect,$ which has a left adjoint $\Sym,$ where $\Sym\left(\V\right)$ is the $\Z_2$-graded symmetric algebra. Notice that for any $\V,$ 
\begin{eqnarray*}
\V&=&\left(V_0,0\right) \oplus \left(0,V_1\right) \\
&=& V_0 \oplus \Pi V_1.
\end{eqnarray*}
Since $\Sym$ is a left adjoint, it is uniquely determined by the fact that for $V$ purely even, $\Sym\left(V\right)$ is the usual (ungraded) symmetric algebra concentrated in degree 0, and that $$\Sym\left(\Pi V\right)=\Lambda\left(V\right),$$ where $\Lambda\left(V\right)$ is the exterior algebra (with its $\Z$-grading reduced modulo $2$). That is to say, for a general $\V,$
$$\Sym\left(\V\right)=\Sym\left(V_0\right) \underset{\R} \otimes \Lambda\left(V_1\right).$$ (Just as in the non-graded commutative case, the tensor product becomes the coproduct for super-commutative algebras).

Suppose that we have a basis $\left(x_1,\ldots,x_n\right)$ for $V_0$ and $\left(y_1,\ldots,y_m\right)$ for $V_1.$ Then elements of $\Sym\left(\V\right)$ are polynomials in the $n$ commuting variables $x_1,\ldots, x_n$ and the $m$ anti-commuting variables $y_1,\ldots,y_m.$

Let $\V=\left(V_0,V_1\right)$ be a finite dimensional $\Z_2$-graded vector space (a.k.a a super vector space) over $\R.$ Associated to $\V$ there is a ringed space $\left(V_0,\O_\V\right),$ where the $\Z_2$-graded structure sheaf is defined as
\begin{eqnarray*}
\O_\V\left(U\right)&=&\Ci\left(U\right) \underset{\Sym\left(\left(V_0\right)^*\right)} \otimes \Sym\left(\V^*\right)\\
&\cong& \Ci\left(U\right) \underset{\R} \otimes \Lambda\left(\left(V_1\right)^*\right).
\end{eqnarray*}

\begin{definition}
A \textbf{superdomain} is pair $\left(X,\O_X\right)$ of a topological space and a sheaf $\O_X$ of super-commutative $\R$-algebras which is isomorphic as a ringed space to $\left(V_0,\O_\V\right)$ for a finite dimensional $\Z_2$-graded vector space.
\end{definition}

\begin{definition}
A \textbf{supermanifold} $\sM=\left(M,\O_{\sM}\right)$ is a second countable Hausdorff space $M$ with a sheaf of super-commutative $\R$-algebras $\O_{\sM}$ for which there exists a cover $\left(U_\alpha \hookrightarrow M\right)_\alpha,$ such that for all $\alpha,$ $\left(U_\alpha,\O_{\sM}|_{U_\alpha}\right)$ is a superdomain. We denote the super-commutative $\R$-algebra $\Gamma_{M}\left(\O_{\sM}\right)$ by $\Ci\left(\sM\right).$
\end{definition}

\begin{definition}
For $\V$ a finite dimensional $\Z_2$-graded vector space, we write $\V=\left(V_0,\O_{\V}\right),$ meaning we regard $\left(V_0,\O_{\V}\right)$ as the geometric incarnation of $\V$ as a supermanifold, just as we can write $\R^n$ both for the $n$-dimensional vector space, and the smooth manifold. This defines a natural functor $\SVect \to \SMfd.$ Morphisms in the image of this functor are called \textbf{linear}.
\end{definition}

\begin{remark}
Any supermanifold $\sM$ is locally diffeomorphic to $\R^{p|q}$ for some non-negative $p$ and $q.$
\end{remark}

\begin{example}
Any smooth manifold $M$ is a super manifold since $\R^n$ is a purely even $\Z_2$-graded vector space, and $\left(\R^n,\O_{\R^n}\right)$ is just $\R^n$ with its standard structure sheaf of smooth functions. A supermanifold of this form is called \textbf{purely even}.
\end{example}

\begin{definition}
For $\sM=\left(M,\O_{\sM}\right)$ a supermanifold, let $\mathcal{I}_\sM$ be the (homogeneous) ideal sheaf locally generated by odd-elements. Then $|\sM|:=\left(M,\left(\O_{\sM}\right)/\mathcal{I}_{\sM}\right)$ is a smooth manifold (regarded as a locally ringed space.) It is called the \textbf{core} of $\sM.$ The functor $|\blank|:\SMfd \to \Mfd$ is left adjoint to the inclusion of manifolds into supermanifolds.
\end{definition}

\begin{remark}
Just as $\O_M$ equips $M$ with local coordinate systems, $\O_{\sM}$ equips $\sM$ with local coordinate systems of the form $\left(x_1,\ldots,x_n;\xi_1,\ldots,\xi_m\right),$ where $\xi_1,\ldots,\xi_m$ are a basis for the dual of $V_1.$ The only caveat is that the variables $\xi_i$ are odd and anti-commute. These coordinates are called \textbf{fermionic}, and the coordinates $\left(x_1,\ldots,x_n\right)$ of $M$ are called \textbf{bosonic.} This is justified by the well known fact that morphisms of supermanifolds can be expressed locally in these coordinates. More precisely:

A smooth function $f:\sM \to \R,$ i.e. a morphism of ringed spaces $\left(M,\O_{\sM}\right) \to \left(\R,\Ci_\R\right)$ can be identified with a choice of an even element $f$ in $\Ci\left(\sM\right),$ and a smooth function $g:M \to \R^{0|1}$ with an odd element. Putting this together, it follows that a smooth function $f:\R^{n|m} \to \R^{l|k}$ is the same data as $l$ even elements $f^1,\ldots, f^l$ of $$\Ci\left(\R^{n|m}\right)=\Ci\left(\R^n\right) \underset{\R} \otimes \Lambda\left(\left(\R^m\right)^*\right)$$ and $k$ odd elements $f^{l+1},\ldots f^{l+k}.$ Let $x_1,\ldots,x_n$ be the standard coordinates on $\R^n$ and $y_1,\ldots,y_l$ the standard coordinates on $\R^l.$ Let $\xi_1,\ldots,\xi_m$ be a dual basis of $\R^m$ and $\eta_1,\ldots,\eta_k$ a dual basis for $\R^k.$ An even element of $\Ci\left(\R^{n|m}\right)$ is of the form
$$h_0\left(x_1,\ldots,x_n\right) + h_{12}\left(x_1,\ldots,x_n\right)\xi_1\xi_2+\ldots=\sum_{I \subset \{1,.\dots,m\}, |I| \in 2 \Z} h_I\left(x_1,\ldots,x_n\right)\xi^I,$$ with each $h_i$ a smooth function, and an odd element is of the same form, but with the sum over all subsets $I$ of odd cardinality. (Here we are regarding $I$ as a multi-index). Thus, using Einstein summation conventions, one gets expressions of the form
\begin{eqnarray*}
y_1\left(x_1,\ldots,x_n,\xi_1,\ldots,\xi_m\right)&=&f^1_{I}\left(x_1,\ldots,x_n\right)\xi^I\\
y_2\left(x_1,\ldots,x_n,\xi_1,\ldots,\xi_m\right)&=&f^2_{I}\left(x_1,\ldots,x_n\right)\xi^I\\
\ldots &&\\
y_l\left(x_1,\ldots,x_n,\xi_1,\ldots,\xi_m\right)&=& f^l_{I}\left(x_1,\ldots,x_n\right) \xi^I\\
\eta_1\left(x_1,\ldots,x_n,\xi_1,\ldots,\xi_m\right)&=& f^{l+1}_I\left(x_1,\ldots,x_n\right)\xi^I\\
\ldots &&\\
\eta_k\left(x_1,\ldots,x_n,\xi_1,\ldots,\xi_m\right) &=& f^{l+k}_I\left(x_1,\ldots,x_n\right) \xi^I.
\end{eqnarray*}
Most of differential geometry carries over with little modification using these local coordinates (with careful attention to signs!), with the remarkable exception of integration theory.
\end{remark}

It is well-known that supermanifolds are affine, in the sense that the functor
$$\SMfd \to \SCom^{op}$$ sending $\sM$ to $\Ci\left(\sM\right)$ is fully faithful. This follows readily from the existence of partitions of unity.

Vector fields on supermanifolds can be defined via derivations.

\begin{definition}
An \textbf{even vector field} on a supermanifold $\sM$ is a derivation $X$ of $\O_{\sM}$ with values in the $\O_{\sM}$-module $\O_{\sM},$ and we write $|X|=0.$ An \textbf{odd vector field} is a derivation $X$ of $\O_{\sM}$ with values in the $\O_{\sM}$-module $\Pi \O_{\sM},$ and we write $|X|=1.$
\end{definition}

Unwinding this, for an odd vector field $X$ and $f$ an element of $\Ci\left(\sM\right),$ $X\left(f\right) \in \Ci\left(\sM\right)$ is an element of the opposite parity of $f,$ i.e. if $f$ is even, $X\left(f\right)$ is odd etc. Moreover, one has
$$X\left(fg\right)=X\left(f\right)g+\left(-1\right)^{|X|}fX\left(g\right).$$

A vector field on $\R^{n|m}$ takes the form
$$f^i \frac{\partial}{\partial x_i} + g^j \frac{\partial}{\partial \xi_j},$$ where $\frac{\partial}{\partial \xi_j}$ is the \emph{left partial derivative}, e.g. 
\begin{eqnarray*}
\frac{\partial}{\partial \xi_1}\left(\xi_2\xi_1\right) &=& -\xi_2\frac{\partial}{\partial \xi_1}\xi_1\\
&=& -\xi_2.
\end{eqnarray*}
If the vector field is even, then the $f^i$ are even functions, and the $g^j$ odd, and vice versa if the vector field is odd. Vector fields on $\sM$ form a $\Z_2$-graded Lie algebra, (i.e. a Lie algebra object in $\SVect$), with Lie bracket defined as
$\left[X,Y\right]=XY-\left(-1\right)^{|X||Y|}YX.$

Vector bundles for supermanifolds are defined in the obvious way, namely, as a fiber bundle with typical fiber $\V$ for a $\Z_2$-graded vector space, whose transition functions are linear. Just as any natural (multi-)functor on finite dimensional real vector spaces extends to vector bundles (e.g. direct sum, tensor product etc.), the parity inversion endofunctor on $\SVect$ extends to vector bundles on supermanifolds, that is, given a vector bundle $E \to \sM$ with typical fiber $\V,$ one can construct in a canonical way a vector bundle $\Pi E \to \sM$ with typical fiber $\Pi \V.$ 

To talk about sections of a vector bundle $\mathcal{V} \to \sM$ over a supermanifold, one must take some care. For example, one would want that for a $\Z_2$-graded vector space $\V,$ sections of the trivial bundle $\sM \to \V$ should be $\V.$ But this doesn't make any sense, since $\V$ is a $\Z_2$-graded vector space, not a set. This can be rectified with the following definition:

\begin{definition}
Given a vector bundle $\mathcal{V} \to \sM$ over a supermanifold, the $\Z_2$-graded module $\underline{\Gamma}_{\sM}\left(\mathcal{V}\right)$ is defined to be the $\Ci\left(\sM\right)$-module in $\Z_2$-graded vector spaces with even component
$$\underline{\Gamma}_{\sM}\left(\mathcal{V}\right)_0=\Gamma_{\sM}\left(\mathcal{V}\right)$$
and odd component
$$\underline{\Gamma}_{\sM}\left(\mathcal{V}\right)_1=\Gamma_{\sM}\left(\Pi\mathcal{V}\right).$$
\end{definition}

The $\Z_2$-graded module of sections of a vector bundle form a locally free sheaf of $\O_\sM$-modules, and conversely, any locally free sheaf of $\O_\sM$-modules arises in this way. For example, one defines the tangent bundle $T \mathcal{M}$ of a supermanifold in the obvious way, and it corresponds to the locally free sheaf whose even component is even vector fields, and odd component is odd vector fields. That is to say, an odd vector field is actually a section of $\Pi T \mathcal{M},$ not $T\M.$

The construction of parity reversing the fibers of a vector bundle can be carried out in the special case where $\sM$ and $\V$ are purely even, i.e. when $E \to M$ is a ordinary vector bundle of smooth manifolds, in the classical sense. If $M$ is $n$-dimensional and $E$ is rank $k,$ then $\Pi E$ is locally isomorphic to $\R^{n|k}.$ Explicitly:
$$\Pi E=\left(M,\O_{\Pi_E}\right)$$ with
$$\O_{\Pi_E}=\Gamma\left(\Lambda E\right),$$ where for sections $\lambda$ and $\kappa$ we define $\lambda \cdot \kappa:=\lambda \wedge \kappa.$ A supermanifold of the form $\Pi E$ for $E$ a classical vector bundle is called \textbf{split}.

There is the following famous result of Batchelor:

\begin{theorem}[Batchelor's Theorem]\cite{batch}
Every smooth supermanifold (non-canonically) splits.
\end{theorem}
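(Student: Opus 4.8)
The plan is to show that the $\mathcal{I}_{\sM}$-adic filtration of the structure sheaf splits as a filtration of sheaves of super-commutative algebras. Write $\mathcal{I} := \mathcal{I}_{\sM}$ for the ideal sheaf generated by odd elements and consider the decreasing filtration $\O_{\sM} = \mathcal{I}^0 \supseteq \mathcal{I} \supseteq \mathcal{I}^2 \supseteq \cdots$. Local triviality (every $\sM$ is locally diffeomorphic to $\R^{p|q}$) shows that $\mathcal{I}^{q+1}$ vanishes locally, so the filtration is locally finite, and that $\mathcal{I}/\mathcal{I}^2$ is a locally free sheaf of $\O_M$-modules of finite rank; hence it is the sheaf of sections of an ordinary vector bundle $E \to M$. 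The same local computation identifies the associated graded algebra $\mathrm{gr}\,\O_{\sM} = \bigoplus_k \mathcal{I}^k/\mathcal{I}^{k+1}$ (an algebra over the degree-zero part $\O_{\sM}/\mathcal{I} = \O_M$) with the free graded-commutative $\O_M$-algebra on $\mathcal{I}/\mathcal{I}^2$, namely $\Gamma_M\left(\Lambda E\right) = \O_{\Pi E}$. Thus it suffices to produce an isomorphism $\O_{\Pi E} = \mathrm{gr}\,\O_{\sM} \stackrel{\sim}{\longrightarrow} \O_{\sM}$ of sheaves of super-commutative algebras inducing the identity on $\mathrm{gr}$.

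The main step is the splitting itself, which I would obtain by a local-to-global argument powered by partitions of unity. Let $\mathcal{T}$ be the sheaf on $M$ whose sections over $U$ are the filtration-compatible isomorphisms of sheaves of super-commutative algebras $\O_{\Pi E}|_U \stackrel{\sim}{\to} \O_{\sM}|_U$ that induce the identity on $\mathrm{gr}$. Local triviality guarantees that $\mathcal{T}$ is locally nonempty, so $\mathcal{T}$ is a torsor under the sheaf of groups $\mathcal{N}$ of filtration-preserving automorphisms of $\O_{\Pi E}$ that are the identity on $\mathrm{gr}$. A global splitting therefore exists provided the class of this torsor in $H^1(M;\mathcal{N})$ is trivial. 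Now $\mathcal{N}$ is a sheaf of unipotent groups: an element sends each odd generator $e$ of $\mathcal{I}/\mathcal{I}^2$ to $e$ plus terms of higher (odd) exterior degree, so $\mathcal{N}$ is filtered with successive graded quotients $\mathcal{N}_j$ isomorphic to the sheaves of sections of the bundles $\mathrm{Hom}\left(E,\Lambda^{2j+1}E\right)$ for $j \geq 1$. These graded pieces are fine sheaves of $\O_M$-modules, hence acyclic by the existence of partitions of unity, so $H^1(M;\mathcal{N}_j) = 0$ for all $j$. Inducting up the finite unipotent filtration via the exact sequences of pointed cohomology sets forces $H^1(M;\mathcal{N})$ to be trivial, producing the desired global isomorphism and hence the splitting $\sM \cong \Pi E$.

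The hard part is precisely this nonabelian cohomology vanishing: because $\mathcal{N}$ is only a sheaf of unipotent groups rather than of modules, one cannot average local splittings with a single partition of unity, and the filtration of $\mathcal{N}$ is what reduces the obstruction to the vanishing of ordinary $H^1$ of fine sheaves of $\O_M$-modules, treated one filtration degree at a time (finiteness of the odd dimension $q$ makes this a finite induction). Finally, the non-canonicity asserted in the statement is intrinsic to the method: the splitting depends on the chosen local trivializations and partition of unity, and two such splittings differ by a global section of the nontrivial group sheaf $\mathcal{N}$.
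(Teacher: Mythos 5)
The paper does not prove this statement: it imports Batchelor's theorem by citation to \cite{batch}, so there is no internal proof to compare against, and your proposal has to be judged against the classical argument in the literature. In outline, yours is exactly that argument: filter $\O_{\sM}$ by powers of the odd ideal $\mathcal{I}$, identify $\mathrm{gr}\,\O_{\sM}$ with $\Gamma_M\left(\Lambda E\right)$ where $E$ is the bundle with sheaf of sections $\mathcal{I}/\mathcal{I}^2$, observe that local splittings form a torsor under a unipotent sheaf of groups $\mathcal{N}$, and kill the obstruction in nonabelian $H^1$ by a finite induction along the unipotent filtration, using that the abelian graded pieces are fine sheaves. This is the right strategy, and your closing remarks about where partitions of unity enter and why the result is non-canonical are accurate.

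There is, however, one genuine error in the execution. Your identification of the graded pieces of $\mathcal{N}$ as $\mathrm{Hom}\left(E,\Lambda^{2j+1}E\right)$ tacitly assumes that an automorphism of $\O_{\Pi E}$ inducing the identity on $\mathrm{gr}$ is $\O_M$-linear, i.e.\ fixes the degree-zero functions and is therefore determined by what it does to the odd generators. That is false, and it is false for precisely the reason Batchelor's theorem has content: a supermanifold carries no canonical projection onto its core, and correspondingly the transition isomorphism $\psi^{-1}\circ\varphi$ between two local splittings will in general send a base function $f$ to $f$ plus even nilpotent corrections lying in $\mathcal{I}^2$. So if $\mathcal{N}$ is taken to be the $\O_M$-linear automorphisms, as your description of its graded pieces forces, then $\mathcal{T}$ is \emph{not} an $\mathcal{N}$-torsor and the argument breaks at that step. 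The repair is standard: let $\mathcal{N}$ consist of \emph{all} filtration-preserving automorphisms of the sheaf of super-commutative $\R$-algebras $\O_{\Pi E}$ inducing the identity on $\mathrm{gr}$. The successive quotients of its unipotent filtration are then the sheaves of derivations raising exterior degree by $2j$, namely $\Gamma\left(\Lambda^{2j}E \otimes TM\right)\oplus\Gamma\left(E^*\otimes\Lambda^{2j+1}E\right)$ for $j\ge 1$; the first summand, which you omitted, records exactly the failure of such automorphisms to fix the base. These corrected graded pieces are still fine sheaves of $\O_M$-modules, so their $H^1$ vanishes and your induction through the exact sequences of pointed cohomology sets goes through verbatim, the filtration terminating because the odd dimension is finite. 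With that correction your proof is complete and is essentially Batchelor's original one.
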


This splitting is non-canonical. However, there is a \emph{canonical} splitting as a fiber bundle with purely odd fiber, as we will explain. Let $\sM$ be supermanifold and let $U \subseteq |\sM|$ be an open submanifold of its core. Then there is a pullback diagram of the form
$$\xymatrix{\left(U,\O_{\sM}|_U\right) \ar[r] \ar[d]& \sM \ar[d]\\
U \ar[r] & |\sM|.}$$
By definition of a supermanifold, there exists an open cover $\left(U_\alpha \hookrightarrow |\sM|\right)_\alpha$ of $|\sM|$ by open submanifolds and isomorphisms of supermanifolds 
$$\varphi_\alpha:\left(U_\alpha,\O_{\sM}|_U\right) \stackrel{\sim}{\longrightarrow} \V,$$ for some $\Z_2$-graded vector space $\V.$ Since $\V=\left(V_0,\O_{\V}\right),$ this means that $|\varphi_\alpha|:U_\alpha \stackrel{\sim}{\longrightarrow} V_0$ is a diffeomorphism. Moreover, notice that $\V\cong V_0 \oplus \Pi V_1,$ from which it follows that $\V \cong V_0 \times \Pi V_1$ as a supermanifold. The canonical projection $\V=V_0\times \Pi V_1 \to V_0$ is the unit $\pi_\V$ exhibiting $\Mfd$ as a reflective subcategory, hence, by naturality, the following diagram commutes:
$$\xymatrix{\left(U_\alpha,\O_{\sM}|_U\right) \ar[d]_-{\pi_\sM} \ar[r]^-{\sim} & U_\alpha \times \Pi V_1. \ar[ld]^{pr}\\
U_\alpha &}$$
In summary, we have proven the following:
\begin{proposition}\label{eq:fiberbun}
For every supermanifold $\sM$ locally modeled on a $\Z_2$-graded vector space $\V,$ the reflector $\pi_\sM:\sM \to |\sM|$ exhibits $\sM$ as a fiber bundle over $|\sM|$ with fiber $\Pi V_1.$
\end{proposition}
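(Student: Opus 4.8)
The plan is to verify directly that $\pi_\sM$ satisfies the defining local-triviality property of a fiber bundle (in the category of supermanifolds) with fiber $\Pi V_1$, reading off the trivializations from the chart data already assembled above. First I would record a purely topological observation: both $\sM$ and $|\sM|$ have the same underlying space $M$, and $\pi_\sM$ is induced by the quotient $\O_\sM \twoheadrightarrow \O_\sM/\mathcal{I}_{\sM}$, hence is the identity on $M$. Consequently, for any open $U \subseteq |\sM|$ the preimage $\pi_\sM^{-1}\left(U\right)$ is exactly the open sub-supermanifold $\left(U,\O_\sM|_U\right)$, so restricting along the cover $\left(U_\alpha\right)$ is literally forming preimages under $\pi_\sM$.

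Next I would produce the local trivialization over each $U_\alpha$. The chart isomorphism $\varphi_\alpha:\left(U_\alpha,\O_\sM|_{U_\alpha}\right)\stackrel{\sim}{\longrightarrow}\V$, composed with the splitting $\V\cong V_0\times\Pi V_1$ and the core diffeomorphism $|\varphi_\alpha|^{-1}:V_0\stackrel{\sim}{\longrightarrow}U_\alpha$, yields an isomorphism $\pi_\sM^{-1}\left(U_\alpha\right)\stackrel{\sim}{\longrightarrow}U_\alpha\times\Pi V_1$. The essential point is that this isomorphism lies over the base, i.e.\ intertwines $\pi_\sM$ with the projection $pr$; this is precisely the commuting triangle displayed above, which holds by naturality of the unit of the reflective inclusion $\Mfd\hookrightarrow\SMfd$ applied to $\varphi_\alpha$, together with the identification of $\pi_\V$ with $pr$. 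Since $\sM$ is by hypothesis locally modeled on the fixed $\V$, the odd part $V_1$, and hence the fiber $\Pi V_1$, is the same across all charts, so these trivializations exhibit $\pi_\sM$ as a fiber bundle with fiber $\Pi V_1$.

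The step requiring the most care is making precise that the trivializations are genuinely fiberwise over $|\sM|$, and here I would be careful to stress what is and is \emph{not} being claimed. One does not assert that the transition maps $\varphi_\beta\circ\varphi_\alpha^{-1}$ preserve the product decomposition $V_0\times\Pi V_1$; by the non-canonicity in Batchelor's theorem they will in general mix bosonic and fermionic coordinates (through nilpotent corrections), so there is no global reduction of structure group to $GL\left(V_1\right)$. What the naturality argument \emph{does} yield is that each transition map $\psi$ covers the identity on $U_\alpha\cap U_\beta$: writing $t_\alpha,t_\beta$ for the two trivializations, both satisfy $pr\circ t_\alpha=\pi_\sM=pr\circ t_\beta$, so $pr\circ\psi=pr$ for $\psi=t_\beta\circ t_\alpha^{-1}$; applying the core functor and using that the core of $\Pi V_1$ is a single point (so that $|pr|$ is the identity of $U_\alpha\cap U_\beta$), one cancels to obtain $|\psi|=\mathrm{id}$. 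This is exactly the local triviality over the base demanded of a fiber bundle, and nothing further is required, so the proposition follows. The only residual bookkeeping, which I expect to be routine, is confirming that the fiber is independent of $\alpha$ (immediate from the global hypothesis that $\sM$ is modeled on the single $\Z_2$-graded vector space $\V$).
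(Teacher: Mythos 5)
Your proof is correct and follows essentially the same route as the paper: restrict to the chart cover of the core, identify $\V \cong V_0 \times \Pi V_1$ as supermanifolds, and use naturality of the unit of the reflective adjunction $\Mfd \hookrightarrow \SMfd$ applied to $\varphi_\alpha$ to see that the resulting isomorphism $\pi_\sM^{-1}\left(U_\alpha\right) \stackrel{\sim}{\longrightarrow} U_\alpha \times \Pi V_1$ commutes with the projections to $U_\alpha$. Your additional verification that the transition maps cover the identity (while not preserving the product splitting) is a correct elaboration of the same point the paper makes afterwards when rephrasing Batchelor's theorem as a reduction of the structure group.
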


In light of this, Batchelor's theorem can be rephrased as the statement that the structure group of the fiber bundle $\pi_\sM:\sM \to |\sM|$ can be (non-canonically) reduced to $\mathbf{GL}\left(\Pi V_1\right).$

\subsection{$\N$-graded geometry}
\begin{definition}
The category of \textbf{$\Z$-graded vector spaces} over $\R$ is $\Fun\left(\Z,\Vect\right),$ hence a $\Z$-graded vector space $\V$ is a collection $\left(V_n\right)_{n \in \Z},$ of vector spaces. We will sometimes also write $\V$ as $V_\bullet.$ We say that an element $v$ of $V_n$ is \textbf{even} (respectively \textbf{odd}) if $n$ is, and we write $|v|=n \in \Z.$ We define tensor products in the usual way. To boot
$$\left(\V \otimes \W\right)_n= \underset{l+k=n} \bigoplus V_l \otimes W_k.$$ The Koszul braiding is defined analogously to the $\Z_2$-graded case, producing a symmetric monoidal category $\ZVect.$
\end{definition}

\begin{definition}
Let $n \in \Z.$ Denote by $\R\left[n\right]$ the $\Z$-graded vector space with $\R$ in degree \textbf{-}$n$ and zero elsewhere. We define the \textbf{suspension} $$\Sigma:=\R\left[1\right] \otimes \left(\blank\right)$$ and \textbf{loop} $$\Omega:=\R\left[-1\right] \otimes \left(\blank\right)\Sigma^{-1}$$ functors. For $n \in \Z,$ and $V_\bullet$ a $\Z$-graded vector space, we define $$V\left[n\right]:=\Sigma^n\left(\V\right),$$ Concretely, we have $$\V\left[n\right]_i=V_{n+i}.$$ Thus $\V\left[n\right]$ is $\V$ shifted to the left by $n$-degrees.
\end{definition}

$\ZVect$ is symmetric monoidal closed with
$$\underline{\Hom}\left(\V,\W\right)_n=\Hom\left(\V,\W\left[n\right]\right).$$
We define the \textbf{dual} of a $\Z$-graded vector space $\V$ as
$$\V^*:=\underline{\Hom}\left(\V,\R\right).$$ Concretely, one has the formula
$$\V^*_n=\left(V_{-n}\right)^*.$$

There is a canonical functor $$\blank/2:\ZVect \to \SVect$$ which sends $\V$ to $\V/2:=\left(\bigoplus \limits_{n\in \Z} V_{2n},\bigoplus \limits_{n\in \Z} V_{2n+1}\right),$ i.e. it reduces the grading modulo $2.$

\begin{definition}
The category of \textbf{$\Z$-graded commutative algebras} over $\R$ is the category of commutative monoids in the symmetric monoidal category $\ZVect.$ They are a $\Z$-graded algebras with the property that $$a\cdot b=\left(-1\right)^{|a||b|}b \cdot a.$$ We define the category thereof as $\gCom$. There is an obvious forgetful functor to $\Z_2$-commutative algebras by reducing the $\Z$-grading modulo $2.$
\end{definition}

\begin{remark}
A $\Z$-graded commutative algebra $\A$ together with a derivation $d$ of $\A$ of degree $1$ (i.e. a derivation with values in $\A\left[1\right]$) whose graded commutator $\left[d,d\right]=2d^2=0,$ is the same as a commutative differential graded algebra over $\R,$ where $d$ is the differential. Here we are using \emph{cohomological grading conventions}.
\end{remark}

There is also a forgetful functor $\gCom \to \ZVect,$ which admits a left adjoint which we abusively denote again by $\Sym.$ It is the usual construction of the \textbf{graded symmetric algebra} of a graded vector space. In particular, for $V$ a vector space in degree $0,$ $$\Sym\left(V\left[-1\right]\right)=\Lambda\left(V\right),$$ where now $\Lambda\left(V\right)$ is equipped with its standard $\N$-grading.

We are now about to define what a \emph{graded manifold} is. However, we will restrict to the case of those graded manifolds modeled on $\Z$-graded vector spaces that are concentrated in non-negative degrees. The approach works equally well for $\Z$-graded manifolds in non-positive degrees, but is ill-suited for the study (or even definition) of general $\Z$-graded manifolds.

\begin{definition}
Let $\V=\left(V_0,V_1,\ldots\right)$ be an $\N$-graded vector space (regarded as a $\Z$-graded one) over $\R$ whose total dimension is finite, i.e. $\bigoplus \limits_{n=0}^{\i} V_n$ is finite dimensional. We define $\left(V_0,\O_{\V}\right)$ to be the ringed space whose structure sheaf assigns an open set $U$ of $V_0$ the $\Z$-graded commutative algebra
$$\O_{\V}\left(U\right)=\Ci\left(U\right) \underset{\Sym\left(\left(V_0\right)^*\right)} \otimes \Sym\left(\V^*\right).$$ (Notice that since $\V$ is concentrated in non-negative degrees, $\V^*$ is concentrated in non-positive degrees, as is $\O_{\V}.$)

A \textbf{graded domain} is a topological space $X$ together with a sheaf $\O_X$ of $\Z$-graded commutative $\R$-algebras (concentrated in non-positive degrees), such that is isomorphic as a $\Z$-graded commutative ringed space to $\left(V_0,\O_{\V}\right)$ for $\V=\left(V_0,V_1,\ldots\right)$ an $\N$-graded vector space of total finite dimension.
\end{definition}

\begin{definition}
A \textbf{graded manifold} $\M=\left(M,\O_{\M}\right)$ is a second countable Hausdorff space $M$ with a sheaf of $\Z$-graded commutative $\R$-algebras $\O_{\M},$ which is locally isomorphic to a graded domain. We write $\V=\left(V_0,\O_{\V}\right),$ meaning we regard $\left(V_0,\O_{\V}\right)$ as the geometric incarnation of $\V$ as a graded manifold. We denote the $\Z$-graded commutative $\R$-algebra $\Gamma_{M}\left(\O_{\M}\right)$ by $\Ci\left(\M\right).$ Smooth maps of graded manifolds are morphisms of ringed spaces.\footnote{There is no need to worry about local rings, as the points of a graded manifold correspond to maximal ideals with residue field $\R.$}

Let $\I_0$ be the ideal sheaf locally generated by elements of $\O_\M$ of negative degree. Then $|\M|:=\left(M,\O_\M/\I_0\right)$ is a smooth manifold, called the \textbf{core} of $\M.$ Notice that there is a canonical smooth map $\M \to |\M|.$
\end{definition}

\begin{remark}
Notice that there is a canonical smooth map $\M \to |\M|.$ In fact, this is the unit of an adjunction exhibiting the category $\Mfd$ as a reflective subcategory of graded manifolds, with the core-functor $|\blank|$ as a left adjoint.

In particular, any morphism $f:\M \to \cN$ of graded manifolds fits into a commutative square
$$\xymatrix{\M \ar[r]^-{f} \ar[d]_-{\pi_\M} & \cN \ar[d]^-{\pi_\cN}\\
|\M| \ar[r]_-{|f|} & |\cN|.}$$
\end{remark}

\begin{remark}
Another, perhaps simpler description of the core $|\M|$ of a graded manifold is that it is $\left(M,\left(\O_{\M}\right)_0\right).$ This is because no elements of negative degree can create any new elements of degree $0.$ This phenomenon breaks down when one considers unbounded $\Z$-graded manifolds. Notice that this is also in contrast to the $\Z_2$-graded case of supermanifolds. For example, if $\left(\eta_1,\eta_2\right)$ are local coordinates on $\R^{0|2},$ $f\left(\eta_1,\eta_2\right)=\eta_1\eta_2$ is a non-trivial even smooth function, but the core of $\R^{0|1}$ is just a single point $\R^0.$
\end{remark}

Notice that, in particular, for an $\N$-graded vector space $\V,$ $\left(V_0,\O_{\V}\right)$ is a graded manifold. Hence, there is a canonical functor
$\mathsf{Vect}_{\N} \to \Mfd_{\N}.$ Morphisms in the image of this functor are called \textbf{linear}. From here, it is straightforward to (geometrically) define vector bundles over a graded manifold $\M$, and their category is equivalent to that of locally free sheaves of $\O_{\M}$-modules concentrated in non-positive degrees. Just as the parity reversal functor $\Pi$ in the $\Z_2$-graded setting extends fiberwise to vector bundles over supermanifolds, the loop functor $\Omega=\left(\blank\right)\left[-1\right]$ extends to vector bundles over graded manifolds. \footnote{If we took the time to define genuine $\Z$-graded manifolds, we could also extend the suspension functor.} If $\M$ is an ordinary manifold $M,$ then there is a natural one-to-one correspondence between vector bundles over $M$ in the category $\Mfd_{\N}$ of graded manifolds, and \emph{graded vector bundles} over $M,$ that is, a sequence $V_0,V_2,V_3,\ldots$ of (classical) vector bundles over $M.$ To such a sequence we can associate the vector bundle
$$\underset{n \in \N}\bigoplus V_n\left[-n\right]$$ over $M.$ (This places $V_n$ in degree $n$.)

There is an analogue of Batchelor's theorem in the graded-context:

\begin{theorem}\label{thm:BatchN}\cite[Theorem 2.1]{ZBatch}
Every graded manifold $\M$ is (non-canonically) isomorphic to the total space of a vector bundle $\V$ over its core $|\M|,$ with $\V_0=0.$
\end{theorem}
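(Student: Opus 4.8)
The plan is to run the classical proof of Batchelor's theorem, but in the $\N$-graded setting and phrased so that the only genuinely hard input is the existence of $\Ci$-partitions of unity. Write $M=|\M|$ for the core and recall that $\left(\O_\M\right)_0=\Ci_M$, so that $\O_\M/\I_0=\Ci_M$. The first step is to identify the canonical ``bundle of generators.'' Set $\mathcal{E}:=\I_0/\I_0^2$. Working in a graded domain chart $\O_\M|_U\cong\Ci(U)\otimes\Sym\left(\V^*\right)$, one sees that $\mathcal{E}$ is a locally free sheaf of $\Ci_M$-modules, concentrated in strictly negative degrees, whose degree $-n$ piece is free of rank $\dim V_n$. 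Thus $\mathcal{E}$ is precisely the sheaf of sections $\Gamma\left(\V^*\right)$ of the dual of a graded vector bundle $\V=\bigoplus_{n\ge 1}V_n[-n]$ over $M$ with $\V_0=0$, and $\Sym_{\Ci_M}\left(\mathcal{E}\right)$ is, by construction, the structure sheaf of the total space of $\V$. The theorem therefore reduces to producing a global isomorphism of sheaves of graded-commutative $\Ci_M$-algebras $\Sym_{\Ci_M}\left(\mathcal{E}\right)\xrightarrow{\sim}\O_\M$.

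Next I would construct a candidate for this isomorphism by lifting generators. Consider the canonical surjection of sheaves of $\Ci_M$-modules $\I_0\twoheadrightarrow\I_0/\I_0^2=\mathcal{E}$. Because $M$ admits partitions of unity, every locally free sheaf of $\Ci_M$-modules of finite rank is a projective object in the category of $\Ci_M$-modules; equivalently, one may glue local sections of this surjection against a partition of unity, and since a convex combination of $\Ci_M$-linear sections is again a $\Ci_M$-linear section, this produces a global, degree-preserving $\Ci_M$-linear splitting $s:\mathcal{E}\to\I_0\subseteq\O_\M$. The point of splitting only a map of \emph{modules} (rather than of algebras) is that the universal property of the free graded-commutative algebra then lets us extend $s$ uniquely to a map of sheaves of graded $\Ci_M$-algebras $\Phi:\Sym_{\Ci_M}\left(\mathcal{E}\right)\to\O_\M$.

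Finally I would check that $\Phi$ is an isomorphism, which may be verified locally. Filter $\O_\M$ by the powers $\I_0^k$; since every generator of $\I_0$ has degree $\le -1$, each fixed internal degree of $\O_\M$ meets only finitely many steps of this filtration, so the filtration is complete and degreewise finite. By construction $\Phi$ respects this filtration and induces the identity on $\mathcal{E}=\mathrm{gr}^1$; being an algebra map out of a symmetric algebra, it therefore induces the canonical comparison map $\Sym_{\Ci_M}\left(\mathcal{E}\right)\to\mathrm{gr}_{\I_0}\O_\M$ on associated graded, which the local normal form of a graded domain identifies as an isomorphism. A filtered algebra map inducing an isomorphism on associated graded with respect to a degreewise-finite, complete filtration is itself an isomorphism, so $\Phi$ is an isomorphism of sheaves and $\M$ is isomorphic to the total space of $\V$.

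The main obstacle is the global splitting $s$ of the module surjection $\I_0\twoheadrightarrow\mathcal{E}$: this is exactly the step that fails for the analogous statement in algebraic or holomorphic geometry, and the entire smooth content of Batchelor's theorem is concentrated here, in the projectivity of locally free $\Ci_M$-modules afforded by partitions of unity. The non-canonicity in the statement likewise enters precisely through the choice of $s$. Once that splitting is in hand, the passage to an algebra map and the verification that it is an isomorphism are purely formal consequences of the universal property of $\Sym$ and the local normal form of a graded manifold.
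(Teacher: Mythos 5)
The paper does not prove Theorem \ref{thm:BatchN}: it is imported from \cite[Theorem 2.1]{ZBatch}, so there is no internal argument to compare yours against. Judged on its own terms, your proof is correct, and it is the standard argument for Batchelor-type statements: take $\mathcal{E}=\I_0/\I_0^2$, which the local normal form of a graded domain exhibits as a locally free graded $\Ci_M$-module concentrated in degrees $\le -1$; split the $\Ci_M$-linear surjection $\I_0 \to \mathcal{E}$ globally by gluing chart-wise splittings with a partition of unity (a convex combination of $\Ci_M$-linear sections is again a section); extend the splitting to an algebra map $\Phi:\Sym_{\Ci_M}\left(\mathcal{E}\right)\to\O_\M$ by the universal property of $\Sym$; and conclude that $\Phi$ is an isomorphism by the $\I_0$-adic filtration, which in each internal degree $-n$ terminates after $n$ steps because $\I_0$ is generated in degrees $\le -1$, so that the associated-graded comparison suffices. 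Two points are worth making explicit in a final write-up, though neither is a gap. First, the entire argument is $\Ci_M$-linear only because $\left(\O_\M\right)_0=\Ci_M$, a feature special to the non-positively graded setting which the paper records in a remark after the definition of the core; in the $\Z_2$-graded case this identity fails, which is exactly why the classical supermanifold version of Batchelor's theorem is harder, and your proof correctly exploits the simplification. Second, your identification of $\Sym_{\Ci_M}\left(\mathcal{E}\right)$ with the structure sheaf of the total space of $\V$ is valid precisely because $\V_0=0$: all fiber coordinates then sit in nonzero degrees, so functions on the total space are automatically polynomial along the fibers, whereas for a bundle with nonzero degree-zero part this identification is false. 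Both points are places where the $\N$-graded hypotheses do real work, and flagging them would make the proof self-contained.
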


\begin{remark}
The above theorem can be reformulated to be in closer analogy with the case of supermanifolds. It is equivalent to the statement that every graded manifold $\M$ is (non-canonically) isomorphic to the total space $\V\left[-1\right],$ where $\V$ arises from graded vector bundle $V_\bullet$ over $|\M|.$ Notice that the shift places $V_0$ in degree $1,$ so $\V\left[-1\right]_0=0.$
\end{remark}

In complete analogy with the proof of Proposition \ref{eq:fiberbun}, we have the following:

\begin{proposition}
Let $\M$ be a graded manifold locally modeled on an $\N$-graded vector space $\V=\left(V_0,V_1,\ldots\right).$ Then the reflector $\pi_\M:\M \to |\M|$ exhibits $\M$ as a fiber bundle over $|\M|$ with fiber $\V'=\left(0,V_1,V_2,\ldots\right).$
\end{proposition}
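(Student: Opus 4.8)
The plan is to follow, essentially verbatim, the argument used for Proposition~\ref{eq:fiberbun} in the $\Z_2$-graded setting, replacing the parity reversal $\Pi$ with the operation of discarding the degree-zero summand. First I would fix a point of $|\M|$ and, invoking the definition of a graded manifold, choose an open neighborhood $U_\alpha \subseteq |\M|$ together with an isomorphism of graded manifolds
$$\varphi_\alpha:\left(U_\alpha,\O_\M|_{U_\alpha}\right) \stackrel{\sim}{\longrightarrow} \V,$$
where $\V=\left(V_0,V_1,\ldots\right)$ is the local model. Applying the core functor $|\blank|$ (which is left adjoint to the inclusion $\Mfd \hookrightarrow \Mfd_\N$) yields a diffeomorphism $|\varphi_\alpha|:U_\alpha \stackrel{\sim}{\longrightarrow} V_0$, since the core of $\V=\left(V_0,\O_\V\right)$ is exactly $V_0$.

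The key observation is that the $\N$-graded vector space decomposes as $\V \cong V_0 \oplus \V'$ with $\V'=\left(0,V_1,V_2,\ldots\right)$; because $\Sym$ carries direct sums to tensor products, this induces an isomorphism of graded manifolds $\V \cong V_0 \times \V'$, under which the reflector $\pi_\V:\V \to |\V|=V_0$ is precisely the projection onto the first factor. I would then invoke naturality of the reflection unit: since $\pi_{\left(\blank\right)}$ is natural in the graded manifold, the isomorphism $\varphi_\alpha$ intertwines the restriction of $\pi_\M$ with $\pi_\V$, so the triangle
$$\xymatrix{\left(U_\alpha,\O_\M|_{U_\alpha}\right) \ar[d]_-{\pi_\M} \ar[r]^-{\sim} & U_\alpha \times \V' \ar[ld]^{pr}\\ U_\alpha &}$$
commutes, exhibiting $\pi_\M$ locally over $U_\alpha$ as the trivial projection with fiber $\V'$.

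Finally I would assemble the local trivializations: as the $U_\alpha$ cover $|\M|$ and over each one $\pi_\M$ is isomorphic over $U_\alpha$ to $U_\alpha \times \V' \to U_\alpha$, the reflector $\pi_\M$ is a locally trivial fiber bundle with fiber $\V'$. I expect no serious obstacle, as the argument is formally identical to the supermanifold case; the only points requiring mild care are verifying that $\V \cong V_0 \times \V'$ is an isomorphism of graded manifolds (not merely of underlying graded vector spaces or of structure sheaves) and that the compatibility of the local trivializations reduces, via the naturality already invoked, to nothing more than local triviality—so no cocycle condition must be checked. The genuine input is once again the definition of a graded manifold as an object locally modeled on $\V$.
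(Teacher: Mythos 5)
Your proposal is correct and follows essentially the same route as the paper: the paper proves the $\Z_2$-graded case (Proposition \ref{eq:fiberbun}) by choosing local isomorphisms to the model $\V$, splitting $\V \cong V_0 \times \Pi V_1$, and invoking naturality of the reflection unit, and then states the graded-manifold version with the remark that it holds ``in complete analogy'' — which is precisely the translation you carried out, with $\V'$ in place of $\Pi V_1$. Your added justifications (that $\Sym$ turns the direct sum $V_0 \oplus \V'$ into a product of graded manifolds, and that only local triviality is needed) are correct and fill in details the paper leaves implicit.
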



Suppose that $\V$ and $\W$ are $\N$-graded (total) finite dimensional vector space over $\R.$  Let's get a handle on what smooth maps $f:\V \to \W$ of graded manifolds are. We need the data of a smooth map $f_0:V_0 \to W_0$ of ordinary manifolds, and a morphism $\alpha:f_0^*\O_{\W} \to \O_{\V}$ of sheaves. Notice that any morphism $f:\V \to \W$ makes the following diagram commute
$$\xymatrix{\V \ar@{-->}[rr]^-{f} \ar[rd]_-{f_0 \circ \pi} & &\W. \ar[ld]\\
& W_0}$$ Let $\V':=\left(0,V_1,V_2,...\right)$ and define $\W'$ analogously.
It follows that $\alpha$ is the same as for each open subset $U$ of $W_0,$ having a commutative diagram
natural in $U.$ Such a commuting triangle is the same as a homomorphism $$\Sym\left(\left(\W'\right)^*\right) \to \left(f_0\right)_*\O_\V\left(U\right).$$ Unwinding the definitions, the need for naturality implies $\alpha$ is the same data as a morphism of sheaves or $\R$-algebras $\Delta\left(\Sym\left(\W'\right)^*\right) \to \left(f_0\right)_*\O_\V,$ where $\Delta\left(\Sym\left(\W'\right)^*\right)$ is the constant sheaf. By adjunction
and the fact that pushforward commutes with global sections, this is the same as a homomorphism of $\R$-algebras of the form
$$\Sym\left(\left(\W'\right)\right) \to \Gamma\left(\O_\V\right).$$ Since $\Sym$ is left adjoint, this is finally the same as a linear map of $\Z$-graded vector spaces $$\W' \to \Ci\left(\V\right)=\Ci\left(V_0\right)\underset{\R}\otimes \Sym\left(\left(\V'\right)^*\right).$$ Finally, if $\dim\left(W_k\right)=n_k,$ this is in bijection with the choice for each $k$ of $n_k$ elements of $\Ci\left(V_0\right)\underset{\R}\otimes \Sym\left(\left(\V'\right)^*\right)$ of degree $-k.$ Choosing a basis for each $V_i,$ such an element of degree $-k$ is a homogeneous polynomial expression in the dual basis vectors with coefficients in $\Ci\left(V_0\right).$

In analogy with the case of supermanifolds, given a vector bundle $\mathcal{V} \to \M$ over a graded manifold, we can associate a graded $\O_{\M}$-module of sections:

\begin{definition}
The graded $\O_\M$-module of sections $\mathcal{V}$ is defined as
$$\underline{\Gamma}_\M\left(\mathcal{V}\right)_n=\Gamma_\M\left(\mathcal{V}\left[n\right]\right).$$
\end{definition}


\begin{definition}
A \textbf{vector field of degree $n$} on a graded manifold $\M$ is a derivation $D:\O_{\M} \to \O_{\M}\left[n\right].$ Regarding $n$ as a grading, this is locally free sheaf of graded $\O_{\M}$-modules, which we denote by $\mathcal{T}\M$--- the \textbf{tangent sheaf}. Of course, this is the module of sections of the tangent bundle $T \M \to M.$ 
\end{definition}

\begin{remark}
The (graded) commutator Lie bracket is defined in the obvious way, with the same sign rule as in the $\Z_2$-graded case. This makes the tangent sheaf a sheaf of graded Lie algebras.
\end{remark}

For a vector field $X$ of even degree, $\left[X,X\right]=X\circ X - X \circ X=0,$ i.e. a $1$-dimensional even distribution is automatically integrable. However, for a vector field of \emph{odd} degree, this is no-longer the case, and integrability is a condition. 

\begin{definition}
A vector field of degree $+1$ whose graded commutator $\left[D,D\right]=2D^2=0$ is called a \textbf{cohomological vector field}.
\end{definition}

\begin{definition}
A \textbf{differential graded manifold} or \textbf{dg-manifold} is a pair $\left(\M,D\right)$ of a graded manifold $\M$ and a cohomological vector field $D.$ A morphism $\varphi:\left(\M,D\right) \to \left(\M',D'\right)$ of dg-manifolds is a morphism $\varphi:\M \to \M'$ of graded manifolds with the property that $D'$ is $\varphi$-related to $D,$ that is, for any $f$ in $\Ci\left(\M'\right),$ 
\begin{equation}\label{eq:related}
\varphi^*D'f=D\varphi^*f.
\end{equation}
We denote the resulting category as $\dgMan.$ (It is more precisely the category of dg-manifolds which are concentrated in non-positive degrees.)
\end{definition}

\begin{remark}
A cohomological vector field $D$ on $\M$ makes $\left(\O_\M,D\right)$ into a sheaf of commutative differential graded algebras over $\R.$ Indeed, $D$ becomes a differential turning $\left(\O_\M^\bullet,D\right)$ into a cochain complex, since $D$ has degree $1$ and $D^2=0,$ and since it is a derivation, the algebra structure is compatible with this differential. The above condition
(\ref{eq:related}) is the same as asking $f^*$ to be a map of cdgas. In analogy with the case of supermanifolds, the functor
$$\Ci:\dgMan \to \dga^{op}$$ is fully faithful.
\end{remark}

Given a dg-manifold $\left(\O_\M,D\right),$ taking the Lie bracket with the cohomological vector field $D$ defines a differential $\mathcal{L}_D:=\left[D,\blank\right]$ of degree $1$ on $\mathcal{T} \M.$ With this differential, $\mathcal{T} \M$ becomes a dg-module for $\left(\O_\M^\bullet,D\right).$ The differential corresponds to the canonical lift of $D$ to $T\left[n\right] \mathcal{M},$ for any $n,$ i.e. the tangent lift of $D.$

Any graded manifold $\M$ may be regarded as a dg-manifold with $D=0.$ In particular, every smooth manifold $M$ is a dg-manifold. Given a dg-manifold $\left(\M,D\right)$ the map $\pi_\M:\M \to |\M|$ is a morphism of dg-manifolds. At the level of algebras, this is just the morphism of differential graded algebras $$\Ci\left(|\M|\right)=\Ci\left(\M\right)_0 \to \left(\Ci\left(\M\right)_\bullet,D\right).$$ which is a chain map since $\Ci\left(\M\right)_1=0.$ However, this map is not a fiber bundle, unless the cohomological vector field $D$ depends only on the fiber coordinates, i.e. is fiber-wise constant.

\subsection{Dg-manifolds as equivariant supermanifolds}\label{sec:dgsup}
\subsubsection{Graded manifolds as supermanifolds}
\begin{definition}
Let $S$ be a finite set equipped with a map $\omega:S \to \N,$ called its weight function. Let $S_0$ and $S_1$ be the pre-images of $0$ and $1$ respectively of the composition $$S \stackrel{\omega}{\longrightarrow} \N \stackrel{\mod 2}{\longlongrightarrow} \Z_2.$$ The \textbf{standard $\N$-graded superdomain of \textbf{weight} $\omega$} is the supermanifold $\left(\R^{S_0|S_1},E_\omega\right),$ where $\R^{S_0|S_1}$ is $\R^{n|m},$ with $n=|S_0|,$ and $m=|S_1|,$ equipped with local coordinates $\left(x_i;\eta_j\right)_{i \in S_0,j \in S_1},$ and $E$ is the even vector field $$\omega\left(i\right)x_i \frac{\partial}{\partial x_i} + \omega\left(j\right)\eta_j\frac{\partial}{\partial \eta^j},$$ called the \textbf{Euler vector field}.

We will call a standard graded superdomain \textbf{trivial} if its weight function is constant with value $0.$

An \textbf{$\N$-graded superdomain} is a pair $\left(\sM,E\right)$ of a supermanifold $\sM$ with an even vector field $E,$ for which there exists a diffeomorphism $\varphi:\M \to \R^{p|q}$ such that $\varphi_*E=E_{\omega}$ for some $\N$-weight $\omega.$ On one hand, $\left(\M,E\right)$ will be called \textbf{trivializable} is $E=0,$ and on the other hand will be called \textbf{non-degenerate} if $\ker\left(E\right)=0.$

A morphism of $\N$-graded supermanifolds $f:\left(\sM,E\right) \to \left(\sM',E'\right)$ is a smooth map $\sM \to \sM'$ such that $E'$ is $f$-related to $E.$

Let $f$ be a function on a graded superdomain $\left(\sM,E\right)$ and let $n \in \N.$ We say that $f$ is \textbf{homogeneous of degree $n$} if $E\left(f\right)=nf.$ 
\end{definition}

Notice that every trivial graded superdomain is diffeomorphic to $\R^n$ for some $n,$ since odd elements can not have weight $0.$ \footnote{This could be relaxed, and one would have a theory of $\N$-graded supermanifolds.}
Moreover, every graded superdomain is isomorphic to a product of a trivializable graded superdomain and a non-degenerate superdomain:
\begin{equation}\label{ref:dec}
\sM\cong M_0 \times \sM'
\end{equation}
Moreover, the Euler field is a vertical vector field with respect to the projection $$\pi:\sM \to M_0.$$ 

\begin{remark}
$M_0$ is generally \emph{not} the core of $\sM.$ This is the case if and only if there are no coordinates of positive even weights.
\end{remark}

The decomposition (\ref{ref:dec}) should be viewed as exhibiting $\sM$ as a (trivial) bundle of non-degenerate graded superdomains over the ordinary manifold field $M_0.$

Moreover, $$\pi^*:\Ci\left(M_0\right) \to \Ci\left(\sM\right)$$ is a monomorphism identifying $\Ci\left(M_0\right)$ with those functions $f:\sM \to \R$ for which $E\left(f\right)=0.$ It follows that given any map $$\varphi:\left(\sM,E_\sM\right) \to \left(\sN,E_\sN\right)$$ must restrict to a smooth map $M_0 \to N_0$ of ordinary manifolds. 

For example, if for $\left(x_1,x_2,\ldots x_n,y_1,y_2,\ldots y_k;\eta_1,\eta_2,\ldots \eta_l\right)$ coordinates on $\R^{n+k|l},$ with the $x_i$ of weight $0$, and the rest of the coordinate with positive weight. This is the same as the product of $\R^n$ (with its trivial Euler field) and $\R^{k|l}$ with $pr_*E$. The coordinates $\left(y_1,y_2,\ldots y_k;\eta_1,\eta_2,\ldots \eta_l\right)$ are called the \textbf{fiber coordinates}.

\begin{lemma}\label{ref:homog}\cite[Lemma 5.6]{monoids}
A function $f$ on a standard graded superdomain $\left(\R^{S_0|S_1},E_\omega\right)$ is $n$-homogeneous if and only if is a homogeneous polynomial in the fiber-coordinates with coefficients in the bases coordinates, and the total weight of the polynomial is $n$.
\end{lemma}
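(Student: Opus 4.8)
The plan is to expand an arbitrary smooth function in the odd coordinates, reduce the homogeneity condition to a family of purely even weighted-Euler equations, and then invoke a weighted version of Euler's theorem on homogeneous functions. Write the even coordinates as $x = (x_i)_{i \in S_0}$ and, for a subset $I \subseteq S_1$, write $\eta^I$ for the (ordered) product of the corresponding odd coordinates. As recalled earlier in the local-coordinate description of functions on a superdomain, every $f \in \Ci(\R^{S_0|S_1})$ has a unique expansion $f = \sum_{I \subseteq S_1} f_I(x)\, \eta^I$ with each $f_I$ a smooth function of the even coordinates alone.

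First I would compute $E(f)$ using that $E$ is a derivation. Since each $f_I(x)$ is even there are no Koszul signs, and the odd number operator $\sum_{j} \omega(j)\, \eta_j \partial/\partial \eta_j$ acts on $\eta^I$ by the scalar $\omega(I) := \sum_{j \in I} \omega(j)$. Hence $E(f) = \sum_I \big( \mathcal{D}_x f_I + \omega(I)\, f_I \big)\, \eta^I$, where $\mathcal{D}_x := \sum_{i \in S_0} \omega(i)\, x_i \partial/\partial x_i$ is the weighted Euler operator in the even variables. Comparing coefficients of $\eta^I$, the condition $E(f) = n f$ is equivalent to $\mathcal{D}_x f_I = (n - \omega(I))\, f_I$ for every $I$.

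Next I would analyze this even equation. The weight-$0$ coordinates do not appear in $\mathcal{D}_x$, so $f_I$ may depend on them arbitrarily smoothly; these are exactly the base coordinates. Writing $x = (b, z)$ with $b$ the weight-$0$ (base) coordinates and $z$ the positive-weight even (fiber) coordinates, the equation becomes a weighted Euler equation in $z$ alone with eigenvalue $n - \omega(I)$ and $b$ as parameters. The main step is then the weighted Euler theorem: a smooth function of the $z$-variables satisfying $\mathcal{D}_z g = \lambda g$, where all weights are strictly positive integers, is a weighted-homogeneous polynomial of weighted degree $\lambda$ (and vanishes when $\lambda < 0$ or $\lambda$ is not attained). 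I would prove this by integrating the flow of $\mathcal{D}_z$, which yields the scaling identity $g(s^{w_1} z_1, \ldots) = s^{\lambda} g(z)$ for all $s > 0$; comparing Taylor coefficients at the origin and using smoothness—positivity of the weights makes the admissible multidegrees finite and forces the Taylor remainder to vanish—shows $g$ equals its finite weighted-homogeneous Taylor polynomial. Applying this with $\lambda = n - \omega(I)$ and coefficients depending smoothly on $b$ shows each $f_I(b,z)$ is a polynomial in $z$ of weighted degree $n - \omega(I)$ with coefficients in $\Ci(b)$.

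Assembling the pieces, each surviving term $f_I(x)\, \eta^I$ is a monomial in the fiber coordinates of total weight $(n - \omega(I)) + \omega(I) = n$ whose coefficient is a smooth function of the base coordinates, so $f$ is a homogeneous polynomial of total weight $n$ in the fiber coordinates, as claimed. The converse is immediate: a direct application of the derivation rule shows that any such polynomial satisfies $E(f) = n f$. I expect the only genuine difficulty to be the weighted Euler theorem—specifically, ruling out non-polynomial smooth eigenfunctions—while the reduction to it and the converse are routine bookkeeping with signs.
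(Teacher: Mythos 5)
The paper offers no proof of this lemma: it is imported wholesale as \cite[Lemma 5.6]{monoids}, so there is no internal argument to compare yours against. Judged on its own, your proof is correct and complete in outline. The expansion $f=\sum_I f_I(x)\,\eta^I$, the computation $E(f)=\sum_I\left(\mathcal{D}_x f_I+\omega(I)f_I\right)\eta^I$ (the sign bookkeeping is right: $\eta_j\,\partial/\partial\eta_j$ acts on $\eta^I$ as the indicator of $j\in I$, so the odd part of $E$ is a weighted number operator), and the resulting system $\mathcal{D}_x f_I=\left(n-\omega(I)\right)f_I$ are all sound; note that in the paper's conventions every odd coordinate automatically has positive (odd) weight, so your split of the even coordinates into weight-zero base variables $b$ and positive-weight fiber variables $z$ captures exactly the base/fiber dichotomy of the statement. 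The one step you flag as the real content, the weighted Euler theorem, is handled by the standard and correct argument: integrating the complete flow of $\mathcal{D}_z$ gives $g\left(s^{w_1}z_1,\ldots\right)=s^{\lambda}g(z)$ for $s>0$; applying $\partial_z^{\beta}$ and setting $z=0$ kills every Taylor coefficient with $\langle w,\beta\rangle\neq\lambda$; positivity of the weights makes the degree-$\lambda$ multi-indices finite in number, and the difference between $g$ and its weighted-homogeneous Taylor polynomial is flat at the origin, so the scaling identity forces it to vanish as $s\to 0^{+}$ (this same argument gives $f_I=0$ when $\omega(I)>n$, consistent with the statement). Smooth dependence of the polynomial coefficients on $b$ is automatic, since they are $z$-derivatives of $f_I$ at $z=0$, and the converse is indeed a one-line derivation computation.
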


It follows that any morphism $f:\left(\sM,E_\sM\right) \to \left(\sN,E_\sN\right)$ of graded superdomains is a morphism of bundles of non-degenerate graded superdomains over the map $f_0=f|_{M_0}$
$$\xymatrix{\sM \ar[r]^-{f} \ar[d]_-{\pi_\sM} & \sN \ar[d]^-{\pi_\sN}\\
M_0 \ar[r]_-{f_0} & N_0.}$$ This is equivalent data to a morphism of bundles $\varphi:\sM \to f_0^*\sN$ over non-degenerate graded superdomains over $M_0.$ Choose fiber coordinates $\left(x_1,\ldots,x_n;\eta_1,\ldots,\eta_m\right)$ and $\left(y_1,\ldots,y_k; \xi_1,\ldots,\xi_k\right)$ of $\sM$ and $\sN$ respectively. Then $\varphi$ is uniquely determined in local coordinates, and each each coordinate $y_i$ must be a function on $\sM$ for which $E_\sM\left(y_i\right)=\omega\left(i\right)y_i,$ similarly for each $\xi_j.$ It follows from Lemma \ref{ref:homog} that this is if and only if each $y_i$ is a homogeneous polynomial of degree $\omega\left(i\right)$ in the fibered coordinates $\left(x_1,\ldots,x_n;\eta_1,\ldots,\eta_m\right)$ with coefficients in $\Ci\left(M_0\right).$ This the same description of a map of graded manifolds between $\Z$-graded vector spaces. That is to say there is a canonical fully faithful functor 
$$F:\mathsf{SDom_{\N}} \to \Mfd_\N$$
from the category of standard graded superdomains whose essential image is the subcategory $\mathsf{Cart}\Mfd_\N$ of graded manifolds spanned by $\N$-graded vector spaces. The functor $F$ is defined on object by 
$$F\left(\left(\R^{S_0|S_1},E_\omega\right)\right)=\underset{n \in \N} \bigoplus \left[\left(\R\left[-2n\right]\right)^{\omega^{-1}\left(2n\right)} \oplus \left(\R^{0|1}\left[-2n-1\right]\right)^{\omega^{-1}\left(2n+1\right)}\right].$$

\begin{remark}
There is some fiddling with signs going on here, in order to produce a graded manifolds with coordinates in \emph{non-positive} rather than non-negative degree. There is due to our unwillingness to use homological grading conventions.
\end{remark}

\begin{definition}
Let $M$ be an ordinary manifold. A \textbf{bundle of graded superdomains} over $M$ with fiber $\left(\R^{S_0|S_1},E_\omega\right)$ is a fiber bundle with fiber $\R^{S_0|S_1},$ whose transition functions are maps of graded superdomains. It is called a bundle of non-degenerate graded superdomains, if the typical fiber is non-degenerate.
\end{definition}

\begin{remark}
Notice that such a bundle of non-degenerate graded superdomains $\sM \to M$ carries a canonical vector field $E$ glued together over its fibers, called its \textbf{Euler vector field}. If the bundle is non-degenerate, $M$ can be identified with the zero locus of $E.$
\end{remark}
 
The following theorem is folklore, but we include a proof for convenience:
\begin{theorem}
There is an equivalence of categories 
$$\mathsf{GrSMfdBun}_0 \stackrel{\sim}{\longrightarrow}\Mfd_\N$$
between the category of bundles of non-degenerate graded superdomains over smooth manifolds, and the category of graded manifolds.
\end{theorem}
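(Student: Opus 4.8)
The plan is to exhibit the equivalence explicitly, using the fully faithful functor $F:\mathsf{SDom_{\N}} \to \Mfd_\N$ onto $\mathsf{Cart}\Mfd_\N$ as the fiberwise building block and globalizing it over the base manifold by translating between the weight-grading recorded by an Euler field and the $\Z$-grading of a graded manifold. I would first construct the functor $\Phi:\mathsf{GrSMfdBun}_0 \to \Mfd_\N$ as follows. Given a bundle $\pi:\sM \to M$ of non-degenerate graded superdomains with Euler vector field $E$, every function decomposes into $E$-weight eigenspaces, and I define a sheaf $\O_\M$ on $M$ by declaring its degree $-n$ component to be the weight-$n$ functions, $\left(\O_\M\right)_{-n} := \{f \in \pi_*\O_{\sM} \mid E(f) = nf\}$. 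Multiplication adds weights, hence adds degrees, and the $\Z_2$-parity of the $E$-weight equals the super-parity, so $\O_\M$ is a sheaf of $\Z$-graded commutative $\R$-algebras concentrated in non-positive degrees. Over a trivializing chart this reproduces $\Ci(U)\otimes_{\R}\Sym(\V^*)$ by Lemma \ref{ref:homog}, so $\left(M,\O_\M\right)$ is a graded manifold; since the fiber is non-degenerate its weight-$0$ functions are exactly $\Ci(M)$, identifying $M$ with the core $|\M|$. A bundle morphism $\varphi$ over $f_0$ has $E$-related Euler fields, so $\varphi^*$ preserves weight and descends to a morphism of graded manifolds, giving functoriality.

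For the quasi-inverse $\Psi:\Mfd_\N \to \mathsf{GrSMfdBun}_0$ I would reverse this recipe. Given a graded manifold $\M$, set $M := |\M|$ and reduce the $\Z$-grading of $\O_\M$ modulo $2$ via $\blank/2$ to obtain a sheaf of super-commutative algebras, which is the structure sheaf of a supermanifold $\sM$ over $M$ (of finite odd dimension, since each graded piece is finite dimensional). The grading operator $E(f) = nf$ for $f \in \left(\O_\M\right)_{-n}$ is an even derivation, i.e. an Euler vector field, and $\ker E$ is the degree-$0$ part $\Ci(M)$, so the fibers are non-degenerate. Using the fiber-bundle structure of the reflector $\pi_\M:\M \to |\M|$ established above, together with the fact that $F$ is an equivalence onto $\mathsf{Cart}\Mfd_\N$, the transition functions of this bundle are maps of graded superdomains, so $\left(\sM,E\right)$ is genuinely an object of $\mathsf{GrSMfdBun}_0$. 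These two constructions are manifestly mutually inverse: the weight-decomposition of $\Psi(\M)$ returns the original grading, and $\Phi\circ\Psi$ and $\Psi\circ\Phi$ are naturally isomorphic to the respective identities.

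The main obstacle I anticipate is not the pointwise bookkeeping but the local-to-global verification that the sheaves produced really have the required standard local form — that $\O_\M$ in $\Phi$ is locally isomorphic to a graded domain, and that $\sM$ in $\Psi$ is locally a standard graded superdomain whose transition functions land in the correct pseudogroup. Both reduce to checking that $F$ is compatible not merely with morphisms of the Cartesian models but with base-parametrized gluing data, i.e. that it matches the two local coordinate descriptions of morphisms in families over a base: a weight-$n$ function is a homogeneous degree-$n$ polynomial in the fiber coordinates with coefficients in $\Ci$ of the base (Lemma \ref{ref:homog}), which is precisely the coordinate description of a degree $-n$ function on a graded manifold. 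Once this compatibility of $F$ with the gluing is established, the equivalence of the glued categories follows formally, and the identification of the base with the core guarantees that the two notions of underlying ordinary manifold coincide.
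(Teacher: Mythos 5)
Your first direction ($\Phi$: from a bundle of non-degenerate graded superdomains to a graded manifold, via the $E$-weight eigenspace sheaves and Lemma \ref{ref:homog}) is essentially correct, and it matches the paper's own ``unwinding'' of the equivalence. But your quasi-inverse $\Psi$ contains a genuine error: the mod-$2$ reduction $\blank/2$ of $\O_\M$ is \emph{not} the structure sheaf of a supermanifold whenever $\M$ has coordinates of negative \emph{even} degree. Concretely, take $\M=\R\left[-2\right],$ the graded manifold with a single generator $y$ of degree $-2,$ so that $\Ci\left(\M\right)=\R\left[y\right].$ Reducing the grading modulo $2$ yields the polynomial algebra $\R\left[y\right]$ on one even generator, which is not $\Ci\left(\R\right)$; the supermanifold corresponding to $\M$ is $\R^{1|0}$ with Euler field $2y\frac{\partial}{\partial y},$ whose structure sheaf contains \emph{all} smooth functions of $y,$ not just polynomials. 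The same issue undermines your parenthetical claim that $\sM$ has ``finite odd dimension'' (the fibers have even directions coming from $V_2,V_4,\ldots$), and also your earlier assertion that ``every function decomposes into $E$-weight eigenspaces'' (a smooth non-polynomial function of a weight-$2$ coordinate has no such decomposition — though your definition of $\left(\O_\M\right)_{-n}$ as an eigenspace does not actually need this claim). The passage from graded functions (fiberwise polynomial) to super functions (fiberwise smooth in the even directions) is a completion, not an algebraic regrading, and it is exactly this asymmetry that makes the theorem nontrivial.

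The fix is to build $\Psi$ locally and glue: over a trivializing chart, replace the graded model $U\times\V$ by the superdomain that $F$ identifies with it (thereby adjoining all smooth functions of the even fiber coordinates), and then use the fact that the transition cocycle of $\M\to|\M|$ takes values in $\underline{\Aut}\left(\V\right),$ which corresponds under $F$ to automorphisms of the non-degenerate superdomain, to glue these local models into a bundle. This gluing step is precisely where the mathematical content lies, and it is what the paper's proof formalizes: the paper regards $\mathsf{SDom_{\N}}$ and $\mathsf{Cart}\Mfd_\N$ as sites, promotes $F$ to an equivalence of $2$-categories of stacks, and transports the classifying map $|\M| \to \mathsf{B}\underline{\Aut}\left(\V\right)$ of the fiber bundle across this equivalence. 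Your closing paragraph gestures at this compatibility of $F$ with gluing data but treats it as a routine verification following formally; once $\Psi$ is repaired as above, that descent argument (in whatever language) is the actual proof, and your two ``manifestly mutually inverse'' constructions only become inverse after it is carried out.
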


\begin{proof}
This follows formally from the equivalence between graded superdomains--- equivalently bundles of non-degenerate graded superdomains whose base is $\R^n$ for some $n$--- and the subcategory of the category of graded manifolds on the $\N$-graded vector spaces. Consider both of these equivalent categories as a Grothendieck site, where the covers in the graded manifold case are those induced by open covers of their cores. Then we have an equivalence between their $2$-categories of stacks of groupoids
\begin{equation*}
F^*:\St\left(\mathsf{Cart}\Mfd_\N\right) \stackrel{\sim}{\longrightarrow} \St\left(\mathsf{SDom_{\N}}\right).
\end{equation*}

Since every graded manifold has a cover by $\N$-graded vector spaces, it follows that the canonical functor
\begin{eqnarray*}
\Mfd_\N &\to& \Sh\left(\mathsf{Cart}\Mfd_\N\right)\\
\M &\mapsto & \Hom_{\Mfd_\N}\left(\blank,\M\right)
\end{eqnarray*}
is fully faithful. By abuse of notation, will will represent the functor of points for $\M$ again by $\M.$ Similarly, given a bundle of non-degenerate graded superdomains $\sM \to M,$ over a manifold $M,$ there is a cover over $M$ by open submanifolds diffeomorphic to $\R^n,$ over which it trivializes, and for an open $U$ in this cover, $\sM_U \to U$ is a graded superdomain. It follows, that the category of bundles of non-degenerate superdomains embeds fully faithfully into $\Sh\left(\mathsf{SDom_{\N}}\right),$ by the functor of points construction.

Let $\M=\left(M,\O_\M\right)$ be graded manifold of the form $M \times \V$ for some graded vector space concentrated in strictly positive degrees. Note that any automorphism $\varphi$ of $\M$ commutes over the core $|\M|=M,$ that is, is a fiber-bundle automorphism. More categorically, this says $\varphi \in \Aut_{M}\left(M \times \V,M \times \V\right).$ Let $\sN$ be a non-degenerate graded superdomain for which $F\left(\sN\right)=\V.$ It follows that $$F^* \underline{\Aut}\left(\V\right) \simeq \underline{\Aut}\left(\sN\right),$$ where $\underline{\Aut}\left(\V\right)$ is the automorphism sheaf of (the representable sheaf associated to) $\V.$

Now let $\M$ be an arbitrary graded manifold. Since $\M \to |\M|$ is a fiber bundle with the same above purely positive fiber $\V,$ it is arises via an associated bundle construction from an $\underline{\Aut}\left(\V\right)$-principal bundle, in the topos $\Sh\left(\mathsf{Cart}\Mfd_\N\right).$ Let $p:|\M| \to \mathsf{B}\underline{\Aut}\left(\V\right)$ be the map classifying this principal bundle in $\St\left(\mathsf{Cart}\Mfd_\N\right).$ Then there is a pullback diagram
$$\xymatrix{\M \ar[d]_-{\pi_\M} \ar[r] & \left[\V/\underline{\Aut}\left(\V\right)\right] \ar[d]\\
|\M| \ar[r]_-{p} &  \mathsf{B}\underline{\Aut}\left(\V\right).}$$ The right adjoint $F_*$ to the equivalence $F^*$ produces a pullback diagram
$$\xymatrix{F_\ast\left(\M\right) \ar[d]  \ar[r] & \left[\sN/\underline{\Aut}\left(\sN\right)\right] \ar[d]\\
|\M| \ar[r]_-{F_\ast p} &  \mathsf{B}\underline{\Aut}\left(\sN\right),}$$
exhibiting $F_\ast\left(\M\right)$ as the functor of points of a bundle of non-degenerate graded superdomains of $|\M|.$ Conversely, given a bundle $\sM \to M$ of non-degenerate superdomains, the same argument shows that $F^*\sM$ is the functor of points of a graded manifold. The result now follows.
\end{proof}

Let $\left(\R^{S_0|S_1},E_\omega\right)$ be a standard graded superdomain of weight $\omega,$ equipped with coordinates $\left(x_i;\eta_j\right)_{i\in S_0,j \in S_1}.$ Define an action $\rho$ of the multiplicative monoid $\left(\R,\cdot\right)$ on $\R^{S_0|S_1}$ by defining
$$t \star \left(x_i;\eta_j\right)=\left(t^{\omega\left(i\right)}x_i;t^{\omega\left(j\right)} \eta_j\right),$$ where for $\omega\left(i\right)=0,$ $t^{\omega\left(i\right)}=1$ (for any $t$). Notice that $-1$ acts by parity involution. The exponential map
$$\exp:\left(\R,+\right) \stackrel{\sim}{\longrightarrow} \left(\R_{> 0},\cdot\right)$$ provides an isomorphism of Lie groups, identifying $\rho|_{\R_{> 0}}$ with a $1$-parameter subgroup of diffeomorphisms. The associated vectorfield is 
\begin{eqnarray*}
\frac{d}{ds}|_{s=0}&=& e^{s}\star \left(x_i;\eta_j\right)\\
\frac{d}{ds}|_{s=0}&=& \left(e^{\omega\left(i\right)\cdot s}x_i;e^{\omega\left(j\right)\cdot s} \eta_j\right)\\
&=& \left(\omega(i)x_i;\omega(j)\eta_j\right)\\
=E_\omega.
\end{eqnarray*}
It follows that for any other graded superdomain 
$\left(\R^{T_0|T_1},E_\lambda\right),$ a smooth map of supermanifolds $f:\sM=\R^{S_0|S_1} \to \R^{T_0|T_1}=\sN$ is equivariant with respect to the canonical $\left(\R_{>0},\cdot\right)$-actions if and only if it is a map of graded superdomains. Moreover, clearly equivariance with respect to $\left(\R,\cdot\right)$-implies the preservation of the Euler fields, since they are the infinitesimal generators. Conversely, $\rho\left(0,\blank\right)$ can be identified with the canonical projection to $M_0,$ making $\sM$ into a fiber bundle, but all morphisms of graded superdomains are morphisms of such fiber bundles, hence $f\left(0\star\left(\blank\right)\right)=0\star f\left(\blank\right).$ Finally, \cite[Remark 4.1]{gbun} implies that $f$ is equivariant for the whole monoid action.

By the above argument, for any bundle $\sM \to M$ of non-degenerate superdomains, its transition functions must be $\left(\R,\cdot\right)$-equivariant, and hence there is a globally defined fiber-wise $\left(\R,\cdot\right)$-action on $\sM.$

\begin{definition}
An action $\rho$ of of $\left(\R,\cdot\right)$ on a supermanifold $\sM$ is called \textbf{even} if $-1$ acts by parity involution. That is, for all $f \in \Ci\left(\sM\right),$ 
$$f\circ \rho\left(-1,\blank\right)=\left(-1\right)^{|f|}\cdot f.$$
\end{definition}

Let $$\rho:\R \times \sM \to \sM,$$ be any smooth monoid action and let $\rho_0$ be the smooth endomorphism $\rho\left(0,\blank\right)$ of $\M.$ Notice that $\rho_0$ is idempotent, since $0 \cdot x$ is a fixed point. It follows by the constant rank theorem for supermanifolds that the image of $\rho_0$ is a sub-supermanifold, and $\rho_0$ factors as
$$\sM \stackrel{p_0}{\longrightarrow} \rho_0\left(\sM\right):=M_0 \stackrel{i_0}\longhookrightarrow \sM,$$ with $p_0$ a submersion and $i_0$ an embedding, and $r_0 \circ i_0=id.$ In particular, $p_0$ is $\left(\R,\cdot\right)$-invariant.

Suppose that the action if \emph{even}. Then $M_0$ is purely even. To see this, notice that all coordinate functions on $M_0$ need to be fixed.

It turns out that one can always find coordinates on $\sM$ putting it into the local form of a graded superdomain:

\begin{theorem}\cite[Theorem 5.8]{monoids}
Let $\rho$ be an even smooth action of $\left(\R,\cdot\right)$ on a supermanifold $\sM,$ and let $E$ be the infinitesimal generator $\rho_*\left(\frac{d}{dt}\right).$ Then $$p_0:\sM \to M_0$$ exhibits $\sM$ as a bundle of non-degenerate graded superdomains, with Euler field $E.$
\end{theorem}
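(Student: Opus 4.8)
The plan is to reduce the statement to a purely local one near a point of $M_0$ and then glue. Since we have already seen that $p_0:\sM \to M_0$ is a submersion with section $i_0$, that $\rho\left(0,\blank\right)=i_0\circ p_0$ is the relevant idempotent, and that $M_0$ is purely even when the action is even, it suffices to show that each point of $M_0$ has a neighborhood $U$ over which $\sM$ is equivariantly diffeomorphic to a trivial bundle of non-degenerate graded superdomains over $U$, in such a way that $p_0$ becomes the projection and $E$ becomes the fiberwise Euler field. Granting this, the transition functions between two such local trivializations are automatically $\left(\R,\cdot\right)$-equivariant, hence---by the equivalence between equivariant maps and maps of graded superdomains established earlier in this subsection---are morphisms of graded superdomains. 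Thus $p_0:\sM \to M_0$ is genuinely a bundle of non-degenerate graded superdomains.

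The key technical input is a weight decomposition of the structure sheaf. Given $f \in \Ci\left(\sM\right)$, consider the smooth function $F\left(t,x\right) = f\left(\rho\left(t,x\right)\right)$, defined for all $t \in \R$ including $t=0$. The monoid identity $\rho\left(t,\rho\left(s,x\right)\right)=\rho\left(ts,x\right)$ forces strong constraints on the $t$-dependence of $F$: iterating it and applying Hadamard's lemma repeatedly to $F$ as a function of $t$ shows that $F$ is in fact polynomial in $t$, so that $f$ decomposes into a finite sum $f=\sum_{n\ge 0} f_n$ of weight-homogeneous components satisfying $\rho\left(t,\blank\right)^* f_n = t^n f_n$, equivalently $E\left(f_n\right)=n f_n$. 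The weights $n$ are non-negative integers, the weight-$0$ part is exactly $p_0^*\, i_0^* f \in p_0^*\Ci\left(M_0\right)$ (consistent with the earlier identification of $\ker E$ in degree $0$ with $\Ci\left(M_0\right)$), and evenness of $\rho$ guarantees that a homogeneous function of weight $n$ has parity $n \bmod 2$; in particular no odd function can have weight $0$. This is the homogeneity structure theorem underlying the Grabowski--Rotkiewicz correspondence between monoid actions and graded bundles.

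With the weight decomposition in hand I would build local coordinates as follows. Near $p\in M_0$ choose ordinary coordinates on $M_0$ and pull them back along $p_0$ to obtain weight-$0$ generators $\left(x_i\right)$. Then choose finitely many weight-homogeneous functions $\left(y_k;\eta_j\right)$ of strictly positive weight whose differentials at $p$ span the conormal space to $M_0$ in $\sM$; these exist because the positively weighted part of the structure sheaf cuts out $M_0$. The collection $\left(x_i;y_k,\eta_j\right)$ then furnishes a diffeomorphism onto a standard graded superdomain intertwining $\rho$ with the standard monoid action, and the identities $E\left(y_k\right)=\omega\left(k\right)y_k$, $E\left(\eta_j\right)=\omega\left(j\right)\eta_j$ show $E$ is exactly the Euler field. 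The absence of weight-$0$ fiber coordinates makes the fiber non-degenerate.

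The main obstacle is the weight-decomposition step: proving that the smooth monoid action forces each function to be polynomial in the homogeneous (fiber) directions, with only finitely many non-negative integer weights appearing, compatibly with parity. The smoothness of $\rho$ at $t=0$ is essential and somewhat delicate---one must carefully control the behaviour of $F\left(t,x\right)$ as $t\to 0$ and rule out non-polynomial homogeneous contributions---and once the algebraic decomposition is established, upgrading it to an honest equivariant local diffeomorphism requires the constant-rank structure of $p_0$ together with an equivariant tubular-neighborhood/slice argument along $M_0$.
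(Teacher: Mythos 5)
Your proposal cannot be checked against an internal argument, because the paper does not prove this statement at all: it is imported wholesale from \cite[Theorem 5.8]{monoids}, and the text surrounding it only establishes the soft preliminaries (the constant-rank factorization $\rho_0=i_0\circ p_0$, the invariance of $p_0$, and the purity of $M_0$ for an even action). Measured against the cited literature, your outline is exactly the right one --- it is the Grabowski--Rotkiewicz homogeneity-structure argument that the reference itself uses: reduce to local equivariant triviality, glue using the fact (established earlier in the paper's subsection) that $\left(\R,\cdot\right)$-equivariant maps between standard graded superdomains are precisely graded morphisms, identify the weight-zero part of a function with $p_0^*\,i_0^*f$, use evenness to force the parity of a weight-$n$ component to be $n \bmod 2$, and produce homogeneous coordinates via the inverse function theorem. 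All of those steps are correct and essentially routine given what the paper proves just before the statement.

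The one place where your write-up is a strategy rather than a proof is the step you yourself flag as the main obstacle: showing that smoothness of $\rho$ at $t=0$ forces $f\left(\rho\left(t,x\right)\right)$ to be polynomial in $t$, with only finitely many non-negative integer weights appearing locally. Be aware that Lemma \ref{ref:homog} cannot be invoked for this: it characterizes homogeneous functions on a \emph{standard} graded superdomain, i.e., after the local normal form is already in hand, whereas your decomposition must be established for an arbitrary even action. Moreover, ``iterating Hadamard's lemma'' does not by itself close the argument: knowing all $t$-Taylor coefficients of $F\left(t,x\right)=f\left(\rho\left(t,x\right)\right)$ at $t=0$ and their homogeneity does not force $F\left(1,x\right)$ to equal the finite Taylor sum (a flat remainder could survive); ruling this out requires exploiting the invertibility of the action for $t\neq 0$ together with smoothness at $t=0$, and is the entire analytic content of the cited theorem. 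So: right approach, correct soft steps, but the core polynomiality/finiteness lemma is assumed rather than proven --- which is a defensible choice, and indeed coincides with what the paper itself elects to cite rather than prove.
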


\begin{corollary}
There is a canonical equivalence of categories 
$$\mathsf{GrSMfdBun}_0 \simeq \mathsf{\SMfd}_0^{\left(\R,\cdot\right)}$$
between the category of bundles of non-degenerate graded superdomains with purely even base, and the category of supermanifolds equipped with an even action of $\left(\R,\cdot\right)$, and equivariant maps.
\end{corollary}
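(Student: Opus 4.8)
The plan is to assemble the two constructions already established above into a pair of mutually inverse functors. In one direction, starting from a bundle $\pi\colon \sM \to M$ of non-degenerate graded superdomains with purely even base, I would use the fact just shown that the $\left(\R,\cdot\right)$-equivariance of the transition functions produces a globally defined fiberwise monoid action $\rho$ on the total space $\sM.$ The key point to verify is that this action is \emph{even}: since the bundle is non-degenerate, every odd fiber coordinate $\eta_j$ is indexed by $S_1$ and hence carries odd weight $\omega\left(j\right),$ so $-1$ acts on it by $\left(-1\right)^{\omega\left(j\right)}=-1,$ while every even coordinate is indexed by $S_0,$ has even weight, and is fixed; this is precisely the parity involution. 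Forgetting the bundle structure but retaining $\rho$ then defines a functor $G\colon \mathsf{GrSMfdBun}_0 \to \mathsf{\SMfd}_0^{\left(\R,\cdot\right)},$ and it is well defined on morphisms because a bundle morphism is fiberwise a map of graded superdomains, which by the characterization above coincides with being $\left(\R,\cdot\right)$-equivariant.

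In the other direction, starting from a supermanifold $\sM$ with an even action $\rho$ and infinitesimal generator $E=\rho_*\!\left(\tfrac{d}{dt}\right),$ I would invoke the preceding theorem, which exhibits $p_0\colon \sM \to M_0$ as a bundle of non-degenerate graded superdomains with Euler field $E$; and since $\rho$ is even, $M_0$ is purely even, as observed just before that theorem. This yields a functor $H$ in the reverse direction. On morphisms, an equivariant map $f$ commutes with $\rho\left(0,\blank\right)$ on source and target, hence restricts to a map $M_0 \to N_0$ of fixed loci and is fiberwise a map of graded superdomains by the same characterization; so $f$ is a bundle morphism, and $H$ is functorial.

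Finally I would check that $G$ and $H$ are mutually inverse. For $H \circ G$: on a bundle with purely even base, the fiberwise action sends $0$ to the fixed point of each fiber, so $\rho\left(0,\blank\right)$ is exactly the original projection $\pi,$ identifying $M_0$ with $M$ and recovering the bundle on the nose. For $G \circ H$: the bundle $p_0\colon \sM \to M_0$ produced by the theorem has Euler field equal to the generator $E$ of the original $\rho,$ and $G\left(H\left(\sM\right)\right)$ is the fiberwise monoid action whose infinitesimal generator is this Euler field and whose value at $0$ is $p_0.$ I expect the main obstacle to be the uniqueness argument showing that this reconstructed action agrees with the original $\rho$: the restriction to $\left(\R_{>0},\cdot\right)$ is determined by $E$ through the exponential identification of $\left(\R,+\right)$ with $\left(\R_{>0},\cdot\right),$ the two actions agree at $t=0,$ and the extension to the full monoid $\left(\R,\cdot\right)$ is forced by \cite[Remark 4.1]{gbun}, so the two coincide. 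Both composites are therefore canonically the identity, and $G$ and $H$ exhibit the desired equivalence of categories.
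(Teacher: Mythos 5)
Your proposal is correct and takes essentially the same approach as the paper: the corollary is stated there without a separate proof precisely because it follows by assembling the quoted theorem from \cite{monoids} with the immediately preceding discussion (the canonical even fiberwise $\left(\R,\cdot\right)$-action on any bundle of non-degenerate graded superdomains, and the identification of maps of graded superdomains with equivariant maps). Your explicit construction of the two functors and the mutual-inverse check, including the appeal to \cite[Remark 4.1]{gbun} to extend agreement from $\R_{>0}$ and $0$ to the whole monoid, is exactly this assembly spelled out.
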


In summary, we have
$$\Mfd_\N \simeq \mathsf{GrSMfdBun}_0 \simeq \mathsf{\SMfd}_0^{\left(\R,\cdot\right)}.$$

It is worth unwinding the equivalence $$\mathsf{\SMfd}_0^{\left(\R,\cdot\right)} \stackrel{\sim}{\longrightarrow} \Mfd_\N.$$
Given an even action of $\left(\R,\cdot\right)$ on a supermanifold $\sM,$ there is a submersion $p:\sM \to M$ onto the fixed point locus, which is a purely even manifold, and moreover, $\sM$ admits coordinates which are homogeneous, i.e. $E\left(x_i\right)=n_i \cdot x_i,$ with $n \in \N,$ and in particular, is a bundle of non-degenerate graded superdomains. Let $U \subseteq M$ be an open subset. Let $n$ be a non-negative integer. Write $\O^{gr}_\M\left(U\right)_{-n}$ for the subset of functions on $\sM_U$ which are homogeneous of degree $n,$ and write 
$$\O^{gr}_\M\left(U\right):=\underset{n \in \N} \bigoplus \O^{gr}_\M\left(U\right)_{-n}.$$ The supercommutative $\R$-algebra structure on $\Ci\left(\sM_U\right)$ descends to $\Z$-graded commutative $\R$-algebra structure on $\O^{gr}_\M\left(U\right),$ which is concentrated in non-positive degrees. This defines a sheaf $\O^{gr}_\M$ over $M$ of $\Z$-graded commutative algebras. Since over any trivialization, the homogeneous functions are homogeneous polynomials in the fiber-coordinates, it follows that if $\sM \to M$ trivializes over $U,$ and has typical fiber the non-degenerate graded superdomain $\left(\R^{S_0|S_1},E_\omega\right),$ then there is a canonical isomorphism
$$\O^{gr}_\M\left(U\right) \cong \Ci\left(U\right) \underset{\R} \otimes \Sym\left(F\left(\left(\R^{S_0|S_1},E_\omega\right)\right)^*\right).$$

We now turn to the case with non-zero differential.

\subsubsection{Dg-manifolds as supermanifolds}





The internal exponent $\End\left(\R^{1|1}\right)={\R^{0|1}}^{\R^{0|1}}$ exists in $\SMfd$ and has underlying supermanifold $\R^{1|1}.$ That is to say, for any supermanifold $\sM$, there is a canonical bijection
$$\Hom_{\SMfd}\left(\sM,\R^{1|1}\right) \cong \Hom_{\SMfd}\left(\sM \times \R^{0|1},\R^{0|1}\right).$$
To boot, it can be realized as the following string of natural identifications
\begin{eqnarray*}
\Hom_{\SMfd}\left(\sM \times \R^{0|1},\R^{0|1}\right) &\cong& \Ci\left(\sM \times \R^{0|1}\right)_1\\
&\cong& \left(\Ci\left(\sM\right) \underset{\R} \otimes \Lambda\left(\R\right)\right)_1\\
&\cong& \left(\Ci\left(\sM\right) \underset{\R} \otimes \R^{1|1}\right)_1\\
&\cong& \left(\Ci\left(\sM\right)_0\otimes \R\right) \oplus \left(\Ci\left(\sM\right)_1 \otimes \R\right)\\
&\cong& \Ci\left(\sM\right)_0 \times \Ci\left(\sM\right)_1\\
&\cong& \Hom_{\SMfd}\left(\sM,\R^{1|1}\right).
\end{eqnarray*}
$\End\left(\R^{1|1}\right)=\R^{1|1}$ therefore inherits the structure of a monoid object in $\SMfd.$ To understand how this works concretely, it is helpful to look at the evaluation map
$$ev:\R^{1|1} \times \R^{0|1} \to \R^{0|1}.$$
This is a faithful action of $\R^{1|1}$ on $\R^{0|1}.$ If $\left(x,\alpha\right)$ are coordinates for $\R^{1|1},$ and $\xi$ is the single odd coordinate of $\R^{0|1},$ the above map is
$$\left(t,\alpha\right)\cdot \xi=\xi \mapsto t\xi+\alpha.$$
It follows that $$\left(t,\alpha\right) \circ \left(y,\beta\right)=\left(ty,\alpha+x\beta\right),$$ and that we therefore have
$$\End\left(\R^{1|1}\right)\cong \left(\R,\cdot\right) \ltimes \left(\R^{0|1},+\right).$$
We also have a corresponding Lie supergroup $$\Aut\left(\R^{0|1}\right)\cong \R^* \ltimes \R^{0|1}.$$ Accordingly, its Lie superalgebra is equivalent to the Lie algebra of vector fields on $\R^{0|1},$ which is isomorphic to $\R^{1|1}$ as a $\Z_2$-graded vector space. It is spanned by the even vector field $$\varepsilon:=-\xi\frac{\partial}{\partial \xi},$$ and the odd vector field $$\delta:=\frac{\partial}{\partial \xi}.$$ Alternatively, this can be directly calculated as $$-\varepsilon=ev_*\frac{\partial}{\partial t},$$ and
$$\delta=ev_*\frac{\partial}{\partial \alpha}.$$
The minus sign is a matter of convention; it will induce a grading, and we wish it to be concentrated in non-positive degrees rather than non-negative degrees, so that we can use cohomological grading conventions.

They satisfy the commutation relations
\begin{equation}
\left[\varepsilon, \delta\right]=\delta
\end{equation}
and
\begin{equation}
\left[\delta,\delta\right]=0.
\end{equation}
Of course, $\left[\varepsilon,\varepsilon\right]=0,$ but this is automatic as it is an even vectorfield.

Therefore, a smooth action $\rho$ of $\Aut\left(\R^{0|1}\right)$ on a supermanifold $\sM,$ gives rise to an even vectorfield $E=\rho_*\varepsilon$ and an odd vectorfield $D=\rho_*\delta,$ such that $\left[E,D\right]=D$ and $\left[D,D\right]=2D^2=0.$ Chasing through the definitions, one can identify $-E$ with the Euler field of the induced action of $\left(\R_{>0},\cdot\right),$ and $D$ with the infinitesimal generator of the action of the odd additive group $\left(\R^{0|1},+\right).$ Nonetheless, we will abusively call $E$ the Euler field.

\begin{definition}
An $\Aut\left(\R^{0|1}\right)$-action on a supermanifold $\sM$ will be called \textbf{even} if $-1$ acts by parity involution on $\sM,$ i.e. the induced action of $\left(\R,\cdot\right)$ is even.
\end{definition}

In general, smooth $\Aut\left(\R^{0|1}\right)$-actions can be wild, however, those which extend to $\End\left(\R^{1|1}\right)$-actions are much better behaved. If $\rho$ is an even action of $\End\left(\R^{1|1}\right)$ on $\sM,$  the induced action of $\left(\R,\cdot\right)$ is even and we get a submersion $p:\sM \to M$ onto the fixed point locus, which is a purely even manifold. The subsheaf $\O^{gr}_\M$ of $\O_\sM|_{M}$ spanned by homogeneous elements then makes $\left(M,\O^{gr}_\M\right)$ into an $-\N$-graded manifold. The minus sign shows up since we have negated the Euler field.

Let $f$ be a homogeneous function of degree $n$ on $\sM_U$ for some open subset of $M,$ with respect to the $\left(\R,\cdot\right)$-action. This means that $E\left(f\right)=-n,$ since we have let $E$ be the opposite of the infinitesimal generator. We can identify $f$ with a function on the graded manifold $\left(U,\O^{gr}_\M|_{U}\right)$ of degree $-n,$ and conversely. So we have
$$E\left(f\right)=|f| \cdot f,$$ where $|f|$ is the degree of $f$ as a function on $\left(U,\O^{gr}_\M|_{U}\right).$ The commutation relations for the Lie algebra of $\Aut\left(\R^{0|1}\right)$ imply that for any such $f,$ $ED\left(f\right)=\left(n+1\right)D\left(f\right).$ So $D$ restricts to a derivation of $\O^{gr}_\M$ of degree $+1.$ Moreover, since $\left[D,D\right]=0$ on $\sM,$ its restriction also satisfies this, so it induces a cohomological vector field (which we will abusively call $D$) on $\left(M,\O^{gr}_\M\right)$ making $\left(M,\O^{gr}_\M,D\right)$ into a differential graded manifold. Since  the vector field $D$ on $\sM$ is determined by what it does on local coordinate functions, and these may be chosen homogeneous, and hence elements of $\O^{gr}_\M,$ $D$ can be recovered from its restriction to $\left(M,\O^{gr}_\M\right).$
Conversely, given a dg-manifold $\left(\M,\O_{\M},Q\right),$ we know that we can associated to $\M$ a supermanifold $\sM$ with an even action of $\left(\R,\cdot\right)$ with fixed point locus $M,$ and the local coordinates on $\M$ become homogeneous coordinate systems on $\sM$ with respect to this action. We can then extend $Q$ as a derivation on all of $\Ci\left(\sM\right),$ as we know what it does on local coordinates.

The following theorem is also folklore:
\begin{theorem}
There is a canonical equivalence of categories
$$\SMfd_0^{\End\left(\R^{1|1}\right)} \simeq \dgMan,$$ between the category of supermanifolds with an even $\End\left(\R^{1|1}\right)$-action and equivariant maps, and the category of non-positively graded dg-manifolds.
\end{theorem}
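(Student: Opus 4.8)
The plan is to bootstrap from the equivalence $\SMfd_0^{\left(\R,\cdot\right)} \simeq \Mfd_\N$ already assembled above, and to read off the differential from the remaining group-theoretic data. The organizing principle is the semidirect-product decomposition $\End\left(\R^{1|1}\right) \cong \left(\R,\cdot\right) \ltimes \left(\R^{0|1},+\right)$ exhibited earlier: an action of the monoid $\End\left(\R^{1|1}\right)$ on a supermanifold $\sM$ is precisely an action of the submonoid $\left(\R,\cdot\right)$ together with an action of the normal odd subgroup $\left(\R^{0|1},+\right)$, subject to the compatibility dictated by the semidirect product (equivalently, by the bracket $\left[\varepsilon,\delta\right]=\delta$ of the two infinitesimal generators). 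Requiring the $\End\left(\R^{1|1}\right)$-action to be even is the same as requiring its restricted $\left(\R,\cdot\right)$-action to be even, so the functor $\rho \mapsto \rho|_{\left(\R,\cdot\right)}$ sends $\SMfd_0^{\End\left(\R^{1|1}\right)}$ to $\SMfd_0^{\left(\R,\cdot\right)} \simeq \Mfd_\N$, producing the underlying graded manifold $\M=\left(M,\O^{gr}_\M\right)$ with core the fixed-point locus.

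The crux is to identify the remaining datum --- the $\left(\R^{0|1},+\right)$-action --- with a cohomological vector field on $\M.$ First I would record the elementary but essential fact that, because $\Ci\left(\R^{0|1}\right)=\R \oplus \R\theta$ with $\theta^2=0,$ any action $a:\R^{0|1}\times \sM \to \sM$ has comorphism of the form $a^*\left(f\right)=f+\theta\, D\left(f\right)$ for a uniquely determined odd endomorphism $D$; the algebra-homomorphism property of $a^*$ forces $D$ to be an odd derivation, and the associativity axiom, computed using $m^*\theta=\theta_1+\theta_2$ and $\theta_i^2=0,$ forces exactly $D^2=0.$ Thus actions of the odd additive group correspond bijectively to square-zero odd vector fields $D=\rho_*\delta,$ with no integrability hypothesis needed --- the truncation $\exp\left(\theta D\right)=1+\theta D$ is automatic. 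The compatibility relation $\left[\varepsilon,\delta\right]=\delta$ then becomes $\left[E,D\right]=D,$ i.e.\ $\mathcal{L}_E D = D,$ which says precisely that $D$ raises the weight grading by $+1$; restricting $D$ to the homogeneous subsheaf $\O^{gr}_\M$ therefore yields a degree $+1$ derivation squaring to zero, i.e.\ a cohomological vector field. This assembles the object assignment $\left(\sM,\rho\right)\mapsto\left(\M,D\right),$ and the converse --- extending a cohomological vector field on $\M$ back to an odd vector field on $\sM$ and hence to an $\left(\R^{0|1},+\right)$-action --- is handled by the same dictionary run backwards, using that $D$ is determined by its values on homogeneous local coordinates.

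For morphisms, the existing equivalence already identifies $\left(\R,\cdot\right)$-equivariant maps with morphisms of graded manifolds, so it remains to match the supplementary $\left(\R^{0|1},+\right)$-equivariance with the dg-relatedness condition (\ref{eq:related}). A map $\varphi$ is equivariant for the odd group iff $\left(\mathrm{id}\times\varphi\right)^* a_{\sN}^* = a_{\sM}^*\varphi^*$; expanding both sides via $a^*\left(f\right)=f+\theta Df$ and comparing coefficients of $\theta$ gives exactly $\varphi^* D' = D\varphi^*,$ which is (\ref{eq:related}). Hence the functor is fully faithful, and combined with the object bijection and the established $\left(\R,\cdot\right)$-equivalence it is an equivalence of categories.

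The step I expect to require the most care is the second one --- establishing that the odd additive group action is faithfully and surjectively captured by a square-zero odd vector field, and that the semidirect-product compatibility is equivalent to the single weight condition $\left[E,D\right]=D.$ Conceptually this is where the differential is born, and one must be careful that (i) no completeness or flow-integrability condition intrudes (it does not, thanks to $\theta^2=0$), and (ii) the weight shift induced by $\left[\varepsilon,\delta\right]=\delta$ lands $D$ in degree $+1$ rather than $-1$ under the chosen sign conventions for $E.$ The remaining verifications --- evenness, functoriality, and the mutual-inverse property --- are then formal consequences of the already-proven $\left(\R,\cdot\right)$-equivalence.
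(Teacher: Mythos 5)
Your proposal is correct and takes essentially the same route as the paper's proof: restrict the action to $\left(\R,\cdot\right)$ to invoke the already-established equivalence with graded manifolds, then identify the residual $\left(\R^{0|1},+\right)$-action with a square-zero odd vector field (with $\left[E,D\right]=D$ forcing degree $+1$) and match equivariance with the intertwining condition (\ref{eq:related}). The only difference is one of explicitness: where the paper argues that an $\left(\R^{0|1},+\right)$-action is an odd one-parameter subgroup, automatically complete and hence determined by its infinitesimal generator, you carry out the $\theta$-expansion of the comorphism $a^*\left(f\right)=f+\theta D\left(f\right)$ and compare coefficients --- a concrete verification of the same assertion, not a different argument.
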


\begin{proof}
We know the functor on objects, namely, if $\sM$ is a supermanifold with an even $\End\left(\R^{1|1}\right)$-action, with induced even and odd vector fields $E$ and $D$ as above, $\left(M,\O^{gr}_\M,D\right)$ is the associated dg-manifold, where $M$ is the fixed point locus. Since $D$ is uniquely determined by its restriction to $\left(M,\O^{gr}_\M\right),$ and we already have an equivalence of categories between supermanifolds with an even $\left(\R,\cdot\right)$-action and graded manifolds, it suffices to show that a morphism between two supermanifolds with an $\left(\R^{0|1},+\right)$-action is equivariant if and only if it intertwines the infinitesimal generator. But an $\left(\R^{0|1},+\right)$-action is an odd one-parameter subgroup, which is automatically complete, hence completely determined by its associated odd-vector field.
\end{proof}

\subsection{Dg-manifolds as bundles of curved $L_\i\left[1\right]$-algebras}
Finally, we will explain yet another equivalent approach to dg-manifolds. We will not invest as much time explaining, as we will not be using it in this paper. However, we will be relying on the results of \cite{dgder} which are cast in this language, so we shall review it briefly in order to be able to effortlessly translate.

\begin{definition}
A \textbf{bundle of curved $L_\i\left[1\right]$-algebras} on a smooth manifold $M$ is a triple $\left(M,L,\lambda\right)$ of a finite dimensional graded vector bundle $L=\bigoplus \limits_{i=1}^n L_i$ equipped with smooth multi-linear maps $$\lambda^k:L \times_M^k L= L \times_{M} L \times_M \ldots \times_M L \to L$$ of degree $+1,$ for each $k=0,\ldots n$ which makes each fiber of $L$ into a curved $L_\i\left[1\right]$-algebra.

A \textbf{morphism} in this category $$\left(M,L,\lambda\right) \to \left(N,E,\mu\right)$$ is  a pair $\left(f,\phi\right)$ where $f:M \to N$ is a smooth map, and $\phi$ is a collection of maps
$$\phi^k:L \times_M^k L= L \times_{M} L \times_M \ldots \times_M L \to E$$ covering $f$ which induce morphisms of curved $L_\i\left[1\right]$-algebras on each fiber.
\end{definition}

\begin{remark}
Hidden in the notation of this definition is that $\lambda_0$ is simply a section of $L_1.$
\end{remark}

Given such an $L,$ we can canonically associated the vector bundle in graded manifolds $$\mathbb{L}:=\bigoplus \limits_{i=1}^n L_i\left[-i\right]\to M,$$ which places each $L_i$ in degree $i.$ The total space of this bundle serves as the underlying graded manifold of the associated dg-manifold. Hidden in the definition of the $\lambda$ is that each $\lambda^k$ must be graded-symmetric, so we get a morphism of graded vector bundles $$\lambda:\underset{k} \bigoplus \Sym^k\left(\mathbb{L}\right) \to \mathbb{L}\left[1\right]$$ which dualizes to a map
$$\mathbb{L}^*\left[-1\right] \to \Sym\left(\mathbb{L}^*\right).$$
If each $L_i$ has typical fiber $V_i$ and $\V$ is the associated $\N$-graded vector space, locally on an open over which it trivializes we have a map of graded $\Ci\left(U\right)$-modules
$$\Ci\left(U\right) \underset{\R} \otimes \mathbb{V}^*\left[-1\right] \to \Ci\left(U\right) \underset{\R} \otimes \Sym\left(\V^*\right)=\Ci\left(\mathbb{L}|_{U}\right).$$ This map uniquely extends to a derivation $D_\lambda$ of degree $+1,$ and the $L_\i$-algebra condition is equivalent to $D_\lambda^2=0.$

\begin{remark}
Again, some care must be taken with the case $k=0.$ Then $\Sym^0\left(\mathbb{L}\right)=\underline{\R},$ the trivial line bundle over $M,$ and the map
$$\underline{\R} \to \mathbb{L}\left[1\right]$$ is the same as a (degree 0) global section, i.e. a section of $\mathbb{L}\left[1\right]_0=L_1,$ and can be identified with $\lambda_0.$ Unwinding the definitions, the contribution to the differential $D_\lambda$ coming from $\lambda_0$ is the insertion operator $\iota_{\lambda_0},$ which acts on sections of $L^*_1$ by pairing and then extends as a derivation.
\end{remark}

Conversely, given a dg-manifold $\left(M,\O_{\M},D\right),$ using the graded Batchelor's theorem \ref{thm:BatchN}, we can reduce the structure group of the fiber bundle $\M \to M$ to produce an associated vector bundle in graded manifolds $\mathbb{L} \to M,$ with typical fiber $\V$ which is (non-canonically) isomorphic to $\M$ as a graded manifold. The cohomological vector field $D$ is determined by what it does in local coordinates, so locally determines maps of modules
$$\Ci\left(U\right) \underset{\R} \otimes \V^*\left[-1\right] \to \Ci\left(U\right) \underset{\R} \otimes \Sym\left(\V^*\right),$$
inducing a vector bundle map $$\mathbb{L}^*\left[-1\right] \to \Sym\left(\mathbb{L}^*\right),$$ which dualizes to a define a bundle of curved $L_\i$-algebras.

This is promoted to an equivalence of categories in \cite[Proposition 1.18]{dgder}.

\section{$\Ci$-algebras} \label{sec:cialg}
The prototypical example of a $\Ci$-algebra (a.k.a $\Ci$-ring) is the ring of smooth functions $\Ci\left(M\right)$ on a smooth manifold $M.$ This ring has an $n$-ary operation  associated to each smooth function $f:\R^n \to \R$:
\begin{eqnarray*}
\Ci\left(M\right)^n\cong \Hom_{\Mfd}\left(M,\R^n\right) &\to& \Ci\left(M\right)\\
\left(\varphi_1,\ldots,\varphi_n\right)=\varphi:M \to \R^n &\mapsto& f\circ \varphi.
\end{eqnarray*}

The precise definition is as follows:

Let $\Ci$ denote the full subcategory of $\Mfd$ spanned by the Euclidean manifolds $\R^n,$ for $n \ge 0.$ Every object is a finite product of $\R.$ This makes $\Ci$ into an \emph{algebraic theory,} with of one \emph{sort}, i.e. $\R$ is its single generator.

\begin{definition}
An \textbf{algebraic theory}, is a category $\bT$ (or more generally an $\i$-category) with finite products. A morphism of algebraic theories $$\bT \to \bT'$$ is a finite product preserving functor.
\end{definition}

\begin{example}
Let $\bbS$ a set. Regarding this set as a discrete category, let $\mathbf{T}_\mathbb{S}.$ be the
free completion of $\bbS$ with respect to finite products. Concretely, the objects are finite families
$$\left(s_i \in \mathbb{S}\right)_{i\in I}$$
and morphisms
$$\left(s_i \in \mathbb{S}\right)_{i\in I} \to \left(t_j \in \mathbb{S}\right)_{j\in J}$$
are functions of finite sets $f:J\to I$ such that
$$s_{f\left(j\right)}= t_j$$
for all $j\in J$.
Then $\mathbf{T}_\mathbb{S}$ is the \textbf{algebraic theory of $\bbS$-sorts}.

If $\bbS$ is a singleton set, then $\mathbf{T}_\mathbb{S}\cong \mathbf{FinSet}^{op}$ be the opposite category of finite sets. We denote this algebraic theory by $\bT_{obj}.$
\end{example}

\begin{definition}\label{definition:algebra}
Let $\sC$ be an $\i$-category with finite products. The $\i$-category $\Alg_{\bT}\left(\sC\right)$ of \textbf{$\bT$-algebras in $\sC$} is the full subcategory of $\Fun\left(\bT,\sC\right)$ on those functors which preserve finite products.

Of particular importance is the cases when $\sC=\Set$ the category of sets, and when $\sC=\Spc$, the $\i$-category of spaces (a.k.a. $\i$-groupoids). We will call an object of $\Alg_{\bT}\left(\Set\right)$ a $\bT$-algebra, and an object of $\Alg_{\bT}\left(\Spc\right)$ a homotopical $\bT$-algebra.
\end{definition}

\begin{example}
For any $\i$-category $\sC$ with finite products, there is a canonical equivalence $\Alg_{\bT_{obj}.}\left(\sC\right)\simeq \sC$ given by evaluation at $*$. More generally, for any set $\bbS,$ there is a canonical equivalence $\Alg_{T_{\bbS}}\left(\sC\right)\simeq \sC^{\bbS}.$ For example, if $\bbS=\left\{1,2,\ldots,n\right\},$ a $\bT_{\bbS}$-algebra in $\sC$ is the same data as the choice of $n$-objects of $\sC.$
\end{example}

\begin{definition}
Given an algebraic theory $\bT,$ an object $r \in \bT_0,$ called a \textbf{generator} if every object in $\bT$ is equivalent to $r^n,$ for some $n \ge 0.$ More generally, a subset $\bbS \subseteq \bT_0$ is said to be a set of generators for $\bT$ if every object in $\bT$ is equivalent to a finite product of objects in $\bbS.$
\end{definition}



Most types of algebraic objects have an associated algebraic theory. For example:

\begin{example}\label{ex:Comk}
Let $k$ be a commutative ring and let $\Comk$ be the the opposite category of finitely generated free $k$-algebras. Making this a skeletal category, we may represent it as the category of affine $k$-schemes of the form $\bbA_k^n,$ for $n \ge 0.$ Then, $\Comk$ is a $1$-sorted Lawvere theory with $\bbA_k^1$ as a generator.

A $\Comk$-algebra in $\Set$ is an ordinary commutative $k$-algebra. More precisely, given a $\Comk$-algebra $$\mathcal{A}:\Comk \to \Set,$$ the set $\mathcal{A}\left(\bbA_k^1\right)$ has an induced structure of a commutative $k$-algebra. This is because $\bbA_k^1$ is a commutative $k$-algebra object in schemes (and hence in $\Comk$) and any finite product preserving functor preserves commutative $k$-algebra objects. For example, the maps
\begin{eqnarray*}
\bbA_k^1 \times \bbA_k^1 = \bbA_k^2 &\to & \bbA_k^1\\
\left(x,y\right) &\mapsto & x+y
\end{eqnarray*}
and 
\begin{eqnarray*}
\bbA_k^1 \times \bbA_k^1 = \bbA_k^2 &\to & \bbA_k^1\\
\left(x,y\right) &\mapsto & x\cdot y
\end{eqnarray*}
define addition and multiplication for the ring object $\bbA_k^1.$ Conversely, given a commutative $k$-algebra $B,$ the functor $$\Hom\left(\blank,B\right)=\Hom\left(\Spec\left(B\right),\blank\right)$$ preserves finite products, hence is a $\Comk$-algebra in $\Set$. These constructions are categorically inverse to one another.
\end{example}



\begin{definition}
Let $\Cart$ denote the full subcategory of the category of smooth manifolds $\Mfd$ on those manifolds of the form $\R^n$ for $n \ge 0$ Then $\Cart$ is an algebraic theory with $\R$ as a generator. $\Cart$-algebras in $\Set$ are also known as \emph{$\Ci$-rings.}
\end{definition}


\begin{remark}
If $k=\R,$ $\ComR$ may also be seen as the category of manifolds of the form $\R^n$ whose morphisms are given by polynomials. Hence, there is an evident functor $\tau:\ComR \to \Cart$ which is a morphism of $1$-sorted Lawvere theories. 
$$\tau:\SComR \to \SCart.$$
\end{remark}

A $\Ci$-algebra is a commutative $\R$-algebra with extra structure. Indeed, if $\A:\Ci \to \Set$ is a $\Ci$-algebra, then $$i\circ \A:\ComR \to \Set$$ is a $\ComR$-algebra. As we have seen in Example \ref{ex:Comk}, this defines a commutative $\R$-algebra with underlying set $\A\left(\R\right).$ For $M$ a smooth manifold, as an algebra for the algebraic theory $\Ci,$ $\Ci\left(M\right)$ is the functor
$$\Hom_{\Mfd}\left(M,\blank\right):\Ci \to \Set.$$ 

For any algebraic theory $\T,$ and a generator $r,$ $j\left(r^n\right)=\Hom_\T\left(r,\blank\right)$ is the free $\T$-algebra (in $\Set$) on $n$-generators (of sort $r$). That is, it follows by the Yoneda lemma (since $\Alg_\T\left(\Set\right)$ is a full subcategory of $\Psh\left(\T^{op}\right)$) that for any $\T$-algebra $\A:\T \to \Set,$ there is a canonical bijection $$\Hom_{\Alg_\T\left(\Set\right)}\left(j\left(r^n\right),\A\right) \cong \A\left(r\right)^n.$$

\begin{definition}
The \textbf{free $\Ci$-algebra on $n$-generators} is $j\left(r^n\right)=\Hom_{\Mfd}\left(\R^n,\blank\right),$ i.e. 
$$\Ci\{x_1,x_2,\ldots,x_n\} \cong \Ci\left(\R^n\right).$$
\end{definition}




One of the main reasons that $\Ci$-algebras are reasonable to work with is the following theorem:

\begin{theorem}\cite[Proposition 1.2]{MSIA}, \cite[Corollary 2.116]{dg1}
Let $I$ be an ideal of the underlying $\R$-algebra $\A_\sharp$ of a $\Ci$-algebra $\A.$ Then $I$ is a $\Ci$-congruence and $\A_\sharp/I$ carries the canonical structure of a $\Ci$-algebra making $\A \to \A_\sharp/I=:\A/I$ a $\Ci$-homomorphism.
\end{theorem}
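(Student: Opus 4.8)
The plan is to deduce everything from Hadamard's lemma. Recall that a $\Ci$-algebra is a finite-product-preserving functor $\A \colon \Ci \to \Set$, so that each smooth map $f \colon \R^n \to \R$ determines an $n$-ary operation $\A(f) \colon \A_\sharp^{\,n} \to \A_\sharp$ on the underlying set $\A_\sharp = \A(\R)$, with the ring structure given by the operations attached to $+, \cdot \colon \R^2 \to \R$. To say that $I$ is a \emph{$\Ci$-congruence} is exactly to say that the equivalence relation $a \sim_I b \iff a - b \in I$ is compatible with every operation $\A(f)$; since a smooth map $\R^n \to \R^m$ is an $m$-tuple of scalar-valued ones, it suffices to check this for scalar $f$.

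First I would fix $f \colon \R^n \to \R$ and elements $a_1, \dots, a_n, b_1, \dots, b_n \in \A_\sharp$ with $a_i - b_i \in I$ for all $i$, and invoke Hadamard's lemma: there exist smooth functions $g_1, \dots, g_n \colon \R^n \times \R^n \to \R$ with
\[
f(x) = f(y) + \sum_{i=1}^n (x_i - y_i)\, g_i(x, y)
\]
holding identically on $\R^{2n}$ (concretely $g_i(x,y) = \int_0^1 (\partial_i f)\bigl(y + t(x-y)\bigr)\, dt$). This is an equality between two morphisms $\R^{2n} \to \R$ in $\Ci$ assembled from $f$, the $g_i$, the coordinate projections, and the ring maps. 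Applying the product-preserving functor $\A$ and evaluating the resulting operations at the tuple $(a_1, \dots, a_n, b_1, \dots, b_n) \in \A_\sharp^{\,2n}$, functoriality converts the displayed identity into
\[
\A(f)(a_1, \dots, a_n) = \A(f)(b_1, \dots, b_n) + \sum_{i=1}^n (a_i - b_i)\cdot \A(g_i)(a_1, \dots, a_n, b_1, \dots, b_n)
\]
in $\A_\sharp$. As $I$ is an ideal and each $a_i - b_i \in I$, the right-hand sum lies in $I$, so $\A(f)(a_1,\dots,a_n) - \A(f)(b_1,\dots,b_n) \in I$. This establishes that $I$ is a $\Ci$-congruence.

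With the congruence in hand, the quotient in $\Alg_{\Ci}\left(\Set\right)$ exists by general principles: I would define $\A/I$ to be the functor whose underlying set is $\A_\sharp/I$ and whose operation $(\A/I)(f)$ is the descent of $\A(f)$ along the surjection $\A_\sharp^{\,n} \to (\A_\sharp/I)^n$, which is well defined precisely by the congruence property just proved. Product-preservation and functoriality of $\A/I$ are inherited from $\A$, and the projection $\A \to \A/I$ is a $\Ci$-homomorphism by construction; since $+$ and $\cdot$ are themselves among the $\Ci$-operations, its underlying $\R$-algebra is the ordinary quotient $\A_\sharp/I$.

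The one step requiring genuine care --- and the crux of the argument --- is the passage from the Hadamard identity of smooth functions to the identity of operations in $\A_\sharp$. Here one must use that $\A$ respects composition and sends the smooth maps $+$ and $\cdot$ to the actual addition and multiplication of $\A_\sharp$, so that the expression $\sum_i (x_i - y_i)\, g_i(x,y)$, viewed as a single morphism $\R^{2n} \to \R$, is sent by $\A$ to the corresponding combination of the operations $\A(g_i)$ with the ring structure. This is purely formal functoriality, but it is where the whole reduction lives: once it is in place, the ideal property of $I$ does the rest.
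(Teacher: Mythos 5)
Your proof is correct, and it is essentially the standard argument: the paper itself gives no proof but cites \cite[Proposition 1.2]{MSIA}, whose proof is precisely this reduction via Hadamard's lemma --- write $f(x)-f(y)=\sum_i (x_i-y_i)g_i(x,y)$, apply the product-preserving functor $\A$ to turn this into an identity of operations, and let the ideal property absorb the right-hand side. Your handling of the key functoriality step and of the descent of operations to $\A_\sharp/I$ is exactly what is needed, so there is nothing to add.
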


The category of $\Ci$-algebras is presentable, so, in particular, it has coproducts. Coproducts in the category of commutative $k$-algebras are computed by tensor product: If $\A$ and $\B$ are commutative rings, their coproduct is $\A \underset{k} \otimes \B$--- the tensor product of $k$-modules equipped with an induced $k$-algebra structure. This is no longer the case for (classical) $\Ci$-algebras, and this is a good thing, since we have for $n,m > 0,$
$$\Ci\left(\R^n\right) \underset{\R} \otimes \Ci\left(\R^m\right) \subsetneq \Ci\left(\R^{n+m}\right).$$ But $\Ci\left(\R^n\right)$ and $\Ci\left(\R^m\right)$ are the free $\Ci$-algebras on $n$ and $m$ generators respectively, so their coproduct must be the free $\Ci$-algebra on $\left(n+m\right)$ generators, namely $\Ci\left(\R^{n+m}\right).$ We denote the coproduct in the category of classical $\Ci$-rings by $\oinfty.$ Then we have
$$\Ci\left(\R^n\right) \oinfty \Ci\left(\R^m\right) \cong \Ci\left(\R^{n+m}\right).$$ Similarly, we denote pushouts by
$$\xymatrix{\A \ar[d] \ar[r] & \cC \ar[d] \\ \B \ar[r] & \B \underset{\A} \oinfty \cC.}$$


The tensor product $\oinfty$ is remarkably well-behaved.

\begin{proposition}\cite[Chapter 1, Theorem 2.8]{MSIA}
If $f:M \to N$ and $g:L \to N$ are transverse maps of smooth manifolds, then
$$\Ci\left(M \times_N L\right) \cong \Ci\left(M\right) \underset{\Ci\left(N\right)} \oinfty \Ci\left(L\right).$$
\end{proposition}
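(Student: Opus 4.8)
The plan is to build the canonical comparison morphism from the universal property of the pushout, identify it locally with the passage to smooth functions on the vanishing locus of a submersion, and then globalize using partitions of unity.

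First I would produce the comparison map. Since $\Ci$ is fully faithful and contravariant, applying it to the pullback square defining $M \times_N L$ turns the two projections into algebra maps $\Ci\left(M\right) \to \Ci\left(M \times_N L\right)$ and $\Ci\left(L\right) \to \Ci\left(M \times_N L\right)$ that agree after precomposition with $f^*$ and $g^*$; this is a cocone under the span $\Ci\left(M\right) \leftarrow \Ci\left(N\right) \to \Ci\left(L\right)$, so the universal property of the pushout supplies a canonical map
$$\Phi \colon \Ci\left(M\right) \underset{\Ci\left(N\right)}{\oinfty} \Ci\left(L\right) \longrightarrow \Ci\left(M \times_N L\right).$$
Because $\oinfty$ is the coproduct and $\Ci$ sends products of manifolds to coproducts of $\Ci$-algebras, we have $\Ci\left(M\right) \oinfty \Ci\left(L\right) \cong \Ci\left(M \times L\right)$, so the source of $\Phi$ is the $\Ci$-quotient of $\Ci\left(M \times L\right)$ by the $\Ci$-ideal $J$ generated by the elements $\left(f \circ \mathrm{pr}_M\right)^*\phi - \left(g \circ \mathrm{pr}_L\right)^*\phi$ for $\phi \in \Ci\left(N\right)$; that this quotient is again a $\Ci$-algebra is exactly the quotient theorem recalled above. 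The task is thus to show that $J$ is the vanishing ideal of $M \times_N L \hookrightarrow M \times L$ and that $\Phi$ realizes the corresponding restriction map.

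Next I would treat the local model. Near a point $\left(x,y\right)$ with $f\left(x\right)=g\left(y\right)=p$, choose a chart $N \cong \R^n$ about $p$ with coordinates $y_1,\ldots,y_n$ and set $h_i := \left(f \circ \mathrm{pr}_M\right)^* y_i - \left(g \circ \mathrm{pr}_L\right)^* y_i \in \Ci\left(M \times L\right)$. Then $M \times_N L$ coincides locally with the common zero locus $Z$ of $h=\left(h_1,\ldots,h_n\right) \colon M \times L \to \R^n$, and the transversality hypothesis $f_*\left(T_xM\right)+g_*\left(T_yL\right)=T_pN$ is precisely the assertion that $h$ is a submersion on a neighborhood of $Z$. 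For such a submersion, the submersion form of Hadamard's lemma (as in \cite{MSIA}) shows that $\Ci\left(M \times L\right) \to \Ci\left(Z\right)$ is surjective with kernel exactly the $\Ci$-ideal $\left(h_1,\ldots,h_n\right)$, and the same lemma expresses each generator of $J$ as $\left(f \circ \mathrm{pr}_M\right)^*\phi - \left(g \circ \mathrm{pr}_L\right)^*\phi = \sum_i h_i \cdot r_i$ with smooth $r_i$. Hence locally $J = \left(h_1,\ldots,h_n\right)$ equals the vanishing ideal, and $\Phi$ is an isomorphism over this chart.

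The main obstacle is the globalization, since the $\Ci$-pushout does not visibly commute with restriction to opens, so the local isomorphisms must be patched with care. I would handle this either through the spectrum functor, by checking that both sides have structure sheaf $\mathcal{O}_{M \times_N L}$ and that $\Phi$ is a local isomorphism of structure sheaves, hence an isomorphism on global sections; or more concretely via partitions of unity, where surjectivity of $\Phi$ follows by patching local preimages against a partition of unity subordinate to a cover by the charts above, and injectivity follows because the ideal $J$ is germ-determined, so an element that vanishes locally in the quotient already vanishes. This is exactly the point at which the fine-sheaf behavior of $\Ci$-rings (the existence of bump functions) is indispensable, and it is the reason the analogous statement fails for polynomial algebras.
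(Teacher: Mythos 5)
You should first note that the paper offers no proof of this proposition at all: it is quoted verbatim from Moerdijk--Reyes \cite[Chapter 1, Theorem 2.8]{MSIA}, so your attempt can only be measured against the standard argument there, whose skeleton (comparison map from the universal property, Hadamard's lemma in a chart, globalization by partitions of unity) you have reproduced correctly. The genuine gap is in your globalization step, and it sits exactly at the crux of the theorem. Injectivity of $\Phi$ means: every $k\in\Ci\left(M\times L\right)$ vanishing on $Z=M\times_N L$ lies in the ideal $J$ generated by the differences $\left(f\circ\mathrm{pr}_M\right)^*\phi-\left(g\circ\mathrm{pr}_L\right)^*\phi$ with $\phi\in\Ci\left(N\right)$ --- and elements of $J$ are by definition \emph{finite} sums of multiples of these generators. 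Your local generators $h_i$ are differences of chart coordinates of $N$, which are not elements of $J$ at all (they must first be cut off by bump functions on $N$ to define global elements of $J$, and those agree with the $h_i$ only on a neighborhood of $Z$). Patching the local expressions by a partition of unity $\left\{\rho_\alpha\right\}$ therefore produces $k=\sum_\alpha\sum_i\left(\rho_\alpha c_{i,\alpha}\right)h_{i,\alpha}$, a locally finite but infinite sum in which the generators vary with $\alpha$; this only exhibits $k$ in the germ-determined closure of $J$, not in $J$. Your proposed remedy --- ``the ideal $J$ is germ-determined'' --- is precisely what has to be proven: germ-determinedness is not a formal property of ideals of $\Ci$-rings (e.g.\ the ideal of compactly supported functions in $\Ci\left(\R\right)$ is not germ-determined), and establishing it for $J$ is essentially equivalent to the theorem itself. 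The same circularity infects your alternative route through $\Speci$, since the spectrum functor is fully faithful only on germ-determined (complete) algebras, which is again the property in question.

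The standard repair is to manufacture \emph{finitely many global} generators. Choose a closed Whitney embedding $\iota=\left(\iota_1,\ldots,\iota_K\right):N\hookrightarrow\R^K$ and set $H_j:=\iota_j\circ f\circ\mathrm{pr}_M-\iota_j\circ g\circ\mathrm{pr}_L\in J$. At a point $z\in Z$ with $p=f\left(x\right)=g\left(y\right)$, one has $dH_z\left(u,v\right)=d\iota_p\left(df_x u-dg_y v\right)$, so by transversality the differentials of the $H_j$ span an $n$-dimensional space with common kernel $T_zZ$; hence some $n$ of the $H_j$ form a submersion near $z$ whose zero locus is $Z$, and Hadamard's lemma gives that the germs of $H_1,\ldots,H_K$ generate the germ of the vanishing ideal $I\left(Z\right)$ at $z$. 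At a point off $Z$, injectivity of $\iota$ forces some $H_j$ to be a unit germ. Now your partition-of-unity argument terminates: writing $k|_{U_\alpha}=\sum_{j=1}^K c_{j,\alpha}H_j$ and summing against $\left\{\rho_\alpha\right\}$ gives $k=\sum_{j=1}^K\bigl(\sum_\alpha\rho_\alpha c_{j,\alpha}\bigr)H_j$, a finite sum over the \emph{fixed} generators with smooth coefficients, whence $I\left(Z\right)=\left(H_1,\ldots,H_K\right)\subseteq J\subseteq I\left(Z\right)$. With this inserted, the remainder of your argument (the construction of $\Phi$, the identification of the pushout as $\Ci\left(M\times L\right)/J$ via the quotient theorem, and surjectivity via extension of smooth functions from the closed submanifold $Z$) is correct.
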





The following fact is fundamental:

\begin{proposition}
The canonical functor
$$\Ci\left(\blank\right):\Mfd \to \Alg_{\Ci}\left(\Set\right)$$
is fully faithful.
\end{proposition}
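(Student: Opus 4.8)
The plan is to show that for all smooth manifolds $M$ and $N$ the map induced by the functor $\Ci\left(\blank\right)$,
$$\Hom_{\Mfd}\left(M,N\right) \longrightarrow \Hom_{\Alg_{\Ci}\left(\Set\right)}\left(\Ci\left(N\right),\Ci\left(M\right)\right), \qquad f \mapsto f^*=\left(\blank\right)\circ f,$$
is a bijection (the target is contravariant in $N$, so this is exactly the assertion of full faithfulness).

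First I would dispose of the case $N=\R^n$. Here $\Ci\left(\R^n\right)$ is the free $\Ci$-algebra on the $n$ generators $x_1,\dots,x_n$, so by the universal property recorded above, $\Hom_{\Alg_{\Ci}\left(\Set\right)}\left(\Ci\left(\R^n\right),\Ci\left(M\right)\right)\cong \Ci\left(M\right)^n$. On the other hand a smooth map $M\to \R^n$ is precisely an $n$-tuple of smooth functions, so $\Hom_{\Mfd}\left(M,\R^n\right)\cong \Ci\left(M\right)^n$ as well, and one checks the two identifications agree with $f\mapsto f^*$. This already yields full faithfulness on the subcategory $\Cart \subseteq \Mfd$, and is essentially the Yoneda lemma for the algebraic theory $\Ci$.

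Next, for general $N$, I would reduce to the Euclidean case by a closed embedding. By the Whitney embedding theorem, choose a closed embedding $\iota:N \hookrightarrow \R^k$. Restriction of functions gives a surjective $\Ci$-homomorphism $\iota^*:\Ci\left(\R^k\right) \to \Ci\left(N\right)$ (every smooth function on a closed submanifold extends to the ambient space, via a tubular neighborhood and a bump function), whose kernel is the ideal $I_N=\{h \in \Ci\left(\R^k\right)\mid h|_N=0\}$. By the theorem that ideals of $\Ci$-algebras are $\Ci$-congruences, this exhibits $\Ci\left(N\right)\cong \Ci\left(\R^k\right)/I_N$ as a $\Ci$-algebra. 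Consequently a $\Ci$-homomorphism $\Ci\left(N\right)\to \Ci\left(M\right)$ is the same datum as a $\Ci$-homomorphism $\psi:\Ci\left(\R^k\right)\to \Ci\left(M\right)$ annihilating $I_N$; by the Euclidean case $\psi=g^*$ for a unique smooth map $g:M\to \R^k$, and the annihilation condition reads $h\circ g=0$ in $\Ci\left(M\right)$ for every $h \in I_N$.

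It remains to match these with smooth maps into $N$, which is where the only real content lies. I claim $h\circ g=0$ for all $h\in I_N$ if and only if $g\left(M\right)\subseteq \iota\left(N\right)$. The forward implication is the main point and is where point-set input enters: if some $g\left(p\right)\notin \iota\left(N\right)$, then since $\iota\left(N\right)$ is closed we may choose a bump function $h$ supported in a neighborhood of $g\left(p\right)$ disjoint from $\iota\left(N\right)$ with $h\left(g\left(p\right)\right)\neq 0$; such $h$ lies in $I_N$ yet $h\circ g \neq 0$, a contradiction. Once $g$ lands in the closed submanifold $\iota\left(N\right)$, it corestricts to a unique smooth map $\bar g:M\to N$ with $\iota\circ \bar g=g$, and $\bar g$ is the desired preimage of $\psi$. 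Naturality in $M$ and $N$ is then a routine check. I expect the surjectivity of the restriction map together with this bump-function argument---i.e. the classical facts that smooth functions extend off closed submanifolds and separate points from closed sets---to be the crux; the categorical bookkeeping around the free algebra and the ideal-quotient theorem is formal given the results already quoted.
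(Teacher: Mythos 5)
Your proof is correct. Note that the paper itself offers no proof of this proposition at all: it is stated as a ``fundamental fact'' and implicitly deferred to the classical literature on $\Ci$-rings (it is essentially \cite[Chapter I]{MSIA}), so there is no in-paper argument to compare against. Your route is the standard one: the Euclidean case $N=\R^n$ is the Yoneda/freeness statement the paper records for algebraic theories (morphisms out of the free algebra $\Ci\left(\R^n\right)$ are $n$-tuples of elements); the general case follows by choosing a Whitney closed embedding $\iota:N\hookrightarrow\R^k$, using softness of $\Ci_N$ (tubular neighborhood plus bump function) to see $\iota^*$ is surjective with kernel $I_N$, invoking the ideal-quotient theorem quoted in the paper to get $\Ci\left(N\right)\cong\Ci\left(\R^k\right)/I_N$ as $\Ci$-algebras, and then using a bump-function separation argument to show that a homomorphism killing $I_N$ comes from a map landing in $\iota\left(N\right)$, which corestricts smoothly since $\iota\left(N\right)$ is embedded. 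Two small points you gloss over but which are genuinely routine: a $\Ci$-homomorphism whose underlying map is bijective is automatically an isomorphism (the inverse components are again natural because both functors preserve finite products), and the correspondence you build is compatible with $f\mapsto f^*$ because $\bar g^*\circ\iota^*=\left(\iota\circ\bar g\right)^*$. With those observations made explicit, your argument is complete and is a legitimate self-contained proof of a statement the paper leaves to the reader.
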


\subsection{Homotopical $\Ci$-algebras}

By definition, a homotopical $\Ci$-algebra is a finite product preserving functor 
$$\A:\Ci \to \Spc.$$ In particular, it has an underlying \emph{space} $\A\left(\R\right)$ which carries the structure of a homotopical $\R$-algebra. The set of connected components $\pi_0\left(\A\left(\R\right)\right)$ inherits the structure of a classical $\Ci$-algebra. This is a direct consequence of the fact that $\pi_0$ preserves finite products. This defines a truncation functor
$$\pi_0:\Alg_{\Ci}\left(\Spc\right) \to \Alg_{\Ci}\left(\Set\right)$$ from homotopical $\Ci$-algebras to classical $\Ci$-algebras. Conversely, since the inclusion $i:\Set \hookrightarrow \Spc$ preserves \emph{all} limits, for any $\Ci$-algebra $\A,$ $i \circ \A$ is a homotopical $\Ci$-algebra. This defines a fully-faithful inclusion
$$i:\Alg_{\Ci}\left(\Set\right) \hookrightarrow \Alg_{\Ci}\left(\Spc\right),$$ right adjoint to $\pi_0,$ therefore exhibiting $\Alg_{\Ci}\left(\Set\right)$ as a reflective subcategory.

We already remarked that the canonical functor $M \mapsto \Ci\left(M\right)$ constitute a fully faithful embedding of $\Mfd$ into $\Alg_{\Ci}\left(\Set\right)^{op}$ which preserves transverse pullbacks, but as $i^{op}$ is a \emph{left} adjoint, the analogous result for $\Alg_{\Ci}\left(\Spc\right)^{op}$ does not follow. That is to say, if $f:M \to N$ and $g:L \to N$ are transverse smooth maps, then even though
$$\Ci\left(M \times_N L\right) \cong \Ci\left(M\right) \underset{\Ci\left(N\right)} \oinfty \Ci\left(L\right),$$ one must show that this tensor product (pushout) is a derived tensor product (homotopy pushout). Nonetheless, the result is true:

\begin{theorem}\label{thm:transverse}\cite[Theorem 4.51]{univ}
The functor $\Ci:\Mfd \hookrightarrow \Alg_{\Ci}\left(  \Spc\right)^{op}$ sending a manifold $M$ to $\Ci\left(M\right)$ is fully faithful, and preserves transverse pullbacks. 
\end{theorem}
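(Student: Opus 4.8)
The plan is to factor the functor as the composite $\Mfd \stackrel{\Ci}{\longrightarrow} \Alg_{\Ci}(\Set)^{op} \stackrel{i^{op}}{\longrightarrow} \Alg_{\Ci}(\Spc)^{op}$ and to treat full faithfulness and the preservation of transverse pullbacks separately. Full faithfulness is the formal part. The first functor is fully faithful by the Proposition recalled above, and $i\colon \Alg_{\Ci}(\Set) \hookrightarrow \Alg_{\Ci}(\Spc)$ is fully faithful, exhibiting classical $\Ci$-algebras as a reflective subcategory. Hence for manifolds $M$ and $N$ the mapping space $\operatorname{Map}_{\Alg_{\Ci}(\Spc)}(\Ci(N),\Ci(M))$ is discrete and canonically identified with $\Hom_{\Alg_{\Ci}(\Set)}(\Ci(N),\Ci(M)) \cong \Hom_{\Mfd}(M,N)$; passing to opposite categories yields full faithfulness of the composite.

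The substance is the preservation of transverse pullbacks. Let $f\colon M \to P$ and $g\colon N \to P$ be transverse. A pullback in $\Alg_{\Ci}(\Spc)^{op}$ is a pushout in $\Alg_{\Ci}(\Spc)$, and we already know from the cited computation that the $1$-categorical pushout $\Ci(M) \underset{\Ci(P)}{\oinfty} \Ci(N)$, formed in $\Alg_{\Ci}(\Set)$, is $\Ci(M\times_P N)$. Since $i$ is a right adjoint it does not in general preserve colimits, so the task is precisely to show that this ordinary pushout, viewed in $\Alg_{\Ci}(\Spc)$, already computes the homotopy pushout; equivalently, that the homotopy pushout $\Ci(M) \underset{\Ci(P)}{\oinfty} \Ci(N)$ formed in $\Alg_{\Ci}(\Spc)$ is $0$-truncated with $\pi_0$ the classical pushout. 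I would verify this in a model for $\Alg_{\Ci}(\Spc)$, such as simplicial $\Ci$-algebras or dg-$\Ci$-algebras, where homotopy pushouts are computed by cofibrant resolutions.

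To make this tractable I would reduce to a local regular-sequence statement. Writing $M \times_P N \cong (M\times N) \times_{P\times P} P$ via the diagonal $\Delta\colon P \to P\times P$, transversality $f \pitchfork g$ is equivalent to $(f,g)\colon M\times N \to P\times P$ being transverse to $\Delta$. Working locally on $P$---legitimate because partitions of unity make $\Ci$-algebras behave sheaf-theoretically, and a tubular neighborhood realizes $\Delta$ as the zero section of a vector bundle---the diagonal is cut out by the differences of the two pulled-back coordinate systems, and transversality guarantees that their pullbacks form a $\Ci$-regular sequence in $\Ci(M\times N)$. The associated $\Ci$-Koszul complex is then a cofibrant resolution computing the homotopy pushout, and regularity forces its homology to be concentrated in degree $0$, equal to the classical quotient $\Ci(M\times_P N)$. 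This exhibits the homotopy pushout as discrete and equal to the classical one.

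The main obstacle is the $\Ci$-specific subtlety underlying this last step. Because the coproduct $\oinfty$ is not the underlying $\R$-linear tensor product, the homotopy pushout of $\Ci$-algebras is \emph{not} the derived tensor product of underlying $\R$-algebras, so the ordinary Koszul complex over $\R$ does not compute it; one must work with the $\Ci$-Koszul complex and the correct free $\Ci$-resolutions, and verify both that a transverse (submersive) situation yields a genuinely $\Ci$-regular sequence and that such regularity kills the higher homotopy of the homotopy pushout. Treating the odd-degree generators correctly---the point flagged earlier about odd regular sequences---and then patching the local acyclicity statements into a single global equivalence via descent and partitions of unity is where the real work lies. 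Establishing that transversality is exactly the condition guaranteeing $\Ci$-regularity, and that $\Ci$-regularity forces the vanishing of positive homotopy, is the technical heart of the argument.
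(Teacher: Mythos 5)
First, a remark on the comparison itself: this paper does not prove Theorem \ref{thm:transverse} at all --- it quotes it from \cite[Theorem 4.51]{univ}, and the discussion preceding it only isolates the crux, namely that the classical pushout $\Ci\left(M\right) \underset{\Ci\left(N\right)}\oinfty \Ci\left(L\right)$ must be shown to already be a \emph{homotopy} pushout. You identify exactly the same crux, and your full-faithfulness argument (factor through $\Alg_{\Ci}\left(\Set\right)^{op}$ and use that $i$ is a fully faithful right adjoint, so mapping spaces between discrete algebras are discrete) is correct and is the standard one. Your plan for the transversality half --- tubular neighborhood of the diagonal, transversality giving a $\Ci$-regular sequence, the $\Ci$-Koszul complex as a cofibrant resolution whose regularity kills higher cohomology, then gluing --- is also the right circle of ideas: it is precisely the toolkit this paper itself assembles for its new results (Lemma \ref{lem:Kosz1}, Proposition \ref{prop:kosz}, Lemma \ref{lem:acycpi}, and the proofs of Theorem \ref{thm:sch} and Proposition \ref{prop:pbspres}).

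There is, however, one genuine structural gap. The geometric identity $M \times_P N \cong \left(M\times N\right)\times_{P\times P} P$ only yields a statement about $\Ci\left(M\right) \underset{\Ci\left(P\right)}\oinfty^{\mathbb{L}} \Ci\left(N\right)$ through the formal pasting $\Ci\left(M\right) \underset{\Ci\left(P\right)}\oinfty^{\mathbb{L}} \Ci\left(N\right) \simeq \left(\Ci\left(M\right)\oinfty^{\mathbb{L}}\Ci\left(N\right)\right) \underset{\Ci\left(P\right)\oinfty^{\mathbb{L}}\Ci\left(P\right)}\oinfty^{\mathbb{L}} \Ci\left(P\right),$ which presupposes $\Ci\left(M\right)\oinfty^{\mathbb{L}}\Ci\left(N\right)\simeq \Ci\left(M\times N\right)$ and $\Ci\left(P\right)\oinfty^{\mathbb{L}}\Ci\left(P\right)\simeq \Ci\left(P\times P\right).$ Preservation of binary products is itself an instance of the theorem you are proving (a transverse pullback over the point), so as written your reduction is circular. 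The fix is to prove a base case first: for Cartesian spaces it holds by construction (finitely generated free algebras are cofibrant and closed under coproducts); for an open $U\subseteq \R^n$ it follows because $\Ci\left(U\right)$ is a $\Ci$-localization, i.e.\ a pushout along $\Ci\left(\R\right)\to\Ci\left(\R\setminus\{0\}\right)$, whose source and target are (quasi-isomorphic to) cofibrant objects; and for a general manifold one uses that $M$ is a smooth retract of a tubular neighborhood $U\subseteq\R^N$, so the comparison map for $M$ is a retract of the one for $U$. Equivalently, one first proves that pullbacks along submersions are preserved (locally they are projections, hence quasi-free extensions, hence cofibrations, then glue), and only then runs your Koszul argument for the embedded, transversally intersected part. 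Two smaller corrections: the worry about odd regular sequences is misplaced here, since every function being killed has degree $0$, so only even regularity enters; indeed the underlying cochain complex of the $\Ci$-Koszul algebra literally \emph{is} the classical Koszul complex, and what the $\Ci$-structure governs is its universal property (which pushout it computes), not the cohomology computation. Finally, the gluing step is not just partitions of unity: to recover the global homotopy pushout from local data one needs that homotopically finitely presented algebras are complete, i.e.\ recovered from global sections of their $\Speci$ structure sheaf (Proposition \ref{prop:comp}), which is exactly how the related Proposition \ref{prop:pbspres} of this paper proceeds.
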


\begin{theorem}\cite[Theorem 5.1]{Be},\cite[Corollary 5.5.9.2]{htt} \label{theorem:bergner}
For any algebraic theory $\bT,$ there is an equivalence of $\i$-categories $$N_{hc}\left(\Alg_{\bT}\left(\Set\right)^{\Delta^{op}}_{proj.}\right) \simeq \Alg_{\bT}\left(\Spc\right)$$
between the homotopy coherent nerve of the category of simplicial $\bT$-algebras, endowed with the projective model structure, and the $\i$-category of $\bT$-algebras in $\Spc.$
\end{theorem}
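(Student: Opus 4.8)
The strategy is to reduce the statement to the general comparison between projective model structures on diagram categories and $\i$-categories of diagrams, and then to account separately for the product-preservation condition by a rectification argument; the latter is where the real work lies. First I would identify the category of simplicial $\bT$-algebras with product-preserving functors into simplicial sets. Because finite products in a functor category are computed pointwise, a simplicial object in $\Alg_{\bT}\left(\Set\right)$ is the same datum as a finite-product-preserving functor $\bT \to \Set^{\Delta^{op}}$, so there is a canonical isomorphism
$$\Alg_{\bT}\left(\Set\right)^{\Delta^{op}} \cong \Alg_{\bT}\left(\Set^{\Delta^{op}}\right).$$
Under this identification I would construct the projective model structure, in which $f$ is a weak equivalence (respectively fibration) precisely when each component $f\left(t\right)$, for $t \in \bT$, is a Kan--Quillen weak equivalence (respectively fibration) of simplicial sets. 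Its existence follows by transfer along the free--forgetful adjunction between $\Set^{\Delta^{op}}$ and $\Alg_{\bT}\left(\Set^{\Delta^{op}}\right)$: the forgetful functor creates filtered colimits and the free $\bT$-algebra functor supplies the generating (acyclic) cofibrations, while the path objects needed for Quillen's transfer criterion are built levelwise from those of simplicial sets. The outcome is a combinatorial simplicial model category.

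Next I would produce the comparison functor. A fibrant simplicial $\bT$-algebra $A$ takes values in Kan complexes, and for Kan complexes the strict product $A\left(t\right) \times A\left(t'\right)$ computes the homotopy product; hence the assignment $t \mapsto A\left(t\right)$, viewed in $\Spc$, preserves finite products. Restricting to fibrant--cofibrant objects and passing to the homotopy coherent nerve therefore yields a functor
$$\Phi:N_{hc}\left(\Alg_{\bT}\left(\Set^{\Delta^{op}}\right)^{\circ}\right) \longrightarrow \Alg_{\bT}\left(\Spc\right).$$
To organize the argument it is convenient to note that, after forgetting the product condition, the projective model structure on the full diagram category $\Fun\left(\bT,\Set^{\Delta^{op}}\right)$ presents $\Fun\left(\bT,\Spc\right)$ by the standard comparison for projective model structures \cite[Proposition 4.2.4.4]{htt}; the functor $\Phi$ is then the restriction of this equivalence to the product-preserving objects on each side.

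It remains to show that $\Phi$ is an equivalence. Full faithfulness is comparatively formal: derived mapping spaces in the projective model structure agree with the mapping spaces of the corresponding objects of $\Fun\left(\bT,\Spc\right)$ by \cite[Proposition 4.2.4.4]{htt}, and since $\Alg_{\bT}\left(\Spc\right)$ is a \emph{full} subcategory of $\Fun\left(\bT,\Spc\right)$ these mapping spaces are unchanged by the passage from diagrams to algebras. The essential surjectivity is the crux and, I expect, the main obstacle: given an arbitrary product-preserving functor $F:\bT \to \Spc$, one must exhibit a strictly product-preserving functor $\bT \to \Set^{\Delta^{op}}$ modeling it. This is exactly a rigidification statement --- every homotopy-coherent diagram over the ordinary category $\bT$ that preserves products up to homotopy can be replaced by a strict diagram preserving them on the nose --- and it is precisely the content of Bergner's rectification theorem \cite{Be}.

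The two facts that make this final step go through are that products of Kan complexes compute homotopy products (so strict product-preservation is genuinely homotopy-invariant, and the strictified diagram still represents $F$) and that $\bT$ is an ordinary $1$-category (so there are no higher coherences on the source that would obstruct the strictification). With these in place I would invoke \cite{Be} and \cite[Corollary 5.5.9.2]{htt} to conclude that $\Phi$ is essentially surjective, completing the proof that it is an equivalence of $\i$-categories.
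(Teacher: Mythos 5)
The paper never proves this statement: Theorem \ref{theorem:bergner} is imported background, with the proof outsourced entirely to \cite[Theorem 5.1]{Be} and \cite[Corollary 5.5.9.2]{htt}. So your proposal can only be judged as a standalone argument, and as such it has a genuine gap precisely at the step you label ``comparatively formal.'' The equivalence of \cite[Proposition 4.2.4.4]{htt} identifies $\Fun\left(\bT,\Spc\right)$ with the homotopy coherent nerve of the fibrant--cofibrant objects of $\Fun\left(\bT,\Set^{\Delta^{op}}\right)$ with its projective model structure, where ``cofibrant'' means projectively cofibrant \emph{as a diagram}. The domain of your $\Phi$, however, consists of objects cofibrant in the \emph{algebra} model structure, and these are in general not projectively cofibrant diagrams: already for the theory of commutative monoids, free simplicial algebras are built from pieces of the form $\left(\Hom_{\bT}\left(r^n,\blank\right)\times X^n\right)/\Sigma_n$, and the non-free symmetric group actions destroy projective cofibrancy. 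Hence $\Phi$ is \emph{not} ``the restriction of this equivalence to the product-preserving objects on each side'' --- its domain does not even sit inside the domain of that equivalence --- and the derived mapping spaces of the algebra model category (computed with algebra-cofibrant replacements) cannot be identified with mapping spaces in $\Fun\left(\bT,\Spc\right)$ by that citation. That identification, equivalently the full faithfulness of the derived forgetful functor, is exactly the hard content of the Badzioch--Bergner rectification theorem; if it were formal, the same argument would rectify homotopy-commutative structures in settings where rectification is known to fail, so some special feature of simplicial sets must enter.

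The circularity compounds this: your essential surjectivity step ``invokes \cite{Be} and \cite[Corollary 5.5.9.2]{htt},'' but \cite[Theorem 5.1]{Be} \emph{is} the statement being proven, so both nontrivial steps of the proposal are deferred to the very references the theorem cites, with the remaining material (the isomorphism $\Alg_{\bT}\left(\Set\right)^{\Delta^{op}}\cong\Alg_{\bT}\left(\Set^{\Delta^{op}}\right)$, the transferred model structure, the construction of $\Phi$) being correct but only scaffolding. If you want an argument that does not presuppose the result, the cleanest route is Lurie's: show that the underlying $\i$-category of the algebra model category admits sifted colimits, that the finitely generated free algebras form compact projective generators whose full subcategory is equivalent to $\bT^{op}$, and then invoke the universal characterization of the sifted cocompletion $\mathcal{P}_{\Sigma}$ from \cite[Section 5.5.8]{htt}; this bypasses any point-set comparison of cofibrant replacements, which is where your argument breaks down.
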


By Theorem \ref{theorem:bergner}, any homotopical $\Ci$-algebra can be modeled by a simplicial $\Ci$-algebra in sets. Since any $\Ci$-algebra has an underlying abelian group, such a simplicial algebra has an underlying simplicial abelian group. It follows that the underlying simplicial set is a Kan complex. The projective model structure on the category of $\Ci$-algebras in $\Set^{\Delta^{op}}$ is such that:
\begin{itemize}
\item[i)] $f:X \to Y$ is a \textbf{weak equivalence} if
$$f\left(\R\right):X\left(\R\right) \to Y\left(\R\right)$$ is a weak homotopy equivalences of simplicial sets and
\item[ii)] $f:X \to Y$ is a \textbf{fibration} if the above map is a Kan fibration.
\end{itemize}
The $\i$-category associated to this model category is equivalent to $\Alg_{\SCi}\left(\Spc\right).$

\subsection{Differential Graded $\Ci$-algebras}
Let a homotopical $\Ci$-algebra $\A$ be represented by a simplicial $\Ci$-algebra $A_\bullet.$ As discussed, $A_\bullet$ has the underlying structure of a simplicial abelian group. The classical Dold-Kan correspondence associates to this simplicial abelian group a cochain complex. In fact, this abelian group is an $\R$-module, so this becomes a cochain complex over $\R,$ concentrated in non-positive degrees. One may wonder if one can transfer the $\Ci$-algebra structure to this cochain complex.
\begin{definition}
A (non-positively graded) \textbf{differential graded $\Ci$-algebra} (dg-$\Ci$-algebra), is a commutative differential graded $\R$-algebra $\left(\cA^{\bullet},d\right),$ together with the additional structure of a lift of the induced commutative $\R$-algebra structure on $\cA_0$ to the structure of a $\Ci$-algebra. A morphism $f:\left(\cA,d\right)\to \left(\cA',d'\right)$ between two such algebras is a morphism of differential graded $\R$-algebras such that the morphism $f^0:\cA^0 \to \cA'^0$ is a morphism of $\Ci$-algebras.
\end{definition}

\begin{remark}
If $\A^\bullet$ is not concentrated in non-positive degrees, the $\Ci$-algebra structure is on the algebra of $0$-cycles.
\end{remark}

The prototypical example of a dg-$\Ci$-algebra is the dg-algebra of smooth functions on a dg-manifold $\left(\Ci\left(\M\right),D\right).$

\begin{theorem}\cite[Theorem 6.5]{dg2} \label{thm:dgmodel}
There exists a cofibrantly generated, almost simplicial, model category structure on the category of (non-positively graded) dg-$\Ci$-algebras, unique with the property that $$f:\left(\cA,d\right)\to \left(\cA',d'\right)$$ is a weak equivalence (respectively fibration) if and only if the induced map of underlying cochain complexes is, with respect to the projective model structure on cochain complexes $\textbf{Ch}\left(\R\right)_{\le 0}.$ 
\end{theorem}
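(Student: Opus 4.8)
The plan is to obtain the model structure by \emph{transfer} along the free--forgetful adjunction $F \dashv U$, where $U : \left(\text{dg-}\Ci\text{-algebras}\right) \to \textbf{Ch}\left(\R\right)_{\le 0}$ records the underlying non-positively graded cochain complex of a pair $\left(\cA,d\right)$ and $F$ is its left adjoint, the free dg-$\Ci$-algebra functor. Here $F$ exists by the adjoint functor theorem, since the category of dg-$\Ci$-algebras is locally presentable and $U$ preserves all limits and filtered colimits. Since by fiat the weak equivalences and fibrations are exactly those maps $f$ for which $U\left(f\right)$ is a weak equivalence, respectively a fibration, in the projective structure, uniqueness is automatic: the cofibrations are then forced as the maps with the left lifting property against the trivial fibrations, so at most one such structure can exist. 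Only existence requires work.

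The projective model structure on $\textbf{Ch}\left(\R\right)_{\le 0}$ is cofibrantly generated; write $I$ for its generating cofibrations (the sphere-to-disc inclusions) and $J = \left\{0 \to D^n\right\}_{n \ge 1}$ for its generating trivial cofibrations, where $D^n$ is the acyclic complex $\R \xrightarrow{\mathrm{id}} \R$ concentrated in degrees $-n$ and $-n+1$. By Kan's transfer (recognition) theorem, the transferred structure---which is then automatically cofibrantly generated, with generating cofibrations $F\left(I\right)$ and generating trivial cofibrations $F\left(J\right)$---exists provided two conditions hold: (i) the domains of $F\left(I\right)$ and $F\left(J\right)$ are small relative to the transferred cell complexes, which is immediate from local presentability; and (ii) every relative $F\left(J\right)$-cell complex is a weak equivalence, that is, a quasi-isomorphism on underlying complexes. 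Condition (ii) is the whole content of the theorem.

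To establish (ii) I would check that pushing out a map in $F\left(J\right)$ along an arbitrary $\cA$ yields a quasi-isomorphism and then pass to the transfinite composite. Concretely, $F$ sends $0 \to D^n$ to the inclusion of the initial object $\R$ into the free dg-$\Ci$-algebra on a single exact generator, adjoining a pair $\theta$ (degree $-n$) and $d\theta$ (degree $-n+1$) forming a contractible two-term complex; the pushout $\cA \to \cA \oinfty F\left(D^n\right)$ thus adjoins a contractible Koszul pair over $\cA$, which on underlying complexes carries an evident contracting homotopy and so is a quasi-isomorphism. The main obstacle---and the sole place the $\Ci$-structure genuinely intervenes---is the interaction of this pushout with the nonstandard coproduct $\oinfty$ in degree $0$: when $n = 1$ the generator $d\theta$ sits in degree $0$, so the pushout freely adjoins a degree-$0$ $\Ci$-generator (along the classifying map $\Ci\left(\R\right) \to \cA^0$) together with a degree $-1$ generator killing it. One must verify that, on underlying complexes, freely adjoining negatively graded generators over the fixed degree-$0$ $\Ci$-algebra $\cA^0$ is computed by an ordinary free graded-commutative ($\Sym$) extension, so that the Koszul-contraction estimate reduces to the classical characteristic-zero computation. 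Finally, the adjective \emph{almost simplicial} is justified by constructing the cotensor $\cA \mapsto \cA^{\Delta^n}$ through the lax-monoidal Dold--Kan correspondence: this furnishes functorial path objects---every object being fibrant, since every object of $\textbf{Ch}\left(\R\right)_{\le 0}$ is projectively fibrant---together with the pushout-product axiom, but only in the weakened form dictated by the laxness of Dold--Kan, which is precisely why the enrichment is \emph{almost}, rather than genuinely, simplicial.
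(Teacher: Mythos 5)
First, a point of comparison: the paper never proves this statement at all --- it is imported from \cite[Theorem 6.5]{dg2}, and the paper's only gloss is the remark immediately following it, that the structure ``was constructed by transfer from the projective model structure on non-positively graded cochain complexes.'' Your proposal is that same transfer strategy, so architecturally you agree with the cited source: uniqueness is indeed automatic once weak equivalences and fibrations are prescribed, cofibrant generation and smallness come from Kan's recognition theorem plus local presentability, and the whole content is your condition (ii), that relative $F\left(J\right)$-cell complexes are quasi-isomorphisms.

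The gap is in how you propose to verify (ii) in the case $n=1$. For $n\ge 2$ your reduction is correct: only negative-degree generators are adjoined, so $\cA \oinfty F\left(D^n\right)\cong \cA\otimes_\R \Sym\left(D^n\right)$ on underlying complexes and the K\"unneth formula over $\R$ finishes the argument. But for $n=1$ the degree-$0$ algebra is \emph{not} fixed: the degree-$0$ part of $\cA\oinfty F\left(D^1\right)$ is the free $\Ci$-extension $\cA^0\left\{x\right\}=\cA^0\oinfty\Ci\left(\R\right)$, which is strictly larger than $\cA^0\otimes_\R\R\left[x\right]$, so there is no ``ordinary $\Sym$-extension'' in sight and the classical polynomial Koszul contraction does not apply verbatim. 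What is actually needed is Hadamard's lemma in the $\Ci$-setting: every $f\in\cA^0\left\{x\right\}$ can be written $f=f|_{x=0}+x\,g$ with $g$ unique, i.e. $x$ is a regular element and $\cA^0\left\{x\right\}/\left(x\right)\cong\cA^0.$ Granting this, $h\left(f\right):=\theta g$ extends to a contracting homotopy (the sequence $0\to\cA^0\left\{x\right\}\xrightarrow{\;x\;}\cA^0\left\{x\right\}\to\cA^0\to 0$ is $\cA^0$-linearly split, hence stays exact after tensoring with each $\cA^{-k}$), and one gets the desired quasi-isomorphism $\cA\to\cA\oinfty F\left(D^1\right).$ This smooth-division statement is the one genuinely non-formal, $\Ci$-theoretic input in the whole theorem, and it is exactly what your phrase ``reduces to the classical characteristic-zero computation'' elides. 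A secondary issue: ``almost simplicial'' is a technical term defined in \cite{dg2}, where the enrichment is built from de Rham forms on simplices and the ``almost'' records a failure of the full simplicial model category axioms (cotensors exist, tensors need not), not any laxness of Dold--Kan; your proposed mechanism would have to be replaced by, or proved equivalent to, that construction.
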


\begin{definition}
Denote by $\underline{\mathbf{dg}\Ci\Alg}_{\le 0}$ the $1$-category of non-positively graded dg-$\Ci$-algebras and denote by $\dgci$ the $\i$-category associated to the above model category, and denote the associated $\i$-category of non-positively graded cochain complexes by $\ch.$
\end{definition}

The above model structure was constructed by transfer from the projective model structure on non-positively graded cochain complexes, so there is a natural Quillen adjunction. The right adjoint sends a dg-$\Ci$-algebra $\left(\A^\bullet,d\right),$ to its underlying cochain complex. The left adjoint sends a cochain complex $\left(V^\bullet,d\right)$ to the \emph{free dg-$\Ci$-algebra on $\left(V^\bullet,d\right).$} Explicitly, it is
$$\Sym_{\Ci}\left(\left(V^\bullet,d\right)\right):=  \Ci\left(V^0\right) \underset{\Sym\left(V^0\right)} \otimes \Sym\left(\left(V^\bullet,d\right)\right).$$ 

Note that for $\V$ an $\N$-graded finite dimensional vector space, regarded simultaneously as a cochain complex with zero differential, and a graded manifold, then we have an identification
$$\Ci\left(\V\right)\cong \Sym_{\Ci}\left(\V^*\right).$$

\begin{definition}
Let $\V$ be a $\Z$-graded vector space concentrated in non-positive degrees, which is of total finite dimension, let $m$ be the smallest $k$ for which $V_k \ne 0.$ For all $n \ge 0,$ let $x_{i,1},\ldots,x_{i,n_i}$ be a basis for $V_{-n}.$ Then we write
$$\Ci\{x_{1,1},x_{i,2},\ldots x_{1,n_1}, x_{2,1},x_{2,2},\ldots x_{m,n_m}\}:=\Sym_{\Ci}\left(\V\right),$$ for the \textbf{free $\Ci$-algebra on $n_1$-generators of degree $-1,$ $n_2$-generators of degree $-2$,..., and $n_m$-generators of degree $-m.$}
\end{definition}

Finally, we note that in the projective model structure, every cochain complex is cofibrant, so the left derived functor of $\Sym_{\Ci}$ is itself.

\begin{definition}
A morphism $f:\left(\A,d\right) \to \left(\B,d\right)$ between  dg-$\Ci$-algebras is \textbf{quasi-free} if there exists a $-\N$-graded super vector space $W^\bullet$ such that $f$ is isomorphic to the inclusion of $\A$ into $\A \underset{\R}\otimes \Sym_{\Ci}\left(W^\bullet\right),$ after forgetting the differentials.
\end{definition}

\begin{proposition}\cite{dg2}
A morphism $f:\left(\A,d\right) \to \left(\B,d\right)$ between non-positively graded differential $\Ci$-algebras is a cofibration if and only if it is a retract of a quasi-free extension.
\end{proposition}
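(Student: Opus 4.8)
The plan is to derive the statement from the cofibrant generation of the model structure of Theorem \ref{thm:dgmodel}. That structure was produced by \emph{transfer} along the free--forgetful adjunction $\Sym_{\Ci} : \mathbf{Ch}\left(\R\right)_{\le 0} \rightleftarrows \underline{\mathbf{dg}\Ci\Alg}_{\le 0}$, so by Kan's transfer theorem a set of generating cofibrations is $\left\{\Sym_{\Ci}\left(\iota\right)\right\}$, where $\iota$ runs over the generating cofibrations of the projective structure on $\mathbf{Ch}\left(\R\right)_{\le 0}$: for $n \ge 1$ the inclusions $\iota_n \colon \R\left[n-1\right] \hookrightarrow D_n$, with $D_n$ the acyclic two-term complex on $\R\left[n\right]\oplus \R\left[n-1\right]$ whose differential is the identity $\R\left[n\right]\to \R\left[n-1\right]$, together with $\iota_0 \colon 0 \to \R\left[0\right]$. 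The first observation is that, since $\Sym_{\Ci}$ is a left adjoint and hence preserves coproducts, forgetting the differential gives $\Sym_{\Ci}\left(D_n\right)\cong \Sym_{\Ci}\left(\R\left[n-1\right]\right) \oinfty \Sym_{\Ci}\left(\R\left[n\right]\right)$. Therefore pushing out $\Sym_{\Ci}\left(\iota_n\right)$ along a classifying map $\Sym_{\Ci}\left(\R\left[n-1\right]\right)\to \A$ (i.e.\ a choice of degree $-n+1$ cycle $z \in \A$) yields, on underlying graded $\Ci$-algebras, exactly $\A \oinfty \Sym_{\Ci}\left(\R\left[n\right]\right)$; the only effect on the differential is to set the new degree $-n$ generator $w$ to satisfy $Dw = z$. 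Thus every cell attachment is a free extension by a single generator.

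For the direction ($\Rightarrow$): in any cofibrantly generated model category the retract argument (via the small object argument) shows that a cofibration is a retract of a relative cell complex, that is, of a transfinite composite of pushouts of the $\Sym_{\Ci}\left(\iota\right)$. By the previous paragraph each such pushout freely adjoins a generator on the level of underlying graded $\Ci$-algebras, and since forgetting the differential commutes with the filtered colimits computing transfinite composites, the underlying graded $\Ci$-algebra of the total object is $\A \oinfty \Sym_{\Ci}\left(W^\bullet\right)$, with $W^\bullet$ the span of all adjoined generators. This is precisely a quasi-free extension, so every cofibration is a retract of one.

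For the direction ($\Leftarrow$): as retracts of cofibrations are cofibrations, it suffices to show each quasi-free extension $\A \hookrightarrow \A \oinfty \Sym_{\Ci}\left(W^\bullet\right)$, with its possibly twisted differential $D$, is a relative cell complex. The idea is to filter by cohomological degree, adjoining the generators of $W^\bullet$ in order of \emph{decreasing} degree (degree $0$ first, then $-1$, $-2$, and so on, transfinitely within each degree). The crucial point is that for a generator $w$ of degree $-n$ the element $Dw$ has degree $-n+1$, and a grading count shows that a monomial of degree $-n+1$ in $\A \oinfty \Sym_{\Ci}\left(W^\bullet\right)$, all of whose factors lie in non-positive degrees, can only involve generators of degree $\ge -n+1$; hence $Dw$ lies in the sub-dg-$\Ci$-algebra already built, and $D^2=0$ guarantees it is a cycle there. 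Adjoining $w$ is therefore a genuine pushout of $\Sym_{\Ci}\left(\iota_n\right)$, and stringing these together exhibits the extension as a relative cell complex.

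The main obstacle is exactly this last filtration argument: one must order the (possibly infinitely many) generators so that every attaching map is defined, and verify --- using that everything is concentrated in non-positive degrees and that $D$ raises degree by one --- that the differential never points ``downward'' into generators not yet present. A secondary technical point is checking that the small object argument applies, i.e.\ that the domains $\Sym_{\Ci}\left(\R\left[n-1\right]\right)$ are small; this is automatic because the ambient category is locally presentable.
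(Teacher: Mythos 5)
Your proof is correct, and since this paper does not prove the proposition itself (it is quoted from \cite{dg2}), the right comparison is with that source, whose argument is essentially the one you give: generating cofibrations obtained by applying $\Sym_{\Ci}$ to the generating cofibrations of the projective structure on $\mathbf{Ch}\left(\R\right)_{\le 0}$ via the transfer theorem, identification of each cell attachment with a one-generator free extension of the underlying graded $\Ci$-algebra, and the decreasing-degree filtration showing conversely that a quasi-free extension is a relative cell complex. The only ingredient you use tacitly --- that the forgetful functor to graded $\Ci$-algebras preserves the relevant pushouts and transfinite compositions, so that cell complexes really are free extensions on underlying graded algebras --- is standard and unproblematic.
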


This implies that the cofibrant objects of $\dgcni$ are retracts of quasi-free dg-$\Ci$-algebras by \cite[Corollary 6.20]{dg2}. In particular, each algebra $\Ci\left(\R^n\right),$ regarded as a dg-$\Ci$-algebra is cofibrant, and the collection of these algebras is closed under coproducts, and hence is also closed under derived $\Ci$-tensor products, as they coincide in this case. It follows that the canonical functor
\begin{eqnarray*}
\Ci & \to & \dgci\\
\R^{n} &\mapsto & \Ci\left(\R^{n}\right)
\end{eqnarray*}
preserves finite products. Hence, by \cite[Proposition 5.5.8.15]{htt}, we deduce that this extends to a unique colimit preserving functor of $\i$-categories $$N_{\Ci}:\Alg_{\Ci}\left(\Spc\right) \to \dgci$$ which sends each algebra of the form $\Ci\left(\R^{n}\right)$ to itself. By the adjoint functor theorem, it follows there exists a right adjoint $\Gamma^{\Ci}$. In fact, it follows from the Yoneda lemma that  $\Gamma^{\Ci}$ sends a dg-$\Ci$-algebra $\A$ to
\begin{eqnarray*}
\Gamma^{\Ci}\left(\A\right):\Ci &\to & \Spc\\
n &\mapsto & \Map_{\dgci}\left(\Ci\left(\R^{n}\right),\A\right).
\end{eqnarray*}

\begin{theorem}\label{thm:DK}[The $\Ci$ Dold-Kan Correspondence] \cite[Corollary 2.2.10]{Nuiten}
The above adjunction
is an adjoint equivalence of $\i$-categories.
\end{theorem}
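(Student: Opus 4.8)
\emph{Proof strategy.} The plan is to recognize both sides as the nonabelian derived \(\i\)-category (sifted cocompletion) of one and the same small category of generators, and to identify the comparison functor with \(N_{\Ci}\). Write \(\Ci^{\mathrm{aff}} \subseteq \Alg_{\Ci}\left(\Spc\right)\) for the full subcategory spanned by the free algebras \(\Ci\left(\R^n\right)\). As for any algebraic theory, these form a set of compact projective generators closed under finite coproducts (\(\Ci\left(\R^n\right) \oinfty \Ci\left(\R^m\right) \simeq \Ci\left(\R^{n+m}\right)\)), so the canonical functor exhibits \(\Alg_{\Ci}\left(\Spc\right) \simeq \mathcal{P}_\Sigma\left(\Ci^{\mathrm{aff}}\right)\). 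By construction \(N_{\Ci}\) preserves all colimits and restricts on \(\Ci^{\mathrm{aff}}\) to the fully faithful inclusion \(\iota:\Ci^{\mathrm{aff}} \hookrightarrow \dgci\) sending \(\Ci\left(\R^n\right)\) to itself. It therefore suffices to prove that \(\iota\) likewise exhibits \(\dgci\) as \(\mathcal{P}_\Sigma\left(\Ci^{\mathrm{aff}}\right)\); the universal property of sifted cocompletion then forces the resulting equivalence to agree with \(N_{\Ci}\), and its right adjoint \(\Gamma^{\Ci}\) becomes the inverse, yielding the asserted adjoint equivalence.

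To establish \(\dgci \simeq \mathcal{P}_\Sigma\left(\Ci^{\mathrm{aff}}\right)\) I would verify the hypotheses of Lurie's recognition criterion (\cite[Proposition 5.5.8.22]{htt}). Presentability is immediate from the combinatorial model structure of Theorem \ref{thm:dgmodel}. The subcategory \(\Ci^{\mathrm{aff}}\) is closed under finite coproducts in \(\dgci\): each \(\Ci\left(\R^n\right)\) is cofibrant, this class is closed under \(\oinfty\), and there the derived and underived coproducts coincide, so \(\Ci\left(\R^n\right) \oinfty \Ci\left(\R^m\right) \simeq \Ci\left(\R^{n+m}\right)\) in \(\dgci\) as well. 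For compact projectivity one uses that the forgetful functor \(\dgci \to \ch\) is monadic and that its monad preserves sifted colimits, so \(\dgci \to \ch\) preserves and creates sifted colimits; since \(\Map_{\dgci}\left(\Ci\left(\R^n\right),\A\right)\) is then computed from the underlying complex of \(\A\) (recording the space of degree-zero points, \(n\) at a time), compact projectivity of \(\Ci\left(\R^n\right)\) reduces to the corresponding classical fact in \(\ch\).

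The crux is the remaining hypothesis: that the degree-zero free algebras \(\Ci\left(\R^n\right)\) generate \(\dgci\) under sifted colimits. Every object of \(\dgci\) is equivalent to a cofibrant one, and by the proposition recalled above such an object is a retract of a quasi-free dg-\(\Ci\)-algebra, built by successively adjoining free generators in non-positive degrees together with a differential. The essential point is that a \emph{single} generator in a negative degree already arises as a sifted colimit of the \(\Ci\left(\R^n\right)\): the free algebra \(\Sym_{\Ci}\left(\R\left[1\right]\right)\) on one generator of degree \(-1\) is the homotopy pushout \(\R \underset{\Ci\left(\R\right)}{\oinfty} \R\) of the two zero-augmentations (a Koszul computation, since \(\Ci\left(\R\right)\langle\epsilon\rangle\) with \(d\epsilon = x\) is a quasi-free resolution of \(\R\)), and this homotopy pushout is the geometric realization of the bar construction \(\mathrm{Bar}_\bullet\left(\R,\Ci\left(\R\right),\R\right)\), whose \(n\)-simplices are \(\Ci\left(\R\right)^{\oinfty n} \simeq \Ci\left(\R^n\right)\) because \(\R = \Ci\left(\R^0\right)\) is initial. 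Iterating this suspension/looping and resolving the attaching differentials by analogous bar-type simplicial objects exhibits every quasi-free algebra, and hence every object of \(\dgci\), as a geometric realization of free degree-zero algebras. This is precisely the mechanism of the classical Dold--Kan correspondence (simplicial \(\R\)-modules versus non-positively graded complexes) transported across the \(\Ci\)-structure, and making this transport precise---keeping track of the \(\Ci\)-operations through the realization, where the strict normalization functor is only lax monoidal---is the main obstacle of the proof.

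With all four hypotheses in hand, \cite[Proposition 5.5.8.22]{htt} yields an equivalence \(\Phi:\mathcal{P}_\Sigma\left(\Ci^{\mathrm{aff}}\right) \xrightarrow{\sim} \dgci\) restricting to \(\iota\). Since both \(\Phi\) (transported through \(\Alg_{\Ci}\left(\Spc\right) \simeq \mathcal{P}_\Sigma\left(\Ci^{\mathrm{aff}}\right)\)) and \(N_{\Ci}\) are colimit-preserving and agree with \(\iota\) on generators, uniqueness of colimit-preserving extensions gives \(\Phi \simeq N_{\Ci}\); in particular \(N_{\Ci}\) is an equivalence and \(\left(N_{\Ci},\Gamma^{\Ci}\right)\) is an adjoint equivalence.
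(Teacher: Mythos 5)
First, a point of order: the paper does not prove this statement at all --- Theorem \ref{thm:DK} is imported verbatim from Nuiten's thesis \cite[Corollary 2.2.10]{Nuiten}, so there is no internal proof to compare yours against; the comparison has to be with the cited argument.

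Your architecture is the right one, and it is essentially the architecture of the proof in the literature: identify $\Alg_{\Ci}\left(\Spc\right)$ with $\mathcal{P}_{\Sigma}$ of the free algebras $\Ci\left(\R^n\right)$ (true for any algebraic theory), verify that these same objects form a set of compact projective generators of $\dgci$ closed under finite coproducts, and conclude by the recognition criterion together with uniqueness of colimit-preserving extensions. The easy hypotheses are handled correctly: presentability from Theorem \ref{thm:dgmodel}; closure under coproducts because the $\Ci\left(\R^n\right)$ are cofibrant so that $\oinfty$ agrees with its derived functor on them; and the mapping-space computation $\Map_{\dgci}\left(\Ci\left(\R^n\right),\A\right)$ follows from the free-forgetful Quillen adjunction $\Sym_{\Ci}\dashv U$ directly, not from Corollary \ref{cor:dk}, so there is no circularity. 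However, as a proof your proposal has a genuine gap at exactly its crux, and you say so yourself: the claim that the degree-zero free algebras generate $\dgci$ under sifted colimits. Your bar-construction mechanism presupposes two facts that carry the real mathematical content: (i) that the two-sided bar construction formed with $\oinfty^{\mathbb{L}}$ computes homotopy pushouts in $\dgci$, so that it can be iterated to reach generators in arbitrary negative degrees and then all quasi-free algebras; and (ii) that $U:\dgci \to \ch$ is monadic with a sifted-colimit-preserving monad. Both are true, but proving them is where the theorem actually lives: in characteristic zero they come down to showing that normalization intertwines the free simplicial $\Ci$-algebra functor with the free dg-$\Ci$-algebra functor up to natural quasi-isomorphism --- precisely the statement that the $\Ci$-operations pass coherently through geometric realizations, which you flag as ``the main obstacle'' and then do not resolve. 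So what you have is a correct reduction of the theorem to its essential difficulty, together with the standard plan for attacking it, rather than a complete proof; carrying it out would reproduce the content of Nuiten's Corollary 2.2.10 rather than provide an alternative to it.
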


\begin{corollary}\label{cor:dk}
For any dg-$\Ci$-algebra $\left(\A^\bullet,d\right),$
$\Map_{\dgci}\left(\Ci\left(\R\right),\A\right)\simeq |\Gamma\left(\A\right)|,$
where $|\Gamma\left(\A\right)|$ is the homotopy type of the simplicial set corresponding to the underlying cochain complex of $\A$ via the Dold-Kan correspondence.
In particular, for all $n$, we have
$$\pi_n \Map_{\dgci}\left(\Ci\left(\R\right),\A^\bullet\right) \cong H^{-n}\left(\A,d\right).$$
\end{corollary}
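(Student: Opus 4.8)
The plan is to avoid the full strength of the $\Ci$ Dold--Kan equivalence (Theorem \ref{thm:DK}) and instead extract the statement directly from the free--forgetful Quillen adjunction $\Sym_{\Ci} \dashv \Gamma$ between $\ch$ and $\dgci$, reducing everything to the \emph{classical} Dold--Kan correspondence. The starting observation is that $\Ci\left(\R\right)$ is the free dg-$\Ci$-algebra on a single generator in degree $0$; concretely $\Ci\left(\R\right) \cong \Sym_{\Ci}\left(\R\right)$, where $\R$ now denotes the cochain complex concentrated in degree $0$ (this is immediate from the formula $\Sym_{\Ci}\left(V^\bullet,d\right)=\Ci\left(V^0\right)\underset{\Sym\left(V^0\right)}\otimes\Sym\left(V^\bullet,d\right)$). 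Thus $\Ci\left(\R\right)$ is the image under the left Quillen functor $\Sym_{\Ci}:\ch \to \dgci$ of the unit complex $\R$.

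Next I would pass to the induced adjunction of $\i$-categories. Since every cochain complex is cofibrant, the left derived functor of $\Sym_{\Ci}$ is $\Sym_{\Ci}$ itself; and since the model structure on $\dgci$ was transferred from $\ch$, its right adjoint $\Gamma$ (the underlying cochain complex) preserves \emph{all} weak equivalences and is therefore already homotopical, so it needs no right derivation. Consequently the Quillen pair descends to an $\i$-adjunction giving a natural equivalence
$$\Map_{\dgci}\left(\Ci\left(\R\right),\A\right) \simeq \Map_{\ch}\left(\R,\Gamma\left(\A\right)\right).$$
It then remains to identify the right-hand mapping space with the realization $|\Gamma\left(\A\right)|$. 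For this I would transport along the classical Dold--Kan equivalence $\mathbf{Ch}\left(\R\right)_{\le 0} \simeq$ simplicial $\R$-modules, which carries the projective model structure to the standard one and sends $\R$ to the free simplicial $\R$-module $\R\left[\Delta^0\right]$ on a point. Mapping out of $\R\left[\Delta^0\right]$, through the free--forgetful Quillen pair between simplicial sets and simplicial $\R$-modules, computes precisely the homotopy type of the underlying Kan complex, which is by definition $|\Gamma\left(\A\right)|$. The ``in particular'' clause is then a direct computation: $\pi_n\Map_{\ch}\left(\R,C\right) \cong \left[\R\left[n\right],C\right]$, the group of homotopy classes of maps out of the cofibrant complex $\R\left[n\right]$ (namely $\R$ in degree $-n$), and this is $H^{-n}\left(C\right)$; applied to $C=\Gamma\left(\A\right)$ it yields $H^{-n}\left(\A,d\right)$.

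The genuinely routine parts are the identification $\Ci\left(\R\right)\cong\Sym_{\Ci}\left(\R\right)$ and the homotopy-group computation. The main obstacle is the third step: one must verify that the abstract derived mapping space $\Map_{\ch}\left(\R,-\right)$ agrees with the Dold--Kan realization $|-|$ \emph{as functors}, not merely on homotopy groups. This amounts to checking that $\R$ corepresents the underlying-space functor after Dold--Kan transport, with due care about (co)fibrancy and naturality in $\A$, so that the equivalence above is natural. Everything else is formal, and I note as a consistency check that the same conclusion is forced by Theorem \ref{thm:DK}: under $N_{\Ci}$ the object $\Ci\left(\R\right)$ corresponds to the free homotopical $\Ci$-algebra on one generator, so $\Map_{\dgci}\left(\Ci\left(\R\right),\A\right)\simeq \Gamma^{\Ci}\left(\A\right)\left(\R\right)$, the underlying space of $\Gamma^{\Ci}\left(\A\right)$, which the compatibility with classical Dold--Kan again identifies with $|\Gamma\left(\A\right)|$.
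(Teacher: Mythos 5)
Your proof is correct, but it follows a genuinely different route from the paper's. The paper treats the corollary as an immediate consequence of Theorem \ref{thm:DK}: by the Yoneda description of $\Gamma^{\Ci}$ given just before that theorem, the space $\Map_{\dgci}\left(\Ci\left(\R\right),\A\right)$ \emph{is} the underlying space $\Gamma^{\Ci}\left(\A\right)\left(\R\right)$ of the homotopical $\Ci$-algebra $\Gamma^{\Ci}\left(\A\right)$, and the cited form of the $\Ci$ Dold--Kan correspondence (Nuiten's result) carries with it the compatibility identifying this underlying space with the Dold--Kan realization of the underlying cochain complex --- this is precisely your closing ``consistency check,'' which is thus the paper's actual argument. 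Your main argument instead bypasses Theorem \ref{thm:DK} entirely: you use only the free--forgetful Quillen adjunction $\Sym_{\Ci} \dashv \Gamma$ supplied by the transferred model structure of Theorem \ref{thm:dgmodel}, the identification $\Ci\left(\R\right) \cong \Sym_{\Ci}\left(\R\right)$, and the classical Dold--Kan correspondence. The key points you rely on all hold: $\Gamma$ creates the weak equivalences and so is already homotopical; every object of $\ch$ is cofibrant over the field $\R$; every dg-$\Ci$-algebra is fibrant in the transferred structure; hence the derived adjunction equivalence $\Map_{\dgci}\left(\Ci\left(\R\right),\A\right) \simeq \Map_{\ch}\left(\R,\Gamma\left(\A\right)\right)$ holds with no replacement, and the right-hand side is corepresented, after Dold--Kan transport, by the free simplicial $\R$-module on a point, giving the underlying Kan complex naturally in $\A$. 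What the paper's route buys is brevity and economy (a one-line appeal to a theorem it needs anyway for the main results); what your route buys is independence from the full strength of the $\i$-categorical equivalence $\Alg_{\Ci}\left(\Spc\right)\simeq \dgci$ --- you use only the Quillen adjunction, not the fact that it underlies an equivalence, so your argument would survive even in settings where one has a transferred model structure but no Dold--Kan--type theorem.
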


\subsection{Homotopically finitely presented dg-$\Ci$-algebras and derived manifolds}\label{sec:dmfd}
\begin{definition}
A dg-$\Ci$-algebra $\A$ is \textbf{homotopically finitely presented} if
$$\Map_{\dgci}\left(\A,\blank\right):\dgci \to \Spc$$ preserved filtered colimits, i.e. $\A$ is a compact object in the $\i$-category $\dgci.$
\end{definition}

\begin{proposition}\label{prop:hfpret}
A dg-$\Ci$-algebra is finitely presented if and only if it is a retract of a finite colimit of dg-$\Ci$-algebras of the form $\Ci\left(\R^n\right),$ i.e, of finitely generated free dg-$\Ci$-algebras.
\end{proposition}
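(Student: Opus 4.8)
The plan is to identify the homotopically finitely presented (i.e. compact) objects of $\dgci$ via the standard description of compact objects in a compactly generated $\i$-category, taking the free algebras as the compact generators. Throughout I work through the equivalence $\dgci \simeq \Alg_{\Ci}\left(\Spc\right)$ of Theorem \ref{thm:DK}, and I record that, as already noted in the excerpt, each $\Ci\left(\R^n\right)$ is the $n$-fold coproduct $\Ci\left(\R\right)^{\oinfty n}$ computed in $\dgci$, so finite colimits of the $\Ci\left(\R^n\right)$ and finite colimits of $\Ci\left(\R\right)$ cut out the same objects.

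The \emph{if} direction is formal once one knows $\Ci\left(\R\right)$ is compact. For this, Corollary \ref{cor:dk} gives $\Map_{\dgci}\left(\Ci\left(\R\right),\A\right) \simeq \lvert\Gamma\left(\A\right)\rvert$; since the model structure of Theorem \ref{thm:dgmodel} is transferred from $\ch$, filtered colimits in $\dgci$ are computed on underlying cochain complexes, and as cohomology commutes with filtered colimits of complexes the functor $\A \mapsto \lvert\Gamma\left(\A\right)\rvert$ preserves filtered colimits. Thus $\Ci\left(\R\right)$, and hence each finite coproduct $\Ci\left(\R^n\right)$, is compact. Because compact objects are closed under finite colimits and retracts \cite{htt}, every retract of a finite colimit of free algebras is compact.

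For the converse, let $\mathcal{D} \subseteq \dgci$ be the full subcategory of retracts of finite colimits of the $\Ci\left(\R^n\right)$; it is essentially small, closed under finite colimits and retracts, and consists of compact objects. The second input is generation: under $\Alg_{\Ci}\left(\Spc\right) \simeq \Fun^{\times}\left(\Ci,\Spc\right)$ this $\i$-category is the free sifted-colimit completion of $\Ci^{op}$, so it is generated under sifted colimits by the representable functors, which are precisely the free algebras $\Ci\left(\R^n\right)$ \cite{htt}. Since $\mathcal{D}$ is closed under finite colimits and finite together with filtered colimits generate all colimits, the closure of $\mathcal{D}$ under filtered colimits is already closed under all colimits; as it contains the generators it is all of $\dgci$, i.e. every object is a filtered colimit of objects of $\mathcal{D}$. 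The objects of $\mathcal{D}$ being compact, the canonical functor $\operatorname{Ind}\left(\mathcal{D}\right) \to \dgci$ is fully faithful \cite{htt}, and the previous sentence makes it essentially surjective, so $\dgci \simeq \operatorname{Ind}\left(\mathcal{D}\right)$. The compact objects of an Ind-completion form the idempotent completion of $\mathcal{D}$ \cite{htt}, which equals $\mathcal{D}$ since $\mathcal{D}$ is already idempotent complete. Hence any compact $\A$ lies in $\mathcal{D}$, that is, is a retract of a finite colimit of free algebras.

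The main obstacle is the compactness of $\Ci\left(\R\right)$: one must confirm that filtered colimits in $\dgci$ are detected on underlying cochain complexes, so that Corollary \ref{cor:dk} can be combined with exactness of filtered colimits in $\ch$. The remainder is formal $\i$-categorical bookkeeping, relying on $\dgci$ being presentable (hence automatically idempotent complete), which is precisely what forces the appearance of retracts in the statement and what guarantees that retracts of finite colimits suffice.
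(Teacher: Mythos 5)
Your proposal is correct in substance, but it takes a genuinely different (more self-contained) route than the paper: the paper's entire proof is a one-line citation, translating \cite[Lemma 3.21]{univ} across the equivalence of Theorem \ref{thm:DK}, whereas you re-prove the content of that lemma from scratch using the general theory of compact generation --- identify $\dgci \simeq \Alg_{\Ci}\left(\Spc\right)$ as a sifted-colimit completion of $\Ci^{op}$, observe that the finitely generated free algebras are compact generators, conclude that $\dgci \simeq \operatorname{Ind}\left(\cD_0\right)$ where $\cD_0$ is the closure of the free algebras under finite colimits, and read off the compact objects as the idempotent completion of $\cD_0$. What the paper's approach buys is brevity and the outsourcing of the $\i$-categorical bookkeeping to \cite{univ}; what yours buys is independence from that reference, and it makes explicit exactly which standard facts from \cite{htt} (generation under sifted colimits, compactness of representables, compact objects of an $\operatorname{Ind}$-category being retracts of the generators) the cited lemma rests on.

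Two steps in your write-up deserve more care. First, you assert that $\cD$ (retracts of finite colimits of free algebras) is closed under finite colimits. This is true but not formal: given a finite diagram in $\cD$, the chosen covering objects in $\cD_0$ do not assemble into a diagram, so closure under retracts and closure under finite colimits do not obviously commute. The clean fix is to run your generation argument with $\cD_0$ itself, which is closed under finite colimits by construction; then its filtered-colimit closure is closed under all colimits and contains the generators, so $\dgci \simeq \operatorname{Ind}\left(\cD_0\right)$, and retracts enter only at the final step, since the compact objects of $\operatorname{Ind}\left(\cD_0\right)$ are exactly the retracts of objects of $\cD_0$. (Alternatively, reduce to pushouts and check directly that a pushout of objectwise retracts is a retract of a pushout of covers: the two induced maps on pushouts compose to an objectwise equivalence of spans, hence to an equivalence, which suffices to exhibit a retract.) Second, your justification of the compactness of $\Ci\left(\R\right)$ --- that filtered colimits in $\dgci$ are computed on underlying complexes ``since the model structure is transferred'' --- is thinner than the fact it invokes: transfer alone does not give this; one also needs that the forgetful functor preserves filtered colimits $1$-categorically and that filtered colimits of quasi-isomorphisms are quasi-isomorphisms. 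It is quicker to argue on the other side of Theorem \ref{thm:DK}, where $\Ci\left(\R\right)$ is the representable functor, $\Map\left(\Ci\left(\R\right),\A\right)\simeq \A\left(\R\right)$ is evaluation, and evaluation preserves sifted (hence filtered) colimits for any algebraic theory.
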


\begin{proof}
This is a translation of \cite[Lemma 3.21]{univ} under the equivalence of Theorem \ref{thm:DK}.
\end{proof}

\begin{corollary}\label{cor:hfpdman}
There is a canonical equivalence
$$\left(\dgci^{\mathbf{fp}}\right)^{op}\simeq \DMfd$$ between the opposite of the $\i$-category of homotopically finitely presented dg-$\Ci$-algebras, and the $\i$-category of derived manifolds.
\end{corollary}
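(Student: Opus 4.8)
The plan is to deduce this immediately from the $\Ci$ Dold--Kan correspondence (Theorem \ref{thm:DK}) together with the universal description of derived manifolds recorded in Corollary \ref{cor:dmfduniv}. The essential observation is that homotopical finite presentation is a purely $\i$-categorical notion---compactness---and is therefore transported across any equivalence of $\i$-categories.

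First I would recall that Theorem \ref{thm:DK} exhibits the functor $N_{\Ci}:\Alg_{\Ci}\left(\Spc\right) \to \dgci$ as an adjoint equivalence. Being an equivalence, $N_{\Ci}$ preserves and reflects all colimits (in particular filtered ones) and induces equivalences on all mapping spaces. Since an object $\A$ is homotopically finitely presented precisely when $\Map\left(\A,\blank\right)$ commutes with filtered colimits, it follows formally that $\A$ is compact in $\Alg_{\Ci}\left(\Spc\right)$ if and only if $N_{\Ci}\left(\A\right)$ is compact in $\dgci.$ Hence $N_{\Ci}$ restricts to an equivalence
$$\Alg_{\Ci}\left(\Spc\right)^{\mathbf{fp}} \stackrel{\sim}{\longrightarrow} \dgci^{\mathbf{fp}}$$
between the respective full subcategories of compact objects. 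Alternatively, one may identify both sides explicitly using Proposition \ref{prop:hfpret} and its analogue for $\Alg_{\Ci}\left(\Spc\right),$ namely that a compact object is exactly a retract of a finite colimit of the free algebras $\Ci\left(\R^n\right)$; the equivalence $N_{\Ci}$ matches these generators on the nose, so it carries one class of retracts-of-finite-colimits to the other.

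Passing to opposite $\i$-categories then yields
$$\left(\Alg_{\Ci}\left(\Spc\right)^{\mathbf{fp}}\right)^{op} \simeq \left(\dgci^{\mathbf{fp}}\right)^{op},$$
and composing with the equivalence $\DMfd \simeq \left(\Alg_{\Ci}\left(\Spc\right)^{\mathbf{fp}}\right)^{op}$ of Corollary \ref{cor:dmfduniv} gives the desired canonical equivalence $\left(\dgci^{\mathbf{fp}}\right)^{op} \simeq \DMfd.$

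I do not expect any genuine obstacle here: all of the substantive work has already been done in establishing Theorem \ref{thm:DK} and Corollary \ref{cor:dmfduniv}. The only point requiring minor care is the assertion that the equivalence restricts to compact objects, which is formal once one notes that compactness is an invariant of the $\i$-category preserved by equivalences; the canonicity of the resulting equivalence is inherited from the canonicity of the two inputs.
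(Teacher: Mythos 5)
Your argument is correct and is precisely the intended proof: the paper states this corollary without proof, and the implicit argument is exactly the one you give, namely transporting compactness across the $\Ci$ Dold--Kan equivalence of Theorem \ref{thm:DK} and then composing with the equivalence $\DMfd\simeq\left(\Alg_{\Ci}\left(\Spc\right)^{\mathbf{fp}}\right)^{op}$ of Corollary \ref{cor:dmfduniv}. Your observation that compactness is an $\i$-categorical invariant, hence automatically preserved and reflected by any equivalence, is the only point that needs saying, and you say it.
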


\begin{corollary}\label{cor:retdmfd}
For every derived manifold $\mathscr{X},$ there exists a derived manifold $\mathscr{X'}$ which can be constructed through a finite iteration of pullbacks from $\R,$ such that $\mathscr{X}$ is a retract of $\mathscr{X}'.$ 
\end{corollary}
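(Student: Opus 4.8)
The plan is to transport the purely algebraic characterization of homotopically finitely presented dg-$\Ci$-algebras from Proposition \ref{prop:hfpret} across the equivalence of Corollary \ref{cor:hfpdman}, and then unwind what a finite colimit of free algebras becomes on the geometric side.

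First I would use Corollary \ref{cor:hfpdman} to replace $\mathscr{X}$ by the dg-$\Ci$-algebra $\A$ to which it corresponds under $\DMfd \simeq \left(\dgci^{\mathbf{fp}}\right)^{op}$; since $\mathscr{X}$ is a derived manifold, $\A$ is homotopically finitely presented. By Proposition \ref{prop:hfpret}, $\A$ is then a retract of a finite colimit $\B$ of free dg-$\Ci$-algebras of the form $\Ci\left(\R^{n_i}\right)$. The notion of retract is self-dual and is preserved by equivalences of $\i$-categories, so letting $\mathscr{X}'$ denote the derived manifold corresponding to $\B$, one immediately obtains that $\mathscr{X}$ is a retract of $\mathscr{X}'$.

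It remains to check that $\mathscr{X}'$ can be built from $\R$ by a finite iteration of pullbacks. Under the equivalence, the object $\B$---a finite colimit of the $\Ci\left(\R^{n_i}\right)$---corresponds to the \emph{finite limit} of the objects $\R^{n_i}$ in $\DMfd$. Here I would invoke the standard fact that every finite colimit in an $\i$-category is generated from the initial object by iterating binary coproducts and pushouts; moreover coproducts of free $\Ci$-algebras are again free, since $\Ci\left(\R^{a}\right) \oinfty \Ci\left(\R^{b}\right) \cong \Ci\left(\R^{a+b}\right)$, and a coproduct is nothing but a pushout over the initial object $\Ci\left(\R^0\right) = \R$. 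Dualizing the whole picture, $\mathscr{X}'$ is obtained from the terminal object $\R^0$ by a finite iteration of pullbacks of objects of the form $\R^n$; and each $\R^n$ is the $n$-fold product $\R \times \cdots \times \R$, which is itself an iterated pullback of copies of $\R$ over the terminal object. Assembling these, $\mathscr{X}'$ is constructed through a finite iteration of pullbacks starting from $\R$.

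The content of the corollary is therefore essentially formal once Proposition \ref{prop:hfpret} is in hand; the only point requiring genuine care is the bookkeeping of the dualization---systematically matching colimits, coproducts, and the initial object on the algebra side with limits, products, and the terminal object on the geometric side---together with confirming that the terminal object $\R^0$, being the empty product of copies of $\R$, is legitimately regarded as built \emph{from} $\R$.
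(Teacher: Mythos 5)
Your proposal is correct and takes essentially the same approach as the paper: the paper's own proof is the one-line citation of Corollary \ref{cor:hfpdman}, Proposition \ref{prop:hfpret}, and the identity $\R^n=\left(\R\right)^n$. The details you spell out---transporting retracts across the equivalence, reducing finite colimits to pushouts over the initial object $\R=\Ci\left(\R^0\right)$, and dualizing to pullbacks over the terminal object (including the caveat that $\R^0$ must be admitted as the empty product of copies of $\R$)---are exactly the bookkeeping the paper's proof leaves implicit.
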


\begin{proof}
This follows immediately from Corollary \ref{cor:hfpdman}, and Proposition \ref{prop:hfpret}, and of course that $\R^n=\left(\R\right)^n.$
\end{proof}

\subsection{$\Ci$-Localizations}

\begin{definition}
Let $\A$ be a homotopical $\Ci$-algebra and let $a\in \pi_0\left( \A\right)$. We say that a map $f:\A\rightarrow \B$ such that $f\left( a\right)\in \pi_0\left( B\right)$ is invertible is a \textbf{localization of $A$ with respect to $a$} if for each $\cC \in \cialgsp$, the map $\Map_{\cialgsp}\left( \B,\cC\right)\rightarrow\Map_{\cialgsp}\left(\A,\cC\right)$ given by composition with $f$ induces an equivalence
\[\Map_{\cialgsp}\left(\B,\cC\right)\overset{\simeq}{\longrightarrow} \Map^0_{\cialgsp}\left( \A,\cC\right),\]
where $\Map^0_{\cialgsp}\left( \A,\cC\right)$ is the union of those connected components of 
\end{definition}
In the case of an ordinary $\Ci$-algebra in set $\A,$ and some $a\in \A$, the above definition reduces to the usual $\Ci$ localization $\A\left[1/a\right]$ given up to equivalence by the pushout
\begin{equation*}
\begin{tikzcd}
\Ci\left( \R\right)\ar[r,"q_a"]\ar[d]& \A\ar[d]\\
\Ci\left( \R\setminus \{0\}\right)\ar[r] & \A\left[1/a\right]
\end{tikzcd}    
\end{equation*}
of $C^{\infty}$-rings. The localization of homotopical $C^{\infty}$-ring admits a similar characterization, for which we will need the following definition.
\begin{definition}\label{strongmap}
\begin{enumerate}
    \item A map $f:\A\rightarrow \B$ in $\Alg_{\ComR}\left(\Spc\right)$ is \textbf{strong} (in the sense of \cite[Definition 2.2.2.1]{ToeVez}) if the natural map
\[ \pi_n\left( \A\right)\underset{\pi_0\left( \A\right)}\otimes\pi_0\left( \B\right)\rightarrow \pi_n\left( \B\right)\]
is an isomorphism for all $n\geq 0$.
\item A map $f:\A\rightarrow \B$ of homotopical $C^{\infty}$-algebras is \textbf{strong} if $f^{\sharp}:\A^{\sharp}\rightarrow \B^{\sharp}$ is strong.
\end{enumerate}
\end{definition}
\begin{proposition}\label{localization}\label{prop:locz}
Let $\A$ be a homotopical $\Ci$-algebra and let $a\in \pi_0\left( \A\right)$, and let $f:A\rightarrow B$ a map of simplicial $C^{\infty}$-rings. The following are equivalent:
\begin{enumerate}
    \item The map $f:A\rightarrow B$ exhibits $B$ as a localization with respect to $a$.
    \item For every $n\geq 0$, the induced map 
    \[\pi_n\left( \A^{\sharp}\right)\underset{\pi_0\left( A^{alg}\right)}\otimes\left( \pi_0\left( \A\right)\left[1/a\right]\right)^{alg}\rightarrow \pi_n\left( B^{alg}\right) \]
    is an equivalence; that is, $f$ is strong and the map of $C^{\infty}$-schemes corresponding to $\pi_0\left( A\right)\rightarrow \pi_0\left( B\right)$ is an open immersion. 
    \item $B$ fits into a pushout diagram
\begin{equation*}
\begin{tikzcd}
\Ci\left( \R\right)\ar[r,"q_a"]\ar[d]& A\ar[d,"f"]\\
\Ci\left( \R\setminus \{0\}\right)\ar[r] & B
\end{tikzcd}    
\end{equation*}
where $q_a$ is the unique up to homotopy map associated to $a\in \pi_0\left( A\right)$ (note that as a consequence, localizations always exist).
\end{enumerate}
\end{proposition}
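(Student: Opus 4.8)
The plan is to establish the three equivalences by first proving $(1)\Leftrightarrow(3)$ formally from the universal property of the pushout, and then $(3)\Leftrightarrow(2)$ by a homotopy-group computation. For $(3)\Rightarrow(1)$, I would use that $q_a:\Ci\left(\R\right)\to\A$ classifies $a\in\pi_0\left(\A\right)$ and that $\Ci\left(\R\setminus\{0\}\right)\simeq\Ci\left(\R\right)\left[1/t\right]$ is itself a localization; hence for any $\cC$ a map $\Ci\left(\R\right)\to\cC$ extends along $\Ci\left(\R\right)\to\Ci\left(\R\setminus\{0\}\right)$ through a contractible space of choices precisely when the classified element of $\pi_0\left(\cC\right)$ is a unit. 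Applying $\Map_{\cialgsp}\left(-,\cC\right)$ to the pushout square of $(3)$ then identifies $\Map_{\cialgsp}\left(B,\cC\right)$ with $\Map^0_{\cialgsp}\left(\A,\cC\right)$, which is exactly the universal property of a localization, giving $(3)\Rightarrow(1)$. Conversely, that same universal property characterizes a localization up to equivalence, so any $f$ satisfying $(1)$ is canonically equivalent to the pushout of $(3)$; this yields $(1)\Rightarrow(3)$.

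Next I would assume $(3)$ and compute. Since $\pi_0:\cialgsp\to\Alg_{\Ci}\left(\Set\right)$ is a left adjoint and hence preserves colimits, applying it to the pushout gives $\pi_0\left(B\right)\cong\pi_0\left(\A\right)\left[1/a\right]$, the classical $\Ci$-localization. The corresponding map of $\Ci$-schemes is the open immersion onto the locus $\{a\neq 0\}$, which is the second assertion of $(2)$, so it remains only to prove that $f^\sharp:\A^\sharp\to B^\sharp$ is strong.

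This strongness is the crux of the proof and amounts to the flatness of $\Ci$-localization, namely the base-change isomorphism
\[ \pi_n\bigl(B^\sharp\bigr)\;\cong\;\pi_n\bigl(\A^\sharp\bigr)\underset{\pi_0\left(\A^\sharp\right)}{\otimes}\bigl(\pi_0\left(\A\right)\left[1/a\right]\bigr)^\sharp \]
for all $n\geq 0$. To prove it I would use that $\left(-\right)\left[1/a\right]$, being a left adjoint, preserves geometric realizations, and resolve $\A$ by a simplicial diagram of free $\Ci$-algebras $\Ci\left(\R^m\right)$ through which $q_a$ factors. On each free algebra the localization is computed explicitly, and the induced map of underlying commutative $\R$-algebras is flat, since an open immersion of affine $\Ci$-schemes induces a flat morphism on global sections. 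Flatness then collapses the $\mathrm{Tor}$-spectral sequence computing the homotopy of the realization, yielding the displayed formula. I expect this to be the main obstacle, precisely because the $\Ci$-tensor product $\oinfty$ does not coincide with the algebraic tensor product, so the algebraic statement of To\"en--Vezzosi \cite{ToeVez} cannot be quoted verbatim and the analytic features of $\Ci$-localization must be controlled directly.

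Finally, for $(2)\Rightarrow(3)$, I would compare the given $f$ with the pushout $\A\left[1/a\right]$ of $(3)$. By $(3)\Rightarrow(1)$ the element $a$ becomes invertible in $\pi_0\left(\A\left[1/a\right]\right)$, so the universal property of the localization produces a canonical map $\A\left[1/a\right]\to B$ over $\A$. On $\pi_0$ this is the identity of $\pi_0\left(\A\right)\left[1/a\right]$ by the open-immersion hypothesis, while on higher homotopy groups both sides are given by the same flat base change along $\pi_0\left(\A\right)\to\pi_0\left(\A\right)\left[1/a\right]$ by the formula already established; hence the comparison map is an isomorphism on every $\pi_n$, so $B\simeq\A\left[1/a\right]$ and $f$ sits in the pushout square of $(3)$. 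This closes the cycle $(1)\Leftrightarrow(3)\Leftrightarrow(2)$ and completes the proof.
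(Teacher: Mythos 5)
The paper never proves Proposition \ref{prop:locz}: it is stated as background, recalled from the localization theory of homotopical $\Ci$-rings developed in \cite[Section 4.3.1]{univ} and \cite{Nuiten} (the algebraic template being \cite{ToeVez}), so there is no in-paper argument to compare yours against and I can only judge the proposal on its own terms. Your skeleton --- $(1)\Leftrightarrow(3)$ by corepresentability, $(3)\Rightarrow(2)$ by computing homotopy groups, $(2)\Rightarrow(3)$ by a comparison map --- is the standard architecture, and the purely formal implications are fine. However, your $(3)\Rightarrow(1)$ is circular as written: the assertion that a map $\Ci\left(\R\right)\to\cC$ extends along $\Ci\left(\R\right)\to\Ci\left(\R\setminus\{0\}\right)$ through a contractible space of choices exactly when the classified element is a unit in $\pi_0\left(\cC\right)$ \emph{is} the case $\A=\Ci\left(\R\right)$, $a=t$ of the implication you are proving, and the classical Moerdijk--Reyes fact \cite{MSIA} that $\Ci\left(\R\setminus\{0\}\right)$ is the $1$-categorical $\Ci$-localization does not give the mapping-space statement. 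The non-circular route is to present $\Ci\left(\R\setminus\{0\}\right)\cong\Ci\left(\R^2\right)/\left(xy-1\right)$ as a transverse pullback, use Theorem \ref{thm:transverse} to identify $\Map_{\cialgsp}\left(\Ci\left(\R\setminus\{0\}\right),\cC\right)$ with the homotopy fiber of the multiplication map $\cC\left(\R\right)\times\cC\left(\R\right)\to\cC\left(\R\right)$ over $1$, and then run the space-of-units argument: over a component whose class is invertible in $\pi_0$ the fiber is contractible (multiplication by such an element is an equivalence, since $\pi_*\left(\cC^{\sharp}\right)$ is a graded $\pi_0$-algebra), and over all other components it is empty.

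The serious gap is the strongness step, which is the actual content of the proposition. In the algebraic setting of \cite{ToeVez} strongness is cheap because the localization is the filtered colimit of multiplication maps, so homotopy groups localize; in the $\Ci$ setting this is precisely what fails, since $\Ci\left(\R\right)\left[1/x\right]$ (the colimit of multiplication by $x$) is a \emph{proper} subring of $\Ci\left(\R\setminus\{0\}\right)$ --- witness $e^{1/x^2}$. Your substitute, namely a levelwise-free resolution $P_\bullet$, algebraic flatness of $\Ci\left(\R^{m}\right)\to\Ci\left(U\right)$, and a collapsing Tor spectral sequence, does not bridge this: a Tor spectral sequence computes the homotopy of an \emph{algebraic} derived base change, whereas the levelwise localizations $P_n \underset{\Ci\left(\R\right)}\oinfty \Ci\left(\R\setminus\{0\}\right)=\Ci\left(U_n\right)$ are not algebraic base changes of the Moore complex of $P_\bullet$ along any fixed flat map; the face maps of the resolution are arbitrary smooth maps $\phi:\R^{m_{n-1}}\to\R^{m_n}$, and $\Ci\left(\phi^{-1}U\right)$ does not agree with $\Ci\left(\R^{m_{n-1}}\right)\underset{\Ci\left(\R^{m_n}\right)}\otimes\Ci\left(U\right)$, for the same reason that $\Ci\left(M\right)\underset{\R}\otimes\Ci\left(N\right)\subsetneq\Ci\left(M\times N\right)$. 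So flatness has nothing to collapse. (Moreover, flatness of $\Ci\left(\R^m\right)\to\Ci\left(U\right)$ is itself a nontrivial analytic theorem, not a formal consequence of being an open immersion.) The missing ingredient is the genuinely $\Ci$-theoretic base-change input that this paper itself reaches for when it performs exactly this kind of computation in the proof of Theorem \ref{thm:sch}: for a surjection of classical $\Ci$-rings $\Ci\left(M\right)\to\A$, the $\Ci$-localization of $\A$ at $a$ coincides with the algebraic base change $\A\underset{\Ci\left(M\right)}\otimes\Ci\left(U_a\right)$ (\cite[Lemma 4.6]{univ}), combined with exactness of restriction of finitely presented modules to opens (\cite[Example 5.28, Proposition 5.11]{joycesch}). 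Without supplying or citing results of this kind, the crux of the proposition remains unproved.
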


We denote the $\Ci$-localization of a homotopical $\Ci$-ring $\A$ by an element $a \in \pi_0 \A$ by $\A\left[1/a\right].$

\subsection{$\Ci$-Koszul complexes}

\begin{definition}
An even element $q$ of a supercommutative ring $\A$ is \textbf{regular} if it is not a zero-divisor.

An odd element $p$ of a supercommutative ring $\A$ is \textbf{regular} if the sequence $$\A \stackrel{\cdot p}{\longrightarrow} \A \stackrel{\cdot p}{\longrightarrow} \A$$ is exact.

A sequence $\left(a_1,a_2,...,a_n\right)$ of (even or odd) elements in a supercommutative ring $\A$ will be called a \textbf{regular sequence}, if  the image of $a_{i+1}$ is a regular element of $\A/\left(a_1,\ldots,a_i\right)$ for all $i.$

If $\A$ is a $\Z$-graded commutative algebra, a sequence $\left(a_1,a_2,...,a_n\right)$ of elements (possibly of different degrees), is \textbf{regular} if it is a regular sequence in the underlying $\Z_2$-graded commutative algebra.
\end{definition}

\begin{definition}
Let $\A$ be a dg-$\Ci$-algebra. Let $a_1,\ldots, a_k$ be a set of elements of $\A$ of degrees $n_1,\ldots,n_k$ such that each $a_i$ is closed, i.e. $da_i=0.$ The \textbf{Koszul algebra} of $\A$ associated to this set is the ($1$-categorical) pushout
$$\xymatrix{\left(\Ci\{x_1,\ldots,x_k\},|x_i|=|a_i|,0\right) \ar[d]_-{\left(a_1,\ldots,a_k\right)} \ar[r] & \left(\Ci\{x_1,\ldots,x_k,\xi_1,\ldots,\xi_k\},d=x_i \frac{\partial}{\partial \xi_i},|\xi_i|=n_i -1\right) \ar[d]\\
\A \ar[r] & K\left(\A,a_1,\ldots, a_k\right),}$$
in $\dgcni.$
If $\A=\left(\A,d\right),$ then the underlying $\Z$-graded commutative algebra of $K\left(\A,a_1,\ldots, a_k\right)$ is
$K\left(\A,a_1,\ldots, a_k\right) \cong \A\{\xi_1,\ldots,\xi_k\},$ the free $\A$-algebra (in $\Ci$-algebras) on the generators $\xi_1,\ldots,\xi_k.$ Since all of these generators are strictly of negative degrees, we have that
$$\A\{\xi_1,\ldots,\xi_k\} \cong \A \underset{\R} \otimes \mathbb{R}\left[\xi_{1},\ldots,\xi_{k}\right],$$ where $\mathbb{R}\left[\xi_{1},\ldots,\xi_{k}\right]$ is the free commutative $\R$-algebra on the (graded) generators $\xi_{1},\ldots,\xi_k.$
Using Einstein summation conventions, the differential takes the form
$$D=D_{\A} + a^i \frac{\partial}{\partial \xi_i}.$$
\end{definition}




\begin{remark}
The Koszul algebra of $a_1,\ldots, a_k$ can be constructed inductively, so that $$K\left(\A,a_1,\ldots,a_n\right)=K\left(K\left(\A,a_1\right),a_2,\ldots,a_n\right)=\ldots= K\left(K\ldots\left(K\left(\A,a_1\right),a_2\right),\ldots,a_n\right).$$
\end{remark}

\begin{lemma}\label{lem:Kosz1}
Suppose that $\A$ is a dg-$\Ci$-algebra with zero differential, and let $a_i$ be regular element of $\A$ of degree $i.$ Then there is a quasi-isomorphism of dg-$\Ci$-algebras
$$K\left(\A,a_i\right) \stackrel{\sim}{\longrightarrow} \A/\left(a_i\right),$$
where $\left(a_i\right)$ is the homogeneous ideal generated by $a_i.$
\end{lemma}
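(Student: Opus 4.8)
The plan is to exhibit the augmentation map explicitly and compute the cohomology of the Koszul complex by hand, splitting into two cases according to the parity of the degree $i$ (equivalently, according to whether $a_i$ is even or odd).

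Since $D_\A = 0$, the remark preceding the lemma identifies the underlying graded algebra of $K(\A, a_i)$ with $\A \otimes_\R \R[\xi_i]$, where $\xi_i$ is a single generator of strictly negative degree $i-1$, and the total differential reduces to $D = a_i \frac{\partial}{\partial \xi_i}$. First I would define the candidate quasi-isomorphism $\varepsilon\colon K(\A, a_i) \to \A/(a_i)$ to be the dg-$\Ci$-algebra map sending $\xi_i \mapsto 0$ and extending the quotient $\A \to \A/(a_i)$. It is a chain map because $\mathrm{im}(D) \subseteq a_i\A = (a_i)$, which the quotient kills; and it is a $\Ci$-morphism in degree $0$ because $\A/(a_i)$ carries the canonical $\Ci$-quotient structure by the theorem on quotients of $\Ci$-algebras cited above (the only case requiring this being $i=0$, where the degree-$0$ part is $\A^0/(a_0\A^0)$, since for $i<0$ the ideal $(a_i)$ meets degree $0$ trivially). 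It then remains to compute $H^\bullet(K(\A,a_i),D)$ and verify that $\varepsilon$ realizes the resulting isomorphism with $\A/(a_i)$.

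When $i$ is even, $a_i$ is an even element and $\xi_i$ is odd, so $\R[\xi_i] = \Lambda(\xi_i)$ and $K = \A \oplus \A\xi_i$ is concentrated in $\xi_i$-degrees $0$ and $1$, with the single nonzero differential $\A\xi_i \xrightarrow{\cdot a_i} \A$. Regularity of the even element $a_i$ says precisely that $\cdot a_i$ is injective, so the cohomology vanishes in $\xi_i$-degree $1$ and equals $\A/a_i\A = \A/(a_i)$ in $\xi_i$-degree $0$; this is just the classical fact that the two-term Koszul complex resolves $\A/(a)$ for a non-zero-divisor $a$.

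The odd case is where I expect the main subtlety to lie. Here $a_i$ is odd and $\xi_i$ is even, so $\R[\xi_i]$ is a genuine polynomial algebra and $K = \bigoplus_{n \geq 0} \A\xi_i^n$, with $D(\alpha\xi_i^n) = \pm\, n\,\alpha a_i\,\xi_i^{n-1}$, giving an infinite complex $\cdots \xrightarrow{\cdot 2a_i} \A\xi_i \xrightarrow{\cdot a_i} \A \to 0$. The crucial point is that we work over $\R$: every integer $n \geq 1$ is invertible, so up to a unit each differential is multiplication by $a_i$. The odd-regularity hypothesis — that $\A \xrightarrow{\cdot a_i} \A \xrightarrow{\cdot a_i} \A$ is exact, i.e. $\ker(\cdot a_i) = a_i\A$ — then forces $\ker/\mathrm{im}$ to vanish in every $\xi_i$-degree $\geq 1$ and to equal $\A/(a_i)$ in $\xi_i$-degree $0$. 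In both cases the isomorphism $H^\bullet(K) \cong \A/(a_i)$ is induced by $\varepsilon$ on the $\xi_i$-degree-$0$ summand, which finishes the argument. The only thing requiring real care is the bookkeeping of the Koszul signs and the integer factors $n$ in the odd case; conceptually, the odd-regularity condition is designed exactly so that this de Rham–type complex is acyclic in positive $\xi_i$-degree.
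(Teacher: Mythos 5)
Your proposal is correct, and it follows the same overall strategy as the paper: identify the underlying graded algebra of $K\left(\A,a_i\right)$ as $\A$ with one free generator $\xi_{i-1}$ adjoined, split into cases according to the parity of $i,$ and compute the cohomology of the explicit complex, with the even case being the classical two-term Koszul resolution in both treatments. However, your odd case is handled more carefully than the paper's own. The paper argues that, in positive $\xi$-degree, $c\cdot a_i=0$ together with regularity forces $c=0,$ and concludes $\ker\left(d_k\right)=\A^k$; this misreads odd regularity, which says $\ker\left(\cdot\, a_i\right)=a_i\A,$ not that $\cdot\, a_i$ is injective --- indeed $\cdot\, a_i$ cannot be injective for nonzero odd $a_i,$ since $a_i^2=0$ in a supercommutative $\R$-algebra, and the paper's kernel claim is even inconsistent with $\mathrm{im}\left(d\right)\subseteq\ker\left(d\right),$ because the image of $d$ contains elements such as $a_ib\,\xi_{i-1}$ of positive $\xi$-degree. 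Your version is the right one: by odd regularity the kernel in $\xi$-degree $j\geq 1$ is exactly $a_i\A\,\xi_{i-1}^j,$ and this coincides with the image from $\xi$-degree $j+1$ because the integer $j+1$ is invertible over $\R,$ so the cohomology is concentrated in $\xi$-degree $0,$ where it equals $\A/\left(a_i\right).$ You also address a point the paper passes over silently, namely why the augmentation is a morphism of dg-$\Ci$-algebras in degree $0.$ The details you defer (Koszul signs and the factors of $n$) are harmless, since they only change the differentials by units.
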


\begin{proof}
\underline{Case 1}: $i$ is even. Then we have that $a_i$ is not a zero-divisor. Let $\xi_{i-1}$ denote the added generator of degree $i-1.$ Then $\xi_{i-1}$ is odd so squares to zero. Therefore, as a graded vector space, we have
$$K\left(\A,a\right) \cong \A^\bullet \oplus \xi_{i-1} \cdot \A^{\bullet}\left[1-i\right].$$ We have
$$d_{k}:\A^{k} \oplus \xi_{i-1} \cdot  \A^{k-i+1} \to \A^{k+1} \oplus   \xi_{i-1} \cdot \A^{k-i}$$ is zero for $k=0,\ldots,i,$ and for $k <i,$
$$d_k\left(b_{k}+\xi_{i-1}c_{k-i+1}\right)=a_i \cdot c_{k-i+1}.$$ Since $a_i$ is not a zero-divisor, we have $$d_k\left(b_{k}+\xi_{i-1}c_{k-i+1}\right) = 0 \iff c_{k-1+i}= 0.$$ This means that $\ker\left(d_k\right)=\A^k.$ It's also clear that $\mathbf{Im}\left(d_{k-1}\right)=\left(a_i\right)_{k},$ that is, the $k^{th}$ component of the principal ideal $\left(a\right).$ It follows that the canonical map $K\left(\A,a\right) \to \A/\left(a\right)$ is a quasi-isomorphism.

\underline{Case 2}: $i$ is odd. Then we have that $a_i x = 0$ iff $x=a_i b$ for some $b.$ So we have the underlying graded algebra of $K\left(\A,a\right)$ is the polynomial algebra $\A\left[\xi_{i-1}\right]$ on a generator of \emph{even} degree $i-1.$ An element of degree $k$ is polynomial of the form 
$x=c_k+\underset{j \ge 1} \sum c_{k-j\cdot\left(i-1\right)}\xi^j_{i-1},$ with each $c_{r} \in \A^{r}.$
So $$dx=\underset{j \ge 1} \sum c_{k-j\cdot\left(i-1\right)}\cdot a_i\cdot j\xi^{j-1}_{i-1}.$$ This means that $dx=0$ if and only if for all $j \ge 1,$ 
 $$c_{k-j\cdot\left(i-1\right)}\cdot a_i=0,$$
 but since $a_i$ is regular, this means that each $c_{k-j\cdot\left(i-1\right)}=0$ for $j \ge 1,$ i.e. $x=c_k,$ so $\ker\left(d_k\right)=\A_k.$ Again, we clearly have that $\mathbf{Im}\left(d_{k-1}\right)=\left(a\right)_{k},$ so it follows once more that the canonical map $K\left(\A,a\right) \to \A/\left(a\right)$ is a quasi-isomorphism.
 \end{proof}
 
\begin{proposition}\label{prop:kosz}
Suppose that $\A$ is a dg-$\Ci$-algebra with zero differential, and let $\left(a_1,a_2,\ldots a_n\right)$ be a regular sequence of elements of $\A.$ Then
$$K\left(\A,a_i\right) \stackrel{\sim}{\longrightarrow} \A/\left(a_1,a_2,\ldots,a_n\right).$$
\end{proposition}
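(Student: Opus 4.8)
The plan is to induct on the length $n$ of the regular sequence, the base case $n=1$ being exactly Lemma \ref{lem:Kosz1}. For the inductive step I would use the inductive description of the Koszul algebra recorded above, namely the identity $K\left(\A,a_1,\ldots,a_n\right)=K\left(K\left(\A,a_1\right),a_2,\ldots,a_n\right)$, together with Lemma \ref{lem:Kosz1} as the engine that strips off one element at a time. Note that since $\A$ has zero differential every $a_i$ is closed, so all the Koszul algebras in sight are defined; moreover the $a_i$ remain closed in $K\left(\A,a_1\right)$, since the operator $a_1\,\partial/\partial\xi_1$ annihilates elements of $\A$.

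First I would apply Lemma \ref{lem:Kosz1} to the single regular element $a_1$ to get a quasi-isomorphism $K\left(\A,a_1\right) \stackrel{\sim}{\longrightarrow} \A/\left(a_1\right)$. The target is again a dg-$\Ci$-algebra with \emph{zero} differential, being a homogeneous quotient of the zero-differential algebra $\A$, and the images $\bar a_2,\ldots,\bar a_n$ form a regular sequence in it: this is immediate from the definition, as $\bar a_{i+1}$ is regular in $\left(\A/\left(a_1\right)\right)/\left(\bar a_2,\ldots,\bar a_i\right)=\A/\left(a_1,\ldots,a_i\right)$ for each $i$. Hence the inductive hypothesis applies to $\A/\left(a_1\right)$ with the length-$(n-1)$ sequence $\left(\bar a_2,\ldots,\bar a_n\right)$, yielding a quasi-isomorphism $K\left(\A/\left(a_1\right),\bar a_2,\ldots,\bar a_n\right)\stackrel{\sim}{\longrightarrow} \A/\left(a_1,\ldots,a_n\right)$.

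The one gap to bridge is that the inductive description produces $K\left(K\left(\A,a_1\right),a_2,\ldots,a_n\right)$, built over the algebra $K\left(\A,a_1\right)$ which carries a \emph{nonzero} differential, whereas the inductive hypothesis is phrased over the zero-differential algebra $\A/\left(a_1\right)$. I would close this gap by showing that, for fixed closed elements $b_2,\ldots,b_n$, the Koszul construction $K\left(\blank,b_2,\ldots,b_n\right)$ sends a quasi-isomorphism of base algebras (carrying the $b_i$ to the chosen elements) to a quasi-isomorphism. Applied to $K\left(\A,a_1\right)\stackrel{\sim}{\longrightarrow}\A/\left(a_1\right)$, this produces $K\left(K\left(\A,a_1\right),a_2,\ldots,a_n\right)\stackrel{\sim}{\longrightarrow}K\left(\A/\left(a_1\right),\bar a_2,\ldots,\bar a_n\right)$, and composing the three quasi-isomorphisms gives the result; one checks the composite is the canonical comparison map.

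I expect this homotopy-invariance statement to be the main obstacle, and I would prove it directly on underlying complexes rather than by appealing to abstract left properness of $\dgcni$. Because the $\xi_i$ sit in strictly negative degrees, $K\left(\B,b_2,\ldots,b_n\right)$ has underlying graded module $\B\otimes_\R\R\left[\xi_2,\ldots,\xi_n\right]$, free over $\B$ on monomials in the $\xi_i$, with differential $D_\B+\sum_i b_i\,\partial/\partial\xi_i$. Filtering by total $\xi$-degree, each operator $b_i\,\partial/\partial\xi_i$ strictly lowers the filtration while $D_\B$ preserves it, so the associated graded differential is just $D_\B$ acting in each monomial slot; a quasi-isomorphism of base algebras therefore induces a quasi-isomorphism on associated graded complexes. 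Since the $\xi_i$ have strictly negative degree and $\B$ is non-positively graded, the filtration is exhaustive and finite in each cohomological degree, so the comparison theorem for the spectral sequence of a filtered complex upgrades this to a quasi-isomorphism on total complexes. (Equivalently, one notes that the top arrow in the pushout defining the Koszul algebra is a quasi-free extension, hence a cofibration, so the Koszul algebra computes a homotopy pushout and is thus invariant under quasi-isomorphism in the base.)
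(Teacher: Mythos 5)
Your proposal is correct, and its inductive skeleton matches the paper's: both induct on the length of the sequence with Lemma \ref{lem:Kosz1} as the base case, and both hinge on the same bridging fact, namely that the Koszul construction $K\left(\blank,b_2,\ldots,b_n\right)$ carries a quasi-isomorphism of base algebras (matching up the chosen closed elements) to a quasi-isomorphism. There are two differences. First, you peel off $a_1$ and apply the inductive hypothesis to the zero-differential quotient $\A/\left(a_1\right)$ with the sequence $\left(\bar a_2,\ldots,\bar a_n\right)$ --- your observation that regularity of this image sequence is immediate from the definition is exactly right --- whereas the paper peels off $a_n$, applying the inductive hypothesis to $K\left(\A,a_1,\ldots,a_{n-1}\right)$ and Lemma \ref{lem:Kosz1} to the regular element $a_n$ in $\A/\left(a_1,\ldots,a_{n-1}\right)$; these are mirror images and both are legitimate. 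Second, and more substantively, the paper proves the bridging fact by identifying the underlying cdga of the Koszul algebra as a pushout along a cofibration and invoking left properness of the projective model structure on non-positively graded cdgas over $\R$ (the route you mention only parenthetically), while your primary argument is the explicit filtration by word-length in the $\xi_i$, whose associated graded differential is $D_\B$, upgraded by the spectral sequence comparison theorem, with convergence guaranteed because the strictly negative degrees of the $\xi_i$ make the filtration finite in each cohomological degree. Your filtration argument is more elementary and self-contained: it needs no model-category input and makes visible exactly where the negativity of the generator degrees is used. The paper's argument is shorter given that the model structure and the cofibrancy of the generating maps are already available, and it reuses the same device that appears elsewhere in the paper (e.g.\ in the proof of Lemma \ref{lem:acycpi}).
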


\begin{proof}
We will prove this by induction on the number of elements in the sequence. We have already established this for $n=1.$ Suppose now that it is true for all sequences of $n-1$ regular elements. We wish to show it also holds for those of length $n.$

Let $\left(a_1,a_2,\ldots a_n\right)$ be a regular sequence of elements of $\A.$ Firstly, notice that since we are not allowing elements of degree $1,$ the Koszul construction can never add generators of degree $0.$ Therefore, for any dg-$\Ci$-algebra $\cB$ one can identify the underlying cdga of the Koszul algebra of an element $b_i \in \cB$ of degree $i$ as the pushout
$$\xymatrix{\R\left[x_i\right] \ar[r] \ar[d]_-{b_i} & \R\left[x_i,\xi_{i-1}\right] \ar[d]\\
\cB \ar[r] & K\left(\cB,b_i\right).}$$
Applying this to the case that $\cB=K\left(\A,a_1,\ldots,a_{n-1}\right),$ we can consider the stack of pushout diagrams
$$\xymatrix{\R\left[x_i\right] \ar[r] \ar[d]_-{a_n} & \R\left[x_i,\xi_{i-1}\right] \ar[d]\\
K\left(\A,a_1,\ldots,a_{n-1}\right) \ar[r] \ar[d]_-{ \rotatebox[origin=c]{90}{$\sim$}} & K\left(\A/\left(\A,a_1,\ldots,a_{n-1}\right),b_i\right) \ar[d]\\
\A/\left(\A,a_1,\ldots,a_{n-1}\right) \ar[r] & K\left(\A/\left(a_1,\ldots,a_{n-1}\right),a_{n}\right),}$$
where $$K\left(A,a_1,\ldots,a_{n-1}\right) \to \A/\left(\A,a_1,\ldots,a_{n-1}\right)$$ is a quasi-isomorphism. Since the projective model structure on non-positively graded differential algebras over $\R$ is left proper, and the top horizontal map is a cofibration, it follows that the map $$K\left(a_1,\ldots,a_n\right) \to K\left(\A/\left(a_1,\ldots,a_{n-1}\right),a_{n}\right)$$ is also a quasi-isomorphism. Since the outer square is also a pushout diagram, and $a_n$ is regular in $\A/\left(a_1,\ldots,a_{n-1}\right),$ Lemma \ref{lem:Kosz1} implies that $K\left(\A/\left(a_1,\ldots,a_{n-1}\right),a_{n}\right) \to \A/\left(a_1,\ldots,a_{n}\right)$ is a quasi-isomorphism. This completes the proof.
\end{proof}

\section{Derived $\Ci$-schemes}\label{sec:sch}


\begin{definition}
A \textbf{homotopically $\Ci$-ringed space} is a pair $\left(X,\cO_X\right)$ with $X$ a topological space and $\cO_X$ a sheaf with values in the $\i$-category $\cialgsp.$ Morphisms are pairs $$\left(  f,\alpha\right):\left(  X,\cO_X\right) \to \left(  Y,\cO_Y\right),$$ with $f$ a continuous map and $$\alpha:\cO_Y \to f_{*}\cO_X.$$ \footnote{For simplicity, we circumvent demanding $\pi_0 \O_X$ have local stalks, since we will only be concerned with examples where each such stalk has residue field $\R,$ so any morphism is automatically local.} More formally, the $\i$-category $\Loc$ of homotopically $\Ci$-ringed spaces is the full subcategory on such spaces of the total space of the coCartesian fibration associated to to the functor
$$\Shv\left(  \blank,\cialgsp\right):\Top \to \widehat{\mathbf{Cat}}_\i$$ induced by push-forward of sheaves, where $\widehat{\mathbf{Cat}}_\i$ is the $\i$-category of large $\i$-categories, and $\Top$ is the $1$-category of (small) topological spaces.
\end{definition}

Consider the canonical functor $$\Gamma:\Loc \to \cialgsp$$ induced by taking global sections of each structure sheaf. This functor has a right adjoint $$\Speci:\cialgsp^{op} \to \Loc$$  \cite[Proposition 5.1.8]{Nuiten} which can be described as follows:

Let us first describe the underlined space $\underline{\Speci\left(\A\right)}$ of $\Speci\left(\A\right).$ Consider the canonical functor $\Ci \to \Top$ sending $\R^{n}$ to itself. This functor preserves finite products, hence, by \cite[Proposition 5.5.8.15]{htt} induces a unique limit-preserving functor
$$\underline{\Speci}:\cialgsp^{op} \to \Top.$$ Concretely, the underlying set of $\underline{\Speci}\left(\A\right)$ is the set of homomorphisms $$\Hom\left(\pi_0\left(\A\right),\R\right).$$ Given an element $a \in \pi_0\left(\A\right),$ consider the function
\begin{eqnarray*}
f_a:\Hom\left(\pi_0\left(\A\right)_0,\R\right) &\to &\R\\
\varphi & \mapsto & \varphi\left(a\right).
\end{eqnarray*}
Then the subsets of the form $U_a:=f_a^{-1}\left(\R\setminus \left\{0\right\}\right)$ form a basis for a topology on $\Hom\left(\pi_0\left(\A\right),\R\right)$ which is moreover closed under finite intersections--- see \cite[Section 4.3.1]{univ}. Since this basis is closed under finite intersections, to describe a sheaf on $\underline{\Speci}\left(\A\right),$ it suffices to define a sheaf on the subcategory of all open subsets of $\underline{\Speci}\left(\A\right)$ on those of the form $U_a.$ Hence, there is a unique sheaf $\cO_\A$ arising as the sheafification of the presheaf $$U_a \mapsto \A\left[a^{-1}\right],$$ and we have
$$\Speci\left(\A\right)=\left(\underline{\Speci}\left(\A\right),\cO_\A\right).$$

\begin{definition}
An \textbf{affine derived $\Ci$-schemes} is a homotopically $\Ci$-ringed space in the image of $\Speci.$ Denote the full subcategory of $\Loc$ on these objects by $\Aff_{\Ci}.$
\end{definition}

Despite this, unlike in classical algebraic geometry, the spectrum functor $\Speci$ is not fully faithful in general. This was already realized for $\Ci$-rings in $\Set$ by Dubuc \cite{cinfsch}. It is fully faithful however when restricted to germ-determined finitely presented $\Ci$-rings by loc. cit.

\begin{definition}
A homotopical $\Ci$-algebra $\A$ is \textbf{complete} if the co-unit $$\varepsilon:\A \to \Gamma\left(\Speci\left(\A\right)\right)$$ is an equivalence. Denote the full subcategory on the complete algebras by $\dgc^{\wedge}.$
\end{definition}

\begin{proposition}\label{prop:comp}\cite[Example 5.1.30]{Nuiten}, \cite[Corollary 4.45]{univ}
If $\A \in \cialgsp$ is finitely presented, then it is complete.
\end{proposition}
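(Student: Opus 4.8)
The plan is to exploit the description of finitely presented algebras furnished by Proposition \ref{prop:hfpret}: such an $\A$ is a retract of a finite colimit of algebras of the form $\Ci\left(\R^n\right)$. Since completeness of $\A$ asserts precisely that the co-unit $\varepsilon:\A \to \Gamma\left(\Speci\left(\A\right)\right)$ of the $\Gamma \dashv \Speci$ adjunction is an equivalence, it suffices to prove three things: that each $\Ci\left(\R^n\right)$ is complete (the base case), that the full subcategory of complete algebras is closed under retracts, and that it is closed under the finite colimits needed to assemble $\A$. The conclusion then follows by writing $\A$ as a retract of such a colimit of complete objects.

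Closure under retracts is formal: $\varepsilon$ is a natural transformation of endofunctors of $\cialgsp$, so if $\A$ is a retract of $\B$ then $\varepsilon_\A$ is a retract of $\varepsilon_\B$ in the $\i$-category of morphisms, and a retract of an equivalence is an equivalence. For the base case I would invoke the classical input from $\Ci$-algebraic geometry that $\Speci\left(\Ci\left(\R^n\right)\right)$ recovers $\R^n$ with its sheaf of smooth functions: the characters $\Hom\left(\Ci\left(\R^n\right),\R\right)$ are exactly the point evaluations, and the presheaf $U_a \mapsto \Ci\left(\R^n\right)\left[1/a\right] \cong \Ci\left(U_a\right)$ of discrete $\Ci$-algebras sheafifies to the sheaf of smooth functions, which is soft and hence has vanishing higher cohomology by partitions of unity. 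As everything in sight is discrete, the derived global sections coincide with the ordinary ones, giving $\Gamma\left(\Speci\left(\Ci\left(\R^n\right)\right)\right) \simeq \Ci\left(\R^n\right)$, so $\varepsilon$ is an equivalence.

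The main obstacle is closure under finite colimits, where the difficulty is genuine rather than formal. Since $\Speci$ is a right adjoint (to $\Gamma$ regarded as a functor into $\cialgsp^{op}$), it converts a colimit of algebras into a limit of $\Ci$-ringed spaces; in particular a pushout $\A \simeq \B \underset{\cC}\oinfty \D$ becomes a fibered product $\Speci\left(\B\right)\times_{\Speci\left(\cC\right)}\Speci\left(\D\right)$ in $\Loc$. But $\Gamma$ is a left adjoint and does not preserve such limits, so one cannot simply commute $\Gamma$ past the colimit, and a direct geometric computation is required to identify the global sections of this fibered product with $\A$. I would carry this out by reducing, as in the proof of Proposition \ref{prop:hfpret}, to the generating pushouts: binary coproducts, for which $\Ci\left(\R^n\right)\oinfty\Ci\left(\R^m\right)\cong\Ci\left(\R^{n+m}\right)$ is again free and hence complete by the base case, and derived zero loci, i.e.\ pushouts $\Ci\left(\R^0\right)\underset{\Ci\left(\R^k\right)}\oinfty \Ci\left(\R^n\right)$ of a smooth map $\R^n \to \R^k$ along evaluation at a point. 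The latter are precisely the Koszul algebras analysed in Proposition \ref{prop:kosz}, and their spectra are derived manifolds supported on the classical vanishing locus. To finish I would show that such a spectrum has global sections computed by the localization sheaf $U_a \mapsto \A\left[1/a\right]$ (using the characterization of Proposition \ref{prop:locz}), that this sheaf is again soft so that its positive sheaf cohomology vanishes, and that its homotopy sheaves are determined fibrewise by the Koszul computation, whence Corollary \ref{cor:dk} identifies $\pi_*\Gamma\left(\Speci\left(\A\right)\right)$ with $\pi_*\A$ and exhibits $\varepsilon$ as an equivalence. Verifying that $\Gamma$ of the fibered product agrees with the pushout of algebras — equivalently, that the structure sheaf of the derived fibered product is correctly computed by localizations and satisfies descent along a finite cover — is the technical heart of the argument and the step I expect to be most delicate.
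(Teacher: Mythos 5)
A preliminary remark: the paper itself offers no proof of this proposition --- it is imported wholesale from \cite[Example 5.1.30]{Nuiten} and \cite[Corollary 4.45]{univ} --- so your attempt can only be measured against those external arguments and against the paper's closely related internal machinery (Lemma \ref{lem:acycpi}, Lemma \ref{lem:piok} and Theorem \ref{thm:sch}, which establish exactly this kind of statement for quasi-free algebras). Your base case and your retract step are correct: the characters of $\Ci\left(\R^n\right)$ are point evaluations, the localization presheaf $U_a \mapsto \Ci\left(\R^n\right)\left[1/a\right]\cong \Ci\left(U_a\right)$ is already a sheaf of discrete algebras, and completeness passes to retracts since a retract of an equivalence is an equivalence.

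The genuine gap is in the colimit step, and it is structural rather than a matter of unexecuted detail. Complete algebras form a reflective subcategory of $\cialgsp$ (the reflector being $L^{\wedge}=\Gamma \circ \Speci$), hence are closed under limits but not, for any formal reason, under pushouts; so ``the pieces are complete'' can never serve as the inductive hypothesis of a cell-attachment induction. Concretely, once you reduce to cell attachments, the next stage has the form $K\left(\B,f_1,\ldots,f_k\right)\simeq \B \underset{\Ci\left(\R^k\right)}\oinfty^{\mathbb{L}} \R$ where $\B$ is a previously built, non-free algebra, and completeness of $\B$ alone gives you no control over $\Gamma\Speci$ of this pushout. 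What the induction must actually carry along is the full geometric package: that $\underline{\Speci}\left(\B\right)$ is the character space of $\pi_0\left(\B\right)$, that the structure sheaf is computed on basic opens by the strong localizations $\B\left[1/a\right]$ with no sheafification needed, and that the structure sheaf is acyclic off the common zero locus of $\left(f_1,\ldots,f_k\right)$. Establishing and propagating that stronger statement is precisely the content of Lemma \ref{lem:acycpi}, Lemma \ref{lem:piok} and Theorem \ref{thm:sch} in this paper (which in turn lean on the finite presentation of the cohomology sheaves and results of \cite{joycesch}), and it is essentially how \cite{univ} argues. Your final paragraph names some of the right ingredients (strongness via Proposition \ref{prop:locz}, softness, the Koszul computation of Proposition \ref{prop:kosz}), but it never reformulates the induction so that it can close, and it explicitly defers the verification; as written, the argument would stall at the second cell attachment, where ``complete'' is too weak a property to propagate. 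Note also that your preliminary reduction --- that every finite colimit of free algebras is an iterated cell attachment --- is itself not free of charge: it is the content of Lemma \ref{lem:factor} and Proposition \ref{prop:fincol} later in the paper, not of Proposition \ref{prop:hfpret}, whose proof in this paper is only a citation.
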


Note that the adjunction $\Gamma \dashv \Speci$ restricts to an adjunction
$$\xymatrix@C=2cm{\cialgsp^{op} \ar@<-0.5ex>[r]_-{\Speci} & \Aff_{\Ci \ar@<-0.5ex>[l]_-{\Gamma}}}$$

\begin{proposition}\cite[Proposition 5.1.26]{Nuiten}
The unit $$\eta:id_{\Aff_{\Ci}} \Rightarrow \Speci \circ \Gamma$$ is an equivalence.
\end{proposition}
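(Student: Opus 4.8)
The plan is to reduce the statement to a purely formal consequence of the triangle identities, isolating a single geometric input. Since $\Aff_{\Ci}$ is by definition the essential image of $\Speci$, and $\eta$ is natural, it suffices to show that $\eta_{\Speci\left(\A\right)} : \Speci\left(\A\right) \to \Speci\left(\Gamma\left(\Speci\left(\A\right)\right)\right)$ is an equivalence for every $\A \in \cialgsp$. Writing $\varepsilon_{\A} : \A \to \Gamma\left(\Speci\left(\A\right)\right)$ for the counit (the completion map), the triangle identity for the adjunction $\Gamma \dashv \Speci$ gives $\Speci\left(\varepsilon_{\A}\right) \circ \eta_{\Speci\left(\A\right)} \simeq \mathrm{id}_{\Speci\left(\A\right)}$. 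Thus $\eta_{\Speci\left(\A\right)}$ is a section of $\Speci\left(\varepsilon_{\A}\right)$, and the whole proposition collapses to the single claim that $\Speci\left(\varepsilon_{\A}\right)$ is an equivalence for all $\A$; once this is known, $\eta_{\Speci\left(\A\right)}$ is forced to be its inverse, hence an equivalence. (This is exactly the assertion that the adjunction is idempotent, equivalently that the comonad $\Gamma \circ \Speci$ is idempotent.)

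To prove that $\Speci\left(\varepsilon_{\A}\right)$ is an equivalence I would work directly with the explicit description of $\Speci$ recalled above, checking separately that it is a homeomorphism on underlying spaces and an equivalence on structure sheaves. On spaces, $\varepsilon_{\A}$ induces $\pi_0\left(\A\right) \to \pi_0\left(\Gamma\left(\Speci\left(\A\right)\right)\right)$, and I would show precomposition is a bijection $\Hom\left(\pi_0\Gamma\Speci\left(\A\right),\R\right) \to \Hom\left(\pi_0\A,\R\right)$ carrying basic opens $U_a$ to basic opens: an $\R$-point is exactly a point of the space, and since the structure sheaf is assembled by inverting the $a \in \pi_0\A$, completion neither creates nor destroys $\R$-points. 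On structure sheaves I would argue stalkwise: the stalk of $\cO_{\A}$ at a point $\varphi$ is the filtered colimit of the localizations $\A\left[1/a\right]$ over those $a$ with $\varphi\left(a\right)\neq 0$, and the analogous colimit computes the stalk of $\cO_{\Gamma\Speci\left(\A\right)}$; invoking the characterization of $\Ci$-localizations in Proposition \ref{prop:locz}, localizing the completion $\Gamma\Speci\left(\A\right)$ at such an $a$ recovers the same germ as localizing $\A$, so the induced map on stalks is an equivalence.

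The main obstacle is precisely this last, local statement: that $\varepsilon_{\A}$ is a \emph{local equivalence}, inverted after applying $\Speci$, equivalently that the completion $\Gamma\left(\Speci\left(\A\right)\right)$ is itself complete. This is the genuine content beyond formal nonsense, since $\Speci$ is not fully faithful and the unit is certainly not an equivalence on all of $\Loc$. I expect the delicate point to be controlling the higher homotopy groups: one must verify that sheafifying and taking global sections does not disturb the germwise localizations at the level of the full homotopy type, not merely on $\pi_0$. Here I would lean on the $\Ci$ Dold--Kan description of mapping spaces (Corollary \ref{cor:dk}), which reduces the stalkwise comparison to a cohomology computation for the relevant localized complexes, together with the behaviour of $\Ci$-localization in Proposition \ref{prop:locz} and the completeness of finitely presented algebras (Proposition \ref{prop:comp}); alternatively, one can simply invoke \cite[Proposition 5.1.26]{Nuiten}, where this idempotency is established in the required generality.
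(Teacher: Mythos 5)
The first thing to say is that the paper contains no proof of this proposition at all: it is imported verbatim from Nuiten's thesis, so the paper's entire ``proof'' is the citation \cite[Proposition 5.1.26]{Nuiten} that you also invoke at the end. Measured against that, your proposal is sound, and your formal reduction is correct and worth having: since every object of $\Aff_{\Ci}$ is by definition equivalent to some $\Speci\left(\A\right)$, naturality plus the triangle identity $\Speci\left(\varepsilon_{\A}\right)\circ\eta_{\Speci\left(\A\right)}\simeq \mathrm{id}_{\Speci\left(\A\right)}$ does reduce the proposition to the single claim that $\Speci\left(\varepsilon_{\A}\right)$ is an equivalence for every $\A$, i.e.\ to idempotency of the adjunction $\Gamma\dashv\Speci$; this is also exactly the form in which the paper uses the result (the ensuing corollary that $L^{\wedge}=\Gamma\circ\Speci$ is a reflection onto the complete algebras).

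Where you should be careful is in regarding the second half of your proposal as anything more than a plan: both of its ingredients are precisely the non-formal content of Nuiten's argument, not reductions of it. Surjectivity on $\R$-points is indeed easy (evaluate through the stalk at a point), but injectivity requires comparing $\pi_0\Gamma\left(\Speci\left(\A\right)\right)$ with sections of the sheafified $\pi_0$, and these differ in general by descent obstructions involving higher homotopy sheaves; likewise, the stalkwise step needs that the restriction maps $\Gamma\left(\Speci\left(\A\right)\right)\left[1/a\right]\to\cO_{\A}\left(U_a\right)$ (localization at the image of $a$) are equivalences, which does not follow from Proposition \ref{prop:locz} alone --- the structure sheaf is the \emph{sheafification} of $U_a\mapsto\A\left[1/a\right]$, so its sections over $U_a$ are no longer the localization of $\A$, and controlling exactly this discrepancy is the technical heart of the cited result. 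Since you explicitly close the loop by invoking \cite[Proposition 5.1.26]{Nuiten}, there is no gap in the proposal as submitted; just be aware that everything before that citation is formal bookkeeping, and everything of substance lives inside it.
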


The following corollary follows immediately:

\begin{corollary}
The composite
$$\left(\cialgsp^{\wedge}\right)^{op} \hookrightarrow \cialgsp^{op} \stackrel{\Speci}{\longlonglongrightarrow} \Loc$$ is fully faithful, with essential image $\Aff_{\Ci}.$ In particular $$\left(\cialgsp^{\wedge}\right)^{op} \simeq \Aff_{\Ci}.$$ Moreover, $\cialgsp^{\wedge}$ is a reflective subcategory of $\cialgsp$ with the left adjoint to the inclusion $i^{\wedge}$ given by $L^{\wedge}:=\Gamma \circ \Speci.$
\end{corollary}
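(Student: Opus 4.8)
The plan is to exhibit $\left(\Gamma,\Speci\right)$ as an idempotent adjunction and to read off all three assertions from this. First I would fix the variance: viewing $\Gamma$ as a functor $\Loc \to \cialgsp^{op}$, the defining natural equivalence $\Map_{\Loc}\left(X,\Speci\left(\A\right)\right) \simeq \Map_{\cialgsp}\left(\A,\Gamma\left(X\right)\right)$ exhibits $\Gamma$ as left adjoint to $\Speci:\cialgsp^{op} \to \Loc$. Under this identification the unit is $\eta:id_{\Loc} \Rightarrow \Speci \circ \Gamma$, and the counit $\Gamma \circ \Speci \Rightarrow id_{\cialgsp^{op}}$, read back in $\cialgsp$, is exactly the natural completion map $\varepsilon_\A:\A \to \Gamma\left(\Speci\left(\A\right)\right)$. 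Thus $\A$ is complete precisely when $\varepsilon_\A$ is an equivalence, so that $\left(\cialgsp^{\wedge}\right)^{op}$ is the full subcategory of $\cialgsp^{op}$ on objects where the counit is invertible, while $\Aff_{\Ci}$ is by definition the essential image of $\Speci$.

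Next I would prove full faithfulness directly. For $\A$ complete and $\B$ arbitrary, the adjunction gives $\Map_{\Loc}\left(\Speci\left(\A\right),\Speci\left(\B\right)\right) \simeq \Map_{\cialgsp^{op}}\left(\Gamma\Speci\left(\A\right),\B\right)$, and since $\A$ is complete the counit $\varepsilon_\A$ is an equivalence, identifying this with $\Map_{\cialgsp^{op}}\left(\A,\B\right)$; unwinding, the resulting map is the one induced by $\Speci$, so $\Speci$ is fully faithful on $\left(\cialgsp^{\wedge}\right)^{op}$. For the essential image, any $\Speci\left(\A\right)$ with $\A$ complete lies in $\Aff_{\Ci}$ by definition, while conversely for $X \in \Aff_{\Ci}$ the map $\eta_X:X \to \Speci\Gamma\left(X\right)$ is an equivalence by \cite[Proposition 5.1.26]{Nuiten}, so $X \simeq \Speci\left(\Gamma X\right)$; it then remains to check that $\Gamma X$ is complete. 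This is where I would use the triangle identity $\varepsilon_{\Gamma X} \circ \Gamma\left(\eta_X\right) = id_{\Gamma X}$: since $\Gamma\left(\eta_X\right)$ is an equivalence, this identity forces $\varepsilon_{\Gamma X}$ to be its inverse, hence an equivalence, which says precisely that $\Gamma X$ is complete. Therefore $\Speci$ restricts to an equivalence $\left(\cialgsp^{\wedge}\right)^{op} \stackrel{\sim}{\longrightarrow} \Aff_{\Ci}$, with $\Gamma$ as inverse.

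For the final claim I would observe that the computation above shows $\varepsilon_{\Gamma\Speci\A}$ is an equivalence for every $\A$, i.e. the comonad $\Gamma \circ \Speci$ on $\cialgsp^{op}$ is idempotent. By the standard formalism of idempotent (co)monads and localizations (see \cite{htt}), its coalgebras---the objects where the counit is invertible, namely $\left(\cialgsp^{\wedge}\right)^{op}$---form a coreflective subcategory of $\cialgsp^{op}$ with coreflector $\Gamma \circ \Speci$. Passing to opposite categories turns coreflectivity into reflectivity and the comonad into the monad $L^{\wedge}:=\Gamma \circ \Speci$, exhibiting $\cialgsp^{\wedge}$ as a reflective subcategory of $\cialgsp$ whose inclusion $i^{\wedge}$ has left adjoint $L^{\wedge}$, with unit the completion map $\A \to \Gamma\Speci\left(\A\right)$.

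I expect the only real difficulty to be bookkeeping rather than mathematics: one must track the opposite-category conventions so that the paper's ``counit'' $\A \to \Gamma\Speci\left(\A\right)$ is correctly matched with the genuine counit of $\Gamma \dashv \Speci$ in $\cialgsp^{op}$, and so that the coreflective structure produced by the idempotent comonad dualizes to the asserted reflective structure with left adjoint exactly $L^{\wedge}=\Gamma\circ\Speci$. Once \cite[Proposition 5.1.26]{Nuiten} supplies the invertibility of the unit on $\Aff_{\Ci}$, everything else is formal.
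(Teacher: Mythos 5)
Your proposal is correct and coincides with the paper's intended argument: the paper states this corollary as an immediate formal consequence of the preceding proposition (invertibility of the unit on $\Aff_{\Ci}$, i.e.\ \cite[Proposition 5.1.26]{Nuiten}) together with the definition of completeness, and your write-up is precisely that adjunction/triangle-identity unwinding with the same key input. The one step you leave implicit---that $\Gamma\Speci\left(\varepsilon_{\A}\right)$ is also invertible, which is needed alongside $\varepsilon_{\Gamma\Speci\left(\A\right)}$ for the localization criterion of \cite[Proposition 5.2.7.4]{htt}---follows from the second triangle identity and the same proposition of Nuiten (or from the comonad laws once $\varepsilon_{\Gamma\Speci\left(\A\right)}$ is known to be invertible), so it is genuinely covered by the ``standard formalism'' you invoke.
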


\part{New Results}\label{part:new}
\section{Differential Graded Geometry, Part II}\label{sec:ii}
\subsection{From dg-manifolds to derived $\Ci$-schemes}\label{sec:dgsch}
\begin{definition}\label{dfn:locus}
Let $\M=\left(M,\O_\M,D\right)$ be a dg-manifold. The $\Ci$-scheme $\pi_0\left(\M\right):=\Speci\left(H^0\left(\Ci\left(\M\right)\right)\right)$ is called the \textbf{classical locus} of $\M.$
\end{definition}

\begin{remark}
As a topological space, $\pi_0\left(\M\right)$ is the subspace of $M$ on which the cohomological vectorfield vanishes.
\end{remark}

\begin{lemma}
Let $\left(M,\O_\M,\D\right)$ be a dg-manifold. Then $\O_\M$ is a homotopy sheaf of dg-$\Ci$-algebras.
\end{lemma}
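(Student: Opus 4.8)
The plan is to verify homotopy descent directly, reducing it first to the underlying sheaf of cochain complexes and then to the softness of the sheaf of smooth sections of a vector bundle. The sheaf $\O_\M$ is already a strict sheaf of dg-$\Ci$-algebras (the differential $D$ turns the locally-free graded algebra $\O_\M$ into a sheaf of cdgas, with the $\Ci$-structure on degree $0$), so what must be shown is that for every open $U \subseteq M$ and every open cover $\mathcal{U} = \left(U_i \hookrightarrow U\right)$, the canonical map
\[
\O_\M\left(U\right) \longrightarrow \operatorname*{holim}_{\Delta} \O_\M\left(\check{C}\left(\mathcal{U}\right)_\bullet\right)
\]
into the homotopy limit over the \v{C}ech nerve is an equivalence in $\dgci$. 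By Theorem \ref{thm:dgmodel} the model structure on $\dgci$ is transferred along the forgetful functor $\dgci \to \ch$, which is the right adjoint of the Quillen adjunction described there; hence it detects weak equivalences and preserves homotopy limits. The displayed map is therefore an equivalence if and only if the induced map of underlying cochain complexes
\[
\left(\O_\M^\bullet\left(U\right),D\right) \longrightarrow \operatorname{Tot}\left(\check{C}^\bullet\left(\mathcal{U},\O_\M^\bullet\right)\right)
\]
is a quasi-isomorphism, the homotopy limit of a cosimplicial cochain complex being its (product) totalization, and the target being the total complex of the \v{C}ech double complex with internal differential $D$ and external differential the \v{C}ech coboundary.

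Next I would identify the underlying graded sheaf. Because $\M$ is locally isomorphic to a graded domain, each cohomological degree $\O_\M^n$ is a locally free sheaf of $\Ci_M$-modules of finite rank, i.e. the sheaf of smooth sections of a vector bundle over $M$. As a module over the fine sheaf $\Ci_M$ it is itself fine, hence soft, hence $\Gamma$-acyclic on the paracompact space $M$. In particular every augmented row of the \v{C}ech double complex,
\[
0 \to \O_\M^n\left(U\right) \to \prod_i \O_\M^n\left(U_i\right) \to \prod_{i,j}\O_\M^n\left(U_{ij}\right) \to \cdots,
\]
is exact; indeed a partition of unity subordinate to $\mathcal{U}$ furnishes an explicit contracting homotopy in each fixed degree $n$. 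Note that this uses only the graded structure, so the differential $D$ plays no role here.

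Finally I would assemble the bicomplex. Since $M$ has finite covering dimension $d = \dim M$, I may refine $\mathcal{U}$ to a cover of order at most $d+1$; as the homotopy-sheaf condition may be checked on such a cofinal family of covers, I may assume the \v{C}ech direction is concentrated in degrees $0 \le p \le d$. The double complex then occupies a bounded horizontal strip, so its totalization has only finitely many summands in each total degree and all convergence questions disappear. Exactness of the rows now forces the augmentation to be a quasi-isomorphism: the column-filtration spectral sequence collapses onto the \v{C}ech-degree-zero column, where it computes $\left(\O_\M^\bullet\left(U\right),D\right)$, which is the desired descent statement.

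I expect the one genuine subtlety to be that $\O_\M^\bullet$ is unbounded below, so that a priori the totalization involves infinite products and its cohomology need not be computed by the naive spectral sequence. This is exactly what the finite covering dimension of $M$ repairs, by letting one truncate the \v{C}ech direction to a bounded range before assembling. The remaining ingredients --- softness of finite-rank locally free modules over $\Ci_M$, and the preservation of homotopy limits by the forgetful functor to $\ch$ --- are standard.
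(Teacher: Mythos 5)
Your proposal follows the same route as the paper's proof: reduce along the transferred model structure to the underlying presheaf of cochain complexes, identify the homotopy limit over the \v{C}ech nerve with the product totalization of the \v{C}ech double complex, and use softness of the finite-rank locally free sheaves $\O^n_\M$ to make the augmented rows exact. Where you genuinely depart from the paper is in recognizing that this is not yet a complete argument: since $\O^\bullet_\M$ is unbounded below (powers of even coordinates of negative degree), each antidiagonal of the double complex is an infinite product, the filtration by internal degree on $\mathrm{Tot}^{\Pi}$ is not exhaustive, and so the row-wise spectral sequence furnished by softness need not converge to $H^\ast\left(\mathrm{Tot}^{\Pi}\right)$ even though it collapses at $E_2$. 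The paper's own write-up elides exactly this point (its phrase ``bounded below'' only bears on the identification of the homotopy limit with the totalization), so your concern is well placed, and your truncation of the \v{C}ech direction via the covering dimension of $M$ is a legitimate way to restore convergence.

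The weakness is that your repair rests on an assertion that is itself nontrivial: that the homotopy-sheaf condition can be checked on a cofinal family of covers. This is not formal for presheaves valued in an $\i$-category. What one actually needs is a transitivity lemma: if $\mathcal{V}$ refines $\mathcal{U}$, and descent holds for $\mathcal{V}$ over $U$ as well as for the restriction of $\mathcal{V}$ to every finite intersection $U_{i_0\cdots i_p}$, then descent holds for $\mathcal{U}$; this is proved by a Fubini argument for the bi-cosimplicial \v{C}ech object. Your family of covers of order at most $\dim M+1$ is closed under restriction to open subsets, so the lemma does apply, but it must be stated and invoked --- it is precisely the kind of claim that fails if phrased carelessly. (A minor further caveat: the core of a graded manifold may a priori have components of unbounded dimension, in which case one argues component by component.) Let me also point out a repair that avoids both issues and works for an arbitrary cover in one stroke: the sheaves $\O^n_\M$ are not merely soft but \emph{fine}, so a partition of unity $\left(\rho_j\right)$ subordinate to the cover yields an explicit contracting homotopy $h$ of every augmented \v{C}ech row; and since $\O^1_\M=0$, every degree-zero function is $D$-closed, whence $D\left(\rho_j c\right)=\rho_j D\left(c\right)$ and $h$ commutes with $D$. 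Then $h$ itself contracts the full augmented total complex, with no spectral sequence and no convergence question at all.
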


\begin{proof}
It suffices to prove that the underlying sheaf of cochain complexes is a homotopy sheaf. Since any open subset of $M$ is automatically a dg-manifold, it suffices to prove descent for a cover of $M.$ Let $\left(U_\alpha \hookrightarrow M\right)_\alpha$ be an open cover, and denote by $\mathcal{U}_\alpha$ the dg-manifold $\left(U_\alpha,\O_{\M}|_{U_\alpha},D\right)$. We need to show that the canonical map
$$\Ci\left(\M\right) \to  \lim\left[ \underset{\alpha} \prod \Ci\left(\mathcal{U}_\alpha\right) \rightrightarrows \underset{\alpha,\beta} \prod\Ci\left(\mathcal{U}_{\alpha\beta}\right)  \rrrarrow \ldots\right]$$
is a quasi-isomorphism. For each $n < 0$ let $\mbox{\v{C}}^\bullet\left(\mathcal{U},\O^n_{\M}\right)$ be the \v{C}ech complex of the abelian sheaf $\O^n_{\M}$ with respect to the above cover. Then these assemble into a double complex $\mbox{\v{C}}^\bullet\left(\mathcal{U},\O^\bullet_{\M}\right).$ Since $\O^\bullet_{\M}$ is bounded below, the above homotopy limit can be computed as the totalization of this double complex. Notice that for each $n,$ $\O^n_{\M}$ is a locally free sheaf associated to a vector bundle over $M,$ and hence is soft. It follows that its sheaf cohomology vanishes. Thus the double complex spectral sequence collapses on the $E_2$-page, and we obtain our result.
\end{proof}

\begin{lemma}\label{lem:acycpi}
Let $U \subseteq M$ be an open subset of a dg-manifold $\M=\left(M,\O_\M,D\right),$ and suppose that $U \cap \pi_0\left(\M\right)\ne 0,$ then $\O_{\cM}|_U$ is acyclic.
\end{lemma}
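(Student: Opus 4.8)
The statement only makes sense when $U$ is disjoint from the classical locus, so I read the hypothesis as $U \cap \pi_0\left(\M\right)=\emptyset$; under this assumption the assertion is that the cochain complex of sheaves $\left(\O_\M|_U,D\right)$ is exact (equivalently, its cohomology sheaves vanish, and --- using softness as in the previous lemma --- that $\Ci\left(\mathcal{U}\right)$ is acyclic). Since acyclicity of a complex of sheaves can be checked on stalks, and the cohomology sheaves are the sheafifications of $p \mapsto H^\bullet$ of sections over small neighborhoods, the plan is to produce, around each $p \in U$, an open $V \ni p$ together with a contracting homotopy for $\left(\O_\M|_V,D\right)$. Softness of each $\O_\M^{-n}$ (they are sections of vector bundles, hence fine) together with the homotopy-sheaf property established in the previous lemma then upgrades vanishing of the cohomology sheaves to acyclicity of global sections over $U$, via the two hyperdescent spectral sequences.

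For the local homotopy I would work in graded local coordinates: write $\O_\M$ locally as $\Ci\left(W\right) \underset{\R}\otimes \Sym\left(\V^*\right)$, with bosonic coordinates of degree $0$ and fiber coordinates of strictly negative degree, and decompose the degree-$+1$ derivation $D=\sum_{k\ge 0} D_k$ by polynomial degree in the fiber coordinates, so that $D_k$ changes that polynomial degree by $k-1$. The piece $D_0$ is the only one that can reach cohomological degree $0$, and on degree-$\left(-1\right)$ elements it is exactly the contraction $\iota_{\lambda_0}$ with the section $\lambda_0$ of $L_1$ whose components are the coefficients of the degree-lowering part of $D$. Since $D$ is $\Ci\left(W\right)$-linear on $\O_\M^{-1}$, the image of $D\colon \O_\M^{-1} \to \O_\M^0$ is the ideal generated by these components, so $p \notin \pi_0\left(\M\right)$ precisely when $\lambda_0\left(p\right) \ne 0$. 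Shrinking $W$ about such a $p$, I choose a local cosection $\sigma$ --- a degree-$\left(-1\right)$ element of $\O_\M$ pairing with $L_1$ --- with $\langle \lambda_0, \sigma\rangle = 1$, possible since $\lambda_0$ is nowhere zero near $p$.

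The candidate homotopy is multiplication by $\sigma$, corrected by an invertible factor. Set $\phi := \left\{D,\sigma\cdot\right\}=D\left(\sigma\cdot\right)+\left(\sigma\cdot\right)D$. A direct computation with the odd Leibniz rule gives $\left\{D_0,\sigma\cdot\right\}=\langle\lambda_0,\sigma\rangle\cdot\mathrm{id}=\mathrm{id}$ as operators on all of $\O_\M|_V$, while each $\left\{D_k,\sigma\cdot\right\}$ with $k\ge 1$ raises polynomial degree in the fiber coordinates by $k\ge 1$; hence $\phi=\mathrm{id}+N$ with $N$ strictly raising polynomial degree and preserving cohomological degree. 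Because every fiber coordinate has degree $\le -1$, an element of cohomological degree $-m$ has polynomial degree $\le m$, so $N$ is nilpotent on each $\O_\M^{-m}$ and $\phi$ is invertible there with inverse the finite sum $\sum_{j\ge 0}\left(-N\right)^j$. As $\phi$ is the graded commutator with $D$, one has $D\phi=\phi D=D\left(\sigma\cdot\right)D$ using $D^2=0$, so $\phi^{-1}$ commutes with $D$; therefore $h:=\phi^{-1}\left(\sigma\cdot\right)$ satisfies $Dh+hD=\phi^{-1}\left(D\left(\sigma\cdot\right)+\left(\sigma\cdot\right)D\right)=\phi^{-1}\phi=\mathrm{id}$, exhibiting $\left(\O_\M|_V,D\right)$ as contractible.

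The crux --- and the only place requiring care --- is precisely that $D$ carries both the degree-lowering Koszul piece $\iota_{\lambda_0}$ and the higher-order terms $D_{\ge 1}$, so that $\sigma\cdot$ alone is not a homotopy; the whole argument hinges on the correction $N=\phi-\mathrm{id}$ being nilpotent, which is forced by the boundedness of polynomial degree within each fixed cohomological degree, a structural feature of the non-positive grading. Once the local contractions are in hand, the remaining gluing step is routine given the softness of the $\O_\M^{-n}$ and the homotopy-sheaf property already proved.
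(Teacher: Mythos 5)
Your reading of the hypothesis is right: the ``$\ne$'' in the statement is a typo for ``$=\emptyset$,'' as confirmed both by the paper's own proof (which uses that some $f^1_j\left(x\right)\ne 0$, i.e. $x\notin\pi_0\left(\M\right)$) and by the way the lemma is applied in Lemma \ref{lem:piok}. Your proof is correct, but the local step is genuinely different from the paper's. Both arguments use the homotopy-sheaf property (plus softness of each $\O^{n}_{\M}$) to reduce to a small coordinate chart disjoint from the classical locus. There the paper presents $\O_\M\left(U\right)$ as an iterated Koszul algebra $\A^{(k)}=K\left(\A^{(k-1)},f^k_1,\ldots,f^k_{m_k}\right)$, notes each stage is a homotopy pushout along a generating cofibration, and reduces to acyclicity of $K\left(\Ci\left(U\right),f^1_1\right)$ with $f^1_1$ a unit, which follows from Lemma \ref{lem:Kosz1} since $\Ci\left(U\right)/\left(f^1_1\right)=0$; this leans on the model structure on $\dgcni$ and the Koszul lemma, and the iterated presentation is reused later (e.g. in Proposition \ref{prop:qfcpt}). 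You instead exhibit an explicit contracting homotopy from a cosection $\sigma$ with $\langle\lambda_0,\sigma\rangle=1$: this is more elementary, needs no model-category input, and yields a stronger conclusion (the complex of sections over the shrunk chart is null-homotopic, not merely acyclic). One simplification you missed: since $D$ is a derivation, the graded commutator $\left[D,\sigma\cdot\right]$ is just multiplication by $D\sigma$; and for a degree $-1$ coordinate $\eta_j$, the function $D\eta_j$ has cohomological degree $0$, hence lies in $\Ci\left(W\right)$ with no higher polynomial-degree components, so $D\sigma=\langle\lambda_0,\sigma\rangle=1$ exactly. Thus your perturbation $N$ vanishes identically, $\phi=\mathrm{id}$, and $\sigma\cdot$ itself is already the homotopy; the nilpotence-and-inversion step is harmless but unnecessary here (it would become essential only in a setting where the pairing could be normalized merely up to terms of higher polynomial degree, so your formulation is the one that generalizes).
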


\begin{proof}
Since $\O_\M$ is a homotopy sheaf, it suffices to prove this for sufficiently small $U.$ Hence, we can assume that $U\cong \R^n,$ and $\O_\M\left(U\right)$ is a quasi-free dg-$\Ci$-algebra. We can therefore find graded coordinates $\left(x_1,\ldots,x_n;\eta_{ij}\right)$ such that $\left(x_1,\ldots,x_n\right)$ is a coordinate system on $\R^n,$ and for each $i,$ $\left(\eta_{i1},\eta_{i2},\ldots,\eta_{im_i}\right)$ are of degree $-i,$ with $i$ running from $i=1$ to $N,$ for some finite $N.$ The differential thus looks like
$$D=f^1_i \frac{\partial}{\partial \eta_{1i}} + f^2_j \frac{\partial}{\partial \eta_{2j}}+ \ldots,$$ with $f^i_j$ of degree $-i+1$ for all $i.$ It follows that we can construct $\O_\M\left(U\right)$ iteratively as $\A^{(0)}=\Ci\left(U\right),$ $$\A^{(k)}=K\left(\A^{(k-1)},f^k_1,f^k_2,\ldots,f^k_{m_k}\right),$$ with $\O_\M\left(U\right) =  \A^{(N)}.$ Since $\O_\M\left(U\right)$ is quasi-free, it is cofibrant in $\dgcni,$ and since all the maps
$$\Ci\{y_1,\ldots,y_{m_{k}}\} \to \left(\Ci\{y_1,\ldots,y_{m_k},\eta_{i1},\ldots,\eta_{m_k}\},d=y_i \frac{\partial}{\partial \xi_{ki}},|\xi_i|=k \right)$$ are cofibrations (they are in fact the generating cofibrations), it follows that each pushout diagram 
$$\xymatrix{\Ci\{y_1,\ldots,y_{m_{k}}\}  \ar[d]_-{\left(f^k_1,\ldots,f^k_{m_k}\right)} \ar[r] & \left(\Ci\{y_1,\ldots,y_{m_k},\eta_{i1},\ldots,\eta_{m_k}\},d=y_i \frac{\partial}{\partial \xi_{ki}},|\xi_i|=k\right) \ar[d]\\
\A^{(k-1)} \ar[r] & \A^{(k)},}$$ is a homotopy pushout. Therefore, if $\A^{(k-1)}$ is acyclic, so is $\A^{(k)}.$ Therefore, to show that $\O_\M\left(U\right)$ is acyclic, it suffices to show that $\A^{(1)}$ is. Let $x \in U$. Then, by hypothesis, $x \in \pi_0\left(\M\right),$ which means that there exists a $j$ such that $f^1_j\left(x\right) \ne 0.$ Without loss of generality, to ease notation suppose $f^1_1\left(x\right) \ne 0.$ By shrinking $U$ if necessary, but keeping it diffeomorphic to $\R^n,$ we can arrange that $f^1_1\left(y\right) \ne 0,$ for all $y \in U.$ This means that $f^1_1$ is a unit, and in particular, a regular element. Notice that $$\A^{(1)}\cong K\left(K\left(\Ci\left(U\right),f^1_1\right),f^1_2,\ldots,f^1_{m_1}\right).$$ By an analogous argument to the one just given, the pushout defining the outer Koszul algebra is a homotopy pushout, so it suffices to prove that $K\left(\Ci\left(U\right),f^1_1\right)$ is acyclic. Since $f^1_1$ is not a zero divisor, we know that $$K\left(\Ci\left(U\right),f^1_1\right) \stackrel{\sim}{\longrightarrow} \Ci\left(U\right)/\left(f^1_1\right),$$ and since $f^1_1$ is a unit, this quotient is $0.$ This completes the proof.
\end{proof}

\begin{proposition}\label{prop:qfcpt}
Let $\left(\A,d\right)$ be a finitely generated quasi-free dg-$\Ci$-algebra. Then $\A$ is homotopically finitely presented, i.e. a compact object in $\dgci.$
\end{proposition}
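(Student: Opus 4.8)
The plan is to exhibit $\A$ as a finite iterated homotopy pushout of \emph{compact} building blocks, and then invoke the fact that compact objects of an $\i$-category are closed under finite colimits and retracts; equivalently, this realizes $\A$ as a finite colimit of algebras of the form $\Ci\left(\R^n\right)$, so that Proposition \ref{prop:hfpret} applies directly.

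First I would set up a degree filtration. Since $\A$ is finitely generated quasi-free, its underlying graded $\Ci$-algebra is free on a finite-dimensional graded vector space of generators concentrated in non-positive degrees; write $n_0$ for the number of generators in degree $0$ and $m_k$ for the number in degree $-k$, and let $N$ be the top such degree. Let $\A^{(k)} \subseteq \A$ be the dg-subalgebra generated by the generators of degree $\ge -k$. Because the differential $D$ raises degree by one and all generators lie in non-positive degree, $D$ of a generator of degree $-k$ has degree $-k+1$ and hence lands in $\A^{(k-1)}$; thus each $\A^{(k)}$ is a $D$-stable quasi-free (hence cofibrant) dg-subalgebra, with $\A^{(0)} = \Ci\left(\R^{n_0}\right)$ and $\A^{(N)} = \A$. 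Choosing a basis $\xi_1,\ldots,\xi_{m_k}$ of the degree-$(-k)$ generators, the elements $a_j := D\xi_j$ are closed (as $D^2=0$) and lie in $\A^{(k-1)}$, and by construction $\A^{(k)} = K\left(\A^{(k-1)}, a_1, \ldots, a_{m_k}\right)$.

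Next I would promote each Koszul extension to a homotopy pushout. By definition the Koszul algebra is the one-categorical pushout of $\A^{(k-1)} \leftarrow C_k \rightarrow D_k$, where $C_k = \left(\Ci\{z_1,\ldots,z_{m_k}\},0\right)$ with $|z_j| = -k+1$ and $D_k = \left(\Ci\{z_j,\xi_j\}, D\xi_j = z_j\right)$. All three corners are quasi-free, hence cofibrant, and $C_k \to D_k$ is a coproduct of the generating cofibrations already used in Lemma \ref{lem:acycpi}, so this pushout computes the homotopy pushout. Therefore, in $\dgci$, the object $\A$ is obtained from $\Ci\left(\R^{n_0}\right)$ by finitely many pushouts along the maps $C_k \to D_k$.

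It remains to see that the building blocks are compact, which is where I expect the real content to sit. The underlying-complex functor $U : \dgci \to \ch$ preserves filtered colimits: under the $\Ci$ Dold--Kan equivalence (Theorem \ref{thm:DK}) $\dgci \simeq \Alg_{\Ci}\left(\Spc\right)$, filtered colimits are computed objectwise because finite products commute with filtered colimits in $\Spc$, and $U$ is induced by the forgetful map of algebraic theories. Consequently its left adjoint $\Sym_{\Ci}$ carries compact objects to compact objects. Now $C_k = \Sym_{\Ci}\left(V_k\right)$ with $V_k$ finite-dimensional in a single degree, while $D_k = \Sym_{\Ci}\left(\mathrm{cone}\right)$ has underlying complex the acyclic two-term complex $V_k\left[1\right] \to V_k$; both underlying complexes are compact in $\ch$, so $C_k$, $D_k$, and likewise $\Ci\left(\R^{n_0}\right) = \Sym_{\Ci}\left(\R^{n_0}\right)$ are compact in $\dgci$. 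Since compact objects are stable under finite colimits and retracts, the iterated-pushout presentation forces $\A$ to be compact. The main obstacle is precisely the filtered-colimit claim for $U$ (equivalently, the compactness of the free building blocks); the remaining steps are bookkeeping with the degree filtration and the cofibrancy already exploited in Lemma \ref{lem:acycpi}.
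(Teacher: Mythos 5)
Your proof is correct and follows essentially the same route as the paper's: the paper simply invokes the iterated Koszul pushout decomposition from the proof of Lemma \ref{lem:acycpi} together with closure of compact objects under finite colimits, exactly as you do. The only addition is your explicit justification that the building blocks $\Sym_{\Ci}$ of finite-dimensional complexes are compact (via the filtered-colimit-preserving forgetful functor to $\ch$), a point the paper leaves implicit; just note that these finite-dimensional complexes are the \emph{generating} complexes of $C_k$ and $D_k$, not their underlying complexes.
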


\begin{proof}
Compact objects are closed under finite colimits. The proof of Lemma \ref{lem:acycpi} shows that any such algebra can be obtained by taking finitely many pushouts of finitely generated free $\Ci$-rings. The result now follows.
\end{proof}

\begin{lemma}\label{lem:piok}
Let $\left(M,\O_\M,\D\right)$ be a dg-manifold. Then there is a canonical quasi-isomorphism
$$\Ci\left(\M\right) \simeq \Gamma_{\pi_0\left(\M\right)}\left(\O_{\M}|_{\pi_0\left(\M\right)}\right).$$
\end{lemma}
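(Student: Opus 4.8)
The plan is to realize the asserted map as the global sections of a quasi\nobreakdash-isomorphism of \emph{complexes of sheaves} on $M$. Write $i:\pi_0\left(\M\right)\hookrightarrow M$ for the inclusion of the classical locus, viewed as the closed subspace of $M$ where $D$ vanishes, and set $V:=M\setminus \pi_0\left(\M\right)$. The canonical comparison map is exactly the restriction of sections along $i$, i.e. the result of applying $\Gamma_M$ to the unit
$$\O_\M^\bullet \longrightarrow i_*\,i^{-1}\O_\M^\bullet = i_*\bigl(\O_\M^\bullet|_{\pi_0\left(\M\right)}\bigr).$$
So it suffices to prove two independent things: that this unit is a quasi\nobreakdash-isomorphism of complexes of sheaves on $M$, and that taking global sections on each side computes the two objects in the statement up to quasi\nobreakdash-isomorphism.

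First I would dispatch the second point using softness. Each $\O_\M^n$ is the sheaf of sections of a vector bundle over the manifold $M$, hence soft and therefore $\Gamma_M$\nobreakdash-acyclic; since $\O_\M^\bullet$ is bounded below, $\Gamma_M\left(\O_\M^\bullet\right)=\Ci\left(\M\right)$ already computes the hypercohomology $\mathbb{H}^\bullet\left(M,\O_\M^\bullet\right)$. The restriction of a soft sheaf to the closed (hence paracompact) subspace $\pi_0\left(\M\right)$ is again soft, so likewise $\Gamma_{\pi_0\left(\M\right)}\left(\O_\M^\bullet|_{\pi_0\left(\M\right)}\right)$ computes $\mathbb{H}^\bullet\left(\pi_0\left(\M\right),\O_\M^\bullet|_{\pi_0\left(\M\right)}\right)$; in particular there is no distinction here between derived and underived global sections. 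Finally, because $i$ is a closed immersion, $i_*$ is exact with vanishing higher direct images, so $\mathbb{H}^\bullet\bigl(M,i_*(\O_\M^\bullet|_{\pi_0\left(\M\right)})\bigr)\cong \mathbb{H}^\bullet\left(\pi_0\left(\M\right),\O_\M^\bullet|_{\pi_0\left(\M\right)}\right)$.

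The crux is to show the unit is a quasi\nobreakdash-isomorphism of complexes of sheaves, which I would verify on cohomology sheaves. Let $\mathcal{H}^q:=\mathcal{H}^q\left(\O_\M^\bullet,D\right)$. Lemma \ref{lem:acycpi} says precisely that over any open set disjoint from $\pi_0\left(\M\right)$ the complex $\O_\M^\bullet$ is acyclic; hence $\mathcal{H}^q|_V=0$, so every $\mathcal{H}^q$ is supported on $\pi_0\left(\M\right)$ and the natural map $\mathcal{H}^q\to i_*\,i^{-1}\mathcal{H}^q$ is a stalkwise isomorphism, therefore an isomorphism. Since $i^{-1}$ and $i_*$ are both exact, the cohomology sheaves of $i_*\left(\O_\M^\bullet|_{\pi_0\left(\M\right)}\right)$ are exactly $i_*\,i^{-1}\mathcal{H}^q$, and the unit induces the isomorphism just described on each $\mathcal{H}^q$. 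Thus the unit is a quasi\nobreakdash-isomorphism, and combining this with the softness identifications of the previous paragraph yields the claimed quasi\nobreakdash-isomorphism $\Ci\left(\M\right)\simeq \Gamma_{\pi_0\left(\M\right)}\left(\O_\M|_{\pi_0\left(\M\right)}\right)$.

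I expect the main obstacle to be careful bookkeeping rather than a conceptual difficulty. The one genuine input is the support statement for the cohomology sheaves, which rests entirely on Lemma \ref{lem:acycpi}; one must read that lemma with the correct hypothesis, namely that $U$ is disjoint from the classical locus (so that some degree $-1$ component of $D$ is non\nobreakdash-vanishing and the local Koszul complex is exact), and interpret its conclusion as the vanishing of cohomology sheaves over $V$, which is exactly what the support argument consumes. The only other points requiring care are the appeal to softness of $\O_\M^\bullet|_{\pi_0\left(\M\right)}$ on the closed subspace and the vanishing $R^{>0}i_*=0$; both are standard for closed immersions into paracompact spaces and should simply be cited.
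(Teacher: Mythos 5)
Your proof is correct, and it takes a genuinely different route from the paper's. The paper identifies $\Gamma_{\pi_0\left(\M\right)}\left(\O_{\M}|_{\pi_0\left(\M\right)}\right)$ with the filtered colimit of $\O_\M\left(U\right)$ over open neighborhoods $U$ of the classical locus, shows each restriction $\Ci\left(\M\right) \to \O_\M\left(U\right)$ is a quasi-isomorphism by descent for the two-element cover $\left\{U, V\right\}$ of $M$ (where $V = M \setminus \pi_0\left(\M\right)$, so that Lemma \ref{lem:acycpi} makes the terms $\O_\M\left(V\right)$ and $\O_\M\left(U \cap V\right)$ acyclic), and concludes because a filtered colimit of a constant diagram of homotopical $\Ci$-algebras is trivial ($B\mathcal{J}$ being contractible). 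You instead stay with complexes of sheaves on $M$: the unit $\O^\bullet_\M \to i_* i^{-1}\O^\bullet_\M$ is a quasi-isomorphism because the cohomology sheaves are supported on $\pi_0\left(\M\right)$ (again by Lemma \ref{lem:acycpi}, together with exactness of $i_*$ and $i^{-1}$ for the closed inclusion $i$), and softness lets you pass to global sections. Both proofs consume exactly the same two inputs --- Lemma \ref{lem:acycpi} and softness of each $\O^n_\M$ (the paper uses softness earlier, in proving that $\O_\M$ is a homotopy sheaf) --- but package them differently: yours is more classical and self-contained (standard sheaf theory; no $\i$-categorical filtered-colimit step, and no need for the neighborhood-colimit description of sections over a closed subspace), while the paper's stays uniformly inside the homotopy-sheaf formalism it has already set up. Your reading of Lemma \ref{lem:acycpi} is also the correct one: its hypothesis is misstated ($U \cap \pi_0\left(\M\right) \neq 0$ should be $U \cap \pi_0\left(\M\right) = \emptyset$, as its own proof makes clear), and your argument uses the corrected form. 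If you write this up, make explicit that $\O^\bullet_\M$ is bounded on each connected component of $M$ and that $i_*$ preserves softness, so that naive global sections compute hypercohomology on both sides of the unit.
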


\begin{proof}
Let $\mathcal{J}$ be the category of open neighborhoods of $\pi_0\left(\M\right)$ in $M,$ ordered by inclusion. Then
$$\Gamma_{\pi_0\left(\M\right)}\left(\O_{\M}|_{\pi_0\left(\M\right)}\right)\cong \underset{U \in \mathcal{J}} \colim \O_{\M}\left(U\right).$$ 
Let $V$ be the compliment of $\pi_0\left(\M\right)$ in $M.$ Then for any open $U \supset K,$ $U \cup V$ is a $2$-term cover of $M.$ By Lemma \ref{lem:acycpi}, $\O_{\M}\left(V\right)$ and $\O_{\M}\left(U \cap V\right)$ are acyclic. Since $\O_{\M}$ is a homotopy sheaf, it follows that the canonical map $\O_\M\left(M\right)=\Ci\left(\M\right) \to \O_\M\left(U\right)$ is a quasi-isomorphism. These canonical maps are given by $\O_{\M}$ regarded as a functor, hence constitute a natural transformation $$\Delta_{\Ci\left(\M\right)} \Rightarrow \O_{\M}|_{\mathcal{J}},$$ which is component-wise a quasi-isomorphism. This implies that $\O_{\M}|_{\mathcal{J}}$ is equivalent to $\Delta_{\Ci\left(\M\right)},$ in the $\i$-category of functors $\Fun\left(\J^{op},\dgci\right).$ Recall that we have an equivalence of $\i$-categories $\Gamma^{\Ci}:\dgci \stackrel{\sim}{\longrightarrow} \Alg_{\Ci}\left(\Spc\right).$ It therefore suffices to show that for any homotopical $\Ci$-algebra $\A,$ $$\A \simeq \underset{U \in \mathcal{J}} \colim \A.$$ Since $\Ci$ is an algebraic theory, the forgetful functor $$\Alg_{\Ci}\left(\Spc\right) \to \Spc$$ is conservative and preserves filtered colimits, so it suffices to prove that for any space $X$ in $\Spc,$ 
$$X \simeq \underset{U \in \mathcal{J}} \colim X.$$
But $\underset{U \in \mathcal{J}} \colim X=\underset{\mathcal{J}} \colim \Delta_X \simeq X \times B\mathcal{J}.$ Since we are dealing with a filtered colimit, $B\mathcal{J}$ is contractible. The result follows.
\end{proof}

Theorem \ref{thm:DK} gives an identification $\Alg_{\Ci}\left(\Spc\right) \simeq \dgci.$ It follow that the $\i$-category $\Loc$ has an alternative description, where the objects are pairs $\left(X,\O_X\right),$ with $O_X$ a homotopy sheaf of dg-$\Ci$-algebras. We will therefore interchangeably view homotopically $\Ci$-ringed spaces either as a space with a sheaf with values in the $\i$-category $\Alg_{\Ci}\left(\Spc\right),$ or a sheaf with values in the (equivalent) $\i$-category $\dgci$--- i.e. a homotopy sheaf of dg-$\Ci$-algebras. 

\begin{theorem}\label{thm:sch}
Let $\left(M,\O_\M,\D\right)$ be a dg-manifold. Then
$$\Speci\left(\Ci\left(\M\right),D\right) \simeq \left(\pi_0\left(\M\right),\O_\M|_{\pi_0\left(M\right)}\right).$$
\end{theorem}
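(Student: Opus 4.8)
The plan is to realize both sides as homotopically $\Ci$-ringed spaces with the same underlying space and the same structure sheaf, comparing them through the unit of the adjunction $\Gamma \dashv \Speci$. Write $\A := \left(\Ci\left(\M\right),D\right)$ and $Z := \pi_0\left(\M\right)$ for the classical locus. The unit $\eta\colon \mathrm{id} \Rightarrow \Speci \circ \Gamma$ evaluated at the homotopically $\Ci$-ringed space $\left(Z,\O_\M|_Z\right)$, together with the identification $\Gamma\left(Z,\O_\M|_Z\right) \simeq \A$ of Lemma \ref{lem:piok}, furnishes a \emph{canonical} comparison map
$$\left(Z,\O_\M|_Z\right) \longrightarrow \Speci\left(\A\right),$$
so it suffices to check this is an equivalence on underlying spaces and on structure sheaves.

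First I would identify the underlying spaces. Since $\underline{\Speci}$ is the unique limit-preserving extension of $\R^n \mapsto \R^n$, its value depends only on $\pi_0$, so $\underline{\Speci}\left(\A\right) = \underline{\Speci}\left(\pi_0 \A\right)$; and by the $\Ci$ Dold--Kan correspondence (Corollary \ref{cor:dk}) we have $\pi_0\left(\A\right) \cong H^0\left(\Ci\left(\M\right),D\right) = \Ci\left(M\right)/I$, where $I = \mathrm{im}\left(D\colon \Ci\left(\M\right)^{-1} \to \Ci\left(M\right)\right)$. Hence $\underline{\Speci}\left(\A\right)$ is the vanishing locus of $I$ in $M$, i.e. the subspace where $D$ vanishes, which is exactly $Z$ with its subspace topology; this is the common underlying space of both sides.

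Next I would match the structure sheaves on the standard basis. For $a \in \pi_0\left(\A\right)$ choose a lift $\tilde a \in \Ci\left(M\right)$, set $W_a := \left\{\tilde a \ne 0\right\} \subseteq M$, and observe $U_a = Z \cap W_a$ is the classical locus of the open dg-submanifold $\mathcal{W}_a := \left(W_a, \O_\M|_{W_a}, D\right)$ (these $U_a$ form a basis of $Z$, via bump functions). Applying Lemma \ref{lem:piok} to $\mathcal{W}_a$ yields
$$\left(\O_\M|_Z\right)\left(U_a\right) \simeq \Gamma_{U_a}\left(\O_\M|_{U_a}\right) \simeq \Ci\left(\mathcal{W}_a\right) = \O_\M\left(W_a\right).$$
The crucial claim is that the restriction map $\A = \O_\M\left(M\right) \to \O_\M\left(W_a\right)$ exhibits the target as the $\Ci$-localization $\A\left[a^{-1}\right]$. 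Granting this, since $\cO_\A$ is by construction the sheafification of $U_a \mapsto \A\left[a^{-1}\right]$ while $\O_\M|_Z$ is already a homotopy sheaf agreeing with this presheaf on $\left\{U_a\right\}$ compatibly with restriction (both transition maps are localizations), comparing stalks shows the two structure sheaves coincide, finishing the proof.

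The main obstacle is the crucial claim $\O_\M\left(W_a\right) \simeq \A\left[a^{-1}\right]$, which I would verify via the characterization of localizations in Proposition \ref{prop:locz}. The induced map on $\pi_0$ is $\Ci\left(M\right)/I \to \Ci\left(W_a\right)/I|_{W_a}$, the $\Ci$-localization at $\tilde a$, hence an open immersion onto $U_a$. For strongness, each graded piece $\Ci\left(\M\right)^{-k}$ is the module of sections of a vector bundle over $M$, so restriction to $W_a$ is the base change $\Ci\left(W_a\right) \otimes_{\Ci\left(M\right)}\left(\blank\right)$ along the flat $\Ci$-localization $\Ci\left(M\right) \to \Ci\left(W_a\right)$; flatness lets this commute with cohomology, giving
$$H^{-k}\left(\O_\M\left(W_a\right)\right) \cong \Ci\left(W_a\right) \otimes_{\Ci\left(M\right)} H^{-k}\left(\A\right) \cong H^0\left(\O_\M\left(W_a\right)\right) \otimes_{H^0\left(\A\right)} H^{-k}\left(\A\right),$$
the last step using that $H^{-k}\left(\A\right)$ is already a module over $H^0\left(\A\right)$. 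This strongness together with the open-immersion condition is exactly the hypothesis of Proposition \ref{prop:locz}, identifying $\O_\M\left(W_a\right)$ with $\A\left[a^{-1}\right]$ and completing the argument.
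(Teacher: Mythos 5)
Your proposal is correct, and its overall skeleton is the same as the paper's: both identify the underlying spaces via $\pi_0\simeq H^0$, and both reduce the comparison of structure sheaves on the basic opens $U_a$ to showing that the restriction $\Ci\left(\M\right)=\O_\M\left(M\right)\to\O_\M\left(W_a\right)$ is the $\Ci$-localization at $a$, verified through the strong/open-immersion criterion of Proposition \ref{prop:locz}; your comparison map built from the unit of $\Gamma\dashv\Speci$ and Lemma \ref{lem:piok} is exactly the map $\psi_\M$ that the paper records as a corollary of the theorem. Where you genuinely diverge is in how strongness is checked. The paper first proves $H^0\Ci\left(\M\right)\left[1/a\right]\cong H^0\Ci\left(\M\right)\otimes_{\Ci\left(M\right)}\Ci\left(W_a\right)$ by an effective-epimorphism argument, and then identifies $H^n\left(\O_\M\left(W_a\right)\right)$ with $\Ci\left(W_a\right)\otimes_{\Ci\left(M\right)}H^n\left(\Ci\left(\M\right)\right)$ by citing Joyce's module theory for $\Ci$-schemes: base change for the MSpec of the (assumed finitely presented) module $H^n\left(\Ci\left(\M\right)\right)$ together with exactness of global sections coming from fineness. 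You get both identifications at once from flatness of $\Ci\left(M\right)\to\Ci\left(W_a\right)$: flat base change commutes with cohomology of the complex of vector-bundle sections. Your route is more self-contained, and notably it requires no finiteness hypothesis on the cohomology modules, which the paper's appeal to MSpec does.

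The one point you must shore up is the flatness assertion itself, since it carries the whole argument and is not formal. The $\Ci$-localization $\Ci\left(W_a\right)$ is strictly larger than the algebraic localization $\Ci\left(M\right)\left[\tilde a^{-1}\right]$ (for instance $e^{1/x^2}\in\Ci\left(\R\setminus\{0\}\right)$ is not of the form $g/x^k$), so flatness cannot be read off from inverting the single element $\tilde a$. It is nevertheless true, by the classical Whitney--Tougeron quotient trick: every $g\in\Ci\left(W_a\right)$ can be written as $G/\chi$ with $G,\chi\in\Ci\left(M\right)$ and $\chi$ nowhere vanishing on $W_a$ (choose $\chi$ flat on $M\setminus W_a$ and decaying fast enough that $\chi g$ extends by zero), so $\Ci\left(W_a\right)=S^{-1}\Ci\left(M\right)$ is a ring of fractions at the multiplicative set $S$ of functions nonvanishing on $W_a$, hence flat. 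With that justification (or a citation, e.g. to Moerdijk--Reyes) inserted, your proof is complete.
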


\begin{proof}
Firstly, notice that $$\pi_0 \Gamma^{\Ci} \Ci\left(\M\right)\cong H^0\Ci\left(\M\right),$$ so the underlying classical $\Ci$-scheme $\pi_0\left(\M\right)=\Speci\left(H^0\Ci\left(\M\right)\right)$ agrees with the underlying classical $\Ci$-scheme $\pi_0\left(\Speci\left(\pi_0 \Gamma^{\Ci} \right)\right).$ In particular, their underlying topological spaces are homeomorphic. It therefore suffices to showing an equivalence of structure sheaves. It suffices to show they agree on sufficiently small $U,$ so we can therefore reduce to the case when  $\Ci\left(\M\right)$ is quasi-free. This implies that $\Gamma^{\Ci}\Ci\left(\M\right)$ is a compact object of $\Alg_{\Ci}\left(\Spc\right).$

An arbitrary open subset of $\pi_0\left(\M\right)$ is of the form $U=f^{-1}\left(\R\setminus\{0\}\right)\cap \pi_0\left(\M\right),$ for a smooth function $f:M \to \R.$ By \cite[Corollary 4.45]{univ}, it follows that $\O_{\Speci\left(\Ci\left(\M\right)\right)}\left(U\right)$ is the homotopical localization of $\Ci\left(\M\right)$ at the element $\left[f\right]\in H^0\Ci\left(\M\right).$ By Lemma \ref{lem:piok}, it therefore suffices to show that $\O_{\M}\left(U\right)$ is the homotopy localization of $\Ci\left(\M\right)$ at $f.$ By Proposition \ref{prop:locz}, it suffices to prove that the canonical map
$$H^n\left(\Ci\left(\M\right)\right) \underset{H^0\left(\Ci\left(\M\right)\right)} \otimes \left(H^0\Ci\left(\M\right)\right)\left[1/a\right] \to H^n\left(\O_{\M}\left(U\right)\right)$$ is an isomorphism for all $n,$ where $a$ is the image of $f$ in $0^{th}$ cohomology.

Notice that we have $1$-categorical pushout diagrams stacked together:
$$\xymatrix{\Ci\left(\R\right) \ar[r] \ar[d] & \Ci\left(\R\setminus \{0\}\right) \ar[d]\\
\Ci\left(M\right) \ar[d]_-{p} \ar[r] & \Ci\left(U\right) \ar[d]\\
H^0\Ci\left(\M\right) \ar[r] & \O_{\Speci\left(\Ci\left(\M\right)\right)}\left(U\right)}$$
Since $p$ is an effective epimorphism, we deduce by \cite[Lemma 4.6]{univ} that 
$$H^0\Ci\left(\M\right)\left[1/a\right] \cong H^0\Ci\left(\M\right) \underset{\Ci\left(M\right)} \otimes \Ci\left(U\right).$$
So we have $$H^n\left(\Ci\left(\M\right)\right) \underset{H^0\left(\Ci\left(\M\right)\right)} \otimes \left(H^0\Ci\left(\M\right)\right)\left[1/a\right] \cong H^n\left(\Ci\left(\M\right)\right) \underset{\Ci\left(M\right)} \otimes \Ci\left(U\right).$$
It therefore suffices to prove that $$H^n\left(\Ci\left(\M\right)\right) \underset{\Ci\left(M\right)} \otimes \Ci\left(U\right) \cong H^n \left(\O_\M\left(U\right)\right).$$
Denote by $\left(H^n\O_{\M}\right)$ the sheafificaton of presheaf
$$W \mapsto H^n\left(\O_{\M}\left(W\right)\right).$$
Consider the $\Ci\left(M\right)$-module $H^n\left(\Ci\left(M\right)\right).$ Since it is finitely presented, \cite[Example 5.28]{joycesch} implies that we can deduce that $$\left(H^n\O_{\M}\right)\left(W\right)\cong \Ci\left(W\right) \underset{\Ci\left(M\right)} \otimes H^n\left(\Ci\left(M\right)\right).$$
Notice that we have an exact sequence of $\O_M$-modules:
$$0 \to \ker\left(d_{n}\right) \to \O^{n}_{\M} \to \ker\left(d_{n+1}\right) \to \left(H^n\O_{\M}\right) \to 0.$$ It follows from \cite[Proposition 5.11]{joycesch} that 
$$0 \to \ker\left(d_{n}\left(U\right)\right) \to \O^{n}_{\M}\left(U\right) \to \ker\left(d_{n+1}\left(U\right)\right) \to \left(H^n\O_{\M}\right)\left(U\right)=\Ci\left(U\right) \underset{\Ci\left(M\right)} \otimes  H^n\left(\Ci\left(M\right)\right) \to 0$$
is also exact. The result now follows.
\end{proof}

\begin{corollary}
Regard a dg-manifold $\mathcal{M}=\left(M,\O_{\M}\right)$ as a topological space with a homotopy sheaf of dg-$\Ci$-algebras. Then there is a canonical map
$$\psi_\M:\Speci\left(\Ci\left(\M\right)\right)\simeq \left(\pi_0\left(\M\right),\O_\M|_{\pi_0\left(M\right)}\right) \to \mathcal{M}$$ of homotopically ringed spaces which induces a quasi-isomorphism on global sections.
\end{corollary}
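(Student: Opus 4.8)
The plan is to realize $\psi_\M$ as the morphism in $\Loc$ given by the inclusion of the classical locus into the underlying space of $\M$, together with the tautological restriction of structure sheaves, and then to identify its effect on global sections with the comparison map of Lemma \ref{lem:piok}. First I would recall that, by the Remark following Definition \ref{dfn:locus}, the underlying topological space of $\pi_0\left(\M\right)$ is the closed subset of $M$ on which the cohomological vector field $D$ vanishes; write $\iota:\pi_0\left(\M\right)\hookrightarrow M$ for the resulting continuous inclusion. By definition, $\O_\M|_{\pi_0\left(\M\right)}$ is the inverse-image sheaf $\iota^{-1}\O_\M$, which is again a homotopy sheaf of dg-$\Ci$-algebras since $\iota^{-1}$ preserves sheaves. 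The unit of the $\left(\iota^{-1},\iota_*\right)$-adjunction then furnishes a canonical morphism of homotopy sheaves
$$\alpha:\O_\M \longrightarrow \iota_*\iota^{-1}\O_\M=\iota_*\left(\O_\M|_{\pi_0\left(\M\right)}\right),$$
so that the pair $\left(\iota,\alpha\right)$ is a morphism $\left(\pi_0\left(\M\right),\O_\M|_{\pi_0\left(\M\right)}\right)\to\left(M,\O_\M\right)=\M$ in $\Loc$. Precomposing with the equivalence of Theorem \ref{thm:sch} yields $\psi_\M$, which is canonical because each ingredient is.

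Next I would compute the induced map on global sections. The functor $\Gamma:\Loc\to\cialgsp$ sends $\left(\iota,\alpha\right)$ to the effect of $\alpha$ on global sections, namely
$$\Ci\left(\M\right)=\Gamma\left(M,\O_\M\right)\longrightarrow\Gamma\left(M,\iota_*\iota^{-1}\O_\M\right)=\Gamma_{\pi_0\left(\M\right)}\left(\O_\M|_{\pi_0\left(\M\right)}\right).$$
Unwinding the unit of the inverse-image adjunction, this is exactly the colimit over the poset $\mathcal{J}$ of open neighborhoods of $\pi_0\left(\M\right)$ in $M$ of the restriction maps $\O_\M\left(M\right)\to\O_\M\left(U\right)$, i.e. the comparison map produced in the proof of Lemma \ref{lem:piok}. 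That lemma asserts precisely that this map is a quasi-isomorphism, which is the content of the corollary.

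The only point requiring care is that $\alpha$ must be exhibited as a genuine morphism of homotopy sheaves valued in $\dgci$ (equivalently in $\Alg_{\Ci}\left(\Spc\right)$ via Theorem \ref{thm:DK}), and that $\left(\iota,\alpha\right)$ is a morphism in the $\i$-category $\Loc$ rather than merely a map of underlying presheaves. This is immediate: the inverse-image and pushforward functors for sheaves valued in a presentable $\i$-category are adjoint $\i$-functors, so their unit is automatically a natural transformation of homotopy sheaves, and $\O_\M$ is already a homotopy sheaf of dg-$\Ci$-algebras, as shown at the beginning of this subsection. With this in hand, the statement follows formally from Theorem \ref{thm:sch} and Lemma \ref{lem:piok}.
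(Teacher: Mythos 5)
Your proposal is correct and takes essentially the same route as the paper, which states this corollary without a separate proof precisely because it is the immediate combination of Theorem \ref{thm:sch} and Lemma \ref{lem:piok}. Your construction of $\psi_\M$ as the closed inclusion $\iota$ of the classical locus paired with the unit $\O_\M \to \iota_*\iota^{-1}\O_\M$ of the inverse-image adjunction, followed by the identification of the induced global-sections map with the comparison map shown to be a quasi-isomorphism in Lemma \ref{lem:piok}, is exactly the intended (implicit) argument.
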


\subsection{The category of fibrant objects structure}\label{sec:catfib}

As mentioned in the introduction, the authors of \cite{dgder} prove that there exists the structure of a category of fibrant objects on the category $\dgMan$ of non-positively graded dg-manifolds, or more precisely, on the equivalent category of bundles of curved $L_\i\left[1\right]$-algebras.

\subsubsection{Categories of fibrant objects}
\begin{definition}
A category of fibrant objects is a category $\sC$ with finite products and two classes of morphisms called the \emph{fibrations} and \emph{weak equivalences} respectively, satisfying the following axioms:

\begin{itemize}
\item[0.] Every object $C$ is fibrant (i.e. $C \to 1$ is a fibration).
\item[1.] Pullbacks against fibrations exist, and are again fibrations.
\item[2.] Acyclic fibrations (morphisms that are both fibrations and weak equivalences) are preserved under pullback.
\item[3.] The weak equivalences satisfy the $2$-out-of-$3$ property.
\item[4.] The following \textbf{factorization property} is satisfied: if $f:C \to D$ is any morphism, there exists a trivial fibration $\pi:C' \to C$ with a section $\lambda,$ and a fibration $f':C' \to D$ such that $f' \circ \lambda =f.$ 
\end{itemize}
\end{definition}

The tautological class of examples is the full subcategory of a model category on its fibrant objects, but there are many which are not of that form.

\begin{definition}
A pullback diagram in a category of fibrant objects in which one leg is a fibration is called a \textbf{homotopy pullback}.

Given any diagram
$$\xymatrix{ & C \ar[d]^-{f}\\
B \ar[r]^-{g} & D,}$$
there exists a trivial fibration $\pi:C' \to C$ with a section $\lambda,$ and a fibration $f':C' \to D$ such that $f' \circ \lambda =f.$ In such a situation, the
fibered product $B\times_{D} C'$ will be called a \textbf{homotopy pullback} of the above diagram.
\end{definition}

Given any category $\cC$ and a set $\mathcal{W}$ of morphisms in $\cC$ one can invert them up to homotopy in a universal way, to construct an $\i$-category $\cC\left[\cW^{-1}\right]_\i.$ There are many classical constructions for this (e.g. the Dwyer-Kan simplicial localization), all of which are equivalent, but the one which most clearly satisfies the desired universal property, is as follows:
Let $y:\cC \hookrightarrow \Psh_\i\left(\cC\right)$ denote the Yoneda embedding into the $\i$-category of presheaves of spaces. Let $y\left(\cW\right)$ be the image of the morphisms in $\cW.$ A presheaf $F:\cC^{op} \to \Spc$ is \textbf{$\cW$-local} if for all $w:C \to D$ in $\cW,$ $$F\left(w\right):F\left(D\right) \to F\left(C\right)$$ is an equivalence in $\Spc$ (i.e. a weak equivalence of Kan complexes.) Since $\cW$ is a set, the full subcategory spanned by the $\cW$-local presheaves are a reflective subcategory \cite[Proposition 5.5.4.15]{htt}, i.e. the inclusion admits a left adjoint $L_\cW.$

\begin{definition}
In the situation above, we let $\cC\left[\cW^{-1}\right]_\i:=L_\cW\left(\cC\right),$ the essential image of $\cC$ under the composite $L_{\cW}\circ y,$ and let $h_{\cW}:\cC \to \cC\left[\cW^{-1}\right]_\i$ be the canonical functor.
\end{definition}

Let $\cD$ be any $\i$-category, and let $\Fun^{\cW}\left(\cC,\cD\right)$ be the full subcategory of the $\i$-category of functors from $\cC$ to $\cD$ on those which send morphisms of $\cW$ to equivalences. Since the image of $\cW$ under the reflector $L_\cW$ consists of equivalences, given any functor $$F:\cC\left[\cW^{-1}\right]_\i \to \cD,$$ $F \circ h_{\cW}$ is in $\Fun^{\cW}\left(\cC,\cD\right).$ Hence there is an induced functor
$$h_{\cW}^*:\Fun\left(\cC\left[\cW^{-1}\right]_\i,\cD\right) \to \Fun^{\cW}\left(\cC,\cD\right).$$

The following proposition is very-well known:
\begin{proposition}
In the situation above, $h_{\cW}^*$ is an equivalence of $\i$-categories.
\end{proposition}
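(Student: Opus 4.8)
The plan is to show that $h_\cW \colon \cC \to \cC[\cW^{-1}]_\i$ has the universal property of a localization by comparing it throughout with the reflective (Bousfield) localization $L_\cW \colon \Psh_\i(\cC) \to \mathcal{P}_{\cW}$, where $\mathcal{P}_{\cW} \subseteq \Psh_\i(\cC)$ denotes the reflective subcategory of $\cW$-local presheaves. The decisive structural input, and the step I expect to carry the most weight, is the observation that, because $\cW$ consists of morphisms \emph{between representables}, the subcategory $\mathcal{P}_{\cW}$ is closed under all small colimits. Indeed, colimits of presheaves are computed objectwise, so for $w \colon c \to c'$ in $\cW$ and a diagram $(F_i)$ of $\cW$-local presheaves the map $(\colim_i F_i)(w) \simeq \colim_i F_i(w)$ is a colimit of equivalences, hence an equivalence; thus $\colim_i F_i$ is again $\cW$-local. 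This is special to inverting maps out of representables: for a general set of maps the local objects are closed only under limits.

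First I would exploit this to identify $\mathcal{P}_{\cW}$ as a presheaf $\i$-category. Since the inclusion $\iota \colon \mathcal{P}_{\cW} \hookrightarrow \Psh_\i(\cC)$ now preserves colimits, and since $\Map_{\mathcal{P}_{\cW}}(L_\cW y(c), -) \simeq \mathrm{ev}_c \circ \iota$ by the adjunction $L_\cW \dashv \iota$ together with the Yoneda lemma, each generator $L_\cW y(c) = h_\cW(c)$ is an atomic (completely compact) object of $\mathcal{P}_{\cW}$. As $L_\cW$ preserves colimits and is essentially surjective, and the representables generate $\Psh_\i(\cC)$ under colimits, the objects of $\cC[\cW^{-1}]_\i$ form a small family of atomic generators. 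By the characterization of presheaf $\i$-categories via atomic generators (\cite[Section 5.1.6]{htt}) the restricted Yoneda functor then induces an equivalence $\mathcal{P}_{\cW} \simeq \Psh_\i(\cC[\cW^{-1}]_\i)$ carrying $L_\cW y(c)$ to the representable at $h_\cW(c)$.

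With this identification in hand I would settle the universal property first for cocomplete targets $\cD$. On one side, the free-cocompletion property of presheaves (\cite[Theorem 5.1.5.6]{htt}) gives $\Fun(\cC[\cW^{-1}]_\i, \cD) \simeq \Fun^{L}(\Psh_\i(\cC[\cW^{-1}]_\i), \cD) \simeq \Fun^{L}(\mathcal{P}_{\cW}, \cD)$, where $\Fun^{L}$ denotes colimit-preserving functors. On the other side, precomposition with $L_\cW$ identifies $\Fun^{L}(\mathcal{P}_{\cW}, \cD)$ with those colimit-preserving functors $\Psh_\i(\cC) \to \cD$ that invert $y(\cW)$ (a colimit-preserving functor inverts the strongly saturated class generated by $y(\cW)$ as soon as it inverts $y(\cW)$), and free cocompletion again identifies these with $\Fun^{\cW}(\cC, \cD)$. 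Chasing the identifications through $L_\cW y = (\iota)\circ h_\cW$ shows that the resulting equivalence $\Fun(\cC[\cW^{-1}]_\i, \cD) \simeq \Fun^{\cW}(\cC, \cD)$ is exactly $h_\cW^*$.

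Finally I would remove the cocompleteness hypothesis by embedding an arbitrary target $\cD$ into $\Psh_\i(\cD)$ along its Yoneda embedding $Y$, which is fully faithful and hence conservative. Postcomposition $Y_*$ is fully faithful on functor $\i$-categories and commutes with $h_\cW^*$, so full faithfulness of $h_\cW^*$ for $\cD$ follows from the already-established equivalence for the cocomplete target $\Psh_\i(\cD)$. For essential surjectivity, a $\cW$-inverting functor $\Phi \colon \cC \to \cD$ extends, after applying $Y$, to some $\Psi \colon \cC[\cW^{-1}]_\i \to \Psh_\i(\cD)$ with $\Psi \circ h_\cW \simeq Y \circ \Phi$; since every object of $\cC[\cW^{-1}]_\i$ is of the form $h_\cW(c)$, the functor $\Psi$ lands objectwise in the essential image of $Y$, hence factors through $\cD$ because $Y$ is fully faithful, producing the desired extension of $\Phi$. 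The genuine content is concentrated in the first step: once the $\cW$-local objects are seen to be colimit-closed, the remainder is the standard interplay between presheaves, their reflective localizations, and free cocompletion.
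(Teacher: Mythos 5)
The paper gives no proof of this proposition at all --- it is stated as ``very-well known'' --- so there is nothing to compare line by line; the relevant question is whether your blind argument is correct, and it is. The load-bearing step is exactly where you place it: since $\cW$ consists of morphisms between representables and colimits of presheaves are computed objectwise, the $\cW$-local presheaves $\mathcal{P}_{\cW}$ are closed under small colimits in $\Psh_\i\left(\cC\right)$; consequently each $h_{\cW}\left(c\right)=L_{\cW}y\left(c\right)$ is completely compact in $\mathcal{P}_{\cW}$, these objects generate $\mathcal{P}_{\cW}$ under colimits, and the characterization of presheaf $\i$-categories by atomic generators in \cite[Section 5.1.6]{htt} gives $\mathcal{P}_{\cW}\simeq\Psh_\i\left(\cC\left[\cW^{-1}\right]_\i\right)$. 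The rest is standard: free cocompletion plus the universal property of the Bousfield localization $L_{\cW}$ yields the statement for cocomplete targets, and the reduction of an arbitrary $\cD$ to the cocomplete target $\Psh_\i\left(\cD\right)$ is sound --- full faithfulness of $h_{\cW}^*$ descends along the fully faithful postcomposition functor, and essential surjectivity uses that every object of $\cC\left[\cW^{-1}\right]_\i$ is, by the paper's definition of it as an essential image, equivalent to some $h_{\cW}\left(c\right),$ so an extension valued in $\Psh_\i\left(\cD\right)$ lands in the essential image of the Yoneda embedding and factors through $\cD.$ Two points you assert rather than verify are genuinely routine but deserve a line each: that the composite equivalence really is $h_{\cW}^*$ (this follows because the restricted Yoneda equivalence carries $h_{\cW}\left(c\right)\in\mathcal{P}_{\cW}$ to the representable presheaf on $h_{\cW}\left(c\right),$ so precomposing the chain with $L_{\cW}\circ y=\iota\circ h_{\cW}$ collapses to precomposition with $h_{\cW}$), and the universe enlargement needed to form $\Psh_\i\left(\cD\right)$ when $\cD$ is large (it still admits small colimits, which is all the cocomplete case requires). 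Neither is a gap.
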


\begin{theorem}\label{thm:pbok}
If $\cW$ is the set of weak equivalences in a category of fibrant objects, then $h_{\cW}$ sends homotopy pullbacks to pullbacks in the $\i$-category $\cC\left[\cW^{-1}\right]_\i.$
\end{theorem}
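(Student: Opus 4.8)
The plan is to reduce the theorem to a computation of mapping spaces in $\cC\left[\cW^{-1}\right]_\i$ and then feed in the homotopical calculus that every category of fibrant objects carries. First I would perform the standard reduction. By the factorization property (Axiom 4) and the definition of homotopy pullback, every homotopy pullback of a cospan $B \to D \leftarrow C$ is, up to weak equivalence, a strict pullback $B\times_D C'$ formed along a \emph{fibration}, where $\pi\colon C'\to C$ is a trivial fibration with section. Since $\pi\in\cW$, the map $h_\cW(\pi)$ is an equivalence, so the cospans $h_\cW B\to h_\cW D\leftarrow h_\cW C$ and $h_\cW B\to h_\cW D\leftarrow h_\cW C'$ agree in $\cC\left[\cW^{-1}\right]_\i$. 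Hence it suffices to treat a strict pullback $P=B\times_D C$ in which $f\colon C\to D$ is a fibration (such pullbacks exist by Axiom 1) and to prove $h_\cW P\simeq h_\cW B\times_{h_\cW D}h_\cW C$.

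Next I would translate this into a statement about mapping spaces. The localization sits as a reflective subcategory $\iota\colon \cC\left[\cW^{-1}\right]_\i\hookrightarrow \Psh_\i(\cC)$ with reflector $L_\cW$, so $\iota$ preserves limits and a square in $\cC\left[\cW^{-1}\right]_\i$ is a pullback precisely when its image in $\Psh_\i(\cC)$ is, where limits are computed objectwise. Moreover the Yoneda embedding preserves limits and the $1$-categorical pullback $P=B\times_D C$ is again a limit in the nerve $N\cC$, so $yP\simeq yB\times_{yD}yC$ already in $\Psh_\i(\cC)$; the only question is whether the reflector $L_\cW$ preserves this pullback. Using $L_\cW\dashv\iota$ together with Yoneda one has $(L_\cW yY)(X)\simeq \Map_{\cC\left[\cW^{-1}\right]}(X,Y)$ for all $X,Y\in\cC$. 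Combining these observations, the theorem becomes equivalent to the claim that for every object $X$ of $\cC$ the corepresented functor carries our square to a pullback of spaces, that is,
$$\Map_{\cC\left[\cW^{-1}\right]}(X,P)\ \stackrel{\sim}{\longrightarrow}\ \Map_{\cC\left[\cW^{-1}\right]}(X,B)\times_{\Map_{\cC\left[\cW^{-1}\right]}(X,D)}\Map_{\cC\left[\cW^{-1}\right]}(X,C).$$

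The remaining, and genuinely substantial, step is to establish this equivalence. Here I would model $\Map_{\cC\left[\cW^{-1}\right]}(X,-)$ by the homotopy calculus of fractions that every category of fibrant objects admits, presenting it (up to equivalence) as the nerve of the category of spans $X\leftarrow X'\to Y$ whose backward leg lies in $\cW$. A span into $P=B\times_D C$ is exactly a compatible pair of spans into $B$ and into $C$ with common apex agreeing over $D$, so at the level of these cocycle categories one obtains a strict fibered product; the content is to show that this strict fibered product computes the \emph{homotopy} fibered product. This is precisely where the fibration hypothesis on $f$ enters, in conjunction with right-properness of a category of fibrant objects --- the pullback of a weak equivalence along a fibration is again a weak equivalence --- which forces the map of cocycle spaces induced by $f$ to behave as a homotopy fibration with the correct homotopy fibers (via a Quillen Theorem B--type analysis of its fibers over a span into $D$). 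I expect this to be the main obstacle: unlike in a model category there are no cofibrant objects available, so one cannot simply present the mapping spaces by simplicial frames and rectify lifts; one must instead run the span/hammock argument and lean on right-properness. This is the classical hard core of the homotopy theory of categories of fibrant objects, and in a fully detailed write-up one would either reproduce the Theorem B argument or cite the corresponding results of Brown, Cisinski, and Szumiło.
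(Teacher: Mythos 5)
Your proposal is correct and takes essentially the same route as the paper: the paper's own proof likewise works through the presentation of $\cC\left[\cW^{-1}\right]_\i$ by $\cW$-local simplicial presheaves (via the Bousfield localization of the projective model structure) and then delegates the substantive homotopical input to the literature on categories of fibrant objects, citing \cite[Propositions 3.6 and 3.23]{Denis} --- the compatibility of homotopy pullbacks with that localization together with the cocycle-category description of mapping spaces --- exactly where you invoke Brown, Cisinski, and Szumi{\l}o. The reductions you spell out (replacing one leg of the cospan by a fibration, and testing the square against the corepresentables $h_{\cW}X$ via Yoneda and the reflection) are the same mechanism, merely left implicit in the paper's citation.
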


\begin{proof}
This follows immediately by combining \cite[Proposition 3.6 and Proposition 3.23]{Denis}, since the Bousfield localization of the projective model structure on simplicial presheaves at the image of $\cW$ is the model category presenting the $\i$-category of $\cW$-local presheaves.
\end{proof}

\subsubsection{The category of fibrant objects structure on dg-manifolds.}

We will now review the category of fibrant objects structure that the authors of \cite{dgder} define on the category of bundles of curved $L_\i$-algebras, and then translate what it means into the language of dg-manifolds. We will also prove a conjecture from the same paper giving an alternative characterization of the weak equivalences. 

Given $\left(M,L,\lambda\right)$ a bundle of curved $L_\i$-algebras, they define the \emph{classical locus} to be the vanishing set of $\lambda_0,$ which is a section of $L_1.$ Recall that the first term of the differential $D_\lambda$ on the associated dg-manifold $\left(\mathbb{L},D_{\lambda}\right)$ is given by $\iota_{\lambda_0},$ which, if $\xi_1,\ldots,\xi_k$ are the local coordinates around a point $p$ of degree $-1$ corresponding to a local trivialization of $L_1$ (of rank $k$) is the term $\left(\lambda_0\right)_i\frac{\partial}{\partial \xi_i}.$ Hence we see that this agrees with the Definition \ref{dfn:locus}, and we have that the vanishing locus is $\pi_0\left(\mathbb{L},D_{\lambda}\right),$ except the authors of \cite{dgder} only consider the underlying set of points, rather than that structure of a $\Ci$-scheme.

\begin{definition}\label{dfn:catfib}
A morphism $f:\M=\left(M,L,\lambda\right) \to \left(N,E,\mu\right)=\cN$ is a \textbf{fibration} if:
\begin{itemize}
\item[i)] $f:M \to N$ is a submersion of smooth manifolds.
\item[ii)] $\phi_1:L \to E$ is a submersion of smooth manifolds.
\end{itemize}
A morphism $f:\M \to \cN$ is a \textbf{weak equivalence} if
\begin{itemize}
\item[i)] $f$ induces a bijection of underlying sets on classical loci.
\item[ii)] For each point $p \in \pi_0\left(\M\right),$
the induced map on tangent complexes $T_p \M \to T_{f(p)} \cN$ is a quasi isomorphism.
\end{itemize}
See \cite[Definitions 1.9, 1.12, 1.21]{dgder}.
\end{definition}

Firstly, suppose that $f:\M=\left(M,L,\lambda\right) \to \left(N,E,\mu\right)=\cN$ is a fibration. Denote by $\End\left(\R^{0|1}\right) \acts \mathfrak{M}$ the associated equivariant supermanifold. We have that
$\mathfrak{M}$ is the total space of the vector bundle
$$\bigoplus\limits_{n=1}^{\i} L_{2n} \oplus \bigoplus\limits_{n=1}^{\i} \Pi L_{2n+1}.$$ From here, it is easy to see that the condition translates into the induced map $\mathfrak{M} \to \mathfrak{N}$ between associated supermanifolds is a submersion of supermanifolds.

Secondly, given a bundle of curved $L_\i\left[1\right]$-algebras $\left(M,L,\lambda\right)$, and a point $p$ in the classical locus, the authors of \cite{dgder} define the \emph{tangent complex} at $p$ to be the complex
$$TM_p \stackrel{D_p \lambda_0}{\longlongrightarrow} L^1_p \stackrel{\lambda_1|_p}{\longlongrightarrow} L^2_p \stackrel{\lambda_1|_p}{\longlongrightarrow} \ldots$$
Note that, in the world of graded manifolds we have
$$T_p \mathbb{L} \cong T_p M \oplus \mathbb{L}_p \cong T_pM \oplus \underset{i} \bigoplus \left(L_i\right)_p\left[-i\right],$$ which becomes the complex
$$TM_p \stackrel{\mathcal{L}_D(p)}{\longlongrightarrow} L^1_p \stackrel{\mathcal{L}_D(p)}{\longlongrightarrow} L^2_p \stackrel{\mathcal{L}_D(p)}{\longlongrightarrow} \ldots$$
It a matter of simple calculation that the differential agrees. 

Here is an alternative description. Let $\mathfrak{M}$ be the corresponding supermanifold with an action of $\End\left(\R^{0|1}\right),$ and denote by $\hat D$ the canonical extension of $D.$ $M$ is the fixed points of the $\left(\R,\cdot\right)$-action and $\pi_0\left(\M\right)$ is the fixed points of the $\R^{0|1}$-action, i.e. the odd-flow of $\hat D.$ For any $x \in M,$ since $x$ is a fixed point for the $\left(\R,\cdot\right)$-action, there is an induced linear action of $\left(\R,\cdot\right)$ on $T_x \mathfrak{M}.$ This induces an $\N$-grading on $T_x \mathfrak{M},$ which since the action is even, is a lift of the $\Z_2$-grading coming from the supermanifold $\mathfrak{M}.$ Explicitly, a tangent vector $v \in T_x \mathfrak{M}$ is of degree $n$ if $t \star v=t^n \cdot v,$ for all $t \in \R,$ where $\star$ denotes the linearized action. Equivalently, $$\frac{d}{dt}|_{t=0} t \star v=n\cdot v.$$ If $x$ is additionally in $\pi_0\left(\M\right),$ then $\hat D_x=0.$ $\hat D,$ or rather its odd-flow, therefore induces a linear map $d_x:T_x \mathfrak{M} \to \Pi T_x \mathfrak{M}.$ But this exactly $\left(\mathcal{L}_D\right)_x.$ Moreover, with respect to the grading induced by $E,$ $\mathcal{L}_D$ becomes a degree $1$-map, and since $\left[\hat D,\hat D\right]=0,$ $\mathcal{L}_D$ becomes a differential. In summary, $\left(T_x \mathfrak{M},\left(\mathcal{L}_D\right)_x\right)$ becomes a cochain complex concentrated in non-negative degrees. It follows that $$T\mathfrak{M}|{\pi_0\left(\M\right)}$$ is a complex of topological vector bundles\footnote{More precisely, has a canonically associated complex of topological vector bundles.} over the topological space $\pi_0\left(\M\right).$ So condition $ii)$ of a weak equivalence states that  $$T\mathfrak{M}|{\pi_0\left(\M\right)} \to f^*T\mathfrak{N}|{\pi_0\left(\N\right)}$$ is a fiber-wise quasi-isomorphism of this complex of topological vector bundles.

Consider the inclusions
$$\pi_0\left(\M\right) \stackrel{j}{\longhookrightarrow} M \stackrel{i_0}{\longhookrightarrow} \mathfrak{M}.$$ 
Then one gets a canonical identification of $\left(i_0\right)^* T \mathfrak{M}$ with the tangent sheaf $\mathcal{T}\M$ of the $dg$-manifold $\M.$ 

\begin{proposition}\label{prop:tansame}
Recall the canonical map
$$\psi_\M:\Speci\left(\Ci\left(\M\right)\right)\simeq \left(\pi_0\left(\M\right),\O_\M|_{\pi_0\left(M\right)}\right) \to \mathcal{M}$$ There is a canonical identification $\psi_\M^*\mathcal{T} \M \simeq \mathcal{T}\left(\Speci\left(\Ci\left(\M\right)\right)\right),$ 
where $\mathcal{T}\left(\Speci\left(\Ci\left(\M\right)\right)\right)$ is the dual of the cotangent complex of the affine derived $\Ci$-scheme $\Speci\left(\Ci\left(\M\right)\right),$ as defined in \cite[Chapter 5]{pelle}.
\end{proposition}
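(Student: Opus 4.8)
The plan is to establish the statement on cotangent complexes first and then dualize, reducing the whole comparison to a local computation on quasi-free dg-$\Ci$-algebras. By definition \cite[Chapter 5]{pelle}, the cotangent complex of the derived affine $\Ci$-scheme $\Speci\left(\Ci\left(\M\right)\right)$ is the quasi-coherent sheaf associated to the $\Ci$-cotangent complex $\mathbb{L}_{\Ci\left(\M\right)}$ of its dg-$\Ci$-algebra of global sections, and $\mathcal{T}\left(\Speci\left(\Ci\left(\M\right)\right)\right)$ is its $\O$-linear dual. On the other side, the tangent sheaf $\mathcal{T}\M$ is the $\O_\M$-linear dual of the sheaf $\Omega^1_\M$ of $\Ci$-Kähler differentials. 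Since $\Omega^1_\M$ is locally free of finite rank in each degree, hence a dualizable (perfect) complex, the pullback $\psi_\M^*$ commutes with $\O$-linear dualization. It therefore suffices to produce a canonical equivalence $\psi_\M^*\Omega^1_\M \simeq \mathbb{L}_{\Ci\left(\M\right)}$ compatible with the differentials, and then dualize.

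Both sides being quasi-coherent sheaves on $\pi_0\left(\M\right)$, and the assertion being local there, I would restrict to a sufficiently small open $U \subseteq M$ on which $\O_\M\left(U\right)$ is quasi-free, exactly as in the proof of Lemma \ref{lem:acycpi}. Choosing graded coordinates $\left(x_1,\ldots,x_n;\eta_{ij}\right)$ with the $x_i$ of degree $0$ and the $\eta_{ij}$ of strictly negative degree, the cotangent sheaf $\Omega^1_\M\left(U\right)$ is the free $\O_\M\left(U\right)$-module on the symbols $dx_i,d\eta_{ij}$, with differential induced by $D$ through $d \circ D = \mathcal{L}_D \circ d$; dually, $\mathcal{T}\M\left(U\right)$ is freely generated over $\O_\M\left(U\right)$ by $\frac{\partial}{\partial x_i}, \frac{\partial}{\partial \eta_{ij}}$ with differential $\mathcal{L}_D = \left[D,\blank\right]$.

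The essential step is to identify this naive module of $\Ci$-Kähler differentials with the abstract $\Ci$-cotangent complex $\mathbb{L}_{\O_\M\left(U\right)}$. Because $\O_\M\left(U\right)$ is quasi-free, it is cofibrant in the model structure of Theorem \ref{thm:dgmodel}, and for a cofibrant dg-$\Ci$-algebra the cotangent complex is represented by its module of $\Ci$-Kähler differentials. Here the $\Ci$-structure intervenes precisely through the degree-$0$ generators: the $\Ci$-differentials of $\Ci\left(\R^n\right)$ form the free module $\Gamma\left(T^*\R^n\right) \cong \Ci\left(\R^n\right) \underset{\R}\otimes \left(\R^n\right)^*$ on the $dx_i$, rather than the vastly larger algebraic Kähler module, while the negatively graded generators $\eta_{ij}$ contribute their ordinary free differentials $d\eta_{ij}$. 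This is exactly the module described above, with matching differential, yielding $\psi_\M^*\Omega^1_\M|_U \simeq \mathbb{L}_{\O_\M\left(U\right)}$ compatibly with $D$.

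Finally, I would verify that these local identifications are canonical, i.e. independent of the chosen graded coordinates, since both $\Omega^1_\M$ and the cotangent complex are functorial in the dg-$\Ci$-algebra; the local equivalences then glue to a global equivalence $\psi_\M^*\Omega^1_\M \simeq \mathbb{L}_{\Ci\left(\M\right)}$ over $\pi_0\left(\M\right)$, and dualizing gives the proposition. The main obstacle I anticipate is this third step: pinning down that the abstractly defined $\Ci$-cotangent complex of \cite[Chapter 5]{pelle} agrees with the naive Kähler differentials for a quasi-free dg-$\Ci$-algebra, and in particular that the $\Ci$-structure correctly replaces algebraic differentials by smooth one-forms in degree $0$ while leaving the negatively graded generators free. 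The remaining points --- the duality and the gluing --- are formal, and checking that the differentials are $\O_\M$-linearly dual is a routine sign computation with $\mathcal{L}_D$.
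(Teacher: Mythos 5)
Your proposal is correct and follows essentially the same route as the paper's (much terser) proof: reduce to a local statement, pass to the quasi-free case in graded coordinates, and invoke the concrete description of the $\Ci$-cotangent complex of a quasi-free dg-$\Ci$-algebra --- which is exactly the content of the reference \cite[Remark 5.1.018]{pelle} that the paper cites, and which supplies the ``main obstacle'' you flag in your final paragraph. The additional details you spell out (pullback commuting with dualization via local freeness, coordinate-independence and gluing) are points the paper leaves implicit, so no gap remains.
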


\begin{proof}
As this is a question about sheaves, it suffices to prove it locally, so we can reduce to the case that $\Ci\left(\M\right)$ is quasi-free. Passing to local coordinates, this follows from the concrete description of the cotangent complex of a quasi-free dg-$\Ci$-algebra in \cite[Remark 5.1.018]{pelle}.
\end{proof}

\begin{lemma}\label{lem:ptwise}\cite{dimatalk}
Let $F:\left(V^\bullet,d\right) \to \left(W^\bullet,d\right)$ be morphism of bounded cochain complexes of finite dimensional real vector bundles over a paracompact Hausdorff space $X.$ Suppose that $F$ induces a fiber-wise quasi-isomorphism. Then $F$ induces a quasi-isomorphism on global sections.
\end{lemma}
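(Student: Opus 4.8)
The plan is to pass to the mapping cone and reduce the statement to the acyclicity of the global sections of a single fiber-wise exact complex. Since taking continuous global sections $\Gamma(X,-)$ is an additive functor, it commutes with the formation of mapping cones, so $\Gamma(X,\operatorname{Cone}(F)) \cong \operatorname{Cone}(\Gamma(X,F))$. A morphism of cochain complexes is a quasi-isomorphism precisely when its cone is acyclic; hence $F$ is a fiber-wise quasi-isomorphism if and only if the bounded complex of vector bundles $C^\bullet := \operatorname{Cone}(F)$ is exact on every fiber, and $F$ is a quasi-isomorphism on global sections if and only if $\Gamma(X,C^\bullet)$ is exact. So it suffices to prove: a bounded complex of finite-dimensional real vector bundles over a paracompact Hausdorff space $X$ which is exact on every fiber has exact global sections.

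The key structural observation is that fiber-wise exactness, together with boundedness, forces every differential $d^n \colon C^n \to C^{n+1}$ to have locally constant rank. Indeed, let $m$ be the lowest degree in which $C^\bullet$ is nonzero; exactness at $C^m$ makes $d^m$ fiber-wise injective, so $\operatorname{rank} d^m_x = \dim C^m_x$ is locally constant. Proceeding upward, exactness at $C^{n+1}$ gives $\dim \ker d^{n+1}_x = \operatorname{rank} d^n_x$, whence $\operatorname{rank} d^{n+1}_x = \dim C^{n+1}_x - \operatorname{rank} d^n_x$ is locally constant by induction. A vector bundle morphism of locally constant rank has kernel and image subbundles, so each $\ker(d^n)$ and $\operatorname{im}(d^n)$ is a subbundle, and fiber-wise exactness upgrades to the bundle identity $\ker(d^n) = \operatorname{im}(d^{n-1})$.

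Next I would use paracompactness to choose a continuous fiberwise inner product on each $C^n$ (these exist via a partition of unity). This splits the relevant short exact sequences of subbundles and exhibits $d^n$ as a bundle isomorphism $\ker(d^n)^{\perp} \xrightarrow{\ \sim\ } \operatorname{im}(d^n) = \ker(d^{n+1})$. Define bundle maps $h^n \colon C^n \to C^{n-1}$ by letting $h^n$ be the inverse of the isomorphism $d^{n-1}\colon \ker(d^{n-1})^{\perp} \xrightarrow{\ \sim\ } \ker(d^n)$ on the summand $\ker(d^n) = \operatorname{im}(d^{n-1})$, and zero on $\ker(d^n)^{\perp}$. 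A direct check on the two summands yields the contracting homotopy identity $d^{n-1}h^n + h^{n+1}d^n = \operatorname{id}_{C^n}$.

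Finally, since the $h^n$ are continuous bundle maps, post-composition with them defines maps on global sections satisfying the same homotopy identity, so $\Gamma(X,C^\bullet)$ is chain-contractible, in particular exact; this completes the reduction and hence the lemma. The only genuinely delicate point is the constant-rank argument of the second step: it is what turns the kernels and images into honest subbundles, and it is exactly here that boundedness is indispensable (an unbounded fiber-wise exact complex need not split as a complex of bundles). Everything downstream — the metric splitting and the assembly of the homotopy — is then routine.
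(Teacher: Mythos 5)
Your proof is correct and follows essentially the same route as the paper's: reduce via the mapping cone to showing that a bounded, fiber-wise exact complex of bundles has acyclic global sections, prove the differentials have locally constant rank so that kernels and images are subbundles, and then use paracompactness to split. The only difference is cosmetic: where the paper invokes a splitting lemma from Dugger to identify the complex with the cone of the identity on the bundle of boundaries $B^\bullet$, you build the fiberwise contracting homotopy explicitly from a choice of metrics, which is a self-contained rendering of the same splitting argument.
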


\begin{proof}
Firstly, by passing to the mapping cone of $F,$ one can reduce to proving that if $\left(V^\bullet,d\right)$ is fiber-wise acyclic, $\Gamma\left(V^\bullet\right)$ is an acyclic complex. Let $x \in X.$ Choose some $k \in \Z,$ and let $V^k$ and $V^{k-1}$ be of rank $n$ and $m$ respectively. Let $x \in X.$ Then
$$\dim\left(V^k_x\right)=\dim\left(\ker\left(d_k\left(x\right)\right)\right) + \dim\left(\mbox{Im}\left(d_k\left(x\right)\right)\right).$$
But the dimension of the image is the rank, and since $\left(V_x^\bullet,d_x\right)$ is acyclic, $\ker d_k\left(x\right)=\mbox{Im}\left(d_{k-1}\left(x\right)\right).$ 
Hence 
\begin{equation}\label{eq:rank}
\mbox{rank}\left(d_k\left(x\right)\right)=n-\mbox{rank}\left(d_{k-1}\left(x\right)\right).
\end{equation}
Choose an open set $U \ni x$ over which both $V^k$ and and $V^{k-1}$ trivialize. Then we can identify $d_k$ and $d_{k-1}$ with functions $$U \to \End\left(\R^n\right)$$ and $$U \to \End\left(\R^m\right).$$ Recall that for any $n,$ the function $$\mbox{rank}:\End\left(\R^n\right) \to \N$$ is lower-semi-continuous. From this, combined with the equality (\ref{eq:rank}), it follows that $\mbox{rank}\circ d_k$ is also upper-semi continuous, and hence continuous, and therefore locally constant. So the dimension of the image of $d_k$ is locally constant. It follows that for all $k,$ 
$$B_k:=\mbox{Im}\left(d_{k-1}\right)$$ is a vector bundle. Since $\left(V^\bullet,d\right)$ is fiber-wise acyclic, we have a short exact sequence of cochain complexes of vector bundles
$$\xymatrix{\tvdots \ar[d] & \tvdots \ar[d]_-{0} & \tvdots \ar[d]_-{d_{k-1}}  & \tvdots \ar[d]_-{0} & \tvdots \ar[d] \\
0 \ar[d] \ar[r] & B^k=\ker\left(d_k\right) \ar[d]_-{0} \ar[r] & V^k \ar[d]_-{d_k} \ar[r]^-{d_k} & B^{k+1} \ar[r] \ar[d]_-{0} & 0 \ar[d]\\
0 \ar[d] \ar[r] & B^{k+1} \ar[r] \ar[d]_-{0} & V^{k+1} \ar[r]_-{d_{k+1}}  \ar[d]_-{d_{k+1}} & B^{k+2} \ar[r] \ar[d]_-{0} & 0 \ar[d] \\
\tvdots & \tvdots & \tvdots & \tvdots & \tvdots }$$
Since $X$ is paracompact Hausdorff, and all the complexes are bounded, it follows from \cite[Lemma 17.13]{duggerK} that there exists a splitting as a cochain complex of vector bundles. Hence one can choose a non-canonical isomorphism of $\left(V^\bullet,d\right)$ with the mapping cone of
$$id:\left(B^\bullet,0\right) \to \left(B^\bullet,0\right),$$ and one concludes it is acyclic.
\end{proof}

\begin{corollary}\label{cor:cotcpx}
Let $f:\left(M,L,\lambda\right) \to \left(N,E,\mu\right)$ be a weak equivalence of bundles of curved $L_\i\left[1\right]$-algebras, and let $\M$ and $\cN$ be the associated dg-manifolds with associated $\End\left(\R^{0|1}\right)$-equivariant supermanifolds $\mathfrak{M}$ and $\mathfrak{N}$ respectively. Then the relative cotangent complex of the induced map $$\Speci\left(\Ci\left(\M\right)\right) \to \Speci\left(\Ci\left(\cN\right)\right)$$ of derived $\Ci$-schemes vanishes.
\end{corollary}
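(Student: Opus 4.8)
The plan is to realize the relative cotangent complex through the cotangent complex cofiber sequence, dualize to tangent complexes, and then detect the resulting map fiberwise over the classical locus using Proposition \ref{prop:tansame} together with condition (ii) in the definition of a weak equivalence. Throughout, write $\M' := \Speci\left(\Ci\left(\M\right)\right)$ and $\cN' := \Speci\left(\Ci\left(\cN\right)\right)$, and let $g : \M' \to \cN'$ be the induced map of derived $\Ci$-schemes.

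First recall the cotangent complex cofiber sequence $g^* L_{\cN'} \to L_{\M'} \to L_{\M'/\cN'}$ for $g$, so that the relative cotangent complex $L_{\M'/\cN'}$ vanishes precisely when $g^* L_{\cN'} \to L_{\M'}$ is an equivalence. Since vanishing is local on the underlying space $\pi_0\left(\M\right)$, after restricting to suitable charts I may assume that $\Ci\left(\M\right)$ and $\Ci\left(\cN\right)$ are quasi-free, hence finitely presented by Proposition \ref{prop:qfcpt}; both absolute cotangent complexes are then perfect, so I may dualize and instead prove that the map of tangent complexes $\mathcal{T}\left(\M'\right) \to g^*\mathcal{T}\left(\cN'\right)$ is an equivalence. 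By Proposition \ref{prop:tansame} there are canonical identifications $\mathcal{T}\left(\M'\right) \simeq \psi_\M^*\mathcal{T}\M$ and $\mathcal{T}\left(\cN'\right) \simeq \psi_\cN^*\mathcal{T}\cN$, and since $\psi_\cN \circ g = f \circ \psi_\M$, this map is identified with $\psi_\M^*$ applied to the tangent map $\mathcal{T}\M \to f^*\mathcal{T}\cN$ of the underlying dg-manifolds.

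It remains to show this is an equivalence, for which it suffices to check that it becomes one after base change to the residue field at each point $p \in \pi_0\left(\M\right)$. Because $\mathcal{T}\M$ is a bounded complex of locally free $\O_\M$-modules, the fiber of $\psi_\M^*\mathcal{T}\M$ at $p$ is exactly the tangent complex $T_p\M = [\,TM_p \to L^1_p \to L^2_p \to \cdots\,]$ with differential $\mathcal{L}_D\left(p\right)$, and likewise the fiber of $g^*\mathcal{T}\left(\cN'\right)$ at $p$ is $T_{f(p)}\cN$; the induced map on fibers is precisely $T_p\M \to T_{f(p)}\cN$, which is a quasi-isomorphism by condition (ii) of Definition \ref{dfn:catfib}. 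As these are the fibers of the map of complexes of finite-rank vector bundles $T\mathfrak{M}|_{\pi_0\left(\M\right)} \to f^*T\mathfrak{N}|_{\pi_0\left(\N\right)}$ over the paracompact Hausdorff space $\pi_0\left(\M\right)$, Lemma \ref{lem:ptwise} upgrades this fiberwise quasi-isomorphism to one on sections over every open set, hence to an equivalence. Thus $\mathcal{T}\left(\M'\right) \to g^*\mathcal{T}\left(\cN'\right)$ is an equivalence and $L_{\M'/\cN'} \simeq 0$.

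The main obstacle is the bookkeeping in this last step: one must reconcile the derived, module-theoretic description of $\mathcal{T}\left(\M'\right)$ furnished by Proposition \ref{prop:tansame} — a module over the full dg-structure sheaf $\O_\M|_{\pi_0\left(\M\right)}$ — with the concrete complex of topological vector bundles $T\mathfrak{M}|_{\pi_0\left(\M\right)}$ appearing in the definition of a weak equivalence, verifying that its fibers genuinely agree with the pointwise tangent complexes $T_p\M$ and that fiberwise detection (equivalently, the application of Lemma \ref{lem:ptwise}) is legitimate for these perfect modules.
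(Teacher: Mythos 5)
Your proposal is correct and follows essentially the same route as the paper's proof: dualize to tangent complexes, identify them via Proposition \ref{prop:tansame} with the complex of vector bundles $T\mathfrak{M}|_{\pi_0\left(\M\right)}$, observe the map is a fiberwise quasi-isomorphism by condition (ii) of Definition \ref{dfn:catfib}, and conclude with Lemma \ref{lem:ptwise} over the paracompact Hausdorff space $\pi_0\left(\M\right)$. The only difference is that you spell out the justification for the dualization step (cofiber sequence, perfectness on quasi-free charts) that the paper leaves implicit in the phrase ``by dualizing.''
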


\begin{proof}
By dualizing, it suffices to show that
$$\mathcal{T}\Speci\left(\Ci\left(\M\right)\right) \to f^*\Speci\left(\Ci\left(\cN\right)\right)$$ is quasi-isomorphism. By Proposition \ref{prop:tansame}, it suffices to show that
$$T\mathfrak{M}|{\pi_0\left(\M\right)} \to f^*T\mathfrak{N}|{\pi_0\left(\cN\right)}$$ is a quasi-isomorphism of complexes of vector bundles over the topological space $\pi_0\left(\M\right).$ It is so fiber-wise so, and $\pi_0\left(\M\right)$ is a closed subspace of $M,$ hence paracompact Hausdorff. Moreover, both complexes are bounded as both supermanifolds are finite dimensional. The result now follows from Lemma \ref{lem:ptwise}.
\end{proof}

Recall that given any supermanifold $\mathfrak{M},$ one has a canonical identification $\mathfrak{M}^{\R^{0|1}}\cong \Pi T \mathfrak{M}.$ There is hence a canonical action of of $\End\left(\R^{0|1}\right)^{op}.$ Notice the $\emph{op},$ that is, this is now a right-action of the monoid as it is given by precomposition. Since it is a group, there is a canonical isomorphism $\Aut\left(\R^{0|1}\right) \cong  \Aut\left(\R^{0|1}\right)^{op},$ given by taken inverses. The affect on infinitesimal generators is by changing their sign. If $\varepsilon'=-\varepsilon$ and $\delta'=-\delta,$ then we get $\left[\varepsilon',\delta'\right]=-\delta',$ meaning the differential points in the opposite direction as before! It is a well-known and celebrated fact that the associated dg-$\Ci$-algebra is differential forms with the de Rham differential being the cohomological vector field \cite{SevGorms}. In slightly more detail, if $\mathfrak{M}=\R^{p|q},$ with coordinates
$\left(x_1,\ldots,x_p;\xi_1,\ldots,\xi_q\right),$ then 
$\Pi T \mathfrak{M}$ has coordinates $\left(x_1,\ldots,x_p,\bar \xi_1,\ldots, \bar \xi_q;\xi_1,\ldots,\xi_q,\bar x_1,\ldots,\bar x_p\right),$ where the barred coordinates have opposite parity. The cohomological vector field induced by the canonical $\End\left(\R^{0|1}\right)^{op}$-action takes the form
$$d=\bar x_i \frac{\partial}{\partial x_i} + \bar \xi_j \frac{\partial}{\partial \xi_j}.$$ To identify this with the de Rham differential, make the substitution $dx_i:=\bar x_i,$ and $d\xi_i:= \bar \xi.$

\begin{theorem}\label{thm:normal}\textbf{Morse lemma for cohomological vector fields.}\cite{Vaintrob,normal2}
Let $\mathfrak{M}$ be a supermanifold, with cohomological vector field $Q,$ and $p$ be a point in the core for which $Q_p=0.$ Then there exists a neighborhood $U$ around $p$ with corresponding open supersubmanifold $\mathfrak{U}=\left(U,\O_{\mathfrak{M}}\right),$ and a pointed diffeomorphism intertwining cohomological vector fields
$$\varphi:\left(\mathfrak{U},Q,p\right) \stackrel{\sim}{\longrightarrow}\left(\left(\Pi T \R^{p|q},d\right)\times \left(V,D\right),\left(0,0\right)\right)$$
where $d$ is the de Rham differential, $V$ is a superdomain, and $\mathcal{L}_D\left(p\right)=0.$ Moreover, if $Q$ arises from an $\End\left(\R^{0|1}\right)$-action on $\mathfrak{M},$ this can be made compatible with the grading induced by the Euler field.
\end{theorem}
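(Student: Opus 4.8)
The plan is to prove this as a \emph{splitting theorem}: near the zero $p$ of $Q$, the $Q$-supermanifold $\mathfrak{M}$ factors into a linear, de Rham-type piece carrying all of the acyclic directions of the linearization of $Q$, together with a complementary \emph{minimal} piece on which the linear part of the cohomological vector field vanishes. The three ingredients are (i) a linear-algebra splitting of the tangent complex at $p$, (ii) its geometric realization as a de Rham factor, and (iii) a homological-perturbation argument upgrading the linear splitting to an honest splitting of $Q$.

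First I would localize and linearize. Since $Q_p=0$ and $p$ lies in the core, choose coordinates identifying a neighborhood with $\mathbb{R}^{a|b}$ centred at $p$ and consider the linearization $\ell:=\mathcal{L}_Q(p)$, the degree-$+1$ odd differential on $T_p\mathfrak{M}$ discussed just before the statement; it satisfies $\ell^2=0$. A finite-dimensional complex splits (non-canonically) as $\ell$-acyclic $\oplus$ (its cohomology, with zero differential), and I would pick linear coordinates adapted to such a splitting. On the acyclic summand $\ell$ pairs coordinates bijectively, $\ell(z_i)=w_i$ and $\ell(w_i)=0$; after the substitution $w_i=d z_i$ this is exactly the linear de Rham differential on a $\Pi T\mathbb{R}^{p|q}$, while the cohomology directions supply the superdomain $V$, on which the linear part of $Q$ is zero by construction. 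Thus in these coordinates $Q = d + (\text{terms of order} \ge 2)$, where $d$ denotes the de Rham field of the factor $\Pi T\mathbb{R}^{p|q}$.

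The heart of the argument is the nonlinear step: conjugating $Q$ to $d+D$ for a cohomological vector field $D$ supported on the $V$-directions with $\mathcal{L}_D(p)=0$, by a pointed diffeomorphism equal to the identity to first order. I would run homological perturbation using the Cartan contracting homotopy $h$ of the linear de Rham differential, which satisfies $dh+hd=N$ for the weight operator $N$ that is invertible away from constants. Filtering functions by polynomial degree in the non-constant coordinates, at each stage one writes $Q=d+R$ with $R$ of order $\ge k$, decomposes the lowest-order part of $R$ into its $d$-exact part and a $d$-closed remainder, and gauges away the exact part by composing with the time-one flow of the vector field obtained by applying $h$ to it; each such correction strictly raises the filtration degree of the error. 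The surviving $d$-closed, non-exact higher-order terms are precisely the transferred (minimal) structure $D$ on $V$, and since $V$ carries the cohomology of $\ell$ the field $D$ has no linear part, i.e. $\mathcal{L}_D(p)=0$. Because each correction is unipotent in the filtration, the composite coordinate change is an honest diffeomorphism of supermanifolds.

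The main obstacle I anticipate is termination and smoothness of this iteration. In the graded case, when $Q$ comes from an $\End\left(\mathbb{R}^{0|1}\right)$-action, the Euler field furnishes an $\mathbb{N}$-grading of bounded total dimension: the perturbation is nilpotent, the process terminates after finitely many steps, and since $h$ may be chosen to preserve weight the resulting diffeomorphism respects the Euler grading, yielding the ``moreover'' clause. For a general supermanifold the odd directions remain nilpotent, so only the even, de Rham directions require analytic care; there one uses the smooth Poincar\'e-lemma homotopy, whose radial integration strictly increases polynomial degree in the non-constant even coordinates, so the series terminates fibrewise over the reduced even locus and assembles into a smooth diffeomorphism. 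This smooth bookkeeping is exactly the content of the cited normal-form results \cite{Vaintrob,normal2}; I would carry out the graded case in detail and indicate how these estimates dispatch the general one.
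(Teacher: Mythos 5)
First, a point of comparison: the paper does not prove this theorem at all --- it is imported wholesale from \cite{Vaintrob,normal2} --- so the only fair benchmark is the proofs in those references, whose strategy (split the tangent complex at $p$ into an acyclic summand plus its cohomology, realize the acyclic summand geometrically as a de Rham factor $\Pi T \R^{p|q}$, then kill the cross terms by a filtered gauge/homological-perturbation argument) is exactly the one you outline. Your linear step is correct, and your treatment of the graded case is essentially complete and correct: when $Q$ comes from an $\End\left(\R^{0|1}\right)$-action, any correction to a coordinate of weight $k$ must itself have weight $k$, hence is a polynomial of degree at most $k$ in the fiber coordinates (each of which has weight at least $1$), so the filtration is bounded coordinate-by-coordinate, the iteration genuinely terminates, and weight-preservation of the contracting homotopy yields the ``moreover'' clause. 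Since the graded case is the only one the paper actually uses (Corollary \ref{cor:schiso}), this part of your proposal does real work.

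The gap is in the general (ungraded) smooth case. Your claim that the iteration ``terminates fibrewise'' because each correction strictly raises polynomial degree in the even acyclic coordinates (the $x_i$ and the $d\xi_j$, which are even but not nilpotent) is false: raising the polynomial order of the error does not make it vanish after finitely many steps; it only shows that the infinite composite of coordinate changes converges as a \emph{formal} power series along the locus where those coordinates vanish. Upgrading this formal equivalence to an honest smooth pointed diffeomorphism is precisely the hard analytic content of the theorem --- the same difficulty as in Sternberg linearization --- and the standard resolution is not an infinite iteration at all but a single geometric argument: for instance, conjugation by the genuine flow of the even vector field $\left[Q,h\right]$, whose linearization at $p$ is the Euler field of the acyclic directions (up to nilpotent terms) and which commutes with $Q$ since $\left[Q,\left[Q,h\right]\right]=\tfrac{1}{2}\left[\left[Q,Q\right],h\right]=0$. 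Your final sentence defers exactly this step to \cite{Vaintrob,normal2}; since that step \emph{is} the theorem in the smooth case, the proposal as written is circular there, and stands on its own only in the graded setting.
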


\begin{corollary}\label{cor:schiso}\cite{dimatalk}
Let $f:\left(M,L,\lambda\right) \to \left(N,E,\mu\right)$ be a weak equivalence of bundles of curved $L_\i\left[1\right]$-algebras. Then the induced map
$$\pi_0\left(\left(M,L,\lambda\right)\right) \to \pi_0\left(\left(N,E,\mu\right)\right)$$ is an isomorphism \underline{of $\Ci$-schemes}.
\end{corollary}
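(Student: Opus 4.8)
The plan is to upgrade the set-theoretic bijection on classical loci, which is built into the definition of a weak equivalence (Definition \ref{dfn:catfib}), to an honest isomorphism of $\Ci$-schemes. Write $g$ for the induced map $\Speci\left(\Ci\left(\M\right)\right) \to \Speci\left(\Ci\left(\cN\right)\right)$ of derived $\Ci$-schemes, and let $\bar g:\pi_0\left(\M\right) \to \pi_0\left(\cN\right)$ be its classical truncation; this is exactly the map whose isomorphy we must establish. Since being an isomorphism of $\Ci$-schemes can be tested on an open cover of the (common) underlying space, and since $\bar g$ is already known to be a bijection of underlying sets, it suffices to produce, near each classical point $p$ with image $q=f\left(p\right)$, a local isomorphism and then glue.

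First I would extract formal étaleness from the cotangent complex. By Corollary \ref{cor:cotcpx} the relative cotangent complex of $g$ vanishes, so $g$ is formally étale as a map of derived $\Ci$-schemes. Because formal étaleness is stable under base change, it is inherited by the classical truncation $\bar g$; in particular the relative Kähler differentials $\Omega^1_{\pi_0\left(\M\right)/\pi_0\left(\cN\right)}$ vanish and $\bar g$ is formally étale as a map of classical $\Ci$-schemes. The remaining task is the $\Ci$-geometric principle that a formally étale bijection of finite-type $\Ci$-schemes is an isomorphism, i.e. that formally étale $\Ci$-maps are local isomorphisms onto open subschemes, so that bijectivity forces global isomorphy.

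This is where I would invoke the Morse lemma for cohomological vector fields (Theorem \ref{thm:normal}). Applied to the associated equivariant supermanifolds $\mathfrak{M}$ and $\mathfrak{N}$ near $p$ and $q$, it writes each (compatibly with the Euler grading) as a product of a de Rham factor $\left(\Pi T \R^{a|b},d\right)$ and a reduced factor on which the linearized differential $\mathcal{L}_D$ vanishes at the marked point. By the super Poincaré lemma the de Rham factor is acyclic with $H^0=\R$, so it is quasi-isomorphic to $\R$; a Künneth argument (the factors being quasi-free, hence cofibrant, so that the coproduct computes the homotopy coproduct) then identifies $\Ci\left(\M\right)$ locally with the smooth functions on the reduced factor, and likewise for $\cN$. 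Thus $g$ is locally modeled by a map of derived $\Ci$-schemes attached to reduced factors where the cohomological vector field carries no linear term. On these reduced models the tangent complexes have zero differential, so the quasi-isomorphism furnished by the weak equivalence is an outright isomorphism of graded vector spaces; together with formal étaleness, the inverse function theorem for $\Ci$-rings applies on the degree-zero base directions and shows $\bar g$ is locally an isomorphism onto an open subscheme. Gluing over a cover of the classical locus and using bijectivity completes the argument.

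The main obstacle is precisely the bridge in the last two paragraphs from linear, tangent-complex-level data to the genuine nonlinear scheme structure: one must verify that vanishing of the \emph{derived} relative cotangent complex of $g$ descends to formal étaleness of the \emph{classical} truncation, and that formal étaleness plus bijectivity genuinely yields an isomorphism rather than merely a map inducing isomorphisms on every tangent space. The Morse normal form does the essential work here, since it reduces the problem to reduced factors on which $\mathcal{L}_D$ vanishes, where the $\Ci$ inverse function theorem can be applied without the acyclic de Rham directions obscuring the comparison.
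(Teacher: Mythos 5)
Your core argument is exactly the paper's own proof: apply the Morse lemma for cohomological vector fields (Theorem \ref{thm:normal}) at each classical point to split off an acyclic de Rham factor, use the weak-equivalence hypothesis to see that the induced map of reduced factors is a quasi-isomorphism of tangent complexes whose differentials vanish at the marked points, hence an isomorphism of graded tangent spaces, apply the inverse function theorem to get a local isomorphism of classical loci, and then use the bijectivity built into Definition \ref{dfn:catfib} to globalize. So the proposal succeeds, and by the same route as the paper.

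The formal-\'etaleness preamble, however, is both unnecessary and shaky, and you should drop it rather than lean on it. First, the classical truncation $\bar g$ is \emph{not} a base change of $g$: the fiber product of $\Speci\left(\Ci\left(\M\right)\right)$ with the truncation of the target is generally a derived scheme whose truncation is $\pi_0\left(\M\right)$, not $\pi_0\left(\M\right)$ itself, so ``stability under base change'' does not justify descent of \'etaleness to $\bar g$; what is true is that $H^0$ of the relative cotangent complex computes the relative K\"ahler differentials of the truncation. Second, the principle you then appeal to --- that a formally \'etale bijection of finite-type $\Ci$-schemes is an isomorphism --- is not available off the shelf in $\Ci$-geometry (there is no EGA-style \'etale structure theory here), and the Morse-lemma argument exists precisely to substitute for it. Fortunately neither claim is load-bearing: in your actual local argument the input to the inverse function theorem is the tangent-space isomorphism coming from condition (ii) of Definition \ref{dfn:catfib} together with acyclicity of the de Rham factors, not formal \'etaleness; this is exactly what the paper does, and its proof of this corollary never invokes Corollary \ref{cor:cotcpx} at all. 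One final caution: to identify the classical loci \emph{as $\Ci$-schemes} you need the map of reduced factors to be a diffeomorphism of the full graded models (so that the ideals $\mathrm{im}\left(D\right)$ and $\mathrm{im}\left(D'\right)$ correspond under pullback), not merely of the degree-zero bases; since you do have the isomorphism of graded tangent spaces, you should apply the graded/super inverse function theorem rather than only its classical degree-zero shadow.
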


\begin{proof}
Let $\left(\mathfrak{M},Q\right)$ and $\left(\mathfrak{N},Q'\right)$ be the corresponding supermanifolds with cohomological vector fields. Let $p \in \pi_0\left(\left(M,L,\lambda\right)\right).$ Then we can identify $p$ with a point in $|\mathfrak{M}|$ for which $Q_p=0.$ Since $Q'$ is $f$-related to $Q,$ we also have $Q'_{f\left(p\right)}=0.$ By Theorem \ref{thm:normal}, we can find 
open subsupermanifolds $\mathfrak{U}$ and $\mathfrak{U}'$ around $p$ and $f\left(p\right)$ and pointed diffeomorphisms intertwining cohomological vector fields
$$\xymatrix@C=2.5cm@R=1.5cm{\left(\mathfrak{U},Q\right) \ar[d]_-{\mis} \ar[r]^{f|_{\mathfrak{U}}} & \left(\mathfrak{U}',Q'\right) \ar[d]^-{\mis}\\
\left(\left(\Pi T \R^{p|q},d\right)\times \left(V,D\right),\left(0,0\right)\right) \ar[r]^-{\mbox{$\underset{{\rotatebox[origin=c]{270}{$:=$}}} \varphi$}} & \left(\left(\Pi T \R^{p'|q'},d'\right)\times \left(V',D'\right),\left(0,0\right)\right),}$$
for which $\mathcal{L}_D\left(p\right)$ and $\mathcal{L}_{D'}\left(f\left(p\right)\right)=0.$
Consider the composite
$$\xymatrix{\left(\Pi T \R^{p|q},d,0\right) \times \left(V,D,0\right) \ar[r]^-{\varphi} & \left(\Pi T \R^{p'|q'},d,0\right) \times \left(V',D',0\right) \ar[d]^-{pr_{V'}}\\
\left(V,D,0\right) \ar[u]^-{\{0\} \times id} & \left(V',D',0\right),}$$
and denote this by $\theta.$
The vertical arrows are quasi-isomorphisms since $\left(\Ci\left(\Pi T \R^{p|q}\right),d\right)\simeq \R.$ It follows that the composite induces a quasi-isomorphism on tangent complexes at $0.$ But the differential of the tangent complexes are $\mathcal{L}_D$ and $\mathcal{L}_D',$ which vanish. It follows that $\theta$ induces an \emph{isomorphism} on tangent spaces of supermanifolds
$$\theta_*:T_0 V \to T_0 V',$$ so shrinking $V'$ if necessary, we can assume that it is a diffeomorphism. Being quasi-isomorphisms, the vertical maps also induced isomorphisms of $\Ci$-rings on $H^0,$ and hence isomorphisms of of classical loci \emph{as $\Ci$-schemes}. Since $\theta$ is a diffeomorphism, it follows that $\varphi$ induces an isomorphism of $\Ci$-schemes between classical loci. It follows that $\pi_0\left(f\right)$ is locally an isomorphism of $\Ci$-schemes around any point in $\pi_0\left(\mathfrak{M}\right).$ Since it is also a bijection of sets, it must be globally an isomorphism.
\end{proof}

\begin{theorem}\label{thm:weqi}
Let $f:\left(M,L,\lambda\right) \to \left(N,E,\mu\right)$ a morphism of bundles of curved $L_\i\left[1\right]$-algebras with corresponding dg-manifolds $\left(\M,D\right)$ and $\left(\cN,D'\right)$ respectively. Then $f$ is a weak equivalence if and only if the induced map $\Ci\left(\cN\right) \to \Ci\left(\M\right)$ is a quasi-isomorphism.
\end{theorem}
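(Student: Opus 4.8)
The plan is to show that both conditions are equivalent to a single intermediate statement: that the induced morphism $\Speci\left(\Ci\left(\M\right)\right) \to \Speci\left(\Ci\left(\cN\right)\right)$ of derived $\Ci$-schemes is an equivalence. First I would record that $\Ci\left(\M\right)$ and $\Ci\left(\cN\right)$ are complete homotopical $\Ci$-algebras: by Theorem \ref{thm:sch} the global sections of $\Speci\left(\Ci\left(\M\right)\right)$ are $\Gamma_{\pi_0\left(\M\right)}\left(\O_\M|_{\pi_0\left(\M\right)}\right)$, and by Lemma \ref{lem:piok} this is canonically quasi-isomorphic to $\Ci\left(\M\right)$, so the counit $\Ci\left(\M\right) \to \Gamma\left(\Speci\left(\Ci\left(\M\right)\right)\right)$ is an equivalence. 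Since $\Speci$ is fully faithful on complete algebras via the equivalence $\left(\cialgsp^{\wedge}\right)^{op} \simeq \Aff_{\Ci}$, the map $\Ci\left(\cN\right) \to \Ci\left(\M\right)$ is a quasi-isomorphism if and only if $\Speci$ of it is an equivalence in $\Loc$. This reduces the theorem to proving that $f$ is a weak equivalence if and only if the associated map of derived affine $\Ci$-schemes is an equivalence.

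For the forward implication, suppose $f$ is a weak equivalence. Corollary \ref{cor:schiso} shows that $\pi_0\left(f\right)$ is an isomorphism of $\Ci$-schemes; in particular the map on zeroth cohomology $H^0\left(\Ci\left(\cN\right)\right) \to H^0\left(\Ci\left(\M\right)\right)$ is an isomorphism, using $\pi_0\Gamma^{\Ci}\Ci\left(\M\right) \cong H^0\Ci\left(\M\right)$. Corollary \ref{cor:cotcpx} shows that the relative cotangent complex of $\Speci\left(\Ci\left(\M\right)\right) \to \Speci\left(\Ci\left(\cN\right)\right)$ vanishes. The remaining input is the $\Ci$-analogue of the Whitehead-type criterion from derived geometry: a morphism of connective $\Ci$-algebras (ours are non-positively graded, hence connective via the $\Ci$ Dold--Kan correspondence of Theorem \ref{thm:DK}) which is an isomorphism on $\pi_0 = H^0$ and whose relative cotangent complex vanishes is an equivalence. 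This follows by the standard obstruction-theoretic argument using the relative Postnikov tower, in which each stage is a square-zero extension classified by the relative cotangent complex; vanishing of the latter trivializes every stage, and the $H^0$-isomorphism provides the base case. One reduces to stalks over the common classical locus, where $\Ci\left(\M\right)$ is quasi-free and finitely presented, and invokes the cotangent-complex machinery of \cite[Chapter 5]{pelle}. Hence the map of derived schemes is an equivalence, and by the first paragraph $\Ci\left(\cN\right) \to \Ci\left(\M\right)$ is a quasi-isomorphism.

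For the converse, suppose $\Ci\left(\cN\right) \to \Ci\left(\M\right)$ is a quasi-isomorphism, so that $\Speci\left(\Ci\left(\M\right)\right) \to \Speci\left(\Ci\left(\cN\right)\right)$ is an equivalence of derived $\Ci$-schemes. Taking $\pi_0$ recovers an isomorphism of classical loci $\pi_0\left(\M\right) \to \pi_0\left(\cN\right)$ as $\Ci$-schemes, which in particular is a bijection on underlying sets, giving condition (i) of Definition \ref{dfn:catfib}. For condition (ii), an equivalence of derived schemes induces an equivalence of their tangent complexes $\mathcal{T}$, hence an equivalence on fibers at each classical point. By Proposition \ref{prop:tansame} the fiber of $\mathcal{T}\left(\Speci\left(\Ci\left(\M\right)\right)\right)$ at a point $p \in \pi_0\left(\M\right)$ is canonically the tangent complex $T_p\M$ of the dg-manifold, so the induced map $T_p\M \to T_{f\left(p\right)}\cN$ is a quasi-isomorphism for every $p$. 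Thus $f$ is a weak equivalence.

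The main obstacle is the forward direction's appeal to the cotangent-complex criterion: making precise, in the $\Ci$-setting and with the completeness and connectivity bookkeeping, that vanishing of the relative cotangent complex together with a $\pi_0$-isomorphism forces an equivalence. The delicate points are that the relative cotangent complex of the map of schemes (Corollary \ref{cor:cotcpx}) controls the algebra-level cotangent complex after passing to affine opens, and that the fibers of the cotangent complex correctly recover the pointwise tangent complexes $T_p\M$; both are handled by reducing to the quasi-free affine local model, where finite presentation holds and the Postnikov-tower argument applies verbatim.
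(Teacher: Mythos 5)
Your proposal is correct and follows the same skeleton as the paper's proof: in the forward direction, Corollary \ref{cor:schiso} gives the $H^0$-isomorphism, Corollary \ref{cor:cotcpx} gives vanishing of the relative cotangent complex, and a Whitehead-type criterion concludes; in the converse, the equivalence of spectra yields the bijection on classical loci and, via Proposition \ref{prop:tansame}, the pointwise quasi-isomorphism of tangent complexes. The only substantive divergence is how the Whitehead step is discharged --- precisely the point you flag as your main obstacle. You propose to run the Postnikov-tower/square-zero-extension argument inside the $\Ci$-category, citing the machinery of \cite[Chapter 5]{pelle}; that route is completable in principle, but you leave it as a sketch. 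The paper instead transfers the problem out of the $\Ci$-world: since $\pi_0\left(f\right)$ is surjective (by Corollary \ref{cor:schiso}), \cite[Proposition 5.1.0.16]{pelle} identifies the $\Ci$-relative cotangent complex of $\Ci\left(\cN\right) \to \Ci\left(\M\right)$ with its \emph{algebraic} relative cotangent complex as a map of commutative dg-algebras over $\R$, so the vanishing supplied by Corollary \ref{cor:cotcpx} becomes vanishing of the algebraic cotangent complex, and then \cite[Corollary 7.4.3.4]{LurieHA} applies verbatim to connective $E_\i$-rings. This identification, which is legitimate only because $\pi_0\left(f\right)$ is surjective, is exactly what lets the paper cite an off-the-shelf theorem instead of redoing obstruction theory in the $\Ci$-setting; it is the cleanest way to fill the gap you correctly identified. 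A minor remark: your opening reduction via completeness (Theorem \ref{thm:sch} plus Lemma \ref{lem:piok}) is sound but unnecessary --- the Whitehead criterion already produces the quasi-isomorphism at the level of algebras, and in the converse direction functoriality of $\Speci$ suffices.
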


\begin{proof}
Suppose that $f$ is a weak equivalence. Since $\pi_0\left(f\right)$ is surjective, by \cite[Proposition 5.1.0.16]{pelle}, we can identify the relative cotangent complex of
$$\Ci\left(\cN\right) \to \Ci\left(\M\right)$$ 
with its relative cotangent complex as a map of commutative dg-algebras over $\R,$ i.e. its \emph{algebraic} relative cotangent complex. Hence, by Corollary \ref{cor:cotcpx}, its algebraic cotangent complex vanishes. Since $H^0\left(\Ci\left(\M\right)\right)$ and $H^0\left(\Ci\left(\cN\right)\right)$ are finitely presented $\Ci$-algebras, it follows from Corollary \ref{cor:schiso} that the induced homomorphism between these $\Ci$-algebras is an isomorphism. It now follows from \cite[Corollary 7.4.3.4]{LurieHA} that the induced map 
$$\Ci\left(\cN\right) \to \Ci\left(\M\right)$$ 
is a quasi-isomorphism.

Conversely, suppose that the above map is a quasi-isomorphism. Then the induced map 
$$\Speci\left(\Ci\left(\M\right)\right) \to \Speci\left(\Ci\left(\M\right)\right)$$ is an equivalence of derived $\Ci$-schemes. In particular, it a bijection on the level of underlying sets, so $$\pi_0\left(\M\right) \to \pi_0\left(\cN\right)$$ is a bijection, and the relative cotangent complex vanishes. So, by Proposition \ref{prop:tansame}, we conclude that the map on point-wise tangent complexes is a quasi-isomorphism.
\end{proof}

\subsection{Derived fibered products}\label{sec:fibprod}
The central construction of study in derived differential geometry is derived fibered products. The category of fibrant objects structure on the category $\dgMan$ of dg-manifolds allows one to make this precise, by stating

\begin{center}
\emph{derived fibered products}   =   \emph{homotopy pullbacks of dg-manifolds}
\end{center}

We will not present the whole theory of how to construct derived fibered products here. (There is for example a construction explained in \cite{dgder} using ideas from AKSZ theory). However, there is special type of homotopy pullback of dg-manifolds that is very easy to describe. Suppose that $\pi:\mathbb{E} \to \M$ is a graded vector bundle over a graded manifold (both equipped with trivial cohomological vector field), and that $s$ is a section. We will explain how to construct the derived intersection of $s$ with the zero-section. The most important case of this is of course the case of a section of a classical vector bundle over a smooth manifold.

Consider the underlying graded manifold of the shifted vector bundle $\mathbb{E}\left[-1\right].$ Let's see what this looks like on a local trivialization. Then 
a section can be identified with a smooth map $\M \to \mathbb{V}.$ Since $\Ci\left(\mathbb{V}\right)$ is the free graded $\Ci$-algebra on $\mathbb{V}^*,$ this is the same data as collection of elements $s^{(k)}_i \in \Ci\left(\M\right),$ each $s^{(k)}_i$ being of degree $-k,$ with $i$ running from $1$ to $\dim V_k.$ Choose for each $k$ a basis $\xi^{(k)}_1,\ldots,\xi^{(k)}_{n_k}$ of $\left(V_k\right)^*$ so that we can identify 
$\Ci\left(\mathbb{E}\left[-1\right]\right)$ as the graded polynomial algebra
$$\Ci\left(\M\right)\left[\xi^{(k)}_{i}\right],$$ with each $\xi^{(k)}_{i}$ of degree $-k-1.$ On one hand, the expression $$s^{(k)}_i \frac{\partial}{\partial \xi^{(k)}_{i}}$$ is a cohomological vector field on $\mathbb{E}\left[-1\right],$ but on the other hand it's actually just the Koszul differential: $$K\left(\Ci\left(\M\right),s^{(k)}_i\right)=\left(\Ci\left(\M\right)\left[\xi^{(k)}_{i}\right],s^{(k)}_i \frac{\partial}{\partial \xi^{(k)}_{i}}\right).$$ This cohomological vector field however is \emph{globally defined}. This is easiest to see in the case of a classical vector bundle $E \to M$ over a manifold. Then, $$\O^{-\bullet}_{E\left[-1\right]}=\Gamma\left(\wedge^\bullet E^*\right).$$
The cohomological vector field is the derivation $\iota_s$ induced by insertion of $s$ into a wedge product of sections of $E^*.$ For the general case, since the degree $0$ fiber of $\mathbb{E}\left[-1\right]$ is $0$-dimensional, one has $$\O_{\mathbb{E}}\cong \Sym_{\O_{\M}}\left(\underline{\Gamma}\left(\mathbb{E}^*\right)\right),$$ so one can again use insertion.

\begin{proposition}\label{prop:zpb}
In the situation above $\left(\mathbb{E}\left[-1\right],\iota_{s}\right)$ is the derived fibered product of $s$ with the zero section.
\end{proposition}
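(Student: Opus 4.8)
The plan is to use the category of fibrant objects structure directly: by Theorem \ref{thm:pbok} the localization functor $h_{\cW}$ carries homotopy pullbacks to pullbacks in $\dgMan\left[\mathbf{qi}^{-1}\right]_\i$, so it suffices to exhibit $\left(\mathbb{E}\left[-1\right],\iota_s\right)$, together with its evident projection to $\M$, as a homotopy pullback of the cospan $\M \stackrel{s}{\longrightarrow} \mathbb{E} \stackrel{0}{\longleftarrow} \M$, where $0$ denotes the zero section. Following the definition of homotopy pullback, I will factor the zero section $0$ through a fibration and then form the ordinary pullback against $s$.

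First I would introduce the auxiliary dg-manifold
$$\mathbb{E}':=\left(\mathbb{E}\oplus\mathbb{E}\left[-1\right],\ D'\right),\qquad D'=\zeta_i\frac{\partial}{\partial \xi_i},$$
where $\zeta_i$ are the fibre coordinates of $\mathbb{E}$ and $\xi_i$ those of $\mathbb{E}\left[-1\right]$. This $D'$ is precisely the insertion operator $\iota_\tau$ of the tautological section $\tau$ of $\pi^*\mathbb{E}$ over $\mathbb{E}$, so it is globally defined by the same argument (via $\O_{\mathbb{E}}\cong\Sym_{\O_\M}\left(\underline{\Gamma}\left(\mathbb{E}^*\right)\right)$) that makes $\iota_s$ well defined; in other words $\mathbb{E}'$ is itself an instance of the construction being studied. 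The projection $\pi:\mathbb{E}'\to\M$ is a submersion, hence a fibration, and it is a weak equivalence: its algebra of functions is the Koszul algebra $K\left(\Ci\left(\mathbb{E}\right),\zeta_1,\ldots,\zeta_n\right)$ attached to the fibre coordinates, which form a regular sequence in the zero-differential algebra $\Ci\left(\mathbb{E}\right)$, so Proposition \ref{prop:kosz} gives a quasi-isomorphism onto $\Ci\left(\mathbb{E}\right)/\left(\zeta_i\right)=\Ci\left(\M\right)$. Thus $\pi$ is a trivial fibration; its zero section $\lambda:\M\to\mathbb{E}'$ together with the bundle projection $0':\mathbb{E}'\to\mathbb{E}$ (which forgets the $\xi_i$ and is again a submersion, hence a fibration) realises the factorization $0=0'\circ\lambda$.

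Next I would form the strict pullback $\M\times_{\mathbb{E}}\mathbb{E}'$ of $s$ along the fibration $0'$; by the axioms this exists and is by definition a homotopy pullback of the original cospan. To identify it, I would pass to a local trivialization $\mathbb{E}|_U\cong U\times\mathbb{V}$: the equation $\zeta=s$ imposed by the fibre product cuts $\mathbb{E}\oplus\mathbb{E}\left[-1\right]$ down to $\mathbb{E}\left[-1\right]$ and restricts $D'=\zeta_i\frac{\partial}{\partial \xi_i}$ to $s_i\frac{\partial}{\partial \xi_i}=\iota_s$, so locally $\M\times_{\mathbb{E}}\mathbb{E}'\cong\left(\mathbb{E}\left[-1\right],\iota_s\right)$. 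These identifications are natural in the transition data of the bundle and glue to a global isomorphism, after which Theorem \ref{thm:pbok} yields the claim.

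The step I expect to be the main obstacle is the verification that the strict $1$-categorical pullback of the submersion $0'$ along $s$ is computed on functions by the $\Ci$-pushout $\Ci\left(\M\right)\underset{\Ci\left(\mathbb{E}\right)}{\oinfty}\Ci\left(\mathbb{E}'\right)$ --- the dg-geometric analogue of the fact that $\Ci$ preserves transverse pullbacks, which is what legitimises the local computation above and the gluing. Since $0'$ is a submersion this is the transverse case, and I would establish it in local coordinates, where the pullback is manifestly $\Ci\left(\M\right)\{\xi_i\}$ with induced differential $\xi_i\mapsto s_i$, and then glue along the vector bundle transition functions. Everything else in the argument is formal, resting on the factorization axiom, Proposition \ref{prop:kosz}, and Theorem \ref{thm:pbok}.
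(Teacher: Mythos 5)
Your proof is correct and takes essentially the same route as the paper's: your auxiliary dg-manifold $\mathbb{E}'=\left(\mathbb{E}\oplus\mathbb{E}\left[-1\right],\iota_\tau\right)$ is precisely the paper's $\left(\pi^*\mathbb{E}\left[-1\right],\iota_{\Delta}\right)$, you factor the zero section through it in the same way, and you establish the trivial fibration by the same local Koszul computation resting on Proposition \ref{prop:kosz}. The only cosmetic differences are that you verify the projection to $\M$ is a weak equivalence directly where the paper checks its section and applies 2-out-of-3, and that you explicitly invoke Theorem \ref{thm:pbok} at the end.
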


\begin{proof}
Let $\mathbb{E} \to \M$ be any graded vector bundle, and then consider the pullback bundle $\pi^*\mathbb{E} \to \mathbb{E}.$ Since as a graded manifold, $\pi^*\mathbb{E} \cong \mathbb{E} \times_{\M} \mathbb{E},$ there is a canonical section $\Delta$ of $\pi^*\mathbb{E}$ induced by the identify map of $\mathbb{E}.$ Notice that if $s$ is a section of $\mathbb{E},$ that the following is a pullback diagram
$$\xymatrix{\left(\mathbb{E}\left[-1\right],\iota_{s}\right) \ar[r] \ar[d] & \left(\pi^*\mathbb{E}\left[-1\right],\iota_{\Delta}\right) \ar[d]\\
\M \ar[r]_-{s} & \mathbb{E}.}$$
Denote by $Z$ the zero section of $\mathbb{E},$ and by $Z_{-1}$ the zero section of $\pi^*\mathbb{E}\left[-1\right].$ Then we can factor
$$Z=\pi_{-1} \circ \left(Z_{-1} \circ Z\right),$$ with $\pi_{-1}$ the bundle projection. We claim that $Z_{-1} \circ Z$ is a weak equivalence. Firstly, notice that the vanishing locus of $\iota_{\Delta}$ is exactly the image of $|\M|=:M$ under $Z_{-1} \circ Z,$ so the induced map on classical loci is a bijection. It suffices to prove that the map locally induces a quasi-isomorphism of structure sheaves. It suffices to work locally, and assume that $\mathbb{E}$ is trivial. If $e_1,\ldots,e_m$ are (dual to) a local frame for $\mathbb{E}$ (of varying degree), introduce $\overline{e}_1,\ldots,\overline{e}_m$ as the corresponding frame for $\mathbb{E}\left[-1\right],$ so $|\overline{e}_i|=|e_1|-1,$ then $\iota_{\Delta}$ takes the form $e_i \frac{\partial}{\partial \overline{e}_i}.$ So we can identify the value of the structure sheaf locally with the Koszul algebra  of $\Ci\left(\M\right)\{e_1,\ldots,e_m\}$ with respect to the regular sequence $e_1,\ldots, e_m,$ and hence by Proposition \ref{prop:kosz}, it is quasi-isomorphic to $\Ci\left(\M\right).$ Since $Z_{-1} \circ Z$ is a section of the composite fibration $$\pi^*\mathbb{E}\left[-1\right] \to \mathbb{E} \to \M,$$ and $Z_{-1} \circ Z$ is a section, the above fibration is a trivial fibration. It follows that the pullback diagram above is a homotopy pullback for the diagram
$$\xymatrix{ & \M \ar[d]^-{Z}\\
\M\ar[r]^-{s} & \mathbb{E}.}$$
\end{proof}

Suppose that $E \to M$ is a vector bundle over a manifold with a section $s.$ Consider the special case where $s$ is the zero-section itself. Then $\iota_s=0.$ This means the derived intersection of the zero-section with itself is $E\left[-1\right],$ as graded manifold with $0$ as its differential. Because the differential is zero, the zero section of $E\left[-1\right]$ is a dg-map, and we can continue the process and take the derived intersection of the zero section with itself and get $E\left[-2\right],$ and so on. Moreover, if $E_1$ and $E_2$ are vector bundles, then, as a graded manifolds (not as a vector bundles), we have $$E_1\left[-1\right] \oplus E_2\left[-2\right]\cong E_1\left[-1\right] \times_M E_2\left[-2\right].$$ Continuing in this way, in light of Batchelor's theorem, we can conclude that every graded manifold can be obtained through homotopy pullbacks from manifolds.

\begin{remark}
A special case of the above is when $M=*,$ and $E$ is just a vector space. The zero section then is just the inclusion of the zero vector $* \to E.$ So the derived fibered product $* \times^h_E *=E\left[-1\right].$ One may also consider this a \emph{derived loop space.} Here, we see that on vector space, the derived loop space is the loop functor $\Omega=\left(\blank\right)\left[-1\right],$ (and more generally for graded vector spaces.)
\end{remark}

\begin{definition}
A \textbf{dg-vector bundle} over a dg-manifold $\left(\M,D\right)$ is a graded vector bundle $\mathbb{E} \to \M$ equipped with a cohomological vector field $Q_E$ making the bundle projection a dg-map.
\end{definition}

\begin{definition}
Let $\M$ be a graded manifold. By Batchelor's theorem for graded manifolds, Theorem \ref{thm:BatchN}, there are vector bundles $E_1,E_2,\ldots,E_m$ over the core $M=|\M|$ such that $$\M \cong \bigoplus\limits_{i=1}^m E_i\left[-i\right],$$ 
where each $\pi_k:E_k \to M$ is a vector bundle of positive rank. The integer $m$ is called the \text{amplitude} of $\M$. (Traditionally, one says $\M$ is of amplitude $\left[-m,0\right].$)
\end{definition}

\begin{remark}
A dg-manifold of amplitude $0$ is just a manifold.
\end{remark}

\begin{theorem}\label{thm:decomp}
Given any dg-manifold $\M$ of amplitude $m,$ there exists a canonical dg-manifold $\cN$ of amplitude $m-1,$ a dg-vector bundle $\mathbb{W} \to \cN$ with $\mathbb{W}$ also of amplitude $m-1$ and a dg-section $\lambda,$ such that $\M$ is the derived fibered product of $\lambda$ and the zero-section.

In particular, every dg-manifold can be constructed from manifolds through a finite iteration of derived fibered products.
\end{theorem}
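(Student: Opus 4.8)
The plan is to peel off the top degree of $\M$ one step at a time, realizing each peeling as a derived intersection of a dg-section with a zero-section, via a dg-enhancement of Proposition \ref{prop:zpb}, and then induct on the amplitude. First I would invoke graded Batchelor's theorem (Theorem \ref{thm:BatchN}) to write $\M\cong\bigoplus_{i=1}^m E_i[-i]$ and work in local graded coordinates $x^a$ of degree $0$ together with fibre coordinates $\xi^{(i)}_\alpha$ of degree $-i$. Since $D$ has degree $+1$ and functions lie in non-positive degrees, $D(x^a)=0$ and each $D(\xi^{(i)}_\alpha)$ is of degree $-(i-1)$, hence a polynomial in the $\xi^{(j)}$ with $j<i$; in particular $D(\xi^{(m)}_\alpha)$ involves no top generator. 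Therefore the subsheaf $\O_\cN\subseteq\O_\M$ generated by the homogeneous functions of degree $>-m$ is $D$-stable, and it coincides with the $\xi^{(m)}$-free subalgebra. I define $\cN:=(M,\O_\cN)$ with $D':=D|_{\O_\cN}$; this is \emph{canonical}, being intrinsically described without reference to the splitting, has amplitude $m-1$, and the inclusion $\O_\cN\hookrightarrow\O_\M$ exhibits a canonical fibration $p\colon\M\to\cN$.

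Next I set $\mathbb{W}:=E_m[-(m-1)]\to\cN$, a graded vector bundle of amplitude $m-1$, and equip it with the cohomological vector field $Q_{\mathbb{W}}$ restricting to $D'$ on the base and killing the fibre coordinates $w_\alpha$ (of degree $-(m-1)$). This is globally well defined because the transition functions of $E_m$ have degree $0$ and are thus $D'$-closed, so $(\mathbb{W},Q_{\mathbb{W}})$ is a dg-vector bundle. As $\mathbb{W}[-1]=E_m[-m]$, its total space over $\cN$ is exactly $\M$ as a graded manifold. I then define the section $\lambda$ by $\lambda^* w_\alpha:=D(\xi^{(m)}_\alpha)\in\O_\cN$, where $w_\alpha$ is identified with the frame of $E_m^*$ dual to the $\xi^{(m)}_\alpha$; this is a global section (the linear part of $D$ along the top fibre), and the computation $D'(\lambda^* w_\alpha)=D^2(\xi^{(m)}_\alpha)=0$ shows $\lambda$ is a \emph{dg}-section.

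I would then establish the dg-version of Proposition \ref{prop:zpb}: for a dg-vector bundle $(\mathbb{W},Q_{\mathbb{W}})\to\cN$ and a dg-section $\lambda$, the derived fibered product of $\lambda$ with the zero-section has underlying graded manifold $\mathbb{W}[-1]$ and cohomological vector field equal to the induced lift of $D'$ plus the insertion operator $\iota_\lambda$. Granting this, a comparison on generators gives $D_{\mathrm{tot}}(\xi^{(j)}_\alpha)=D'(\xi^{(j)}_\alpha)$ for $j\le m-1$ and $D_{\mathrm{tot}}(\bar w_\alpha)=\lambda^* w_\alpha=D(\xi^{(m)}_\alpha)$, so under the identification $\bar w_\alpha\leftrightarrow\xi^{(m)}_\alpha$ one recovers $D_{\mathrm{tot}}=D$, proving $\M$ is the asserted derived fibered product. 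The final statement follows by induction on amplitude: $\cN$ and $\mathbb{W}$ both have amplitude $m-1$, hence by the inductive hypothesis are built from manifolds by finitely many derived fibered products, and therefore so is their homotopy pullback $\M$; the base case $m=0$ is a manifold.

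The hard part will be the dg-enhancement of Proposition \ref{prop:zpb}. Its proof forms $p^*\mathbb{W}$, uses the diagonal dg-section, factors the zero-section through a trivial fibration, and identifies the fibre locally with a Koszul algebra on the regular sequence given by a fibre frame. The new difficulty is that the base now carries the nontrivial differential $D'$, so I must check that the Koszul quasi-isomorphism of Proposition \ref{prop:kosz} persists relative to $(\O_\cN,D')$. I would handle this exactly as in Lemma \ref{lem:acycpi}, presenting the relative Koszul algebra as an iterated homotopy pushout and invoking left-properness of the projective model structure, with weak equivalences detected by the quasi-isomorphism criterion of Theorem \ref{thm:weqi}.
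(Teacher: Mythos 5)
Your proposal is correct, and its overall architecture coincides with the paper's: split $\M$ via the graded Batchelor theorem, observe that $D$ preserves the subalgebra of functions of degree $>-m$ (the paper builds the whole tower $\M_0 \to \M_1 \to \cdots \to \M_m=\M$ this way), take $\lambda$ to be the linear part of $D$ along the top fibre, and identify $\M$ with the derived zero locus of $\lambda$; your induction on amplitude is the paper's ``this works for all $k$'' iteration. The genuine divergence is in the key lemma. You propose a dg-enhancement of Proposition \ref{prop:zpb} proved by redoing the Koszul computation relative to the dg-base $\left(\O_\cN,D'\right)$, via iterated homotopy pushouts, left properness, and Theorem \ref{thm:weqi}; this works (it is the same style of argument as Lemma \ref{lem:acycpi} and Proposition \ref{prop:kosz}, and the needed cofibrancy holds locally where $\Ci\left(\cN|_U\right)$ is quasi-free). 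The paper instead sidesteps any Koszul theory over a base with nonzero differential: since the fibration replacement of the zero section is the fibrewise-constant construction, it is literally the pullback of the classical one of Proposition \ref{prop:zpb} (over $M$ with zero differential) along the dg-map $\left(\M_k,Q_k\right)\to\left(M,0\right)$, and trivial fibrations are stable under pullback by the category-of-fibrant-objects axioms. Your own $Q_{\mathbb{W}}$ is exactly such a fibrewise-constant lift, so this shortcut is available to you and would eliminate your ``hard part'' entirely. Two further remarks: your intrinsic description of $\O_\cN$ as the subalgebra generated by elements of degree $>-m$ is a nice touch that makes the word ``canonical'' in the statement transparent, whereas the paper works with the non-canonical splitting; conversely, the one place where you assert rather than prove is the globality of $\lambda$ --- the paper verifies the gluing identity $f^{(k)}_{i,\alpha}=\bigl(g^{(k+1)}_{\alpha\beta}\bigr)_{il}f^{(k)}_{l,\beta}$ by an explicit computation whose engine is exactly the fact you cite for $Q_{\mathbb{W}}$ (degree-zero transition functions are $D$-closed), so you should run that check for $\lambda$ as well.
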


\begin{proof}
To fix notation, by Batchelor's theorem there is an isomorphism of graded manifolds
$$\xymatrix{\psi:\M \ar[r]^-{\sim} \ar[d] & \mathbb{E}=\bigoplus\limits_{i=1}^m E_i\left[-i\right] \ar[ld]^-{\pi}\\
M.}$$
where each $\pi_k:E_k \to M$ is a vector bundle of rank $n_k>0.$ 

Clearly $\left(\M,D\right) \cong \left(\mathbb{E},\psi_*D\right).$ Write $Q=\psi_*D.$ For all $k=0,\ldots,m,$ let $\mathbb{E}_{\le k}$ be the graded vector bundle
$$\bigoplus\limits_{i=1}^k E_i\left[-i\right] \stackrel{\pi_{\le k}}{\longlongrightarrow} M.$$
Denote its underlying graded manifold by $\M_k$. Notice that there is a canonical isomorphism of underlying graded manifolds 
$$\M_{k+1} \cong \pi_{\le k}^* E_{k+1}\left[-k-1\right]$$ (although the vector bundle structures are different), under which the canonical map of graded vector bundles over $M$
$$\mathbb{E}_{\le k+1} \to \mathbb{E}_{ \le k},$$
regarded only as a smooth map of graded manifolds, can be identified with $$\pi_{\le j}^* E_{k+1}\left[-k-1\right] \to \M_{k}.$$
Since $Q$ must increase degrees of functions by $1,$ it follows that for all $k,$ $Q$ restricts to a cohomological vector field $Q_k$ on $\M_k,$ 
producing a tower of fibrations of dg-manifolds (corresponding to submersions of corresponding supermanifolds)
$$\left(\M,D\right) \cong \left(\M_{m},Q_m\right) \stackrel{p_m}{\longlongrightarrow} \left(\M_{m-1},Q_{m-1}\right) \stackrel{p_{m-1}}{\longlongrightarrow} \ldots \left(\M_1,Q_1\right) \stackrel{\pi_1}{\longrightarrow} \left(\M_0,Q_0\right)=\left(M,0\right).$$

Choose an open cover $\left(U_\alpha \hookrightarrow M\right)_\alpha$ over which each $E_k$ trivializes as $$\varphi^{(k)}_\alpha:E_k|_{U_\alpha} \stackrel{\sim}{\longrightarrow} U_\alpha \times V_k,$$ for some fixed vector space $V_k.$ For all $k,$ let $$\mathbb{V}_{\le k}:=\bigoplus\limits_{i=1}^k V_i\left[-i\right],$$ and denote by $$\varphi_\alpha:\mathbb{E}|_{U_\alpha} \stackrel{\sim}{\longrightarrow} U_\alpha \times \mathbb{V}\mbox{ and }\varphi^{\le k}_\alpha:\mathbb{E}_{\le k}|_{U_\alpha} \stackrel{\sim}{\longrightarrow} U_\alpha \times \mathbb{V}_{\le k}$$ the induced trivializations. 

For each $V_k$ choose a basis $\xi^{(k)}_1,\ldots,\xi^{(k)}_{n_k}$ of $\left(V_k\right)^*.$ Then in the induced coordinate system on $U_\alpha \times \mathbb{V},$
$$\left(\varphi_\alpha\right)_*Q=f^{\left(-k+1\right)}_{j,\alpha} \frac{\partial}{\partial \xi^{(k)}_j},$$ with $f^{\left(-k+1\right)}_{j,\alpha}$  a $\left(\varphi_\alpha\right)_*Q$-exact element of degree $-k+1$ of $$\Ci\left(U_\alpha \times \mathbb{V}\right)\cong \Ci\left(U_\alpha\right) \underset{\R} \otimes \Sym\left(\mathbb{V^*}\right).$$
Fixing $k,$ notice that for all $j=1,\ldots,n_{k+1}$ $$f^{\left(-k\right)}_{j,\alpha}=\left(\varphi^{\le k+1}_\alpha\right)_*Q_{k+1}\left(\xi^{k+1}_j\right),$$ is of degree $-k,$ so it cannot depend on the fiber coordinates of $p_{k+1}.$ Putting these together forms a map of graded manifolds $$f^{\left(k\right)}_\alpha:U_{\alpha} \times \mathbb{V}_{\le k} \to V_{k+1}\left[-k\right],$$ which since each component of $f^{\left(k\right)}_\alpha$ is $Q_{k+1}$-closed, becomes a map of dg-manifolds
$$\left(U_{\alpha} \times \mathbb{V}_{\le k},\left(\varphi^{\le k}_\alpha\right)_* Q_k\right) \to \left(V_{k+1}\left[-k\right],0\right),$$
and hence a section of the trivial dg-vector bundle $$\left(U_{\alpha} \times \mathbb{V}_{\le k}\right) \times V_{k+1}\left[-k\right] \to \left(U_{\alpha} \times \mathbb{V}_{\le k}\right).$$ Denote by $\lambda_\alpha$ the canonically induced section of the restriction of $pr_{k+1}^*E_{k+1}\left[-k\right]$ to the graded open submanifold $\mathbb{E}_{\le k}|_{U_\alpha}$ of $\M_{k}.$ Let 
$$\mathcal{U}^k_{\alpha\beta}:=\mathbb{E}_{\le k}|_{U_\alpha} \cap \mathbb{E}_{\le k}|_{U_\beta}.$$
We claim that
\begin{equation}\label{eq:glue}
\lambda_\alpha|_{\mathcal{U}^k_{\alpha\beta}}=\lambda_{\beta}|_{\mathcal{U}^k_{\alpha\beta}}.
\end{equation}
Since $E_{k+1}$ is a vector bundle over $M,$ we can write the induced transition functions in the form
\begin{eqnarray*}
U_{\alpha\beta} \times V_{k+1} &\stackrel{\left(\varphi^{\left(k\right)}_{\beta}\right)^{-1}}{\longlonglongrightarrow} E_{k+1}|_{U_{\alpha\beta}} \stackrel{{\varphi^{\left(k\right)}_{\alpha}}}{\longlonglongrightarrow}& U_{\alpha\beta} \times V_{k+1} \\
\left(x,v\right) &\mapsto& \left(x,g^{(k+1)}_{\alpha\beta}\left(x\right)v\right).
\end{eqnarray*}
To verify (\ref{eq:glue}), it suffices to prove that
$$f_{i,\alpha}^{\left(k\right)}=\left(g^{(k+1)}_{\alpha\beta}\right)_{il}f_{l,\beta}^{\left(k\right)}.$$
Firstly, note that
\begin{eqnarray*}
f^{\left(-k\right)}_{j,\alpha}&=&\left(\varphi^{\le k+1}_\alpha\right)_*Q_{k+1}\left(\xi^{k+1}_j\right)\\
&=& Q_{k+1}\left(\xi_{i}^{(k+1)} \circ \varphi^{\le k+1}_\alpha\right)\\
&=& Q_{k+1}\left(\xi_{i}^{(k+1)} \circ \varphi^{\le k+1}_\alpha \circ \left(\varphi^{\le k+1}_\beta\right)^{-1} \circ \varphi^{\le k+1}_\beta\right)\\
&=& Q_{k+1}\left(\xi_{i}^{(k+1)} \circ \varphi^{\le k+1}_{\alpha\beta} \circ \varphi^{\le k+1}_\beta\right)\\
&=& \left(\varphi^{\le k+1}_\beta\right)_*Q_{k+1}\left(\xi_{i}^{(k+1)} \circ \varphi^{\le k+1}_{\alpha\beta}\right)\\
&=& \left(\varphi^{\le k+1}_\beta\right)_*Q_{k+1}\left(\left(g^{(k+1)}_{\alpha\beta}\right)_{il}\xi^{(k+1)}_l\right).
\end{eqnarray*}
Notice that $g^{(k+1)}_{\alpha\beta}$ are functions on $M,$ so $Q\left(g^{(k+1)}_{\alpha\beta}\right)_{il}=0,$ and hence
\begin{eqnarray*}
f^{\left(-k\right)}_{j,\alpha}&=& \left(g^{(k+1)}_{\alpha\beta}\right)_{il}\left(\varphi^{\le k+1}_\beta\right)_*Q_{k+1}\left(\xi^{(k+1)}_l\right)\\
&=& \left(g^{(k+1)}_{\alpha\beta}\right)_{il}f_{l,\beta}^{\left(k\right)}.\
\end{eqnarray*}
Thus, we have constructed a global section $\lambda$ of $\pi_{\le k}^*E_{k+1}\left[-k\right].$ 

Let $$r_k:\pi_{\le k}^*E_{k+1}\left[-k\right] \to \M_{k}$$ be the bundle projection. The pullback bundle $r_k^*\pi_{\le k}^*E_{k+1}\left[-k\right] \to \pi_{\le k}^*E_{k+1}\left[-k\right]$ has a canonical section $\Delta$ coming from the diagonal map. The insertion operator $\iota_{\Delta}$ then can be identified with a canonical cohomological vector field on $r_k^*\pi_{\le k}^*E_{k+1}\left[-k-1\right],$ which in local coordinates can be identified with the Koszul complex of the fiber-coordinates of the bundle $\pi_{\le k}^*E_{k+1}\left[-k\right].$ Moreover, $\iota_{\Delta}$ commutes with natural cohomological vector field $\tilde Q_{k}$ induced from $Q_k.$ Therefore $\iota_{\Delta}+\tilde Q_{k}$ is a cohomological vector field. It is in fact nothing but the natural cohomological vector field coming from writing $\pi_{\le k}^*E_{k+1}\left[-k\right]$ as a pullback of dg-manifolds
$$\xymatrix{\pi_{\le k}^*E_{k+1}\left[-k\right] \ar[r] \ar[d] & \left(E_{k+1}\left[-k\right],\iota_{\Delta}\right) \ar[d] \\
\M_{k} \ar[r]^-{\pi_{\le k}} & M.}$$
By the proof of Proposition \ref{prop:zpb}, the composite
$$\left(\pi_{k+1}^*E_{k+1}\left[k-1\right],\iota_{\Delta}\right) \to E_{k+1}\left[-k\right] \to M$$ is a trivial fibration. Since trivial fibrations are stable under pullback,
$$\left(r_k^*\pi_{\le k}^*E_{k+1}\left[-k-1\right],\iota_{\Delta}\right) \to \pi_{\le k}^* E_{k+1}\left[-k\right] \stackrel{\rho}{\longrightarrow} \mathbb{E}_{\le k}$$ is as well. Moreover, the zero section of 
$\pi_{\le k}^*E_{k+1}\left[-k\right]$ factors a section of this map, followed by the map $\rho$ above.
By construction, we have that
$$\xymatrix{\left(\M_{k+1},Q_{k+1}\right) \ar[d]_-{p_{k+1}} \ar[r] & \left(r_k^*\pi_{\le k}^*E_{k+1}\left[-k-1\right],\iota_{\Delta}\right) \ar[d]\\
\left(\M_{k},Q_k\right) \ar[r]_-{\lambda} &  \left(\pi_{\le k}^*E_{k+1}\left[-k\right],0\right)}$$ is a pullback diagram, and since the right vertical arrow is a fibration, it is therefore a homotopy pullback, and by above can be identified with the derived fibered product of $\lambda$ with the zero-section. As this construction works for all $k,$ it in particular holds for $m,$ proving the theorem.
\end{proof}

\section{The equivalence of $\i$-categories}\label{sec:eq}
\subsection{Fully faithfulness}
Denote by $\dgMani$ the $\i$-category obtained from $\dgMan$ by formally inverting the weak equivalences.

Consider the canonical functor of $1$-categories
\begin{eqnarray*}
\Ci:\dgMan &\to& \left(\dgcni\right)^{op}\\
\M  &\mapsto& \Ci\left(\M\right)
\end{eqnarray*}

By Theorem \ref{thm:weqi}, it sends weak equivalences to quasi-isomorphisms, hence there is an induced functor between $\i$-categories
$$\Ci:\dgMani \to \dgci^{op}.$$
We will prove it is fully faithful.

\begin{lemma}\label{lem:hfp}
Let $\M$ be a dg-manifold. Then $\Ci\left(\M\right)$ is homotopically finitely presented.
\end{lemma}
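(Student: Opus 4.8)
The plan is to induct on the amplitude $m$ of $\M.$ For the base case $m=0,$ the dg-manifold $\M$ is an ordinary manifold $M,$ and $\Ci\left(M\right)$ is homotopically finitely presented: under the equivalence $\DMfd \simeq \left(\dgci^{\mathbf{fp}}\right)^{op}$ of Corollary \ref{cor:hfpdman}, the canonical functor $\Mfd \to \DMfd$ carries $M$ to $\Ci\left(M\right),$ which therefore lies in $\dgci^{\mathbf{fp}}.$ For the inductive step, suppose the claim holds for all dg-manifolds of amplitude $< m,$ and let $\M$ have amplitude $m.$ By Theorem \ref{thm:decomp} there is a dg-manifold $\cN$ of amplitude $m-1,$ a dg-vector bundle $\mathbb{W} \to \cN$ whose total space again has amplitude $m-1,$ and a dg-section $\lambda,$ such that $\M$ is the derived fibered product of $\lambda$ with the zero-section; that is, $\M \simeq \cN \times^h_{\mathbb{W}} \cN.$ By the inductive hypothesis both $\Ci\left(\cN\right)$ and $\Ci\left(\mathbb{W}\right)$ are homotopically finitely presented.

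The key step is to show that $\Ci$ carries this derived fibered product to a pushout, i.e. that
$$\Ci\left(\M\right) \simeq \Ci\left(\cN\right) \underset{\Ci\left(\mathbb{W}\right)} \oinfty \Ci\left(\cN\right)$$
in $\dgci,$ where the two structure maps are induced by $\lambda$ and the zero-section. By Theorem \ref{thm:pbok} the derived fibered product is a genuine pullback in $\dgMani,$ so this amounts to the claim that $\Ci$ preserves it. Here I would use the explicit insertion-operator model from the discussion preceding Proposition \ref{prop:zpb}: globally $\Ci\left(\M\right)=\left(\Sym_{\O_\cN}\left(\underline{\Gamma}\left(\mathbb{W}^*\right)\right),D_\cN+\iota_\lambda\right),$ a relative Koszul-type algebra over $\Ci\left(\cN\right).$ Since $\underline{\Gamma}\left(\mathbb{W}^*\right)$ is a finitely generated projective $\Ci\left(\cN\right)$-module, this exhibits $\Ci\left(\cN\right) \to \Ci\left(\M\right)$ as a retract of a finitely generated quasi-free extension, hence a cofibration, while the zero-section furnishes a global Koszul cofibrant replacement; the pushout is then computed on the nose by the homotopy pushout, which is exactly the global analogue of Propositions \ref{prop:zpb} and \ref{prop:kosz}. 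Granting this, $\Ci\left(\M\right)$ is a finite colimit in $\dgci$ of the homotopically finitely presented algebras $\Ci\left(\cN\right)$ and $\Ci\left(\mathbb{W}\right).$ Since compact objects are closed under finite colimits and retracts, $\Ci\left(\M\right)$ is compact, completing the induction.

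The main obstacle is precisely this globalization. The local statement that the derived zero-locus is a (finitely generated) Koszul algebra is already in hand from Propositions \ref{prop:zpb} and \ref{prop:kosz}, so the difficulty is entirely that $\mathbb{W}$ need not be a trivial bundle: $\Ci\left(\M\right)$ is only a relative symmetric algebra on a projective--- not free--- module over $\Ci\left(\cN\right).$ The care required is in checking that the Serre--Swan splitting realizing $\underline{\Gamma}\left(\mathbb{W}^*\right)$ as a direct summand of a free module can be chosen compatibly with the cohomological vector fields $D_\cN$ and $\iota_\lambda,$ so that $\Ci\left(\M\right)$ is a retract of a finitely generated quasi-free extension \emph{as a dg-$\Ci$-algebra} and not merely as a graded one; once that is secured, closure of compact objects under retracts and finite colimits finishes the argument. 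Alternatively, one may bypass the retract bookkeeping by showing directly that $\Ci$ sends the homotopy pullback of Proposition \ref{prop:zpb} to a homotopy pushout: since $\O_\M$ is a homotopy sheaf, one reduces to a cover trivializing $\mathbb{W},$ over which the assertion is exactly the finitely generated Koszul computation of Proposition \ref{prop:qfcpt}.
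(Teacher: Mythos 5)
Your route is genuinely different from the paper's, but as written it has a real gap at exactly the point you defer with ``Granting this'' --- that deferral is not a technicality but the entire content of the lemma. The statement you need, that $\Ci$ carries the derived fibered product of Theorem \ref{thm:decomp} to the homotopy pushout $\Ci\left(\cN\right) \underset{\Ci\left(\mathbb{W}\right)}\oinfty^{\mathbb{L}} \Ci\left(\cN\right),$ is a special case of Proposition \ref{prop:pbspres}; but the paper can only prove that proposition \emph{after} Lemma \ref{lem:hfp}, because its proof invokes compactness of the algebras involved and hence completeness (Proposition \ref{prop:comp}). So you cannot quote it, and your two direct arguments for it are each incomplete. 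First, the Serre--Swan issue you flag is a genuine obstruction, not bookkeeping: when $D_\cN \neq 0,$ extending the differential of $\Ci\left(\M\right)$ over a complementary summand $P$ of $\underline{\Gamma}\left(\mathbb{W}^*\right)$ amounts to equipping $P$ with a \emph{flat} lift of $D_\cN$ (a dg-module structure), and an arbitrary lift has curvature. The repair exists, but it uses a feature of Theorem \ref{thm:decomp} that your proposal never invokes: there $\mathbb{W}$ is pulled back from a classical vector bundle on the core, so the complement can also be chosen pulled back from the core, and for such bundles the flat lift is automatic because a cohomological vector field annihilates all functions of degree zero. Second, even once $\Ci\left(\cN\right) \to \Ci\left(\M\right)$ is known to be a cofibration, the claim that the $1$-categorical pushout ``computes the homotopy pushout on the nose'' needs an argument --- the base $\Ci\left(\cN\right)$ is not cofibrant and no left properness statement for $\dgcni$ is on record, so one must check by hand that pushing out along quasi-free extensions on negative-degree generators (which are free as graded modules) preserves quasi-isomorphisms. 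Third, your alternative sheaf-theoretic reduction asks a homotopy \emph{colimit} (the derived tensor product) to commute with the \v{C}ech \emph{limits} expressing $\Ci\left(\M\right)$; that exchange is precisely the descent/completeness problem that forces the paper to prove Lemma \ref{lem:hfp} first.

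For contrast, the paper's proof avoids fibered products entirely: it embeds the core $M$ in $\R^n,$ uses a tubular neighborhood to exhibit $\M$ as a dg-retract of its restriction over the neighborhood, and then uses Batchelor's theorem, complementary bundles, and a bump function to extend the cohomological vector field to a dg-manifold built from trivial bundles over $\R^n,$ whose function algebra is finitely generated quasi-free and hence compact by Proposition \ref{prop:qfcpt}; compactness then survives the $\Ci$-localization (a pushout along compact objects) and the retraction. Note that this construction succeeds for the same reason as the repair above: the base there is an ordinary manifold, where extending a cohomological vector field by zero over a complementary bundle is automatic. Your induction on amplitude is an attractive alternative architecture and can likely be completed along the lines indicated, but doing so amounts to proving a special case of Proposition \ref{prop:pbspres} by hand, which is exactly the work your proposal leaves out.
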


\begin{proof}
Let $\M$ be a dg-manifold. Choose an embedding $M \hookrightarrow \R^n$ as a closed submanifold. Let $M \subset U_i$ for $i=1,2$ be tubular neighborhoods such that $U_1$ is a proper subset of $U_2.$ Let $f_i:M \hookrightarrow U_i$ be the inclusion with retraction $p_i.$ Let $\mathcal{U}_i$ be the pullback of dg-manifold
$$\xymatrix{ U_i \times_{M} \M \ar[r] \ar[d] & \M \ar[d]\\
U \ar[r]_-{p_i} & M.}$$ Then $\M$ is a retract of both $\mathcal{U}_1$ and $\mathcal{U}_2$. Let $Q$ be the cohomological vector field on $\mathcal{U}_2.$ Let $\lambda$ be a smooth function on $\R^n$ such that is identical to $1$ on $U_1$ and vanishes outside of $U_2.$
Let $E_1,\ldots,E_m$ be a sequence of vector bundles over $U_2$ such that $$\mathcal{U}_2\cong \bigoplus\limits_{i=1}^{m} E_i\left[-i\right]$$ as a graded manifold. For each $i,$ choose a vector bundle $F_i$ such that $E_i \oplus F_i$ is trivializable to $U_2 \times \R^{N_i}.$ Denote by $V_i$ the trivial vector bundle over $\R^n$ of rank $N_i.$ Consider the graded manifold
$$\cN=\bigoplus\limits_{i=1}^{m} V_i\left[-i\right].$$ Equip if with the odd vector field $\lambda \cdot Q,$ which is cohomological since $Q \lambda = 0.$ The core of this dg-manifold is $\R^n,$ and $\cN \times_{\R^n} U_2$ is a dg-manifold of which $\mathcal{U}_2$ is a retract. Notice that $\Ci\left(\cN\right)$ is quasi-free and finitely generated. By Proposition \ref{prop:qfcpt}, $\Ci\left(\cN\right)$ is homotopically finitely presented. Since $\Ci\left(\cN \times_{\R^n} U_2\right)$ is a localization of $\Ci\left(\cN\right),$ it is too (since $\Ci\left(\R\right)$ and $\Ci\left(\R\setminus\{0\}\right)$ are and compact objects are stable under pushouts). It follows that $\Ci\left(\cN \times_{\R^n} U_2\right)$ is a compact object. Hence is any retract of it is as well. Therefore $\Ci\left(\M\right)$ is a compact object, i.e. homotopically finitely presented.
\end{proof}

\begin{corollary}
The functor $\Ci:\dgMani \to \dgci^{op}$ induces a functor
$$\Psi:\dgMani \to \DMfd,$$ from the $\i$-category of dg-manifolds to the $\i$-category of derived manifolds.
\end{corollary}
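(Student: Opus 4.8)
The plan is to deduce this as an essentially formal consequence of the work already done. We have at our disposal the functor $\Ci\colon \dgMani \to \dgci^{op}$ obtained by inverting weak equivalences, which is well defined precisely because Theorem \ref{thm:weqi} guarantees that $\Ci$ carries weak equivalences of dg-manifolds to quasi-isomorphisms of dg-$\Ci$-algebras. The key input is Lemma \ref{lem:hfp}, which asserts that $\Ci\left(\M\right)$ is homotopically finitely presented for \emph{every} dg-manifold $\M.$ In the language of the $\i$-category $\dgci,$ this says that every object in the essential image of $\Ci$ is a compact object, i.e. lies in the full subcategory $\dgci^{\mathbf{fp}} \subseteq \dgci.$

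First I would observe that, since $\left(\dgci^{\mathbf{fp}}\right)^{op}$ is a \emph{full} subcategory of $\dgci^{op},$ and the functor $\Ci$ has essential image contained in it by Lemma \ref{lem:hfp}, the functor factors (uniquely up to a contractible space of choices) through the inclusion; this is automatic for full subcategory inclusions in the $\i$-categorical setting. This yields a functor $$\dgMani \to \left(\dgci^{\mathbf{fp}}\right)^{op}.$$ Then I would compose with the canonical equivalence $\left(\dgci^{\mathbf{fp}}\right)^{op}\simeq \DMfd$ of Corollary \ref{cor:hfpdman} to obtain the desired $$\Psi\colon \dgMani \to \DMfd.$$

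There is no serious obstacle remaining at this stage: all the genuine content has been absorbed into the earlier results, namely Theorem \ref{thm:weqi} (so that $\Ci$ descends to the localization), Lemma \ref{lem:hfp} (so that the target may be restricted to compact objects), and Corollary \ref{cor:hfpdman} (which identifies the $\i$-category of derived manifolds with the opposite of the compact dg-$\Ci$-algebras). The only point meriting a sentence of care is the functoriality of the factorization through the full subcategory, but in an $\i$-category this is formal, since a fully faithful inclusion detects and reflects the relevant factorization up to contractible choice.
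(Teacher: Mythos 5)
Your proposal is correct and is exactly the paper's argument: the paper proves this corollary by citing Corollary \ref{cor:hfpdman}, with Lemma \ref{lem:hfp} (compactness of $\Ci\left(\M\right)$) supplying the factorization through $\left(\dgci^{\mathbf{fp}}\right)^{op}$, precisely as you spell out. Your additional remark about the formal nature of factoring through a full subcategory in the $\i$-categorical setting is the only content the paper leaves implicit.
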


\begin{proof}
This follows immediately from Corollary \ref{cor:hfpdman}.
\end{proof}

\begin{proposition}\label{prop:pbspres}
$$\Ci:\dgMani \to \dgci^{op}$$
preserves pullbacks.
\end{proposition}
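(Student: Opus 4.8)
The plan is to reduce an arbitrary pullback in $\dgMani$ to a \emph{strict} fibered product along a fibration and then compute $\Ci$ of it as an ordinary $\Ci$-pushout. By Theorem \ref{thm:pbok} the localization $h_{\mathbf{qi}}:\dgMan \to \dgMani$ carries homotopy pullbacks to pullbacks, and in a category of fibrant objects a homotopy pullback is formed by replacing one leg of the cospan with a fibration and taking the ordinary fibered product. Since every cospan in $\dgMani$ is, up to equivalence, the image of a cospan $\M \xrightarrow{f} \cN \xleftarrow{p} \mathcal{P}$ in $\dgMan$, and the pullback is unchanged if $p$ is replaced by a weakly equivalent fibration, it suffices to treat the case where $p$ is a fibration, i.e.\ a submersion of the underlying graded manifolds. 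Thus I must show that $\Ci$ sends the strict fibered product $\M\times_\cN\mathcal{P}$ to the homotopy pushout of $\Ci(\M)\leftarrow\Ci(\cN)\to\Ci(\mathcal{P})$ in $\dgcni$, equivalently to a pullback in $\dgci^{op}$.

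The first step is the strict computation. The fibered product $\M\times_\cN\mathcal{P}$ exists in $\dgMan$: the underlying graded manifolds pull back along the submersion $p$, and the cohomological vector fields, being $f$- respectively $p$-related, determine a unique cohomological vector field on the pullback. I claim that
\[
\Ci(\M\times_\cN\mathcal{P})\;\cong\;\Ci(\M)\underset{\Ci(\cN)}{\oinfty}\Ci(\mathcal{P}),
\]
the ordinary pushout in the $1$-category of dg-$\Ci$-algebras. As $\O$ is a homotopy sheaf this is local on $\cN$ and $\mathcal{P}$, so I may assume $p$ trivializes to a projection $\cN\times\mathbb{W}\to\cN$ with $\mathbb{W}$ a graded domain. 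Passing to the associated $\End(\R^{0|1})$-equivariant supermanifolds, $p$ becomes a submersion of supermanifolds, which is transverse to every map; the identification then follows from the graded/super analogue of the fact (Theorem \ref{thm:transverse} and its $1$-categorical predecessor) that taking smooth functions sends transverse pullbacks to $\oinfty$, upon restricting to the homogeneous parts, with the differential carried along by $D$-relatedness. In particular, after forgetting differentials, $\Ci(\cN)\to\Ci(\mathcal{P})$ locally adjoins the fibre coordinates of $\mathbb{W}$, hence is a quasi-free extension and so a cofibration in $\dgcni$, and therefore so is its base change $\Ci(\M)\to\Ci(\M)\oinfty_{\Ci(\cN)}\Ci(\mathcal{P})$.

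It remains to see that this ordinary pushout already computes the homotopy pushout, and this is where I expect the real work to lie. On a sufficiently small open on which $p$ trivializes and all three structure algebras are quasi-free --- hence cofibrant --- this is the standard fact that the strict pushout of a cofibration between cofibrant objects is a homotopy pushout. The main obstacle is to globalize this: one must verify that forming $\Ci(\M)\oinfty_{\Ci(\cN)}\Ci(\mathcal{P})$ is compatible with the homotopy-sheaf structure, equivalently that adjoining the fibre coordinates of $p$ is flat. Concretely, the negative-degree fibre directions contribute a tensor product over $\R$ with a free graded-commutative algebra, which preserves quasi-isomorphisms, while the degree-$0$ fibre directions contribute the functor $-\oinfty\Ci(\R^k)$ of freely adjoining $\Ci$-generators, which preserves weak equivalences because $\Ci(\R^k)$ is cofibrant and flat. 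Granting this flatness, pushout along $\Ci(\cN)\to\Ci(\mathcal{P})$ preserves quasi-isomorphisms, the strict pushout agrees with the homotopy pushout globally, and $\Ci$ carries the homotopy pullback $\M\times_\cN\mathcal{P}$ to a pushout in $\dgci^{op}$. Finally, by Lemma \ref{lem:hfp} the pushout remains homotopically finitely presented, so under Corollary \ref{cor:hfpdman} the preserved pullback lands in $\DMfd$ as required.
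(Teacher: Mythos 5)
Your reduction to a strict fibered product along a fibration is the same first step the paper takes, but the heart of your argument has a genuine gap at exactly the point you flag and then waive (``granting this flatness''). Globally the submersion $p$ need not trivialize, so $\Ci\left(\cN\right) \to \Ci\left(\mathcal{P}\right)$ need not be a quasi-free extension: cofibrations in $\dgcni$ are retracts of extensions by a graded vector \emph{space}, not by a bundle, and none of the three algebras is cofibrant when the cores are not Euclidean. Your flatness verification is carried out only in fibre coordinates, i.e.\ only in the trivialized situation, so it does not show that $-\oinfty_{\Ci\left(\cN\right)}\Ci\left(\mathcal{P}\right)$ preserves quasi-isomorphisms globally. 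Moreover, your prior claim that the strict identification $\Ci\left(\M\times_\cN\mathcal{P}\right)\cong\Ci\left(\M\right)\oinfty_{\Ci\left(\cN\right)}\Ci\left(\mathcal{P}\right)$ ``is local on $\cN$ and $\mathcal{P}$'' begs the question: the $1$-categorical pushout $\oinfty$ is a colimit and is not known to be compatible with restriction to opens or with the homotopy-sheaf structure; establishing that compatibility is the same globalization problem in disguise. So both halves of your argument are local statements, and nothing in the proposal bridges them to the global one.

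The paper bridges this differently, and that difference is the essential content of its proof. Instead of attempting global flatness, it compares $\Ci\left(\mathcal{P}\right)$ with the derived pushout $\Ci\left(\M\right)\oinfty^{\mathbb{L}}_{\Ci\left(\cN\right)}\Ci\left(\mathcal{P}\right)$ through their spectra: by Theorem \ref{thm:sch} the spectrum of functions on a dg-manifold is its classical locus with the restricted structure sheaf, the underlying spaces agree because $\Speci$ preserves limits, and the structure sheaves are compared locally, where your (correct) observation that the map is locally a quasi-free extension, hence a cofibration, makes the local derived pushout an ordinary one. The passage back from spectra to algebras is then supplied by completeness: by Lemma \ref{lem:hfp} and stability of compact objects under pushouts, both algebras are homotopically finitely presented, hence complete by Proposition \ref{prop:comp}, i.e.\ each is recovered as global sections of its own spectrum, so the equivalence of spectra yields the desired quasi-isomorphism. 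That compactness-plus-completeness step is precisely the global ingredient your proof is missing; without it (or a genuine proof of global flatness, say by a Mayer--Vietoris argument that would itself require the descent compatibility of $\oinfty$ noted above), your approach does not close.
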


\begin{proof}
It suffices to prove that if
$$\xymatrix{\mathfrak{P} \ar[r] \ar[d] & \mathfrak{M} \ar[d]^-{p} \\
\mathfrak{L} \ar[r]_-{f} & \mathfrak{N}}$$
is a pullback diagram of $\End\left(\R^{0|1}\right)$-supermanifolds, with $\cL,$ $\M,$ $\cN,$ and $\cP$ the corresponding dg-manifolds, and $p$ a submersion, then
$\Ci\left(\cP\right)$ is the homotopy pushout $$\Ci\left(\cN\right) \underset{\Ci\left(\cL\right)} \oinfty^{\mathbb{L}} \Ci\left(\M\right)$$ in $\dgcni.$
From Theorem \ref{thm:sch}, we know that
$$\Speci\left(\Ci\left(\cP\right)\right) \simeq \left(\pi_0\left(\cP\right),\O_{\cP}|_{\pi_0\left(\cP\right)}\right).$$
There is a canonical isomorphism
$$\pi_0\left(\cP\right) \cong \pi_0\left(\cL\right)\times_{\pi_0\left(\cN\right)} \pi_0\left(\M\right).$$
Since $$\Speci:\dgci^{op} \to \Loc$$ preserves limits, we also have that
$$\pi_0\left(\cL\right)\times_{\pi_0\left(\cN\right)} \pi_0\left(\M\right) \cong \Speci\left(H^0\left(\Ci\left(\cN\right) \underset{\Ci\left(\cL\right)} \oinfty^{\mathbb{L}} \Ci\left(\M\right)\right)\right).$$ Hence we have an identification of underlying spaces between $\Speci\left(\Ci\left(\cP\right)\right)$ and $$\Speci\left(\Ci\left(\cN\right) \underset{\Ci\left(\cL\right)} \oinfty^{\mathbb{L}} \Ci\left(\M\right)\right).$$ Let $\left(z,x\right) \in \pi_0\left(\cL\right) \times_{\pi_0\left(\cN\right)} \pi_0\left(\M\right)$. Then $x$ can be identified with a point in $\mathfrak{M}.$ Since $p$ is a submersion, there exists an open neighborhood $\mathcal{U}$ of $x,$ and a corresponding open neighborhood $\mathcal{W}$ of $p\left(x\right)$ and diffeomorphisms making the following diagram commute
$$\xymatrix{\mathcal{U} \ar[r]^-{\sim} \ar[d]_-{p} & \R^{n|m} \times \R^{l|k} \ar[d] \\
\mathcal{W} \ar[r]^-{\sim} & \R^{n|m}},$$
where we are identifying the open subsets with their super submanifolds, and we are assuming that $\mathcal{U} \subseteq p^{-1}\left(\cW\right).$ Let $\mathcal{V}:=\left(f^{-1}\left(\cW \right) \times \mathcal{U}\right)\cap \cP.$ Notice that
$$\mathcal{V} \cap \pi_0\left(\cP\right)$$ is an open neighborhood of $\left(z,x\right).$
By Lemma \ref{lem:piok} and Theorem \ref{thm:sch}, it follows that
$$\O_{\Speci\left(\Ci\left(\cP\right)\right)}\left(\mathcal{V}\right) \cap \pi_0\left(\cP\right) \simeq \O_{\cP}\left(\mathcal{V}\right).$$ 
Also, we have that
$$\Speci\left(\Ci\left(\cN\right) \underset{\Ci\left(\cL\right)} \oinfty^{\mathbb{L}} \Ci\left(\M\right)\right)|_{\mathcal{V}} \simeq \Speci\left(\Ci\left(f^{-1}\cW\right)\right) \times_{\Speci\left(\Ci\left(\cW\right)\right)} \Speci\left(\Ci\left(\mathcal{U}\right)\right).$$ So the value of the structure sheaf assigns $\cV$ is the homotopy pushout
$$\Ci\left(\left(f^{-1} \cW\right)\cap \cL\right) \underset{\Ci\left(\cW\cap \cN\right)}\oinfty^{\mathbb{L}} \Ci\left(\mathcal{U}\cap \M\right).$$ But $\Ci\left(\cW \cap \cN\right) \to \Ci\left(\mathcal{U} \cap \M\right)$ is a quasi-free extension since the corresponding map of supermanifolds $\mathcal{U} \to \cW$ is just projecting away the $\R^{l|k}$ factor. Hence the homotopy pushout is the ordinary pushout. Notice that we can write $$\mathcal{U} \cong \cW \times \R^{l|k},$$ and identify  
$$\left(f^{-1}\left(\cW \right) \times \mathcal{U}\right) \cap \mathfrak{P} \cong f^{-1}\left(\cW\right) \times \R^{l|k}.$$ 
Since at the level of algebras a pushout of a free extension is always a free extension, we can identify
$$\Ci\left(\left(f^{-1} \cW\right)\cap \cL\right)  \to \Ci\left(\left(f^{-1} \cW\right)\cap \cL\right) \underset{\Ci\left(\cW\cap \cN\right)}\oinfty \Ci\left(\mathcal{U}\cap \M\right)$$ with $$\Ci\left(f^{-1}\cW \cap \cL\right) \to \Ci\left(\mathcal{V} \cap \cP\right).$$ It follows that $$\Speci\left(\Ci\left(\cP\right)\right) \simeq \Speci \left(\Ci\left(\cN\right) \underset{\Ci\left(\cL\right)} \oinfty^{\mathbb{L}} \Ci\left(\M\right)\right).$$
By Lemma \ref{lem:hfp}, $\Ci\left(\cL\right)$,$\Ci\left(\M\right)$ and $\Ci\left(\cN\right)$ are compact objects in $\dgci$. Since pushouts of compact objects are compact, it follows that $$\Ci\left(\cN\right) \underset{\Ci\left(\cL\right)} \oinfty^{\mathbb{L}} \Ci\left(\M\right)$$ is compact, i.e. homotopically finitely presented. Proposition \ref{prop:comp} then implies that it is equal to global sections of its structure sheaf. Since the same is true for $\Ci\left(\cP\right),$ we are done.
\end{proof}

\begin{theorem}\label{thm:ff}
$$\Ci:\dgMani \to \dgci^{op}$$ is fully faithful.
\end{theorem}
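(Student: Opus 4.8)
The plan is to prove that, for all dg-manifolds $\M$ and $\cN$, the comparison map
$$\Map_{\dgMani}\left(\M,\cN\right) \longrightarrow \Map_{\dgci}\left(\Ci\left(\cN\right),\Ci\left(\M\right)\right)$$
induced by $\Ci$ is an equivalence. First I would reduce the target $\cN$ to an ordinary manifold by induction on its amplitude. By Theorem \ref{thm:decomp}, a dg-manifold $\cN$ of amplitude $m$ is the derived fibered product of a section $\lambda$ and the zero-section of a dg-vector bundle $\mathbb{W}$ over a dg-manifold $\cN'$, where both $\cN'$ and $\mathbb{W}$ have amplitude $m-1$; by Theorem \ref{thm:pbok} this derived fibered product is a genuine pullback in $\dgMani$, so $\cN \simeq \cN' \times_{\mathbb{W}} \cN'$. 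Since $\Map_{\dgMani}\left(\M,-\right)$ preserves pullbacks, while $\Ci$ carries this pullback to the corresponding pushout $\Ci\left(\cN'\right) \underset{\Ci\left(\mathbb{W}\right)} \oinfty^{\mathbb{L}} \Ci\left(\cN'\right)$ of dg-$\Ci$-algebras (Proposition \ref{prop:pbspres}) and $\Map_{\dgci}\left(-,\Ci\left(\M\right)\right)$ carries pushouts to pullbacks of spaces, the comparison map for $\cN$ is the pullback of the comparison maps for the amplitude-$\left(m-1\right)$ objects $\cN'$ and $\mathbb{W}$. Thus the inductive step is formal, and it suffices to treat the base case in which the target is an ordinary manifold $N$ (amplitude $0$).

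For the base case I would identify the algebraic mapping space with a space of maps of derived $\Ci$-ringed spaces. By Lemma \ref{lem:hfp} the algebra $\Ci\left(\M\right)$ is homotopically finitely presented, hence complete (Proposition \ref{prop:comp}); together with the adjunction $\Speci \dashv \Gamma$ this yields $\Map_{\dgci}\left(\Ci\left(N\right),\Ci\left(\M\right)\right) \simeq \Map_{\Loc}\left(\Speci\left(\Ci\left(\M\right)\right),N\right)$, where by Theorem \ref{thm:sch} the source is the derived $\Ci$-scheme $\left(\pi_0\left(\M\right),\O_\M|_{\pi_0\left(\M\right)}\right)$. Covering $N$ by Euclidean charts and using that morphisms of $\Ci$-ringed spaces into a manifold are local on the target reduces the desired identity to the case $N=\R^n$, and the product decomposition $\Ci\left(\R^n\right)\cong \Ci\left(\R\right)^{\oinfty n}$ reduces it further to $N=\R$. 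For $N=\R$, Corollary \ref{cor:dk} computes the right-hand side as the Dold--Kan homotopy type $\left|\Gamma\left(\Ci\left(\M\right)\right)\right|$ of the underlying cochain complex of $\Ci\left(\M\right)$.

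The crux, and the step I expect to be the main obstacle, is the direct computation of the \emph{geometric} mapping space $\Map_{\dgMani}\left(\M,\R\right)$ and its identification with $\left|\Gamma\left(\Ci\left(\M\right)\right)\right|$. This must be carried out intrinsically in the category of fibrant objects $\dgMan$: using the factorization axiom to assemble a homotopy function complex (equivalently, a suitable path object for $\R$ built from the polynomial/de Rham resolutions underlying the structure of \cite{dgder}), one shows that the complex computing $\Map_{\dgMani}\left(\M,\R\right)$ agrees with the model-categorical derived mapping complex of Theorem \ref{thm:dgmodel}, which is $\left|\Gamma\left(\Ci\left(\M\right)\right)\right|$ by Corollary \ref{cor:dk}. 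Concretely this amounts to checking that under the (strictly) fully faithful functor $\Ci\colon \dgMan \hookrightarrow \left(\underline{\mathbf{dg}\Ci\Alg}_{\le 0}\right)^{op}$ the fibrations and trivial fibrations of the category of fibrant objects correspond to cofibrations and trivial cofibrations of dg-$\Ci$-algebras, so that the resolutions supplied by the factorization axiom genuinely compute the derived mapping space. The localization-of-presheaves comparison of \cite[Propositions 3.6 and 3.23]{Denis} already invoked in the proof of Theorem \ref{thm:pbok} would then provide naturality of the resulting equivalence in both variables, and combined with the amplitude induction above this closes the argument.
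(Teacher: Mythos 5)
Your first reduction (inducting on the amplitude of the target via Theorem \ref{thm:decomp}, Theorem \ref{thm:pbok} and Proposition \ref{prop:pbspres}) is exactly the paper's first step, and your reduction from a general manifold target to $\R$ is close in spirit to the paper's (which uses that every manifold is a retract of a transverse pullback of Cartesian spaces---a decomposition both sides visibly respect---rather than a chart-locality argument, which on the geometric side $\Map_{\dgMani}\left(\M,N\right)$ you have not justified). The genuine gap is in what you yourself call the crux: the identification $\Map_{\dgMani}\left(\M,\R\right)\simeq \left|\Gamma\left(\Ci\left(\M\right)\right)\right|.$ You assert that it suffices to check that $\Ci$ matches fibrations and trivial fibrations of $\dgMan$ with cofibrations and trivial cofibrations of dg-$\Ci$-algebras, ``so that the resolutions supplied by the factorization axiom genuinely compute the derived mapping space.'' This does not work as stated. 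First, the matching itself is doubtful: fibrations of dg-manifolds are submersions, and for a submersion $\M \to \cN$ the map $\Ci\left(\cN\right)\to\Ci\left(\M\right)$ is only locally a quasi-free extension; it is not clear that it is a cofibration (a retract of a quasi-free extension) globally. Second, and more fundamentally, even a perfect matching of the distinguished classes would not give the conclusion: mapping spaces in $\dgMani$ are built from cocycles (spans) whose middle terms are dg-manifolds, whereas the derived mapping space in $\dgci$ is computed by resolutions ranging over \emph{all} dg-$\Ci$-algebras. Showing that no new components or homotopies appear when one passes to this much larger category is precisely the content of fully faithfulness, so your sketch is circular at the decisive point.

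The paper closes this gap by an explicit computation of homotopy groups, which is the idea missing from your proposal. It uses that $\R\left[-k\right]\simeq \Omega^k \R$ is an abelian group object in $\dgMani$, so both mapping spaces are group-like and it suffices to compare $\pi_0\Map_{\dgMani}\left(\M,\R\left[-k\right]\right)$ with $\pi_0\Map_{\dgci}\left(\Ci\left(\R\left[-k\right]\right),\Ci\left(\M\right)\right)\cong H^{-k}\left(\Ci\left(\M\right)\right)$ for all $k.$ Surjectivity of the comparison map holds because every class in $H^{-k}\left(\Ci\left(\M\right)\right)$ is represented by a strict closed function, i.e.\ a strict morphism $\M \to \R\left[-k\right]$ in $\dgMan.$ Injectivity uses the cocycle description of \cite[Proposition 3.23]{Denis}: if the class of $f:\M' \to \R\left[-k\right]$ vanishes, then $f=Dg$ and $f$ factors through the dg-manifold with algebra $\left(\Ci\{x,\epsilon\},x\frac{\partial}{\partial\epsilon}\right),$ which is weakly contractible, so $\left[f\right]=0.$ It is this explicit geometric realization of algebraic cocycles and nullhomotopies---not a comparison of factorization systems---that makes the argument go through; without an argument of this kind your proposal does not constitute a proof.
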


\begin{proof}
By Theorem \ref{thm:decomp}, every dg-manifold can be obtained through a finite iteration of pullbacks in $\dgMani$ starting with manifolds. This reduces the task to showing that for any dg-manifold $\M$
$$\Map_{\dgMani}\left(\M,N\right) \simeq \Map_{\dgci}\left(\Ci\left(N\right),\Ci\left(\M\right)\right)$$ for any manifold $N.$ In the same vein, every manifold is a retract of a transverse pullback of Cartesian manifolds, i.e. ones of the form $\R^n,$ and every $\R^n$ is of course a finite product of $\R.$ So this further reduces to showing that 
$$\Map_{\dgMani}\left(\M,\R\right) \simeq \Map_{\dgci}\left(\Ci\left(\R\right),\Ci\left(\M\right)\right).$$
Notice that for all $k,$ $\left(\R\left[-k\right],+\right)$ is an abelian group object in both $\dgMan$ and $\dgMani,$ since products are homotopy fibered products. Moreover, 
$\Ci$ is a functor which preserves finite products, hence the canonical map 
$$\Map_{\dgMani}\left(\M,\R\left[-k\right]\right) \to \Map_{\dgci}\left(\Ci\left(\R\left[-k\right]\right),\Ci\left(\M\right)\right)$$ is a homomorphism of abelian group objects in $\Spc.$ Applying $\pi_0,$ we have simply a homomorphism of ordinary abelian groups
$$F:\pi_0\Map_{\dgMani}\left(\M,\R\left[-k\right]\right) \to \pi_0\Map_{\dgci}\left(\Ci\left(\R\left[-k\right]\right),\Ci\left(\M\right)\right).$$ Notice that in the $1$-category case, we have
$\Hom_{\dgMan}\left(\M,\R\left[-k\right]\right)\cong Z^{-k} \Ci\left(\M\right),$ the abelian group of elements $f$ of degree $-k$ in $\Ci\left(M\right)$ such that $Df=0.$ The functor
$$h_{\cW}:\dgMan \to \dgMani$$ hence induces a homomorphism
$$Z^{-k}\left(\Ci\left(\M\right)\right) \to \pi_0\Map_{\dgMani}\left(\M,\R\left[-k\right]\right).$$ Moreover, as a pointed dg-manifold, $\R\left[-k\right]\cong \Omega^k \R.$ By this we mean that $\R\left[-1\right]$ is the derived intersection of the origin with itself in $\R,$ $\R\left[-2\right]$ is the derived intersection of the origin with itself in $\R\left[-1\right],$ and so on. From this it follows that $$\Map_{\dgMani}\left(\M,\R\left[-k\right]\right)\simeq \Omega^k \Map_{\dgMani}\left(\M,\R\right).$$ Applying $\Ci,$ we also have that $\Ci\left(\R\left[-k\right]\right)$ is the $k$-fold looping of $\Ci\left(\R\right)$ in $\dgci^{op}.$ Hence
$$\Map_{\dgci}\left(\Ci\left(\R\left[-k\right]\right),\Ci\left(\M\right)\right) \simeq \Omega^k  \Map_{\dgci}\left(\Ci\left(\R\right),\Ci\left(\M\right)\right).$$
So, by Corollary \ref{cor:dk}, we have
\begin{eqnarray*}
\pi_0 \Map_{\dgci}\left(\Ci\left(\R\left[-k\right]\right),\Ci\left(\M\right)\right) &\cong & \pi_0 \Omega^k  \Map_{\dgci}\left(\Ci\left(\R\right),\Ci\left(\M\right)\right)\\
&\cong& \pi_0 \Omega^k \Ci\left(\M\right)\\
&\cong& \pi_k \Gamma\left(\Ci\left(\M\right)\right)\\
&\cong& H^{-k}\left(\Ci\left(\M\right)\right),
\end{eqnarray*}
where $\Gamma\left(\Ci\left(\M\right)\right)$ is the corresponding simplicial abelian group. Since the composite homomorphism
$$\xymatrix{Z^{-k}\left(\Ci\left(\M\right)\right) \cong  \Hom_{\dgMan}\left(\M,\R\left[-k\right]\right)\ar[d]\\
\pi_0\Map_{\dgMani}\left(\M,\R\left[-k\right]\right) \ar[d]_-{F}\\
\pi_0\Map_{\dgci}\left(\Ci\left(\R\left[-k\right]\right),\Ci\left(\M\right)\right) \cong H^{-k}\left(\Ci\left(\M\right)\right)}$$
is surjective, so is $F.$ Moreover, by \cite[Proposition 3.23]{Denis}, the mapping space $$\Map_{\dgMani}\left(\M,\R\left[-k\right]\right)$$ is the geometric realization of a cocycle category, i.e., a category of spans $\M \stackrel{\sim}{\leftarrow} \M' \to \R\left[-k\right],$ so every element $\pi_0\Map_{\dgMani}\left(\M,\R\left[-k\right]\right)$ is of the form $\left[f\right],$ for $f:\M' \to \R\left[-k\right],$ with $\M'$ quasi-isomorphic to $\M.$ If $F\left(\left[f\right]\right)=0,$ by repeating the same argument with $\M',$ this means that the $-k^{th}$-cohomology class of $f$ in $\Ci\left(\M'\right)$ is zero. This means that there exists $g \in \Ci\left(\M\right)$ of degree $-k-1$ such that $Dg=f.$ So, there exists a dotted homomorphism $\varphi$ as in
$$\xymatrix{\left(\Ci\{x,\epsilon\},x \frac{\partial}{\partial \epsilon}\right) \ar@{-->}[r]^-{\varphi} & \Ci\left(\M'\right)\\
\Ci\{x\} \ar[u] \ar[ru]_-{f} & }$$
where $|x|=-k$ and $|\epsilon|=-k-1.$ On one hand, the algebra $\left(\Ci\{x,\epsilon\},x \frac{\partial}{\partial \epsilon}\right)$ is the Koszul algebra $K\left(\Ci\{x\},x\right),$ and $x$ is not a zero divisor, so 
$\left(\Ci\{x,\epsilon\},x \frac{\partial}{\partial \epsilon}\right)$ is quasi-isomorphic to $\Ci\{x\}/\left(x\right)=\R.$
But $\left(\Ci\{x,\epsilon\},x \frac{\partial}{\partial \epsilon}\right)$ is also $\Ci\left(D\right),$ where $D$ is the dg-manifold with underlying graded manifold $\R\left[-k\right]\times\R\left[-k-1\right],$ with cohomological vector field $x \frac{\partial}{\partial \epsilon}.$ So we have a factorization of $f:\M' \to \R\left[-k\right]$ as
$$\M' \stackrel{\tilde g}{\longrightarrow} D \stackrel{f'}{\longrightarrow} \R\left[-k\right],$$ so $f$ is in the essential image of
$$\Map_{\dgMani}\left(\M,D\right) \to \Map_{\dgMani}\left(\M,\R\left[-k\right]\right).$$
But $D$ is weakly equivalent to the point, so $\Map_{\dgMani}\left(\M,D\right)$ is contractible, so this means that $f$ is in the same connected component as the constant map $\M \to \R\left[-k\right]$ at $0,$ and hence $\left[f\right]=0$ in the abelian group 
$$\pi_0\Map_{\dgMani}\left(\M,\R\left[-k\right]\right).$$ So $F$ is also injective, and hence an isomorphism. But since
\begin{eqnarray*}
\pi_0\Map_{\dgMani}\left(\M,\R\left[-k\right]\right)&\cong& \pi_0 \Omega^k\Map_{\dgMani}\left(\M,\R\right)\\
&\cong& \pi_k \Map_{\dgMani}\left(\M,\R\right)
\end{eqnarray*}
and similarly
$$\pi_0\Map_{\dgci}\left(\Ci\left(\R\left[-k\right]\right),\M\right)\cong \pi_k \Map_{\dgci}\left(\Ci\left(\R\right),\Ci\left(\M\right)\right),$$
it follows that the induced map of Kan complexes
$$\Map_{\dgMani}\left(\M,\R\right) \simeq \Map_{\dgci}\left(\Ci\left(\R\right),\Ci\left(\M\right)\right)$$
is a bijection on $\pi_0$ and induces an isomorphism on all homotopy groups with basepoint at the constant map with value $0,$ but since the spaces are abelian group objects in $\Spc,$ they are group-like $E_\i$-spaces, so this suffices to prove the map is a weak equivalence.
\end{proof}

\begin{corollary}
$$\Psi:\dgMani \to \DMfd$$ is fully faithful.
\end{corollary}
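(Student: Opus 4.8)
The plan is to exhibit $\Psi$ as the corestriction of the already-established fully faithful functor $\Ci$, so that fully faithfulness is inherited for purely formal reasons. First I would recall that $\Psi$ is defined precisely as the factorization of $\Ci:\dgMani \to \dgci^{op}$ through the full subcategory $\left(\dgci^{\mathbf{fp}}\right)^{op}$ of homotopically finitely presented dg-$\Ci$-algebras, followed by the equivalence $\left(\dgci^{\mathbf{fp}}\right)^{op} \simeq \DMfd$ of Corollary \ref{cor:hfpdman}. That this factorization exists at all is exactly the content of Lemma \ref{lem:hfp}: for every dg-manifold $\M$, the dg-$\Ci$-algebra $\Ci\left(\M\right)$ is homotopically finitely presented, so $\Ci$ indeed lands in $\left(\dgci^{\mathbf{fp}}\right)^{op}$.

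The key observation is then that the inclusion $\left(\dgci^{\mathbf{fp}}\right)^{op} \hookrightarrow \dgci^{op}$ is fully faithful, being the inclusion of a full subcategory. Consequently, for any pair of dg-manifolds $\M$ and $\cN$, the mapping space computed in $\left(\dgci^{\mathbf{fp}}\right)^{op}$ coincides with the one computed in $\dgci^{op}$. By Theorem \ref{thm:ff}, the latter is equivalent, via $\Ci$, to $\Map_{\dgMani}\left(\M,\cN\right)$. Transporting along the equivalence of Corollary \ref{cor:hfpdman}, which preserves mapping spaces, yields the desired equivalence $\Map_{\dgMani}\left(\M,\cN\right) \simeq \Map_{\DMfd}\left(\Psi\left(\M\right),\Psi\left(\cN\right)\right)$, which is precisely fully faithfulness of $\Psi$.

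Since every step is formal once Lemma \ref{lem:hfp} and Theorem \ref{thm:ff} are in hand, there is no genuine obstacle remaining here: the entire weight of the argument has already been discharged in the proof of Theorem \ref{thm:ff}. That fully faithfulness statement, in turn, rests on the decomposition of every dg-manifold into a finite iteration of derived fibered products (Theorem \ref{thm:decomp}), reducing the claim to mapping spaces into $\R$, together with the $\Ci$ Dold--Kan computation of $\pi_k\Map_{\dgci}\left(\Ci\left(\R\right),\Ci\left(\M\right)\right)\cong H^{-k}\left(\Ci\left(\M\right)\right)$ of Corollary \ref{cor:dk}. Thus the corollary is a direct consequence of the two preceding results, and I would present it as such.
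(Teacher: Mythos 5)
Your proposal is correct and matches the paper's (implicit) argument exactly: the paper treats this corollary as immediate from Theorem \ref{thm:ff}, since $\Psi$ is by construction the corestriction of $\Ci$ to the full subcategory $\left(\dgci^{\mathbf{fp}}\right)^{op}$ (via Lemma \ref{lem:hfp}) composed with the equivalence of Corollary \ref{cor:hfpdman}, so fully faithfulness transfers formally. Nothing is missing; your write-up simply makes explicit what the paper leaves unsaid.
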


\subsection{Essential surjectivity}

If $\sC$ is any category of fibrant objects, since it has finite homotopy pullbacks and a terminal object and every object is fibrant, it follows that $\sC\left[\cW^{-1}\right]_\i$ has finite limits. Since $\Psi$ is fully faithful, and by Proposition \ref{prop:pbspres} it preserves finite limits (as it clearly preserves the terminal object) it follows that the essential image of $\Psi$ is closed under finite limits in $\DMfd.$ By Corollary \ref{cor:retdmfd}, this implies that every derived manifold is a retract of a derived manifold in the essential image of $\Psi.$ We wish to show that $\Psi$ is in fact essentially surjective, i.e. closed under these retracts. We will need a series of preliminary results.

Denote by $\Idem$ the free $1$-category on one idempotent $e:C \to C.$ Unwinding the definitions, one can see that for an $\i$-category $\cC,$ a functor $E:\Idem \to \cC$ is the same thing as as an idempotent in the sense of Definition \cite[Definition 4.4.5.4]{htt}--- this is just a homotopy-coherent idempotent.

\begin{lemma}\label{lem:cofproj}
Let $\sC$ be a combinatorial model category. Let an idempotent $e:C \to C$ in $\sC$ be regarded as a diagram $E:\Idem \to \sC.$ If $C$ is a cofibrant, then $e$ is projectively cofibrant.
\end{lemma}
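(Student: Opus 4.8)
The plan is to verify projective cofibrancy directly through the lifting criterion, exploiting that $\Idem$ has a single object, so that on $\Fun\left(\Idem,\sC\right)$ the projective (trivial) fibrations and weak equivalences are all created by the single evaluation functor $\operatorname{ev}\colon\Fun\left(\Idem,\sC\right)\to\sC$. Concretely, a trivial projective fibration is a morphism $p\colon\left(X,\xi\right)\to\left(Y,\eta\right)$ of objects-equipped-with-an-idempotent whose underlying map $p\colon X\to Y$ is a trivial fibration in $\sC$ and which satisfies $p\xi=\eta p$. Since the initial object maps into $E=\left(C,e\right)$, cofibrancy of $E$ amounts to solving, for every such $p$ and every equivariant $g\colon C\to Y$ (meaning $ge=\eta g$), an \emph{equivariant} lift $\tilde g\colon C\to X$ with $p\tilde g=g$ and $\xi\tilde g=\tilde g\,e$. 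First I would record that $\operatorname{ev}$ is right Quillen, so its left adjoint $F\colon\sC\to\Fun\left(\Idem,\sC\right)$, the free-diagram functor, is left Quillen; hence $F\left(A\right)=\left(A\sqcup A,\sigma_A\right)$, with $\sigma_A=\iota_e\nabla$, is projectively cofibrant whenever $A$ is cofibrant in $\sC$.

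Next I would use that a combinatorial model category is locally presentable, hence cocomplete, hence idempotent complete as a $1$-category, so the idempotent $e$ splits as $e=\iota\rho$ with $\rho\colon C\to D$, $\iota\colon D\to C$, and $\rho\iota=\mathrm{id}_D$; being a retract of the cofibrant object $C$, the object $D$ is cofibrant. A short computation shows the constant diagram $\left(D,\mathrm{id}_D\right)$ is a retract of $F\left(D\right)$ via the equivariant maps $\iota_e\colon\left(D,\mathrm{id}_D\right)\to F\left(D\right)$ and $\nabla\colon F\left(D\right)\to\left(D,\mathrm{id}_D\right)$, so $\left(D,\mathrm{id}_D\right)$ is cofibrant. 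Now $g\iota\colon\left(D,\mathrm{id}_D\right)\to\left(Y,\eta\right)$ is equivariant, since $\eta\,g\iota=ge\iota=g\iota\rho\iota=g\iota$, so cofibrancy of $\left(D,\mathrm{id}_D\right)$ together with the trivial fibration $p$ yields an equivariant lift $n\colon D\to X$ with $pn=g\iota$ and $\xi n=n$. Setting $k:=n\rho\colon C\to X$ produces a map with $pk=ge$ and $\xi k=k=ke$; equivalently, applying the idempotent operator $h\mapsto\xi h\,e$ (which squares to itself on the set of lifts of $g$) to any naive lift $h$ lands, after a single application, on such a bi-invariant partial lift. This pins down the behaviour of the desired lift on the image of $e$.

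The main obstacle is to upgrade this to a full equivariant lift of $g$, extending $k$ across $C$ while preserving $\xi\tilde g=\tilde g\,e$; the difficulty is genuinely \emph{non-additive}, because the image-inclusion $\iota$ need not be a cofibration and the ``complement'' of $\mathrm{im}\left(e\right)$ is not a subobject, so the additive correction $\tilde g=h-he-\xi h+2\xi he$ available in the stable case has no direct analogue. To handle this I would first try to arrange the favorable case by factoring $\iota$ as $D\xrightarrow{\,j\,}\hat C\xrightarrow{\,q\,}C$ with $j$ a cofibration and $q$ a trivial fibration, noting that $\hat e:=j\rho q$ is then an idempotent on $\hat C$ with $q\hat e=eq$, so that $q\colon\left(\hat C,\hat e\right)\to\left(C,e\right)$ is a trivial projective fibration whose source has a cofibrant image-inclusion. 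The substantive step is then a homotopy-extension argument: the two lifts $\xi h$ and $he$ of $ge$ are fibrewise homotopic, and one transports a chosen homotopy along a good cylinder $C\otimes I$ — extending it off the image by precomposition with the retraction $\rho$ — to correct $h$ into an equivariant lift, the idempotency of $h\mapsto\xi h\,e$ ensuring the correction stabilises. I expect this final step, namely strictifying the coherence between $\xi$ and $e$ without additivity, to be the technical heart of the argument; it is precisely where the contractibility of the space of lifts of a trivial fibration out of a cofibrant object — equivalently, the rigidity of the homotopy-coherent idempotent acting on that space in the sense of \cite[Definition 4.4.5.4]{htt} — must be invoked.
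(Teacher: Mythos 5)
Your first two paragraphs are correct, and they reproduce, in cleaner form, exactly what the paper's own proof does: the paper likewise reduces to producing an equivariant section of a projective trivial fibration $\pi\colon\left(A,a\right)\to\left(B,b\right)$ with $B$ cofibrant, splits both idempotents ($b=ir$ with $ri=\mathrm{id}$, $a=jp$ with $pj=\mathrm{id}$), uses cofibrancy of the splitting of $b$, and forms $s=j\lambda r$. That $s$ is precisely your bi-invariant partial lift $k=n\rho$: it satisfies $as=sb=s$, but $\pi s=ir=b$, \emph{not} $\mathrm{id}_B$ (no map factoring through the retraction $r$ can satisfy $\pi s=\mathrm{id}_B$ unless $r$ is invertible, i.e.\ unless $b=\mathrm{id}_B$). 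The paper nevertheless asserts at this point that $s$ ``is a section of $\pi$'' and stops. So the step you honestly leave open in your third paragraph --- promoting a bi-invariant lift of $ge$ to a genuine equivariant lift of $g$ --- is exactly the step that is missing from the paper as well: your proposal has a gap precisely where the paper's proof has an unjustified (indeed false) assertion.

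Moreover, no homotopy-extension or factorization argument can close this gap, because the lemma is false at the stated level of generality. Take $\sC$ to be simplicial sets, $C=\Delta^1$ (cofibrant), and $e$ the constant map at the vertex $0$. Define $\beta\colon\left[0,1\right]\times\R\to\R$ by
$$\beta\left(u,x\right)=\left(1-2u\right)\cdot\min\left(\max\left(x,0\right),1\right)\ \mbox{ for }u\le\tfrac{1}{2},\qquad \beta\left(u,x\right)=u-\tfrac{1}{2}\ \mbox{ for }u\ge\tfrac{1}{2}.$$
Then $\beta$ is continuous and takes values in $\left[0,1\right]=\mathrm{Fix}\left(\beta_0\right)$, so $a\left(u,x\right):=\left(0,\beta\left(u,x\right)\right)$ is a continuous idempotent on $E:=\left[0,1\right]\times\R$ covering the constant self-map of $\left[0,1\right]$ at $0$. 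Setting $X:=\Delta^1\times_{\operatorname{Sing}\left(\left[0,1\right]\right)}\operatorname{Sing}\left(E\right)$ with the induced idempotent $\xi$, the projection $\left(X,\xi\right)\to\left(\Delta^1,e\right)$ is a projective trivial fibration (Sing of a trivial Serre fibration, pulled back), and its equivariant sections correspond exactly to continuous paths $\tau$ in $\R$ with $\beta\left(u,\tau\left(u\right)\right)=\tau\left(0\right)$ for all $u$. No such path exists: $u=\tfrac{1}{2}$ forces $\tau\left(0\right)=0$ while $u=1$ forces $\tau\left(0\right)=\tfrac{1}{2}$. Hence $\left(\Delta^1,e\right)$ is not projectively cofibrant although $\Delta^1$ is cofibrant. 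This is why your final step resists proof: contractibility of the space of lifts rectifies the idempotent only up to coherent homotopy, and passing from there to a \emph{strict} equivariant lift is genuinely obstructed; your additive correction works exactly because additivity is the kind of extra hypothesis under which the statement is true. As stated, the lemma --- and hence the retract step in the proof of Theorem \ref{thm:main} that invokes it --- needs to be repaired by a different route (for instance, identifying the homotopy splitting of a strict idempotent with the strict splitting via sequential homotopy colimits and an interleaving argument) rather than by any completion of this lifting argument.
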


\begin{proof}
Let $b:B \to B$ and $a:A \to A$ be idempotents. By definition of the projective model structure, a map $\pi:A \to B$ intertwining the idempotents is a trivial fibration if and only if $\pi$ is in $\sC.$ It suffices to show that given such a $\pi,$ if $B$ is cofibrant, there is a section $s$ of $\pi$ such that $sb=as.$ Since $\sC$ is idempotent complete, we can factor $b$ as
$$B \stackrel{r}{\longrightarrow} C \stackrel{i}{\longhookrightarrow} B,$$ with $r\circ i=id_{C},$ and factor $a$ as
$$A \stackrel{p}{\longrightarrow} K \stackrel{j}{\longhookrightarrow} A,$$ with $jp=a.$
 Since $C$ is a retract of $B,$ $B$ is also cofibrant. Therefore, the pullback trivial fibration $$C \times_{B} A \to C$$ has a section $\lambda.$ Let $s=j\lambda r.$ Then, since $b=ir,$ we have that $sb=s.$ But also, since $a=jp,$ $as=s.$ This implies that $\lambda$ is a morphism $\left(B,b\right) \to \left(A,a\right),$ which is a section of $\pi.$ It follows that $\left(B,b\right)$ is projectively cofibrant.  
\end{proof}

\begin{definition}
Let $\Qf$ denote the subcategory of $\dgcni$ on finitely generated quasi-free dg-$\Ci$-algebras, and let $\overset{\circ} \Qf$ denote its associated subcategory of the $\i$-category $\dgci.$
\end{definition}

\begin{remark}\label{rmk:qf}
The functor $\Ci:\dgMan \to \dgcni$ restricts to an equivalence between the full subcategory of $\dgMan$ on those dg-manifolds with Euclidean base, and $\Qf^{op}.$
\end{remark}

\begin{lemma}\label{lem:factor}
Let $p:\A \to \B$ be a map in $\Qf.$ Then there exists a canonical factorization
$\A \to \overline{B} \to B,$ with $\overline{B} \in \Qf,$ such that $A \to \overline{B}$ is a cofibration and $\overline{B} \to B$ is a trivial fibration.
\end{lemma}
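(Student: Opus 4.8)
The plan is to translate the statement into dg-manifolds and realize the factorization as the one provided by the category of fibrant objects structure on $\dgMan$. By Remark \ref{rmk:qf}, $\Qf^{op}$ is equivalent to the full subcategory of $\dgMan$ on dg-manifolds with Euclidean base, so $p:\A\to\B$ corresponds to a morphism $\varphi:\cN\to\M$ of such dg-manifolds, where $\A=\Ci(\M)$ and $\B=\Ci(\cN)$. Under this (contravariant) dictionary, quasi-free extensions---which are exactly the cofibrations, by the cofibration characterization from \cite{dg2}---correspond to submersions of graded manifolds, i.e. to fibrations in the CFO structure, and by Theorem \ref{thm:weqi} weak equivalences correspond to quasi-isomorphisms. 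Thus producing the desired factorization $\A\xrightarrow{i}\overline{B}\xrightarrow{q}\B$ amounts to producing a factorization $\cN\xrightarrow{\lambda}\cN'\xrightarrow{\varphi'}\M$ in which $\varphi'$ is a submersion and $\lambda$ is a section of a trivial fibration $\pi:\cN'\to\cN$; this is precisely the output of the factorization axiom of the category of fibrant objects on $\dgMan$. Setting $i:=\Ci(\varphi')$ and $q:=\Ci(\lambda)$, the identity $q\circ\Ci(\pi)=\Ci(\pi\circ\lambda)=\mathrm{id}_\B$ exhibits $q$ as a retraction of the trivial cofibration $\Ci(\pi)$, so $q$ is a surjective quasi-isomorphism (a trivial fibration) by two-out-of-three, while $i$ is a cofibration and $q\circ i=\Ci(\varphi'\circ\lambda)=p$.

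To keep everything inside $\Qf$ and to make the factorization canonical, I would carry out the factorization concretely, following the tubular-neighbourhood/derived-zero-locus recipe sketched in the introduction and used in Theorem \ref{thm:decomp}. Form the product $\cN\times\M$ (base $\R^m\times\R^n$, still Euclidean) and the graph $\lambda_0=(\mathrm{id},\varphi):\cN\to\cN\times\M$, a closed dg-embedding whose image is cut out, in the Euclidean trivialization $\M\cong\R^n\times\mathbb{V}$, by the components of the ``difference'' section $\sigma$ of the trivial graded vector bundle $(\cN\times\M)\times(\R^n\oplus\mathbb{V})$. The dg-manifold $\cN'$ is then the derived zero locus of $\sigma$: the shifted bundle equipped with the cohomological vector field $\iota_\sigma+\tilde Q$ (where $\tilde Q$ is the tangent lift of the product differential), together with its two projections, the bundle projection followed by $pr_\cN$ giving $\pi:\cN'\to\cN$ and the one followed by $pr_\M$ giving $\varphi':\cN'\to\M$, both submersions, with $\lambda$ the zero-section over the graph. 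Because the cutting functions are a regular sequence, Proposition \ref{prop:zpb} (equivalently Proposition \ref{prop:kosz}) identifies $\Ci(\cN')$ with a Koszul resolution of the functions on the graph, which are $\cong\Ci(\cN)$, so $\pi$ is a quasi-isomorphism. Since the shifted fibre generators all lie in nonzero degree, the core of $\cN'$ is the core of $\cN\times\M$, namely $\R^{m+n}$; thus $\cN'$ has Euclidean base and finitely many quasi-free generators, giving $\overline{B}=\Ci(\cN')\in\Qf$.

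The hard part will be the differential. Unlike in the purely manifold case (where $Q=0$ and $\sigma$ is automatically closed), the difference section $\sigma$ is not $\tilde Q$-closed: one computes $\tilde Q\sigma\neq0$ because $Q_\M$ acts nontrivially on the coordinates of $\M$, so that $[\tilde Q,\iota_\sigma]=\iota_{\tilde Q\sigma}$ does not vanish and $\iota_\sigma+\tilde Q$ fails to square to zero. The remedy, which I would spell out, is homological perturbation: since the graph is a dg-submanifold ($\lambda_0$ is a dg-map), the function $\tilde Q\sigma$ vanishes on the regular zero locus of $\sigma$ and hence lies in the ideal it generates, $\tilde Q\sigma=A\cdot\sigma$ for a matrix $A$ of functions; adding to $\iota_\sigma+\tilde Q$ a correction built from $A$ acting on the shifted generators produces a genuine cohomological vector field $D_{\cN'}$, the correction terminating after finitely many steps because it strictly raises the bounded filtration by the finitely many new generators. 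This is exactly the mechanism underlying the category of fibrant objects structure of \cite{dgder}, and the perturbed $\cN'$ still resolves the graph, so the quasi-isomorphism claims above persist. The construction is manifestly functorial in the data of $\varphi$ together with the chosen trivialization, which is the sense in which the factorization is canonical; combined with the full faithfulness of $\Ci$ this completes the proof.
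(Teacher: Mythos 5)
Your second and third paragraphs are, after applying $\Ci$, the same construction as the paper's proof: the paper takes $\overline{B}$ to be the quasi-free extension of $\Ci\{x_1,\ldots,x_n,y_1,\ldots,y_m\}=\Ci\left(\cN\times\M\right)$ by shifted generators $\overline{x_i}$ of degree $|x_i|-1$ (i.e.\ functions on exactly your shifted bundle $\cN'$ over $\cN\times\M$), with $q\left(x_i\right)=x_i$ the quasi-free inclusion of $\A$ (hence a cofibration) and $r$ given by $x_i\mapsto p_i,$ $y_j\mapsto y_j,$ $\overline{x_i}\mapsto 0$ the surjection onto $\B$; the differential is the product differential plus a Koszul-type term killing the differences $p_i-x_i$, plus a correction. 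So the core is identical; what differs is execution: the paper writes the correction and a contracting homotopy $\Gamma=\overline{x_i}\,\partial/\partial x_i$ explicitly, while you derive them from the derived-graph picture and homological perturbation. One caveat: your opening reduction to the category-of-fibrant-objects factorization does not work as stated and is not needed. Cofibrations in $\dgcni$ are \emph{retracts} of quasi-free extensions, an arbitrary submersion need not induce a cofibration on functions (nor conversely), and the factorization axiom of \cite{dgder} gives no control ensuring the intermediate object has Euclidean base or finitely generated quasi-free functions, i.e.\ that it stays in $\Qf$. Your concrete construction is what actually proves the lemma; the first paragraph should be demoted to motivation.

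The instructive point of comparison is the correction term, where your caution is warranted and in fact sharper than the paper's treatment. The one-step correction $-\overline{x_k}\,\partial f^i/\partial x_k$ appearing in the paper's formula squares to zero only when the $f^i$ are linear in the generators: for $\A=\left(\Ci\{x,\xi\},\, d\xi=x^2\right)$, $\B=\Ci\{y\}$, $p=0$, it gives $D\overline{\xi}=-\xi-2x\overline{x}$ and hence $D^2\overline{\xi}=x^2\ne 0$, whereas the Hadamard-remainder correction you describe ($f^i(x)-f^i(p)=(x_k-p_k)H^i_k$, here $D\overline{\xi}=-\xi-x\overline{x}$) does work; likewise the claimed identity $\left[D,\Gamma\right]=jr-\mathrm{id}$ can hold only on generators, since the left side is a derivation and the right side is not. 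Consequently both your sketch and the paper's argument rest on the same two verifications, which you must actually carry out for a complete proof: (i) the perturbation closes up, i.e.\ the order-by-order obstructions (which are annihilated by multiplication against $\sigma$) are $\iota_\sigma$-exact --- this uses that $\left(p_i-x_i\right)$ is a regular sequence, so the relevant Koszul cohomology vanishes, and termination follows from the degree bound you give; and (ii) the perturbed $r$ is still a quasi-isomorphism, e.g.\ by filtering $\overline{B}$ by powers of the ideal $\left(\overline{x_i}\right)$ so that the associated graded is the unperturbed Koszul complex of a regular sequence, to which Proposition \ref{prop:kosz} applies. With (i) and (ii) supplied, your proof is correct and is precisely the geometric form of the paper's argument.
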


\begin{proof}
Let $\A=\left(\Ci\{x_1,\ldots,x_n\},d=f^i\frac{\partial}{\partial x_i}\right),$ with $|x_i|=n_i,$ and let $\B=\left(\Ci\{y_1,\ldots,y_m\},d=g^j\frac{\partial}{\partial y_j}\right),$ with $|y_i|=m_i.$ For all $i,$ let $p_i:=p\left(x_i\right) \in \cB.$ Let the underlying graded algebra of $$\overline{B}=\Ci\{x_1,\ldots,x_n,\overline{x_1},\ldots,\overline{x_n},y_1,\ldots,y_m\},$$ with $|\overline{x_i}|=n_i-1,$ and it equip it with the differential
$$D=f^i\frac{\partial}{\partial x_i} + g^j\frac{\partial}{\partial y_j} + \left(p_i-x_i -\overline{x_k}\frac{\partial f^i}{\partial x_k}\right)\frac{\partial}{\partial \overline{x_i}}.$$
It is routine to check that this squares to zero.
Let $q:\A \to \overline{B}$ be given y $q\left(x_i\right)=x_i,$ and $r:\overline{B} \to  \B$ be given by $r\left(x_i\right)=p_i,$ $r\left(\overline{x_i}\right)=0,$ and $r\left(y_j\right)=y_j.$ It is easy to verify that both of these are homomorphisms of dg-algebras. Note that there is also a homomorphism $\varphi:\cB \to \overline{\cB}$ defined by $\varphi\left(y_j\right)=y_j,$ which is a section of $r.$
We have a factorization $p=r \circ q,$ and $q$ is a quasi-free extension, hence a cofibration. Moreover, $\overline{B}$ is in $\Qf.$ The morphism $r$ is surjective, hence a fibration. It suffices to prove it is a quasi-isomorphism. Let $\Gamma:=\overline{x_i} \frac{\partial}{\partial x_i}.$ Then $\Gamma$ is a degree $-1$ derivation of $\overline{\cB}.$ One can check that
$$\left[D,\Gamma\right]=\left(D \circ \Gamma + \Gamma \circ D\right)=jr-id.$$ It follows that $jr$ is chain homotopic to the identify. Since $rj=id,$ this implies in particular that $r$ is a quasi-isomorphism, hence a trivial fibration.
\end{proof}

\begin{proposition}\label{prop:fincol}
$\overset{\circ} \Qf$ is closed under finite colimits in $\dgci.$
\end{proposition}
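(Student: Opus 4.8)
The plan is to invoke the standard fact that an initial object together with pushouts generate all finite colimits \cite[Corollary 4.4.2.4]{htt}, so it suffices to check two things: that $\overset{\circ}\Qf$ contains the initial object of $\dgci$, and that it is closed under pushouts. The initial object of $\dgci$ is the free dg-$\Ci$-algebra on the zero vector space, namely $\Ci\left(\R^0\right)=\R$, which is finitely generated and (trivially) quasi-free, and hence lies in $\overset{\circ}\Qf$.

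For pushouts, I would start with a span $\B \leftarrow \A \rightarrow \cC$ whose three vertices lie in $\overset{\circ}\Qf$. Since finitely generated quasi-free dg-$\Ci$-algebras are cofibrant (being quasi-free extensions of $\R$) and every object of $\dgcni$ is fibrant in the model structure of Theorem \ref{thm:dgmodel}, any morphism of $\dgci$ between two such algebras is represented, up to homotopy, by a strict morphism in $\Qf$. I may therefore assume the span is given by honest maps $\B \stackrel{u}{\longleftarrow} \A \stackrel{v}{\longrightarrow} \cC$ in $\Qf$. Applying Lemma \ref{lem:factor} to $u$ yields a canonical factorization $\A \stackrel{q}{\longhookrightarrow} \overline{B} \stackrel{r}{\longrightarrow} \B$ in which $\overline{B}\in\Qf$, the map $q$ is a quasi-free extension, hence a cofibration, and $r$ is a trivial fibration.

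Because $r$ is a weak equivalence, the homotopy pushout of the original span coincides with that of $\overline{B} \stackrel{q}{\longleftarrow} \A \stackrel{v}{\longrightarrow} \cC$; and since $q$ is a cofibration between cofibrant objects and $\cC$ is cofibrant, this homotopy pushout is computed by the ordinary $1$-categorical pushout $\overline{B} \underset{\A}\oinfty \cC$ in $\dgcni$. The cofibration $q$ exhibits $\overline{B}$ as a free $\Ci$-extension $\A\{w_1,\ldots,w_s\}$ of $\A$ on finitely many generators $w_i$ of non-positive degree; as free extensions are stable under base change, the pushout is the free extension $\cC\{w_1,\ldots,w_s\}$ of $\cC$ on the same generators, equipped with the induced differential. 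Since $\cC$ is finitely generated quasi-free, so is $\cC\{w_1,\ldots,w_s\}$, and the two structure maps $\overline{B} \to \cC\{w_1,\ldots,w_s\} \leftarrow \cC$ are themselves morphisms of $\Qf$; hence the pushout lies in $\overset{\circ}\Qf$, with its cocone in $\overset{\circ}\Qf$ as well.

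The only genuine subtleties are the replacement of the $\i$-categorical span by a strict one in $\Qf$ and the identification of the $\i$-categorical pushout with the $1$-categorical pushout $\overline{B} \underset{\A}\oinfty \cC$; both are supplied by the model-categorical input already developed, namely the cofibrancy and fibrancy of finitely generated quasi-free algebras together with the explicit factorization of Lemma \ref{lem:factor}, which is exactly what makes one leg a cofibration without leaving $\Qf$. The residual bookkeeping --- that a base change of a finitely generated free $\Ci$-extension is again one --- is routine and involves no new ideas.
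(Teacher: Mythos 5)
Your proposal is correct and follows essentially the same route as the paper's proof: reduce to the initial object and pushouts, use Lemma \ref{lem:factor} to replace one leg of the span by a quasi-free extension (cofibration), and observe that the resulting $1$-categorical pushout --- which computes the homotopy pushout --- is a quasi-free extension of a finitely generated quasi-free algebra, hence again lies in $\overset{\circ}\Qf$. The only difference is that you spell out the strictification of the span and the base-change bookkeeping that the paper leaves implicit.
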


\begin{proof}
As it contains the initial object, it suffices to prove that $\Qf$ is closed under homotopy pushouts in $\dgcni.$ By Lemma \ref{lem:factor}, one can form homotopy pushouts explicitly by replacing one map with a quasi-free extension. This means that this pushout will also be a quasi-free extension of a quasi-free algebra, and hence quasi-free (and clearly finitely generated).
\end{proof}




\begin{theorem}\label{thm:main}
The functor $\Psi:\dgMani \to \DMfd$ is an equivalence of $\i$-categories.
\end{theorem}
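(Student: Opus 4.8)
Since Theorem \ref{thm:ff} and its corollary already give that $\Psi$ is fully faithful, the plan is to prove essential surjectivity. First I would record the structural consequences of what has been established: by Proposition \ref{prop:pbspres} the functor $\Ci$ preserves pullbacks, it visibly preserves the terminal object, and by Lemma \ref{lem:hfp} together with Corollary \ref{cor:hfpdman} it lands in $\DMfd$. Hence the essential image of $\Psi$ is a full subcategory of $\DMfd$ that is closed under finite limits and contains every smooth manifold, in particular $\R$ and every finite iterated pullback built from $\R$. Invoking Corollary \ref{cor:retdmfd}, every derived manifold $\mathscr{X}$ is a retract of a derived manifold $\mathscr{X}'$ obtained by finitely many pullbacks from $\R$; such an $\mathscr{X}'$ therefore lies in the essential image, say $\mathscr{X}' \simeq \Psi(\cN)$. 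Thus the whole theorem reduces to showing that the essential image is closed under retracts.

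Next I would reformulate this closure as a splitting problem inside $\dgMani$. A retract of $\Psi(\cN)$ is classified by a homotopy-coherent idempotent on $\Psi(\cN)$, which by full faithfulness of $\Psi$ lifts to an idempotent $\tilde e\colon \cN \to \cN$ in $\dgMani$; since idempotent splittings are absolute, it suffices to split $\tilde e$ in $\dgMani$. Here I would first simplify the base: because $\Ci(\mathscr{X}') \simeq \Ci(\cN)$ is a finite colimit of copies of $\Ci(\R^n)$, Proposition \ref{prop:fincol} places it in $\overset{\circ}{\Qf}$, so by Remark \ref{rmk:qf} we may replace $\cN$ by a quasi-isomorphic dg-manifold with Euclidean base whose algebra $B := \Ci(\cN)$ is finitely generated quasi-free, and in particular cofibrant in $\dgcni$.

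The heart of the argument is to split $\tilde e$ geometrically. Transporting $\tilde e$ across $\Ci$ gives a homotopy-coherent idempotent on $B$ in $\dgci$; using that the projective model structure on $\Fun(\Idem,\dgcni)$ presents $\Fun(\Idem,\dgci)$ and that $B$ is cofibrant, Lemma \ref{lem:cofproj} lets me strictify it to an honest idempotent homomorphism $e\colon B \to B$ that is projectively cofibrant (and fibrant, as every object of $\dgcni$ is), so that its $\i$-categorical splitting is computed by the ordinary $1$-categorical image of $e$. Via Remark \ref{rmk:qf} this strict idempotent corresponds to an idempotent self-map of the dg-manifold $\cN$, equivalently to an equivariant idempotent endomorphism of the associated supermanifold; applying the constant rank theorem for supermanifolds, exactly as was done earlier for the projector $\rho_0$, realizes its image as a sub-dg-manifold $\M$, and then $\Ci(\M)$ is precisely the splitting. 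Consequently $\Psi(\M)$ splits the original idempotent, whence $\mathscr{X} \simeq \Psi(\M)$, and essential surjectivity follows.

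I expect the main obstacle to be this final step: one knows abstractly that the splitting exists as a compact object of $\dgci$ (since $\dgci$ is idempotent complete and retracts of compact objects are compact), but the genuine content is to exhibit it as the algebra of functions on an actual dg-manifold rather than a formal retract. This is what forces the combination of the cofibrancy input of Lemma \ref{lem:cofproj}, the explicit quasi-free factorizations of Lemma \ref{lem:factor}, and the finite-colimit closure of Proposition \ref{prop:fincol}: together they keep the splitting within the class of finitely presented dg-$\Ci$-algebras that arise from dg-manifolds, while the geometric constant-rank splitting pins it down as a bona fide sub-dg-manifold.
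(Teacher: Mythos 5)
Your overall architecture is the same as the paper's: reduce essential surjectivity to splitting idempotents via Proposition \ref{prop:hfpret} and Corollary \ref{cor:retdmfd}, pass to a finitely generated quasi-free model using Proposition \ref{prop:fincol} and Remark \ref{rmk:qf}, and then argue that the splitting of a strict idempotent is geometric. Your final step is actually more explicit than the paper's: where the paper tersely asserts that any retract of the algebra of functions on a dg-manifold is again of that form, you correctly realize the strict idempotent as an equivariant idempotent self-map of the associated supermanifold and invoke the constant rank theorem (as the paper does earlier for $\rho_0$) to exhibit its image as a sub-dg-manifold whose functions split the idempotent. That part is sound.

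The genuine gap is in the middle. You write that, since the projective model structure on $\Fun\left(\Idem,\dgcni\right)$ presents $\Fun\left(\Idem,\dgci\right)$ and $B$ is cofibrant, ``Lemma \ref{lem:cofproj} lets me strictify it to an honest idempotent homomorphism $e\colon B \to B$.'' Lemma \ref{lem:cofproj} performs no strictification: it only says that an \emph{already strict} idempotent on a cofibrant object is projectively cofibrant. What rectification (the paper cites \cite[Proposition A.3.4.12]{htt}) actually provides is a strict idempotent $e\colon \A \to \A$ on some object $\A$ merely connected to $B$ by a zig-zag of quasi-isomorphisms; nothing forces $\A$ to equal $B$, and coherent idempotents are precisely the kind of diagram where this distinction is delicate (cf. \cite[Section 4.4.5]{htt}). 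The issue is not cosmetic: your geometric splitting argument requires a strict idempotent on an object of $\Qf$, so that Remark \ref{rmk:qf} converts it into a self-map of an honest dg-manifold, and the rectified object $\A$ need not lie in $\Qf$. Nor can you repair this by taking a projectively cofibrant replacement of the diagram $\left(\A,e\right)$, since its underlying object would then only be a retract of a quasi-free algebra --- which is circular, being exactly the class of objects you are trying to prove is geometric. The paper fills this hole with a transfer argument: split $e$ $1$-categorically on $\A$, choose a genuine quasi-isomorphism $f\colon B \to \A$ (possible because $B$ is cofibrant and every object is fibrant), and use the splitting to induce a strict idempotent $e'$ on $B$ itself together with a weak equivalence of $\Idem$-diagrams from $\left(B,e'\right)$ to $\left(\A,e\right)$; only after that does Lemma \ref{lem:cofproj} apply, guaranteeing that the $1$-categorical, geometric splitting of $e'$ computes the $\i$-categorical splitting. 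Your proposal needs this step, or some substitute for it, to go through.
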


\begin{proof}
The functor is fully faithful by Theorem \ref{thm:ff}. It suffices to prove it is essentially surjective. For this, it suffices to show that every homotopically finitely presented dg-$\Ci$-algebra is in the essential image of $\Ci.$ By Proposition \ref{prop:hfpret}, it follows that such an algebra is a retract of a colimit of free $\Ci$-algebras, i.e. ones of the form $\Ci\left(\R^n\right),$ for some $n.$ But, each of these is in $\Qf,$ so by Proposition \ref{prop:fincol}, it follows that the colimit is in $\overset{\circ}\Qf.$ Let $$E:\Idem \to \dgci$$ be the homotopy idempotent that lies in $\overset{\circ}\Qf.$ By \cite[Proposition A.3.4.12]{htt}, we can model such a diagram as an actual retract in $\dgcni$ 
$$e:\A \to \A,$$ with $\A$ connected to $\B$ in $\Qf$ through a zig-zag of quasi-isomorphisms. Since $\cB$ is cofibrant, and all objects are fibrant, we can choose a legitimate quasi-morphism $f:\B \to \A.$ We can then split the idempotent $e,$ and then pull it back via $f$ to get an induced-retract diagram, and hence an induced idempotent  $e'$ on $\B.$ We manifestly have an induced weak-equivalence between these two idempotents in the projective model structure on $\Fun\left(\Idem,\dgcni\right).$ Moreover, since $\B$ is cofibrant, it follows by Lemma \ref{lem:cofproj} that $e'$ is projectively cofibrant. Therefore its splitting is a homotopy splitting. However, since $\cB$ is equivalent to the algebra of functions on a dg-manifold, so is any of its retracts. This shows that the retract of a finite colimit of free $\Ci$-algebras is quasi-isomorphic to functions on a dg-manifold. Since every homotopically finitely generated dg-$\Ci$-algebra is of this form, this finishes the proof.
\end{proof}

\begin{corollary}
A dg-$\Ci$-algebra is homotopically finite generated if and only if it is quasi-isomorphic to $\Ci\left(\M\right)$ for a dg-manifold $\M.$
\end{corollary}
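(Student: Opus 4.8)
The plan is to deduce this directly from the equivalence established in Theorem \ref{thm:main}, together with the characterization of derived manifolds as homotopically finitely presented dg-$\Ci$-algebras in Corollary \ref{cor:hfpdman}. The key observation is that ``homotopically finite generated'' here is synonymous with ``homotopically finitely presented,'' i.e. being a compact object of $\dgci$, and that this is a property of the equivalence class of an object in the $\i$-category $\dgci$. Since quasi-isomorphism is precisely the notion of equivalence in $\dgci$, both implications become transparent once the right results are invoked.

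First I would handle the easy ``if'' direction. Suppose $\A$ is quasi-isomorphic to $\Ci\left(\M\right)$ for some dg-manifold $\M$. By Lemma \ref{lem:hfp}, $\Ci\left(\M\right)$ is homotopically finitely presented, i.e. a compact object of $\dgci$. Because compactness depends only on the equivalence class of an object in $\dgci$, and a quasi-isomorphism is an equivalence there, it follows at once that $\A$ is itself compact, hence homotopically finitely presented.

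For the ``only if'' direction, suppose $\A$ is homotopically finitely presented. Under the equivalence $\left(\dgci^{\mathbf{fp}}\right)^{op}\simeq \DMfd$ of Corollary \ref{cor:hfpdman}, $\A$ corresponds to a derived manifold. The essential surjectivity established in the proof of Theorem \ref{thm:main} shows exactly that every homotopically finitely presented dg-$\Ci$-algebra lies in the essential image of $\Ci:\dgMani \to \dgci^{op}$; concretely, it is a retract of a finite colimit of algebras of the form $\Ci\left(\R^n\right)$, such a colimit lies in $\overset{\circ}\Qf$ by Proposition \ref{prop:fincol}, and the corresponding homotopy idempotent splits (via Lemma \ref{lem:cofproj}) off a retract of functions on a dg-manifold, which is again functions on a dg-manifold. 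Thus $\A$ is equivalent in $\dgci$---equivalently, quasi-isomorphic---to $\Ci\left(\M\right)$ for some dg-manifold $\M$, completing the proof.

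Since both directions merely repackage results already in hand, there is no genuine obstacle here: the entire content is carried by the essential surjectivity half of Theorem \ref{thm:main}, which itself rests on Proposition \ref{prop:hfpret}, Proposition \ref{prop:fincol}, and the idempotent-splitting argument. The only point I would state with care is the invariance of homotopical finite presentation under quasi-isomorphism, which is what makes the ``if'' direction immediate and gives the statement its biconditional form.
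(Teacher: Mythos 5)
Your proposal is correct and follows essentially the same route as the paper: the corollary is immediate from Theorem \ref{thm:main} (whose essential surjectivity argument gives the ``only if'' direction), Corollary \ref{cor:hfpdman}, and Lemma \ref{lem:hfp} together with the invariance of compactness under equivalence in $\dgci$ (the ``if'' direction). Your added remark that ``homotopically finite generated'' means ``homotopically finitely presented,'' i.e.\ compactness in $\dgci$, and that this is a quasi-isomorphism-invariant property, is exactly the reading the paper intends.
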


\bibliography{research}
\bibliographystyle{hplain}

\end{document}